\newtheorem{theorem}{Theorem}[section]
\newtheorem{proposition}[theorem]{Proposition}
\newtheorem{corollary}{Corollary}[theorem]
\newtheorem{lemma}[theorem]{Lemma}
\newtheorem{remark}{Remark} [section]
\newtheorem{definition}{Definition} [section]
\DeclareMathSymbol{,}{\mathpunct}{operators}{"2C}
\DeclareMathSymbol{.}{\mathpunct}{operators}{"2E}
\numberwithin{equation}{section}
\newcommand{\ep}{\epsilon}
\newcommand{\be}{\begin{equation}}
\newcommand{\ee}{\end{equation}}
\newcommand{\p}{\partial}
\newcommand{\RP}{\text{Re}}
\newcommand{\IP}{\text{Im}}
\newcommand{\R}{\mathbb{R}}
\newcommand{\C}{\mathbb{C}}
\newcommand{\CI}{\mathcal{I}}
\newcommand{\CH}{\mathcal{H}}
\newcommand{\CR}{\mathcal{R}}
\newcommand{\CC}{\mathcal{C}}
\newcommand{\BF}{\mathbf{F}}
\newcommand{\N}{\mathbb{N}}
\newcommand{\CU}{\mathcal{U}}
\newcommand{\CF}{\mathcal{F}}
\newcommand{\BBU}{\mathbb{U}}
\newcommand{\BBY}{\mathbb{Y}}
\newcommand{\BBF}{\mathbb{F}}
\newcommand{\CK}{\mathcal{K}}
\newcommand{\CS}{\mathcal{S}}
\newcommand{\wt}{\widetilde}
\begin{document}
\begingroup
\allowdisplaybreaks

\title[Water waves linearized at shear flows]{On the spectra of the gravity water waves linearized at monotone shear flows}

\author[X. Liu]{Xiao Liu}
\address[X. Liu]{Department of Mathematics,  University of Illinois, 605 E Springfield Ave., Champaign, IL 61820}
\email{liu328@illinois.edu}

\author[C. Zeng]{Chongchun Zeng}
\address[C. Zeng]{School of Mathematics, Georgia Institute of Technology, Atlanta, GA 30332}
\email{zengch@math.gatech.edu}
\thanks{CZ is supported in part by the National Science Foundation DMS-1900083.}

\begin{abstract}
We consider the spectra of the 2-dim gravity  waves of finite depth linearized at a uniform monotonic shear flow $U(x_2)$, $x_2 \in (-h, 0)$, where the wave numbers $k$ of the horizontal variable $x_1$ is treated as a parameter. Our main results include a.) a complete branch of non-singular neutral modes $c^+(k)$ strictly decreasing in $k\ge 0$ and converging to $U(0)$ as $k \to \infty$; b.) another branch of non-singular neutral modes $c_-(k)$, $k \in (-k_-, k_-)$ for some $k_->0$, with $c_-(\pm k_-) = U(-h)$; c.) the non-degeneracy and the bifurcation at $(k_-, c=U(-h))$; d.) the existence and non-existence of unstable modes for $c$ near $U(0)$, $U(-h)$, and interior inflection values of $U$; e.) the complete spectral distribution in the case where $U''$ does not change sign or changes sign exactly once non-degenerately. In particular, $U$ is spectrally stable if $U'U''\le 0$ and unstable if $U$ has a non-degenerate interior inflection value or $\{U'U''>0\}$ accumulate at $x_2=-h$ or $0$. Moreover, if $U$ is an unstable shear flow of the fixed boundary problem in a channel, then strong gravity could cause instability of the linearized gravity waves in all long waves (i.e. $|k|\ll1$).
\end{abstract}
\maketitle

\section{Introduction}

Consider the two dimensional gravity water waves in the moving domain of finite depth 
\[
\CU_t=\{(x_1,x_2)\in \mathbb{T}_L \times \mathbb{R} \mid -h< x_2 < \eta(t,x)\}, \quad  \mathbb{T}_L:=\mathbb{R}/  L\mathbb{Z}, \; L>0,
\]
or 
\[
\CU_t=\{(x_1,x_2)\in \R \times \mathbb{R} \mid -h< x_2 < \eta(t,x)\}.
\]
The free surface is given by $S_t=\{(t,x) \mid x_2=\eta(t,x_1)\}$. 
For $x \in \CU_t$, let $v=(v_1(t,x), v_2(t, x)) \in \mathbb{R}^2$ denote the fluid velocity and $p=p(t,x)\in \mathbb{R}$ the pressure. They satisfy the free boundary problem of the incompressible Euler equation:
\begin{subequations} \label{E:Euler}
 \begin{empheq} 
 {align}
  & \partial_t v+(v\cdot \nabla)v+ \nabla p+g\vec{e}_2=0,  & x\in \CU_t, \label{E:Euler-1} \\
  & \nabla \cdot v=0, &x\in   \CU_t,  \label{E:Euler-2}\\
  & \p_t \eta (t, x_1)=v(t,x)\cdot(-\p_{x_1} \eta (t, x_1),1)^T, & x\in  S_t, \label{E:Euler-3} \\
  & p(t, x)=0, & x\in S_t, \label{E:Euler-4}\\
  & v_2 (x_1, -h)=0, &  x_2=-h, \label{E:Euler-5}
\end{empheq}
\end{subequations}
where $g>0$ is the gravitational acceleration and the constant density is normalized to be 1.
The kinematic boundary condition \eqref{E:Euler-3} is equivalent to that $\partial_t+v\cdot \nabla$ 
is tangent to $\{(t, x) \mid x\in S_t\}$ at any $x\in S_t$, which means that $S_t$ moves at the velocity restricted to $S_t$. 

Shear flows are a fundamental class of stationary solutions of laminar flows 
\be \label{E:shear}
v_*:= \big(U(x_2), 0\big)^T, \quad S_*:=\{(t,x)|x_2=\eta_* (x_1) \equiv 0\}, \quad \nabla p_* = -g \vec{e}_2.
\ee 
Our goal in this paper is to analyze the spectral distribution (and thus the spectral stability and the linear instability) of the gravity water wave system linearized at a monotonic shear flow satisfying 
\be \tag{{\bf H}} 
U \in C^{l_0}([-h, 0]), \; l_0\ge 6; \quad \; U'(x_2) > 0, \ \forall x_2 \in [-h, 0].
\ee 

\begin{remark} \label{R:sign}
Due to the symmetry of horizontal reflection
\[
(x_1, x_2)\to (-x_1, x_2), \quad (v_1, v_2, \eta, p) \to (-v_1, v_2, \eta, p),  
\]
the case of $U'<0$ is completely identical except the sign assumptions on $U''$ and $U'''$ in the theorems 
should be reversed.
\end{remark}

\subsection{Linearization and the spectral problem} \label{SS:Linearization and eigenvalues}

We first derive the linearized system of \eqref{E:Euler} at the shear flow $(v_*=(U(x_2), 0)^T, \eta_* =0)$ given in \eqref{E:shear}  satisfied and we denote the linearized solutions  by $(v, \eta, p)$.  
Let $(S_t^\epsilon, v^\epsilon (t, x), p^\ep (t, x))$ be a one-parameter family of solutions of \eqref{E:Euler} with $(S_t^0, v^0(t,x), p^0(t, x))=(S_*, v_*, p_*)$. 
Following the same procedure as in \cite{LiuZ21}, differentiating  the Euler equation system \eqref{E:Euler} with respect to $\ep$ and then evaluating it at $\ep=0$ yield 
\begin{subequations} \label{E:LEuler}
\be\label{E:LEuler-1}
    \partial_t v+U(x_2) \p_{x_1} v+(U'(x_2) v_2 , 0)^T +\nabla p=0, \quad  \;\; \nabla\cdot v=0, \quad \; x_2 \in (-h, 0), 
\ee
\be\label{E:LEuler-2}
 -\triangle p =2U'(x_2)\partial_{x_1}v_2, \quad x_2 \in (-h, 0),  \; \text{ and }  \;  \partial_{x_2}p|_{x_2=-h} =0. 
\ee
\be\label{E:LEuler-4} 
    \partial_t \eta= v_2|_{x_2=0} -U(0) \partial_{x_1}\eta.
\ee
\be\label{E:LEuler-5}
    p=g\eta, \quad \text{ at } \ x_2=0. 
\ee
\end{subequations}

Observe that the variable coefficients in the linear system \eqref{E:LEuler} depend only on $x_2$ and thus the Fourier modes of $x_1$ evolve in $t$ independently of each others. The zeroth Fourier mode corresponds to the perturbed shear flow component and can be handled easily, so the main interest lies in the $k$-th Fourier modes with $k\ne 0$. To find eigenvalues and eigenfunctions, we consider linearized solutions in the form of 
\be \label{E:e-func} 
v(t, x) = e^{ik(x_1 - c t)} \big(\hat v_{1}(x_2), \hat v_{2} (x_2) \big), \;\; \eta(t, x_1) = e^{ik(x_1 - c t)} \hat \eta, \;\; p(t,x) = e^{ik(x_1 - c t)} \hat p(x)
\ee
where apparently the eigenvalues take the form $\lambda =-ikc$ with the wave speed $c= c_R + i c_I \in \C$. In seeking solutions in the form of \eqref{E:e-func}, the wave number $k \in \R$ is often treated as a parameter. Substituting \eqref{E:e-func} into the linearization \eqref{E:LEuler}, through straight forward calculations (see \cite{Yih72, HL08} as well as \cite{LiuZ21}), one obtains the boundary value problem of the standard Rayleigh equation 
\begin{subequations} \label{E:Ray}
\be \label{E:Ray-1}
- \hat v_2'' + \Big( k^2 + \frac {U''}{U-c} \Big) \hat v_2 = 0,
\ee
with the boundary conditions 
\be \label{E:Ray-2}
\hat v_2 (-h)= 0, 
\ee
\be \label{E:Ray-3} 
\big((U-c)^2\hat v_2'-(U'(U-c)+g)\hat v_2\big)\big|_{x_2=0} =0.
\ee
\end{subequations} 
Here and throughout the paper $'$ denotes the derivative with respect to $x_2$. 
Apparently the last boundary condition at $x_2=0$ differs it from that of the linearized channel flow for $x_2 \in (-h, 0)$ with the slip boundary conditions at $x_2 =-h, 0$. It actually makes a substantial difference as we shall see below. For $k \ne 0$, $\hat v_1$ and $\hat \eta$ can be determined by $\hat v_2$ using the divergence free and the kinematic condition, respectively, 
\[
-k \hat v_1 + \hat v_2'=0, \qquad \hat \eta = \frac {\hat v_2(0)}{ik (U(0)-c))}. 
\]
So we shall mainly focus on $\hat v_2$. 

The linear system  \eqref{E:LEuler} is linearly unstable if there exist solutions $(k, c, \hat v_2)$ to \eqref{E:Ray} with $c_I>0$, which appear in conjugate pairs. Obviously the Rayleigh equation becomes singular when $c \in U([-h, 0])$, otherwise its initial value problem depends on $c$ and $k^2$ analytically and thus also even in $k$. We recall the following standard terminology. 

\begin{definition} \label{D:modes}
$(k, c)$ is a non-singular mode if $c \in \C \setminus U([-h, 0])$ and there exists a non-trivial solution $y(x_2)$ to \eqref{E:Ray} (thus also yields a solution to \eqref{E:LEuler} in the form of \eqref{E:e-func}). $(k, c)$  is a singular mode if $c \in U([-h, 0])$ and 
\[
(U-c) (-y''+ k^2 y) + U'' y=0
\]
has a non-trivial $H_{x_2}^2$ solution $y(x_2)$ satisfying the boundary conditions \eqref{E:Ray-2} and \eqref{E:Ray-3}. 
A non-singular mode is a stable (or unstable) mode if $c_I\le 0$ (or $c_I >0$). $(k, c)$ is a neutral mode if it is a singular or non-singular mode and $c \in \R$.
\end{definition}

In all above cases, \eqref{E:LEuler} has a solution in the form of \eqref{E:e-func}. 

We first present our main results and then comment on them along with the literatures.

\subsection{Main results} \label{SS:main}

In this paper we focus on the spectral distribution of the gravity waves linearized at a uniformly monotonic shear flow. It serves as the first step for us to understand the linearized flow at monotonic shear flows including the possible linear inviscid damping and the dispersive character (see \cite{LiuZ21} for capillary gravity waves). This would lay the foundation for the study of the local nonlinear dynamics. 
The following is a non-technical summary (assuming $U'>0$) of the main results mostly given in the following three main theorems. 
\begin{itemize}
\item There is a complete upper branch of neutral modes $c^+(k)> U(0)$ decreasing in $k>0$ and converging to $U(0)$ in the same fashion as the dispersion relation of the free gravity wave as $k \to +\infty$. 
\item Possibly two more branches of eigen-modes for $|k|\lesssim 1 $ and $|k|\gg1$, respectively.
\begin{itemize}
\item There is a branch of neutral modes $c_-(k)< U(-h)$ increasing in $k\in [0, k_-]$ and reaches $U(-h)$ at some $k_->0$ and then, near $c=U(-h)$ and for $0< k-k_- \ll1$, it either bifurcates into a (possibly broken) branch of unstable modes whose real parts are given by $\{ U(x_2) \mid U'(x_2) U''(x_2) >0\}$ near $U(-h)$ or the branch disappears completely if $U'U''<0$ near $x_2=-h$. 
\item Unless $U'U''<0$ near $x_2 =0$, there is a (possibly broken) branch $c^-(k)$ of "extremely weakly" unstable modes converging to $U(0)$ as $k\to +\infty$, whose real parts are given by $\{ U(x_2) \mid U'(x_2) U''(x_2) >0\}$ near $U(0)$. 
\end{itemize}
\item Inflection values of $U$ in $U\big((-h, 0)\big)$ always yield one or two singular neutral modes. Bifurcation to unstable modes occurs with $c$ on one side of any non-degenerate interior inflection value. 
\item $\{U''U'>0\}\ne \emptyset$ is necessary and also "almost" sufficient for linear instability. 
\item If $U''U'> 0$ on $[-h, 0]$, then $c^-(k)$ connects to $c_-(k)$ and, along with $c^+(k)$ and $\overline{c^-(k)}$, give all the eigen-modes (singular and non-singular modes) and \eqref{E:LEuler} is spectrally unstable for all $|k| > k_-$. 
\item If $U''U'\le 0$ on $[-h, 0]$, then  \eqref{E:LEuler} is spectrally stable and $c_-(k)$ for $|k| \le k_-$, and $c^+(k)$, are all the eigen-modes besides those at interior inflection values. 
\item If $U''$ changes from negative to positive exactly once non-degenerately, then $c^-(k)$ connects to the inflection value at $|k|=k_0>k_-$, and, along with $c^+(k)$ and $\overline{c^-(k)}$, give all the eigen-modes. Hence \eqref{E:LEuler} is spectrally unstable for all $|k| \in ( k_-, k_0)$.
\item If $U''$ changes from positive to negative instead, the eigenvalue distribution is also obtained. In particular, in some cases \eqref{E:LEuler}  with sufficiently strong gravity is spectrally unstable for all long waves (i.e. $|k| \ll 1$).  
\item Some improvements to those results in \cite{LiuZ21} on the spectral distribution of the capillary gravity water waves linearized at monotonic shear flows (Proposition \ref{P:CGWW}). 
\end{itemize}

The following first main theorem is on the above branches $c^\pm (k)$ and $c_-(k)$, the singular neutral modes, as well as the bifurcations near singular neutral modes. 

\begin{theorem} \label{T:e-values}
Assume $U \in C^6$ and $U'>0$ on $[-h, 0]$, then singular neutral modes $(k, c)$ occur only if $c= U(-h)$ or $c$ is an inflection value of $U$. Moreover the following hold for a constant $C>0$ determined only by $U$. 
\begin{enumerate} 
\item There exists $k_0>0$ such that there exist an analytic function $c^+(k) \in (U(0), +\infty)$ and a $C^4$ function $c^-(k) = c_R^- (k) + ic_I^- (k)$ both defined for $|k|\ge k_0$ and even in $k$ and the following hold for any $|k| \ge k_0$. 
\begin{enumerate}
\item $c_I^-(k)$ and $U''(U^{-1} \big(c_R^-(k) \big)$ have the same signs ($+, -$, or $0$) and  
\begin{align*}
&\lim_{|k| \to \infty}  \big(c^\pm (k) - U(0)\big) /\sqrt{g/ |k|} =\pm 1, \\
& C^{-1} \big|U''(U^{-1} (c_R^-(k) \big)\big|\le |k|^{\frac 32} e^{ \frac {2\sqrt{g|k|}}{U'(0)}} |c_I^- (k)| \le C\big|U''(U^{-1} (c_R^-(k) \big)\big|.
\end{align*}
\item $(k, c)$ is a non-singular or singular mode iff i.) $c= c^+(k)$, ii.) $c = c^-(k)$ and $c_I \ge 0$, or iii.) $c = \overline{c^-(k)}$ and $c_I \le 0$. 
\item $c^+(k)$ can be extended to an even and analytic function of $k \in \R$ with $c^+(k) \in \big( U(0), c^+ (0) \big]$ and $(c^+)'(k)<0$ for $k > 0$. Moreover, for any $k \in \R$, $c^+(k)$ is the only non-singular mode in $(U(0), +\infty)$ and it corresponds to  an 
eigenvalue $-ikc^+ (k)$ of \eqref{E:LEuler}. 
\end{enumerate}
\item There exist a unique $k_->0$ and a $C^{1, \alpha}$ (for any $\alpha \in [0, 1)$) even function $c_-(k) \in (-\infty, U(-h)]$ defined for $|k| \le k_-$ such that the following hold.
\begin{enumerate}
\item $c_-(k)<U(-h)$ is analytic in $k \in (-k_-, k_-)$, $c_- (\pm k_-) =U(-h)$, and $(c_-)'(k) >0$ for all $k \in (0, k_-]$.
\item  $(k, c)$ is a non-singular mode with $c \ne c^+(k)$ outside the disk \eqref{E:semi-circle} iff $|k| < k_-$ and $c = c_-(k)$. 
\item There exist $\ep, \rho>0$ such that $c_-(k)$ can be extended to $[-k_- -\ep, k_-+\ep]$ as a complex valued $C^{1, \alpha}$ even function  satisfying the following.
\begin{enumerate}
\item For $|k| \in [k_-, k_-+\ep]$, $\RP\, c_-' (k)>0$, $\IP\, c_- (k)$ and $U''\big(U^{-1} (\RP\, c_-(k))\big)$ have the same signs ($+, -$, or $0$), and 
\[
C^{-1} \big|U''(U^{-1} (\RP\, c_-(k) \big)\big| (|k| - k_-)^{2} \le   |\IP \, c- (k)| \le C\big|U''(U^{-1} (\RP \, c_-(k) \big)\big| (|k| - k_-)^{2}.
\]
\item $(k, c= c_R+ic_I)$ is a singular or non-singular mode of  \eqref{E:LEuler} with $|k-k_-| \le \ep$, $|c-U(-h)|\le \rho$, and $c_I\ge 0$ iff $c = c_-(k)$ ($c_I\le 0$ iff $c=\overline{c_-(k)}$). 
\end{enumerate}\end{enumerate}
\item Suppose $c_0=U(x_{20})$ with $x_{20} \in (-h, 0)$  and $U''(x_{20})=0$,  then the following hold. 
\begin{enumerate}
\item There exists $S \subset [0, +\infty)$ such that $|S|=1$ or $2$, 
$\max S>0$, and $(k, c_0)$ is a singular neutral mode of  \eqref{E:LEuler} iff $k \in S$.
\item 
If, in addition, $U'''(x_{20})\ne 0$, then, for any $k_*\in S\setminus \{0\}$, there exist $\ep, \rho>0$ and a $C^4$  complex valued function $\CC(k)$ defined on $[-k_* -\ep, k_*+\ep]$ satisfying the following. 
\begin{enumerate} 
\item $\CC_I(k)$ has the same sign as $(k-k_*) U'''(x_{20})$ and 
\[
C^{-1} |k-k_*| \le |\CC_I(k)| \le C |k-k_*|.
\] 
\item $(k, c)$ is a singular or non-singular mode of  \eqref{E:LEuler} with $|k-k_*| \le \ep$, $|c-c_0|\le \rho$, and $c_I\ge 0$ iff $(k-k_*) U'''(x_{20}) \ge 0$ and $c = \CC(k)$ ($c_I\le 0$ iff $c=\overline{\CC(k)}$ and $(k-k_*) U'''(x_{20}) \ge 0$). 
\end{enumerate} \end{enumerate}
\item The linearized gravity water wave system \eqref{E:LEuler} is linearly unstable for some wave number $k \in \R$ if $U''$ has a non-degenerate zero point in $(-h, 0)$ or there exists a sequence $x_{2, n} \in (-h, 0)$ converging to $-h$ or $0$ as $n \to +\infty$ such that $U'' (x_{2,n})>0$ for all $n$.
\end{enumerate}
\end{theorem}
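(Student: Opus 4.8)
\emph{Reformulation and a priori localization.} For $k\in\R$ and $c\in\C\setminus U([-h,0])$ let $y(\cdot\,;k,c)$ solve \eqref{E:Ray-1} with $y(-h)=0$, $y'(-h)=1$; it is analytic in $(k^2,c)$, and $(k,c)$ is a non-singular mode iff the dispersion function $F(k,c):=\big[(U-c)^2y'-(U'(U-c)+g)\,y\big]_{x_2=0}$ vanishes. The change of unknown $\hat v_2=(U-c)\phi$ turns \eqref{E:Ray} into the self-adjoint problem
\[
\big((U-c)^2\phi'\big)'=k^2(U-c)^2\phi,\qquad \phi(-h)=0,\qquad (U(0)-c)^2\phi'(0)=g\phi(0),
\]
whose energy identity reads $g|\phi(0)|^2=\int_{-h}^0(U-c)^2\big(|\phi'|^2+k^2|\phi|^2\big)$. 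Taking imaginary parts forces $c_R\in(U(-h),U(0))$ whenever $c_I\ne0$; combining real and imaginary parts gives the Howard-type region \eqref{E:semi-circle}. Hence every non-real mode, and every mode with $c_R\notin[U(-h),U(0)]$, is so confined, and a real $c\notin U([-h,0])$ that is a mode must satisfy $c>U(0)$ or $c<U(-h)$.

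\emph{The monotone branches $c^+$ and $c_-$.} On the half-lines $c>U(0)$ and $c<U(-h)$ the weight $(U-c)^2$ is positive, so the self-adjoint problem is a genuine Sturm--Liouville eigenvalue problem in $\mu=-k^2$, with a Robin condition of the ``wrong'' sign (coefficient $g/(U(0)-c)^2>0$) at $x_2=0$. Its principal eigenvalue $\mu_0(c)$ is simple, real-analytic, strictly monotone with non-vanishing derivative, and $\mu_1(c)>0$ throughout; by comparison with the constant-coefficient limits one gets $\mu_0(c)\to-\infty$ as $c\to U(0)^+$, $\mu_0(c)\to(\pi/2h)^2$ as $c\to\pm\infty$, and $\mu_0(c)\downarrow-k_-^2$ as $c\to U(-h)^-$ for a unique $k_->0$. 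Inverting $\mu_0$ produces the analytic, even branch $c^+(k)$, the unique non-singular mode in $(U(0),+\infty)$ for every $k\in\R$ with $(c^+)'<0$ on $k>0$, and the analytic, even branch $c_-(k)$ on $|k|<k_-$, the unique non-singular mode in $(-\infty,U(-h))$ with $c_-'>0$ on $(0,k_-)$ and none for $|k|\ge k_-$. Expanding $\phi$ and $\mu_0$ around the constant-coefficient solution as $c\to U(0)^+$ gives $\big(c^+(k)-U(0)\big)/\sqrt{g/|k|}\to1$.

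\emph{The singular and near-singular analysis (the main content).} Three degenerations of the Rayleigh equation remain. \textbf{(a) Large $|k|$ near $U(0)$:} writing $U-c\approx U'(0)x_2+(U(0)-c)$, the critical layer $x_c=U^{-1}(c)$ lies within $O(\sqrt{g/|k|})$ of the free surface; away from it $y$ is a combination of $e^{\pm kx_2}$, and matching to \eqref{E:Ray-3} through the layer, with the $U''/(U-c)$ term continued along a Lin-type contour passing on the $c_I\to0^\pm$ side of $x_c$, produces two $C^4$ even branches $c^\pm(k)$ for $|k|\ge k_0$ with $c_R^+>U(0)>c_R^-$, $\big(c^\pm(k)-U(0)\big)/\sqrt{g/|k|}\to\pm1$, and an exponentially small correction $|c_I^-(k)|\asymp|k|^{-3/2}e^{-2\sqrt{g|k|}/U'(0)}\,|U''(U^{-1}(c_R^-(k)))|$ carried by the $U''$-residue at $x_c$, of sign $\operatorname{sgn}U''(U^{-1}(c_R^-(k)))$ (this $c^+$ agrees with the Sturm--Liouville branch). \textbf{(b) Bifurcation at $(k_-,U(-h))$:} at $c=U(-h)$ the point $x_2=-h$ is a regular singular point of \eqref{E:Ray-1} with $U''/(U-c)\sim U''(-h)/(U'(-h)(x_2+h))$, and $y(-h)=0$ selects the Frobenius branch $y\sim(x_2+h)$, so $F$ is analytic for $c_R<U(-h)$; as $c_R$ crosses $U(-h)$ the near-boundary critical point enters $(-h,0)$ and the continuation of $y$ (for $c_I\to0^+$) injects a term $\propto U''(-h)(c-U(-h))^2\log(c-U(-h))$, the square coming from $y^2\sim(x_2+h)^2$ at the critical layer. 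After checking that the leading coefficients do not vanish (the non-degeneracy), a non-smooth implicit function theorem solves $F=0$ for $c=c_-(k)$ on $[-k_--\epsilon,k_-+\epsilon]$ in $C^{1,\alpha}$, with $\RP c_-'>0$, $|\IP c_-(k)|\asymp|U''(U^{-1}(\RP c_-(k)))|(|k|-k_-)^2$, and $\IP c_-$ of the same sign as $U''(U^{-1}(\RP c_-(k)))$; if $U''<0$ near $-h$ the branch has $\IP c_-<0$ for $|k|>k_-$ and yields no unstable mode. \textbf{(c) Interior inflection values:} if $c_0=U(x_{20})$ with $x_{20}\in(-h,0)$ and $U''(x_{20})=0$, then $U''/(U-c_0)$ is continuous so \eqref{E:Ray-1} is regular at $x_{20}$, $F_0(k):=F(k,c_0)$ is real-analytic in $k^2$ with $F_0(k)\to(U(0)-c_0)^2e^{kh}/2>0$ as $k\to+\infty$, and a Sturm oscillation/comparison count of its positive zeros gives $S$ with $|S|\in\{1,2\}$, $\max S>0$; if moreover $U'''(x_{20})\ne0$, perturbing $c$ off $c_0$ puts a critical layer at $x_c\approx x_{20}+(c-c_0)/U'(x_{20})$ with residue $\propto U'''(x_{20})(c-c_0)$, so $\partial_cF$ acquires an imaginary part $\propto U'''(x_{20})$, and near each $k_*\in S\setminus\{0\}$ an implicit function theorem gives $\CC(k)$ with $\CC_R-c_0\asymp-(k-k_*)$, $|\CC_I(k)|\asymp|k-k_*|$, $\CC_I$ of the sign of $(k-k_*)U'''(x_{20})$, and the branch is a mode only on the side $(k-k_*)U'''(x_{20})\ge0$ (on the other side the candidate $c$ would violate $c_I>0$). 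This step (b)--(c) — identifying exactly which singular terms and coefficients appear, verifying non-degeneracy, and running the non-smooth implicit function theorem with the precise sign and order of the imaginary part — is the principal obstacle; it is where the delicate Lin-type contour-deformation estimates are needed.

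\emph{Linear instability, part (4).} If $U''$ has a non-degenerate interior zero $x_{20}$ then $U'''(x_{20})\ne0$, and by (c) the mode $\CC(k)$ has $\CC_I(k)>0$ for $k$ on the side with $(k-k_*)U'''(x_{20})>0$. If $U''>0$ along some $x_{2,n}\to-h$, then $U(x_{2,n})\in(U(-h),U(-h)+\delta)$ for $n$ large, which is covered by the continuous increasing $\RP c_-$ with $\RP c_-(k_-)=U(-h)$ from (b); choosing $k_n$ with $\RP c_-(k_n)=U(x_{2,n})$ gives $\IP c_-(k_n)>0$ since $U''(U^{-1}(\RP c_-(k_n)))=U''(x_{2,n})>0$. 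If $U''>0$ along some $x_{2,n}\to0$, then (as $x_{2,n}<0$) $U(x_{2,n})\in(U(0)-\delta,U(0))$, which lies in the image of the continuous $c_R^-$ whose values tend to $U(0)$ from below as $|k|\to\infty$; choosing $k_n$ with $c_R^-(k_n)=U(x_{2,n})$ gives $c_I^-(k_n)>0$ since $U''(U^{-1}(c_R^-(k_n)))=U''(x_{2,n})>0$. In every case \eqref{E:LEuler} has a solution of the form \eqref{E:e-func} with $c_I>0$, hence is linearly unstable.
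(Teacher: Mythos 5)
Your overall architecture (semicircle confinement, Sturm--Liouville reformulation of \eqref{E:Ray}, separate treatment of the two real branches, critical-layer analysis for $|k|\gg1$, and local bifurcation at $U(-h)$ and at interior inflection values) parallels the paper's, but the proposal leaves unproven exactly the points where the real work lies. First, your construction of $c^+$ and $c_-$ rests on the assertion that the principal eigenvalue $\mu_0(c)$ of the Robin-type Sturm--Liouville problem is strictly monotone in $c$ with non-vanishing derivative on $c>U(0)$ and $c<U(-h)$, and that $\mu_1(c)>0$. The Hadamard-type derivative of $\mu_0$ in $c$ contains $\int_{-h}^0 \frac{U''}{(U-c)^2}\phi_0^2\,dx_2$ plus a boundary term of indefinite sign for $c>U(0)$, so this monotonicity is not evident and is essentially equivalent to part of what must be proved (uniqueness of the mode in $c$ for each fixed $k$, statements (1c) and (2b)). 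The paper instead proves monotonicity in $K=k^2$ (via $\p_K Y=\int \tilde y^2>0$, Lemmas \ref{L:Y-def}(8), \ref{L:e-v-basic-2}(2)), pins down the $k=0$ roots explicitly, and then obtains existence, uniqueness and the signs $(c^+)'<0$, $c_-'>0$ by the analytic-continuation/index-counting argument of Lemma \ref{L:continuation} and Remark \ref{R:continuation}; without that (or a proof of your monotonicity claim) the global uniqueness statements are not established. Second, in your step (b) you defer "checking that the leading coefficients do not vanish (the non-degeneracy)". That non-degeneracy, $\p_c F(k_-,U(-h))\ne 0$ with the correct sign, together with the uniform $C^{1,\alpha}$ regularity of $F$ at $c=U(-h)$, is the central technical content of the paper: Lemma \ref{L:Y-1}/Corollary \ref{C:F} require a contour-deformation representation of $Y$ handling possible accumulation of channel-flow eigenvalues, and Lemma \ref{L:pcF} is proved by a delicate contradiction argument that perturbs the profile $U$ by a localized convex bump and exploits a root-confinement lemma (Lemma \ref{L:temp-2}); nothing in your Frobenius/logarithm bookkeeping substitutes for this, and in \cite{LiuZ21} the same step needed the extra hypothesis $U''\ne0$ precisely because it is not a routine check.

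Third, for interior inflection values you assert $\max S>0$ and $|S|\in\{1,2\}$ from "a Sturm oscillation/comparison count", but existence of a singular neutral wavenumber at an arbitrary interior inflection value is not automatic: the paper proves it (Lemma \ref{L:neutral-M-1}) by testing the variational functional with $y=(U-c_0)\chi_{[x_{20},0]}$, which yields the value $-g/2<0$, and the count $|S|\le 2$ hinges on the uniqueness of the channel-flow neutral wavenumber $k_C$ and on $\p_K F(\cdot,c_0)>0$ away from it (Lemmas \ref{L:neutral-M}--\ref{L:neutral-M-3}); your argument gives neither the existence input nor the mechanism (the pole of $F$ at $k_C$) that produces the second root. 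Finally, the large-$|k|$ step is only an outline: the quantitative content of Lemma \ref{L:e-v-asymp-2} — localization of all roots into the two rectangles, the derivative bounds on $Y$ there (Lemma \ref{L:Y-2}), the contraction producing $C^4$ branches, and the two-sided exponential bound $|c_I^-(k)|\asymp |k|^{-3/2}e^{-2\sqrt{g|k|}/U'(0)}|U''(U^{-1}(c_R^-(k)))|$ — is claimed but not derived. Your part (4) deduction is fine once (1)--(3) are in place, but as it stands the proposal identifies the right skeleton while omitting the non-degeneracy, regularity, existence-of-neutral-mode, and asymptotic estimates that constitute the proof.
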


Clearly the above $c^-(k)$, $c_-(k)$, and $\CC(k)$ are relevant only when their imaginary parts are non-negative, so the subsets of these branches corresponding to eigenvalues are possibly broken. They correspond to unstable modes iff the imaginary parts are positive. From the estimates of the imaginary parts, the strength of the instability (the exponential growth rates) is the strongest near non-degenerate inflection values, and the weakest near $U(0)$.  

\begin{remark} 
a.) Due to the symmetry, the case of $U'<0$ (and also in the following theorems) is completely identical except the signs of $U''$ and $U'''$ should be reversed. \\
b.) More detailed asymptotics of $c^\pm (k)$ can be found in Lemma \ref{L:e-v-asymp-2}. \\
c.) See Lemma \ref{L:neutral-M-3} for more details on when $|S|=1$ or $2$ in statement (3) and its relationship to the singular neutral mode of the linearized channel flow. The statement $S \ne \emptyset$ had also been proved in \cite{Yih72, HL13} and we gave a different proof here.  It actually holds for a wider class of shear flows not necessarily monotonic, see Remark \ref{R:inflectionV}.  When $|S|=2$, 
see Lemma \ref{L:bifurcation-P} for  how a closed branch of unstable modes emerges from the smaller wave number and makes the linearized system unstable for $|k|\ll 1$. \\
d.) While $U'''(x_{20})\ne 0$ is the most easily checked non-degeneracy condition in statement (3b), a more precise condition and more details of the bifurcation (including the case of $k_*=0$) can be found in Lemma \ref{L:bifurcation}.  \\
e.) Local bifurcation in statement (4) from an interior inflection value, similar to \cite{LiuZ21}, can be compared to that in \cite{HL08} for a different class of shear flows. Also see comment below. 
 \end{remark} 

In the next theorem we consider the case where $U''$ does not change sign. 

\begin{theorem} \label{T:e-values-1}
Assume $U \in C^6$ and $U'>0$ on $[-h, 0]$, then the following hold. 
\begin{enumerate} 
\item Suppose $U''(x_2) >0$ for all $x_2 \in (-h, 0)$, then the above $c^- (k)$ (given in Theorem \ref{T:e-values}(1)) and $c_-(k)$ (Theorem \ref{T:e-values}(2)) can be extended for all $k \in \R$ and they coincide, with $c_I^- (k) = 0$ for all $|k| \le k_-$ and $c_I^- (k) >0$ for all $|k| > k_-$. Moreover, all singular and non-singular modes of  \eqref{E:LEuler} are given by $(k, c^\pm (k))$ and $(k, \overline {c^-(k)})$ for $k \in \R$.
\item The linearized gravity water wave system \eqref{E:LEuler} is spectrally stable for all wave number $k\in \R$ if $U''\le 0$ on $[-h, 0]$. Moreover $(k, c)$ is a neutral mode iff $c= c^+(k)$, $c = c_-(k)$ and $|k|\le k_-$, or $c \in U((-h, 0))$ is an inflection value of $U$ and $|k| \in S$ given in Theorem \ref{T:e-values}(3). 
\end{enumerate}
\end{theorem}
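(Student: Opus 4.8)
The plan is to combine two elementary quadratic identities extracted from the Rayleigh system \eqref{E:Ray} with the local and semi-global structure already furnished by Theorem~\ref{T:e-values}. Fix $k\ne0$ and let $(k,c,\hat v_2)$ be a non-trivial mode with $c\notin U([-h,0])$ (the only regime relevant for unstable modes and for neutral modes outside $[U(-h),U(0)]$). First, substituting $\hat v_2=(U-c)\phi$ turns \eqref{E:Ray-1} into the divergence form $\big((U-c)^2\phi'\big)'=k^2(U-c)^2\phi$; since $c\ne U(-h)$ we get $\phi(-h)=0$, while \eqref{E:Ray-3} collapses to $(U(0)-c)^2\phi'(0)=g\phi(0)$, so pairing with $\bar\phi$ and integrating by parts yields
\[
g|\phi(0)|^2=\int_{-h}^0 (U-c)^2\big(|\phi'|^2+k^2|\phi|^2\big)\,dx_2 .
\]
Second, pairing \eqref{E:Ray-1} with $\overline{\hat v_2}$ and using \eqref{E:Ray-2}--\eqref{E:Ray-3} gives
\[
\int_{-h}^0\big(|\hat v_2'|^2+k^2|\hat v_2|^2\big)\,dx_2+\int_{-h}^0\frac{U''}{U-c}|\hat v_2|^2\,dx_2=\Big(\frac{U'(0)(U(0)-c)+g}{(U(0)-c)^2}\Big)|\hat v_2(0)|^2 .
\]
Note also that \eqref{E:Ray-3} forces $\hat v_2'(0)=0$ whenever $\hat v_2(0)=0$, hence $\hat v_2\equiv0$; so $\hat v_2(0)\ne0$ for every mode.

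I would prove the stability statement (2) first. Assume $U''\le0$ on $[-h,0]$ and suppose some mode has $c_I\ne0$. The imaginary part of the first identity is $c_I\int_{-h}^0(U-c_R)\big(|\phi'|^2+k^2|\phi|^2\big)\,dx_2=0$, which forces $U(-h)<c_R<U(0)$. The imaginary part of the second identity, after dividing by $c_I\ne0$, reads
\[
\int_{-h}^0\frac{U''}{|U-c|^2}|\hat v_2|^2\,dx_2=\Big(\frac{U'(0)}{|U(0)-c|^2}+\frac{2g(U(0)-c_R)}{|U(0)-c|^4}\Big)|\hat v_2(0)|^2 .
\]
The left side is $\le0$ since $U''\le0$; the right side is $>0$ since $U'(0)>0$, $g>0$, $U(0)-c_R>0$, and $\hat v_2(0)\ne0$ --- a contradiction, so \eqref{E:LEuler} is spectrally stable. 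Since modes occur in conjugate pairs, every mode is then neutral. A neutral mode has $c\in\R$: if $c>U(0)$ it equals $c^+(k)$ by the last part of Theorem~\ref{T:e-values}(1)(c); if $c<U(-h)$ it is non-singular and lies outside the disk \eqref{E:semi-circle}, hence $|k|<k_-$ and $c=c_-(k)$ by Theorem~\ref{T:e-values}(2)(b); the remaining case $c\in U([-h,0])$ makes it singular, so by Theorem~\ref{T:e-values} either $c$ is an interior inflection value with $|k|\in S$, or $c=U(-h)$, in which case the uniqueness in Theorem~\ref{T:e-values}(2)(c) together with monotonicity in $k^2$ of the (real) dispersion relation at $c=U(-h)$ forces $|k|=k_-$, i.e. $c=c_-(\pm k_-)$. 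This is exactly the asserted list.

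For statement (1), assume $U''>0$ on $(-h,0)$, so $U$ has no interior inflection value and, by Theorem~\ref{T:e-values}, the only possible singular neutral mode is at $c=U(-h)$, occurring precisely at $|k|=k_-$; moreover the first identity shows every unstable mode has $\RP c\in(U(-h),U(0))$. The core is a global count of modes in the closed upper half $c$-plane. Solving the Rayleigh initial value problem from $x_2=-h$ with $(y,y')(-h)=(0,1)$ gives $y(x_2;k,c)$ analytic in $c$ on $\C\setminus U([-h,0])$, and modes are the zeros of
\[
D(k,c):=(U(0)-c)^2 y'(0;k,c)-\big(U'(0)(U(0)-c)+g\big)y(0;k,c),
\]
which extends continuously to $c=U(-h)$ and to the segment $\big(U(-h),U(0)\big)$ from above. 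Because $U''>0$ on $(-h,0)$, the Sokhotski--Plemelj formula shows the imaginary part of this boundary value of $D$ along $\big(U(-h),U(0)\big)$ carries a fixed sign (it is a positive multiple of $U''(x_c)/U'(x_c)$ times a square), so the image of the segment under $D$ contributes no winding about $0$; combined with the large-$|c|$ behavior $D(k,c)\sim\cosh(kh)\,c^2$ (from $\tfrac{U''}{U-c}\to0$ and $y(x_2;k,c)\to k^{-1}\sinh k(x_2+h)$), the argument principle gives exactly two zeros of $D(k,\cdot)$ in the closed upper half-plane for each $k$, one being $c^+(k)>U(0)$. The second zero must coincide with $c^-(k)$ for $|k|\ge k_0$ by Theorem~\ref{T:e-values}(1)(b) and with $c_-(k)$ for $|k|\le k_-$ by Theorem~\ref{T:e-values}(2)(b); hence these branches agree where both are defined and the common function extends to all $k\in\R$. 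For $|k|\le k_-$ it is $c_-(k)<U(-h)$, so $c_I^-=0$; for $|k|>k_-$ its real part lies in $(U(-h),U(0))$, where $U''>0$, so $c_I^->0$ by the sign relation in Theorem~\ref{T:e-values}(1)(a)/(2)(c). The enumeration of all modes as $(k,c^\pm(k))$ and $(k,\overline{c^-(k)})$ then follows from the count and conjugate symmetry.

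The main obstacle is the winding-number computation for $D$ in statement (1): one must rigorously justify the continuation of $D$ to the closed segment $[U(-h),U(0)]$, pin down the sign of its boundary imaginary part there --- this is precisely where $U''>0$ is used, via the critical-layer jump --- and control the leading behavior of $D$ as $|c|\to\infty$ accurately enough to conclude the count is $2$, in particular ruling out zeros escaping to or emerging from the endpoints $U(0),U(-h)$ as $k$ varies across the possible gap $|k|\in(k_-,k_0)$. Statement (2) is comparatively soft once the two identities are available; its one delicate point --- that $c=U(-h)$ is singular only at $|k|=k_-$ --- I would settle by the same monotonicity in $k^2$ of the real dispersion relation at $c=U(-h)$ that underlies the uniqueness of $k_-$ in Theorem~\ref{T:e-values}(2).
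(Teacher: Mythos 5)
Your statement (2) is fine and is essentially the paper's own argument: the identity obtained by pairing the Rayleigh equation with $\overline{\hat v_2}$ and taking imaginary parts, the confinement $c_R\in\big(U(-h),U(0)\big)$ (the paper quotes Yih's semicircle theorem, you rederive it via Howard's substitution), $\hat v_2(0)\ne 0$, and then the classification of neutral modes by Theorem \ref{T:e-values}. The substantive issue is statement (1). There your whole proof rests on the claim that, for every $k$, the argument principle gives exactly two zeros of $D(k,\cdot)=\BF(k,\cdot)$ in the \emph{closed} upper half plane, and this claim is not established. First, the fixed-sign property of the boundary imaginary part on $\big(U(-h),U(0)\big)$ is a property of $F=\BF/y_-(k,\cdot,0)$ (Lemma \ref{L:e-v-basic-1}(4)), not of $\BF$ itself; to pass to $F$ one must know $y_-(k,c,0)\ne 0$ on the closed half plane (true when $U''>0$, by Rayleigh's criterion for the channel problem and Lemma \ref{L:y0}(5), but this needs to be said). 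Second, the argument principle does not count zeros lying \emph{on} the contour: $c^+(k)$ is always a real zero, $c_-(k)$ is a real zero for $|k|<k_-$, and at $|k|=k_-$ the zero sits at the endpoint $U(-h)$ of the critical segment, where $\BF$ is only $C^{1,\alpha}$; so ``two zeros in the closed upper half plane'' cannot be read off from a boundary winding number without indentation bookkeeping, simplicity information, and control at the endpoints. Third, the assertion that the segment ``contributes no winding'' is simply not true uniformly in $k$: the net argument change along a contour hugging $[U(-h),U(0)]$ depends on the sign of $F(k,U(-h))$, and it is exactly this contribution (nonzero once $F(k,U(-h))>0$, i.e. $|k|>k_-$) that accounts for the appearance of the conjugate pair $c^-(k),\overline{c^-(k)}$ — the number of zeros off the segment is $2$ for $|k|<k_-$ and $3$ for $|k|>k_-$, so the count you want is not a $k$-independent consequence of ``no winding from the segment plus $c^2$ at infinity.'' You yourself flag the behavior at the endpoints and the crossing of $k=k_-$ as ``the main obstacle,'' but that obstacle is precisely the theorem's content at the transition, and it is left unresolved.

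For comparison, the paper does not count zeros globally in $c$ for fixed $k$; it continues in $k$: the asymptotic count at $|k|\ge k_0$ (Lemma \ref{L:e-v-asymp-2}) anchors an index-one statement in upper/lower regions $\Omega_4^\pm$ that avoid $U([-h,0])$, which is propagated down to any $k_1>k_-$ because (under $U''>0$) the only singular neutral mode is $(k_-,U(-h))$; the identification $c^-=c_-$ for $k>k_-$ then follows since both are roots in $\Omega_4^+$ where the index is one, and the passage through $k=k_-$ uses the non-degeneracy $\p_cF(k_-,U(-h))\ne 0$ (Lemma \ref{L:pcF}, itself a delicate result) together with the bifurcation Lemma \ref{L:bifurcation} and the regularity of Corollary \ref{C:F}, while the regime $|k|\le k_-$ is handled by a two-root index count in $\Omega_5$ using the monotonicity of $F$ in $k^2$ and the explicit roots of $F(0,\cdot)$. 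A corrected version of your global winding computation (done in $\C\setminus[U(-h),U(0)]$, with the segment contribution computed rather than dismissed, and with the real zeros pinned down by the monotonicity facts of Lemma \ref{L:e-v-basic-2}) could be made into an alternative proof, but as written the key count is asserted rather than proved, and the coincidence of $c^-$ and $c_-$ across the gap $(k_-+\ep,k_0)$ and the behavior at $(k_-,U(-h))$ depend on it.
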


\begin{remark} 
Under the assumption $U'' < 0$  (and $U'(0)\ge 0$), the spectral stability had been established in \cite{Yih72} (see also \cite{HL08}). The above spectral stability statement (2) can also be proved under the relaxed assumptions $U''\le 0$ on $[-h, 0]$ and $U'(0)>0$. See Remark \ref{R:stability}.
\end{remark}

Finally, the last theorem is for the case where $U$ has exactly one non-degenerate inflection point in $(-h, 0)$.  

\begin{theorem} \label{T:e-values-2}
Assume 
\be \label{E:K+}
\{x_2 \in (-h, 0) \mid U''(x_2)=0\} = \{x_{20}\}, \quad c_0= U(x_{20}), 
\ee
then the following hold.  
\begin{enumerate} 
\item If $U''' (x_{20}) >0$, then  $c^-(k)$ can be extended evenly for all $|k| \ge k_0$, where $S=\{k_0\}$ (see Theorem \ref{T:e-values}(1)(3)), such that $c^-(k_0) = c_0$ and $c_I^- (k) >0$ for all $|k|> k_0$. Moreover all singular and non-singular modes of  \eqref{E:LEuler} are given by $(k, c^+ (k))$ for $k \in \R$, $(k, c^-(k))$ and $(k, \overline {c^-(k)})$ for $|k| \ge k_0$, and $(k, c_-(k))$ for $|k| \le k_-$. 
\item If $U'''(x_{20})<0$ and $S=\{k_0\}$, then $k_0 > k_-$ and $c_-(k)$ (given in Theorem \ref{T:e-values}(2)) can be extended evenly for  $|k| \le k_0$ such that $c_-(k_0)=c_0$, $\IP\, c_-(k) >0$ for $|k| \in (k_-, k_0)$, and all singular and non-singular modes of  \eqref{E:LEuler} are given by $(k, c^+ (k))$ for $k \in \R$ and $(k, c_- (k))$ and $(k, \overline {c_-(k)})$ for $|k| \le k_0$.
\item If $U'''(x_{20})<0$, $S=\{k_0>k_1\}$, and $k_1 \le k_-$, then $k_0 > k_-$ and we have the following. 
\begin{enumerate} 
\item $c_-(k)$ can be extended evenly for $|k| \le k_0$ such that $c_-(k_0)=c_0$ and $\IP\, c_-(k) >0$ for $|k| \in (k_-, k_0)$.
\item  There exists a $C^4$ even complex valued function $\CC(k)$ defined for $|k| \in [0, k_1]$ such that $\CC(\pm k_1) =c_0$ and $\CC_I(k) >0$ for all $|k| \in [0, k_1)$. 
\item All singular and non-singular modes of  \eqref{E:LEuler} are given by $(k, c^+ (k))$ for $k \in \R$, $(k, c_- (k))$ and $(k, \overline {c_-(k)})$ for $|k| \le k_0$, and $(k, \CC(k))$ and $(k, \overline {\CC(k)})$  for $|k| \le k_1$.
\end{enumerate}
\item If $U'''(x_{20})<0$, $S=\{k_0>k_1\}$, and $k_1 > k_-$, then $k_0 > k_-$ and \eqref{E:LEuler} is unstable iff $|k| < k_0$. Moreover there exist connected components $\CS_j$, $j=-1, 0, 1$, of 
\[
\CS= \{ \text{non-singular modes } (k, c) \in \R \times \C \mid c_I>0 \} \cup \{(\pm k_-, U(-h)),\, (\pm k_0, c_0),\, (\pm k_1, c_0)\},   
\]
such that 
\[
\CS=\CS_{-1} \cup \CS_0 \cup \CS_1, \quad \CS_1 = \{ (k, c) \mid (-k, c) \in \CS_{-1}\}, \quad  \{ (k, c) \mid (-k, c) \in \CS_{0}\}= \CS_0,
\]
and either 
\begin{enumerate} 
\item $(k_-, U(-h)), (k_0, c_0) \in \CS_1$ and $(\pm k_1, c_0) \in \CS_0$, or 
\item $(k_-, U(-h)), (k_1, c_0) \in \CS_1$ and $(\pm k_0, c_0) \in \CS_0$. 
\end{enumerate}\end{enumerate}
\end{theorem}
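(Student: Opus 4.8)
Since $U''$ vanishes only at $x_{20}\in(-h,0)$ and nondegenerately, there is a clean dichotomy: if $U'''(x_{20})>0$ then $U''<0$ on $(-h,x_{20})$ and $U''>0$ on $(x_{20},0)$, and if $U'''(x_{20})<0$ the two signs are interchanged; thus $\{U''U'>0\}=\{U''>0\}$ is one of the subintervals $(-h,x_{20})$, $(x_{20},0)$ and $U(\{U''>0\})$ is $(c_0,U(0))$ or $(U(-h),c_0)$, respectively. The plan is to combine the local branch and bifurcation data already furnished by Theorem \ref{T:e-values} at the three special values $c\in\{U(-h),c_0,U(0)\}$ with a global continuation and counting of the non-singular modes as $k$ varies. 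For the counting, let $y(x_2;k,c)$ solve \eqref{E:Ray-1}--\eqref{E:Ray-2} with $y'(-h)=1$ and set $\mathcal W(k,c):=\big((U-c)^2y'-(U'(U-c)+g)y\big)\big|_{x_2=0}$; then $(k,c)$ with $c\in\C\setminus U([-h,0])$ is a non-singular mode iff $\mathcal W(k,c)=0$. The function $\mathcal W$ is analytic in $c$ off the cut $U([-h,0])$, even and analytic in $k$, real for real $c$, obeys $\mathcal W(k,\bar c)=\overline{\mathcal W(k,c)}$ (so non-real modes occur in conjugate pairs), and $\mathcal W(k,c)>0$ for $|c|$ large; near $U(-h)$, $c_0$, $U(0)$ one works instead with the Orr-type analytic continuation of $\mathcal W$ across the cut, as in the constructions behind Theorem \ref{T:e-values}.

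The ingredients are: (i) an a priori localization --- from the Rayleigh-type integral identities adapted to the free-surface condition \eqref{E:Ray-3}, which are effective precisely because $U''$ changes sign only once --- showing that every unstable mode has $\RP\,c\in U(\{U''>0\})$ and $c$ in the fixed disk of Theorem \ref{T:e-values}(2), that along any family of unstable modes with $c_I\downarrow0$ the real part tends to $U(-h)$, $U(0)$, or $c_0$, and (Theorem \ref{T:e-values}(1)) that for $|k|$ large the only modes besides $c^+(k)$ are $c^-(k)$ and $\overline{c^-(k)}$ near $U(0)$; (ii) the local pictures of Theorem \ref{T:e-values}: $c_-(k)$ on $|k|\le k_-+\ep$ with $\IP\,c_-(k)$ carrying the sign of $U''$ just above $-h$, $c^-(k)$ on $|k|\ge k_0$ with $c_I^-(k)$ carrying the sign of $U''$ just below $0$, and for each $k_*\in S$ the branch $\CC(k)$ near $c_0$ with $\CC_I(k)$ carrying the sign of $(k-k_*)U'''(x_{20})$, together with the two-sided estimates for these imaginary parts; and (iii) the count $|S|\in\{1,2\}$, and --- when $U'''(x_{20})>0$ --- that $S$ is a singleton equal to the threshold $k_0$ of Theorem \ref{T:e-values}(1), via Lemma \ref{L:neutral-M-3}.

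The four cases then follow by continuation in $k\ge0$: the finitely many zeros of $\mathcal W$ in the open upper half-plane move continuously and, by (i), are created or destroyed only at $c\in\{U(-h),U(0),c_0\}$, hence only at $k=k_-$, at elements of $S$, or as $k\to\infty$, with the signs and estimates of (ii) fixing how many emerge and on which $k$-side at each event. If $U'''(x_{20})>0$, $U''>0$ near $0$ makes $c^-(k)$ unstable for $|k|$ large with $c_R^-(k)\uparrow c_0$ as $|k|$ decreases, so the threshold equals $\max S$, $S=\{k_0\}$, $c^-(k_0)=c_0$, and $c^-$ extends to $|k|\ge k_0$ with $c_I^->0$ there; since $U''<0$ near $-h$ makes the $U(-h)$-branch disappear for $|k|>k_-$, the modes are exactly $c^+(k)$, $c^-(k)$ and $\overline{c^-(k)}$ for $|k|\ge k_0$, and $c_-(k)$ for $|k|\le k_-$ (case 1). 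If $U'''(x_{20})<0$, $U''>0$ near $-h$ makes $c_-(k)$ unstable just past $k_-$, and by (i) this branch must terminate at a neutral limit, necessarily $c_0$; comparing with the absence of unstable modes near $U(0)$ for large $|k|$ via a parity count forces $k_0=\max S>k_-$. When $S=\{k_0\}$ the branch runs from $(k_-,U(-h))$ to $(k_0,c_0)$ and these together with $c^+$ exhaust the modes (case 2). When $S=\{k_0>k_1\}$ the second neutral limit at $k_1$ spawns another arc of unstable modes through $(\pm k_1,c_0)$; since $\CC_I\sim(k-k_1)U'''(x_{20})<0$ for $k>k_1$, if $k_1\le k_-$ this arc closes into an even branch $\CC(k)$ on $|k|\le k_1$ disjoint from the $U(-h)$-branch --- a long-wave instability bubble, reducing to the single point $c_0$ at $k=0$ when $k_1=0$ --- which is case 3. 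If instead $k_1>k_-$, the set $\CS$ of unstable modes (with the six singular modes adjoined) is, using the regularity in Theorem \ref{T:e-values}, a compact finite disjoint union of real-analytic arcs whose endpoints are exactly $(\pm k_-,U(-h))$, $(\pm k_0,c_0)$, $(\pm k_1,c_0)$, with arc-germs pointing to $\{|k|>k_-\}$ at the first pair and to $\{|k|<k_0\}$, $\{|k|<k_1\}$ at the others; this directional data, the reflection $k\mapsto-k$, and the fact that no arc passes through a boundary point in its interior force $\CS$ into exactly three symmetric components realizing alternative (a) or (b), while (i) gives instability precisely for $|k|<k_0$ (case 4). In all cases $c^+(k)$ supplies the unique neutral mode above $U(0)$, and the stated regularities ($C^4$ for $c^-$ and $\CC$, $C^{1,\alpha}$ for $c_-$) are inherited from the implicit function / Puiseux constructions behind Theorem \ref{T:e-values}.

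The main obstacle is the exhaustiveness built into the counting: one must control $\mathcal W$ and its Orr continuation near $U([-h,0])$ --- via the Tollmien-type singular solutions of the Rayleigh equation and the precise crossing behavior of $\mathcal W$ at $U(-h)$, $c_0$, $U(0)$ --- finely enough to guarantee that a zero of $\mathcal W$ in the upper half-plane appears or disappears only at these three values and with exactly the multiplicities predicted by Theorem \ref{T:e-values}, so that the local pictures genuinely patch into the asserted global one; this is also where ``no unstable modes near $U(0)$ for $|k|$ large'' and the parity count must be made rigorous. A secondary difficulty, and the reason case 4 is left as a dichotomy, is that deciding between alternatives (a) and (b) requires more global information about the unstable branch than the integral identities provide; proving only that these are the two possibilities is the connectivity argument on $\CS$ under $k\mapsto-k$ sketched above. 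Finally, the degenerate sub-cases $k_*=0$ and the square-root (rather than smooth) dependence of $c_-$ at $k=k_-$ require the degenerate implicit function / bifurcation analysis of Lemma \ref{L:bifurcation}, from which the regularity claims propagate.
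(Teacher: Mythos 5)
Your overall route is the paper's: classify the singular neutral modes ($c=U(-h)$ only at $k=\pm k_-$, $c=c_0$ only at $|k|\in S$, never $c=U(0)$), use the full-neighborhood bifurcation results (Theorem \ref{T:e-values}(2)(3), Lemma \ref{L:bifurcation}) to know exactly how many unstable modes emerge on each side of these wave numbers, use the large-$|k|$ asymptotics of Lemma \ref{L:e-v-asymp-2}, and then continue and count the unstable roots of $\BF(k,\cdot)$ as $k$ varies. Two places, however, are genuine gaps as written. First, your ingredient (i), that every unstable mode has $\RP\, c\in U(\{U''>0\})$, is neither proved in the paper nor available from the classical Rayleigh--Fj{\o}rtoft identities (the free-surface term in \eqref{E:Ray-3} destroys that localization); it is also unnecessary. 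What the paper actually uses is the semicircle theorem \eqref{E:semi-circle} together with continuity of $\BF$ restricted to $c_I\ge 0$: a family of unstable roots with $c_I\downarrow 0$ limits onto a root of $\BF$ with $c\in U([-h,0])$, hence onto one of the finitely many singular neutral modes. Moreover the rigorous version of your ``parity count'' is the argument-principle index \eqref{E:ind} evaluated on the open upper half of the semicircle disk, whose jumps at $k_-$, $k_1$, $k_0$ are read off from the bifurcation lemmas; this is precisely where the exhaustiveness you flag as the main obstacle is settled, so it should be formulated as an index computation rather than a parity heuristic.

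Second, in case (4) you assert that $\CS$ is a compact finite disjoint union of real-analytic arcs whose endpoints are exactly the six distinguished points, and you derive the dichotomy from that arc structure. This is not justified: for $|k|\in(k_-,k_1)$ the count of unstable roots is $2$, so roots may collide (a multiple zero of $\BF(k,\cdot)$ in the open upper half-plane), and then $\CS$ need not decompose into arcs at all. The paper instead argues directly on connected components of $\CS$: by compactness each component attains its supremum/infimum in $k$; at such an extremal point $c$ cannot lie in the open upper half-disk, because zeros of the analytic function $\BF(k,\cdot)$ continue (possibly splitting) past the extremal $k$, so the extremum is attained at one of $(\pm k_-,U(-h))$, $(\pm k_1,c_0)$, $(\pm k_0,c_0)$, with the one-sided bifurcation data excluding the wrong endpoints; combining this with the existence of an unstable mode at $k=0$ and the evenness of $\BF$ in $k$ forces alternative (a) or (b). Your ``directional germ'' idea plays the same role but must be run at the level of components, not of an assumed arc decomposition. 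A smaller omission: in case (1) the identification $|S|=1$ via Lemma \ref{L:neutral-M-3} requires first ruling out $k_C$, which the paper does by showing $y_-(k,c_0,0)>0$ for all $k$ from the sign $U''/(U-c_0)>0$; this verification should appear in your outline.
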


\begin{remark} 
a.) An equivalent condition to the existence of $k_1$ is given in Lemma \ref{L:neutral-M-3}. \\
b.)  In this theorem, the case of $U'''(x_{20})<0$ corresponds to the intersection of increasing and the so-called class $\CK^+$ (see \cite{HL08}) shear flows. \\
c.) In particular, \eqref{E:LEuler} becomes unstable for all long waves (i.e. $|k|\ll 1$) if $k_1 >0$. \\
d.) Assuming $U'''(x_{20})<0$ and $U$ is an unstable shear flow of the channel flow with fixed boundaries. Letting $g$ increase, the system could deform from case (2) to (3) and then possibly to (4), see Lemma \ref{L:neutral-M-3}. In Lemma \ref{L:bifurcation-P} which holds under a weaker assumption, one can see a closed  branch of unstable modes bifurcates from $c_0$ as $1\gg k_1>0$. As this branch grows, it may or may not intersect the branch $c_-(k)$ in case (2), resulting in the two possibilities in case (4) where the three subsets may even coincide and be equal to $\CS$.  
\end{remark}

\subsection{Backgrounds and discussions}

Due to its strong physical and mathematical relevance there have been extensive studies of the Euler equation   linearized at shear currents. In particular the linear instability is often viewed as the first step in understanding the transition of the fluid motion from the laminar flows to turbulent ones. Much of the existing analysis was on the fluid in a fixed channel with slip boundary conditions 
\be \label{E:E-channel}
\text{\eqref{E:Euler-1}--\eqref{E:Euler-2} with } g=0, \;\; \; x_2 \in (-h, 0), \quad v_2 (x_1, 0)= v_2(x_1, -h)=0, 
\ee
and some of the results have been extended to free boundary problems. 

$\bullet$ {\it Linearized channel flows.} Classical results on the spectra of the Euler equation \eqref{E:E-channel} in a channel  linearized at a shear flow
include:
\begin{itemize}
\item Unstable eigenvalues  are isolated for each $k \in \R$ and do not exist for $|k|\gg 1$.
\item Rayleigh's necessary condition of instability \cite{Ray1880}: unstable eigenvalues do not exist for any $k$ if $U'' \ne 0$ on $[-h, 0]$ (see also \cite{Fj50}).
\item Howard's Semicircle Theorem \cite{How61}: eigenvalues exist only with $c$ in the disk 
\be \label{E:semi-circle}
\big(c_R-\tfrac 12 (U_{max}+U_{min})\big)^2+c_I^2\leq \tfrac 14 (U_{max}- U_{min})^2. 
\ee
\item Unstable eigenvalues may exist with $c$ near inflection values of $U$ (Tollmien \cite{To35} formally, also \cite{LinC55}). 
\end{itemize}
Many classical results can be found in books such as \cite{DR04, MP94} {\it etc.} For a class of shear flows, the rigorous bifurcation of unstable eigenvalues was proved, e.g., in \cite{FH98, Lin03}. In particular, continuation of branches of unstable eigenvalues were obtained by Lin in the latter. 

$\bullet$ {\it Linearized gravity water waves.} It has been extended to the linearized free boundary problem of the gravity  waves at certain classes of shear flows (see \cite{Burns53, Hunt55, Ben62, Yih72, Shr93, LH98, HL08, KR11, RR13, BR13} {\it etc.}) that: a.) the semicircle theorem still holds for unstable modes; 
b.) the bifurcation and continuation of branches of unstable modes starting from limiting singular neutral modes; c.) the stability if $U''<0$ and $U'(0)\ge0$, {\it etc.} Compared to channel flows with fixed boundaries, new phenomena of the linearized gravity waves include: 
\begin{itemize} 
\item In addition to inflection values, critical values of $U$ where $U'=0$, and $c=U(-h)$ may be limiting singular neutral modes. 
\item $U'' \ne 0$ may not ensure the spectral stability. 
\item There are non-singular neutral modes with $c$ in both $(-\infty, U(-h))$ and $(U(0), +\infty)$ which are outside the circle \eqref{E:semi-circle}. 
\item $U(-h)$ can be a singular neutral mode for certain wave number. 
\end{itemize} 
However, the rigorous bifurcation analysis at $U(-h)$ or critical values of $U$ was still missing. This holds the key how Rayleigh's stability condition may fail in the gravity wave case. Moreover we have not been aware of rigorous studies of the branch of unstable modes near $c=U(0)$ for $|k| \gg 1$. These have been addressed in the above Theorem \ref{T:e-values}. 

In particular, we would like discuss some of our results (assuming $U'>0$) in relation to those in \cite{Yih72, HL08, HL13, RR13}. 

* {\it Singular neutral modes and branches from bifurcations.} Firstly we do not only prove that $U(-h)$ and internal inflection values are always singular neutral modes for some wave numbers as in \cite{Yih72, HL08, HL13}, but also obtain the exact numbers of those wave numbers (see Theorem \ref{T:e-values} and Subsection \ref{SS:int-N-M}). For an internal inflection value $c_0=U(x_{20})$, it turns out that whether it becomes a singular neutral mode at one or two wave numbers depends on i.) whether it is a singular neutral mode for the linearized channel flow and also on ii.) whether $g$ is greater than some threshold. Secondly our bifurcation analysis is carried out in whole neighborhoods of singular neutral modes and allows broken branches, instead of in  cones of neighborhoods as in \cite{Lin03, HL08}. This is more than a technical improvement, as it gives both the existence and the number of nearby unstable modes which is crucial for the index counting used in the analytic continuation argument. In particular, when  the above i) holds, strong gravity could creates a closed branch of unstable modes with $|k| \le k_1$ for some $k_1>0$ (Lemma \ref{L:bifurcation-P} and Theorem \ref{T:e-values-2}). This is contrary to the common expectation that gravity stabilizes long waves.  Finally, when $U''$ changes the sign at most once, these ingredients allow us to obtain the complete eigenvalue distribution by identifying how different branches of unstable modes connect to each other. Some of these branches had been observed in numerics \cite{RR13}. 

* {\it Comparison to numerics.} Three examples of shear flows were computed in \cite{RR13} which we shall adapt to our notations to avoid unnecessary confusions. In the last two examples, $U'>0$ for $x_2 \in (-2, 0)$, $U''(x_2)=0$ iff $x_2=-1$, and $c_0= U(-1)=0$ is  the only inflection value.  Their second example is $U(x_2) = \tanh a(x_2+1)$ and $a=1/2$ or $2$, where clearly $U'''(-1) <0$. The former is stable in the channel flow case, while the latter is unstable. For $a=1/2$, a branch of unstable modes corresponding to $c_-(k)$ in  Theorem \ref{T:e-values-2}(2) was found. For $a=2$, two branches of unstable modes corresponding to those in either (3) or (4a) of Theorem \ref{T:e-values-2} were observed. In their third example, $U(x_2) = 1+ x_2 + (1+x_2)^3/2$ where $U'''(-1) >0$, and the branch of unstable modes corresponds to that in Theorem \ref{T:e-values-2}(1).
 
$\bullet$ {\it Comparison with the linearized capillary gravity waves at monotonic shear flows.} 
In this paper we follow a strategy to study the spectra of the linearized gravity waves \eqref{E:LEuler} similar to that of the linearized capillary gravity waves   in \cite{LiuZ21}. The analysis is based on a detailed understanding of the Rayleigh equation \eqref{E:Ray-1}, particularly near its singularity where $U(x_2) -c \sim 0$. Subsequently, we consider the solution $y_-(k, c, x_2)$ to \eqref{E:Ray-1} satisfying the normalized initial condition $y_-(-h)=0$ and $y_-'(-h)=1$, which are analytic in $k \in \R$ and  $c \in \C \setminus U([-h, 0])$. Such a solution gives rises to an eigenfunction iff the boundary condition \eqref{E:Ray-3} is satisfied. This is formulated into an equation $F(k, c)=0$ where $F$ is defined in \eqref{E:dispersion} whose regularity is carefully studied. 
In Subsections \ref{SS:Ray} and \ref{SS:e-values-0}, we outline those basic analysis developed in \cite{LiuZ21} needed in this paper. To identify the roots of $F$, the key ingredients are a.) the asymptotic properties of $F$ as $|k|, |c| \to +\infty$ and $c_I \to 0$; b.) the bifurcations for $c$ near $U([-h, 0])$; and c.) the analytic continuation of the roots of $F$. Here $k$ is treated as a parameter in both b.) and c.). 

As revealed in Theorems \ref{T:e-values} and \ref{T:e-values-1}, the spectral distributions are significantly different   when the surface tension is not present. Moreover, the analysis in the current paper also substantially improves some results for capillary gravity waves (see Proposition \ref{P:CGWW}).

On the one hand, while one might expect that the water waves tend to be more unstable when there is no surface tension, particularly in high wave numbers, the main differences in the spectral distributions at monotonic shear flows include the following. 
\begin{itemize}
\item [(i)]  For $|k| \gg 1$, capillary gravity waves are linearly stable, while the gravity waves are unstable if the set $\{U'U''>0\}$ accumulates at $x_2=0$. 
\item [(ii)] While $c=U(-h)$ and interior inflection values of $U$ may or may not be a singular neutral mode of the linearized capillary gravity waves depending on the combined strength of the gravity and surface tension, they are {\it always} singular neutral modes of the linearized gravity waves for one or two $k\ge 0$ (for a unique $k>0$ in the case of $c= U(-h)$).  
\item [(iii)] As a consequence, $\{ U'U'' >0\} \ne \emptyset$ is "almost" sufficient for the instability of the linearized gravity waves (Theorem \ref{T:e-values}(4)), which is not always the case for the capillary gravity waves.  
\item [(iv)] Contrary to the natural expectation, a.) when the surface tension is very weak with the coefficient $0< \sigma\ll 1$, it actually creates instability near non-degenerate inflection values in relatively large wave  numbers $k \sim \sigma^{-1}$ (Proposition 4.11 in \cite{LiuZ21}), and b.) strong gravity may also cause instability in all small wave numbers. 
\end{itemize}
Related to (i), one is reminded that, for shear flows, the Rayleigh-Taylor sign $-\frac {\p p}{\p n} \big|_{x_2=0} = g>0$ at any shear flow implies that the nonlinear gravity waves are locally well-posed for initial data near shear flows and there is no unbounded exponential growth rates in high waves numbers. However, when $U'U''>0$ near $x_2=0$, the linearization at the shear flow is still unstable for all $|k|\gg1$, but the exponential growth rates $O\big(|U''| |k|^{-\frac 12} e^{-\frac {2\sqrt{g|k|}}{U'(0)}}\big)$ of those unstable linear solutions (Theorem \ref{T:e-values}(1a)) are extremely weak. 


When the combination of the surface tension and the gravity is sufficiently strong, the linearized capillary gravity wave at a monotonic shear flow is essentially the superposition of a linearized channel flow with fixed boundaries (corresponding to the continuous spectra) and a dispersive system (corresponding to the non-singular modes and the surface motions) with two branches of the dispersion relation given by $c^\pm(k) \notin U([-h, 0])$ (see \cite{LiuZ21}). Without surface tension, the complete branches of the non-singular modes alway reach and also possibly converge fast to $U([-h, 0])$. This interaction between the continuous spectra and eigenvalues would make the the linear inviscid damping (see \cite{LiuZ21}) a very subtle issue except in the simplest case where $U'U'' < 0$ near $x_2=0$ and the period in $x_1$ eliminates all the singular neutral modes. We would address the inviscid damping in a future work. 

These differences in the spectra could potential result in substantial differences in the nonlinear local dynamics.  

On the other hand, the bifurcation analysis of unstable eigenvalues requires certain regularity and non-degeneracy of $F(k, c)$, particular in the most subtle case near $c=U(-h)$. The regularity and non-degeneracy at $c=U(-h)$ in \cite{LiuZ21} were verified under the additional assumption $U'' \ne 0$ on $[-h, 0]$. In Subsections \ref{SS:Y} and \ref{SS:non-deg}, we prove these regularity and non-degeneracy without any assumptions additional to the monotonicity of $U$. Therefore the work in this paper also substantially improves the bifurcation analysis in the capillary gravity wave case, see Proposition \ref{P:CGWW} in Subsection \ref{SS:CGWW}.

Compared to \cite{LiuZ21}, in this paper we also thoroughly analyzed the case where $U''$ changes sign exactly once. 

$\bullet$ {\it The spectra of the linearized fluid interface problem.} 
Another related problem is the two phase fluid problem linearized at a shear flow $U(x_2) = U^+ (x_2) \chi_{x_2 \in (0, h_+)} + U^- (x_2) \chi_{x_2 \in (-h_-, 0)}$ where the slip boundary condition is assumed at $x_2 = \pm h_\pm$ . The famous Kelvin Helmholtz instability was first identified in the simplest setting where $U^\pm = consts$ \cite{kelvin1871hydro, DR04, BR13}. In the case where the upper fluid is much lighter than the bottom one (the air-water interface, for example), Miles \cite{Mi57, Mi59a, Mi59b, Jan04} proposed the critical layer theory to model how the wind (shear flow) in the air above a stationary water generates waves through linear instability due to the resonance between  the shear flow in the air and the temporal frequency of the linear irrotational capillary gravity water waves. This was later rigorously proved in \cite{BSWZ16}. More recently, the spectra of the linearized fluid interface problem at monotonic shear flows  had been studied more comprehensively and in more details in \cite{Liu22}.      \\

\noindent {\bf Notations.} Throughout the paper, 
wave numbers are always denoted by "$k$" and $K=k^2$ is also used. For complex quantities, the subscripts "R" and "I" denote their real and imaginary parts, respectively. Sometimes "$\RP$" and "$\IP$" are also used when the subscripts make the notations too cumbersome.

\section{Preliminary analysis} \label{S:Pre}

In this section we review some basic results on the homogeneous Rayleigh equation and the equation satisfied by the eigenmodes $(k, c, y(x_2))$. Most of these results have already been obtained in \cite{LiuZ21} and the readers are referred there for more details.

\subsection{Results on the homogeneous Rayleigh equation 
} \label{SS:Ray}

\be \label{E:Ray-H1-1}
-y''(x_2)+ \big(k^2 + \tfrac{U''(x_2)}{U(x_2)-c}\big)y(x_2)=0, \quad x_2\in [-h,0],
\ee
where
\[
'= \p_{x_2}, \quad k\in \R, \quad c=c_R + i c_I \in \C,
\]
had been thoroughly analyzed in Section 3 of \cite{LiuZ21}. In this subsection, we summarize those  results which we shall use here. When necessary some modifications will be outlined to address the different needs in this paper.
Throughout this subsection we assume 
\be \label{E:U'}
U'(x_2)>0, \quad \forall x_2 \in [-h, 0]. 
\ee
As pointed out in Remark \ref{R:sign}, the case of $U'<0$ can be reduced to the above one. Hence all results under \eqref{E:U'} hold for all uniformly monotonic $U(x_2)$, namely those $U$ satisfying $U'\ne 0$ on $[-h, 0]$. The solutions to the Rayleigh equation \eqref{E:Ray-H1-1} 
are obviously even in $k$ and thus $k\ge 0$ will be assumed mostly. Similarly complex conjugate of solutions also solve \eqref{E:Ray-H1-1}  with $c$  replaced by $\bar c$, hence we shall consider the case of $c_I > 0$. Most of the estimates are uniform in $c$ with $c_I>0$ and thus hold in the limiting case as $c_I \to 0+$. 
The dependence on $k\gg1$ will also be carefully tracked. 

Recall $U \in C^{l_0}$. For convenience we extend $U$ to be a $C^{l_0}$ function defined in a neighborhood $[-h_0-h, h_0]$ of $[-h, 0]$, where 
\be \label{E:h0}
h_0 = \min\Big\{\frac h2, \,  \frac {\inf_{[-h, 0]} U'}{4 |U''|_{C^0([-h, 0])}}
\Big\} >0,  
\ee
so that  
\be \label{E:U-ext}
|U'|_{C^{l_0-1}([-h_0-h, h_0])} \le 2|U'|_{C^{l_0-1} ([-h, 0])}, \quad U' \ge \tfrac 12 \inf_{[-h, 0]} U'(x_2), \quad \text{ on } \;  [-h_0-h, h_0].
\ee
In the analysis of the most singular case of $c$ near $U([-h, 0])$, we let $x_2^c$ be such that 
\be \label{E:x2c}
c_R = U(x_{2}^c), \; \text{ if } c_R \in U([-h_0-h, h_0]).
\ee

Consider the solution  $y_- (k, c, x_2)$ to \eqref{E:Ray-H1-1} satisfying the initial conditions 
\be \label{E:y-pm} \begin{split}
y_- (-h) =0, \quad y_-'(-h) =1, 
\end{split} \ee
where, as often in the rest of the paper, the dependence on $k$ and $c$ are skipped to make the notations simpler when there is no confusion in the context. Apparently this is a non-trivial solution and the following lemma holds. 

\begin{lemma} \label{L:basic-y_pm}
For any $k\in \R$ and $c \in \C \setminus U([-h, 0])$, $y_- (k, c, x_2) = y_- (-k, c, x_2) = \overline {y_- (k, \bar c, x_2)}$ is analytic in $c$ and $k^2$. 
\end{lemma}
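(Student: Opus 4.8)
The plan is to read off all three assertions from the standard theory of linear ordinary differential equations with holomorphic parameter dependence, together with uniqueness for the initial value problem. First I would record that the coefficient
\[
Q(k, c, x_2) := k^2 + \frac{U''(x_2)}{U(x_2)-c}
\]
in \eqref{E:Ray-H1-1} depends on $k$ only through $K = k^2$, and that for $c \in \C \setminus U([-h, 0])$ the quantity $U(x_2)-c$ is bounded away from $0$ uniformly in $x_2 \in [-h, 0]$, since $U([-h,0])$ is compact and hence $\mathrm{dist}\big(c, U([-h,0])\big) > 0$ on the open set $\C \setminus U([-h,0])$. Consequently, for each fixed $x_2 \in [-h,0]$, the map $(K, c) \mapsto Q(k,c,x_2)$ is holomorphic on $\C \times \big(\C \setminus U([-h,0])\big)$, and $Q$ is jointly continuous and locally bounded in $(K, c, x_2)$.

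Next I would invoke the classical theorem on the analytic dependence of solutions of linear ODEs on parameters: writing \eqref{E:Ray-H1-1} as the first order system for $(y, y')$ with initial data $(0,1)$ at $x_2 = -h$ (cf.\ \eqref{E:y-pm}), the solution $y_-(k,c,\cdot)$ and its derivative $y_-'(k,c,\cdot)$ are, for each $x_2 \in [-h,0]$, holomorphic functions of $(K, c) \in \C \times \big(\C \setminus U([-h,0])\big)$. This yields the claimed analyticity in $c$ and in $k^2$. If one prefers a self-contained argument, reformulate the IVP as the Volterra equation $y(x_2) = (x_2+h) + \int_{-h}^{x_2}(x_2-s)\,Q(k,c,s)\,y(s)\,ds$; its Neumann series converges in the sup norm on $[-h,0]$ locally uniformly in $(K,c)$, each partial sum being holomorphic in $(K,c)$, and a locally uniform limit of holomorphic functions is holomorphic by the Weierstrass theorem.

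For the two symmetries I would use uniqueness of solutions to the linear IVP. Since $Q$ and the initial data $(0,1)$ involve $k$ only through $k^2$, the functions $y_-(k,c,\cdot)$ and $y_-(-k,c,\cdot)$ solve the same initial value problem and therefore coincide. For the conjugation identity, $U$ is real-valued, so taking complex conjugates of \eqref{E:Ray-H1-1} and \eqref{E:y-pm} shows that $x_2 \mapsto \overline{y_-(k,\bar c, x_2)}$ solves \eqref{E:Ray-H1-1} with parameter $c$ — using $\overline{U''/(U-\bar c)} = U''/(U-c)$ and $k^2 \in \R$ — and satisfies the real initial conditions $y(-h)=0$, $y'(-h)=1$; uniqueness then forces $\overline{y_-(k,\bar c, x_2)} = y_-(k,c,x_2)$.

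I do not expect a genuine obstacle here; the only point requiring some care is making the joint holomorphy in the parameters rigorous, which is handled either by citing the standard result on analytic parameter dependence or by noting that the Volterra/Picard iteration converges locally uniformly with holomorphic partial sums. Everything else is an immediate consequence of uniqueness for linear initial value problems.
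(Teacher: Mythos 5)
Your proposal is correct and is essentially the argument the paper relies on: the paper states this lemma without proof (deferring to the standard analysis of the Rayleigh equation in \cite{LiuZ21}), and the expected justification is exactly what you give — the coefficient $k^2 + U''/(U-c)$ is holomorphic in $(k^2, c)$ for $c\notin U([-h,0])$, so the Volterra/Picard iteration (or the classical theorem on analytic parameter dependence) yields joint analyticity, while the two symmetries follow from uniqueness of the linear initial value problem since the equation and data depend on $k$ only through $k^2$ and conjugation with real $U$ maps the problem at $\bar c$ to the problem at $c$. No gaps.
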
 


To give some basic results on $y_- (k, c, x_2)$, we first introduce some subintervals of $[-h, 0]$, 
\be \label{E:CIs} \begin{split} 
&\CI_2 := (x_{2l}, x_{2r}) = \left\{ x_2 \in [-h, 0]\, : \,   \frac 
1{|U(x_2)-c|} >  
\rho_0 \mu^{-\frac 32} \right\}, 
\quad \CI_1 = [-h, x_{2l}), \\
& \CI_3=(x_{2r}, 0], \quad \mu= \langle k \rangle^{-1} = (1+k^2)^{-\frac 12}, \quad 
\rho_0 = \frac 
4{h_0 \inf_{[-h_0-h, h_0]} U'}.
\end{split} \ee
Clearly $[-h, 0]= \CI_1 \cup \CI_2 \cup \CI_3$ where any of the three subintervals may be empty. In particular, if $\CI_2 =\emptyset$, then we consider $[-h, 0]$ as $\CI_1$ 
in the statement of the following lemma. The choice of the above constant $\rho_0$ and the fact $0\le \mu \le 1$ ensure 
\be \label{E:rho_0}
c_R \in U([-\tfrac 14 h_0-h, \tfrac 14 h_0]) \; \text{ if } \; \CI_2 \ne \emptyset. 
\ee
The following is a part of Lemma 3.9 in \cite{LiuZ21}. 

\begin{lemma} \label{L:y_pm-1}
For any $\alpha  \in (0, \frac 12)$, there exists $C >0$ depending only on $\alpha$, $|U'|_{C^2}$, and $|(U')^{-1}|_{C^0}$, such that, for any $c \in \C \backslash U([-h, 0])$, it holds
\be \label{E:y-_1}
|\mu^{-1} y_- (x_2) - \sinh (\mu^{-1}(x_2+h))|   \le C \mu^{\alpha}\sinh (\mu^{-1}(x_2+h)),
\ee
for all $x_2 \in [-h, 0]$. In addition, if $\CI_2 =\emptyset$, 
then for all $x_2\in [-h, 0]$, 
\be \label{E:y-_2}
|y_-'(x_2) - \cosh (\mu^{-1}(x_2+h))|  \le C \mu^{\alpha}\sinh (\mu^{-1}(x_2+h)),  
\ee
Otherwise if $\CI_2 \ne \emptyset$, 
then 
\be \label{E:y-_3}
|y_-'(x_2) - \cosh (\mu^{-1}(x_2+h))|  \le
\begin{cases}  
C \mu^{\alpha}\sinh (\mu^{-1}(x_2+h)),  \qquad & x_2 \in \CI_1\\
C\mu^{\alpha}\cosh (\mu^{-1}(x_2+h)),  & x_2 \in \CI_3
\end{cases} \ee
and for $x_2 \in \CI_2$, 
\be \label{E:y-_4} \begin{split}
\big|y_-'(x_2) - \cosh (\mu^{-1}(x_2+h)) &- \frac {U''(x_2^c)}{U'(x_2^c)} y_-(x_{2l})\log |U(x_2) - c|  \big| \\
& \le C \mu^{\alpha} \big( 1 + \mu \big| \log |U(x_2) - c|\big| \big)\cosh (\mu^{-1} (x_2+h)).   
\end{split} \ee
\end{lemma}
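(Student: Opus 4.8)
The final statement to prove is Lemma~\ref{L:y_pm-1}, the asymptotic description of the normalized solution $y_-$ to the homogeneous Rayleigh equation \eqref{E:Ray-H1-1}. The plan is to treat \eqref{E:Ray-H1-1} as a perturbation of the constant-coefficient equation $-y''+k^2 y=0$, whose $y(-h)=0$, $y'(-h)=1$ solution is $\mu\sinh(\mu^{-1}(x_2+h))$, and to control the perturbation term $\tfrac{U''}{U-c}\,y$ via a Volterra integral representation. Concretely, writing $\phi_0(x_2)=\mu\sinh(\mu^{-1}(x_2+h))$ and $\psi_0(x_2)=\cosh(\mu^{-1}(x_2+h))$ for the two fundamental solutions (with Wronskian $1$), Duhamel's formula gives
\[
y_-(x_2)=\phi_0(x_2)+\int_{-h}^{x_2}\big(\phi_0(x_2)\psi_0(s)-\psi_0(x_2)\phi_0(s)\big)\,\frac{U''(s)}{U(s)-c}\,y_-(s)\,ds,
\]
and differentiating,
\[
y_-'(x_2)=\psi_0(x_2)+\int_{-h}^{x_2}\big(\psi_0(x_2)\psi_0(s)-\mu^{-1}\phi_0(x_2)\phi_0(s)\big)\tfrac{1}{?}\,\cdots,
\]
more cleanly: $y_-'(x_2)=\psi_0(x_2)+\int_{-h}^{x_2}K(x_2,s)\,\frac{U''(s)}{U(s)-c}\,y_-(s)\,ds$ with $|K(x_2,s)|\lesssim\cosh(\mu^{-1}(x_2+h))$ when $s$ is away from the singular layer and with the sharper bound $|K(x_2,s)|\lesssim e^{\mu^{-1}(x_2-s)}$ for $s$ near $x_2$. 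The first step is therefore to set up this integral equation, record the elementary bounds on $\phi_0,\psi_0$ and the Green kernel, and observe that $|U(s)-c|^{-1}$ is integrable in $s$ with $\int_{-h}^0 |U(s)-c|^{-1}\,ds\lesssim 1+|\log|c_I||$ uniformly; combined with the defining inequality of $\mathcal I_2$ this will power all the estimates.

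The second step is to prove \eqref{E:y-_1}. Set $w=\mu^{-1}y_--\sinh(\mu^{-1}(x_2+h))=\mu^{-1}(y_--\phi_0)$; from the integral equation, $w$ satisfies a Volterra equation whose inhomogeneous term comes from $\mu^{-1}\int\text{(kernel)}\tfrac{U''}{U-c}\phi_0$. Since the kernel is dominated by $\sinh(\mu^{-1}(x_2+h))\cdot(\text{something})$ and $\phi_0(s)\le\mu\sinh(\mu^{-1}(x_2+h))$... one gets, after dividing through by $\sinh(\mu^{-1}(x_2+h))$ (which is increasing), a scalar Volterra inequality of the form $|\tilde w(x_2)|\le C\mu\int_{-h}^{x_2}\frac{|\tilde w(s)|+1}{|U(s)-c|}ds$ for $\tilde w = w/\sinh(\mu^{-1}(\cdot+h))$; then Grönwall plus the logarithmic integrability bound gives $|\tilde w|\lesssim \mu(1+|\log|c_I||)\lesssim\mu^\alpha$ — here one uses that the contribution of the singular layer $\mathcal I_2$, where $|U-c|^{-1}$ can be as large as $\rho_0\mu^{-3/2}$, is still controlled because the layer has width $O(\mu^{3/2})$-ish... actually the sharp interplay is: $\mu\cdot(\text{layer width})\cdot(\text{sup of }|U-c|^{-1})$; more carefully one splits the $s$-integral over $\mathcal I_1,\mathcal I_2,\mathcal I_3$ and uses $|U-c|^{-1}\le\rho_0\mu^{-3/2}$ only on the layer while keeping $c_I$ out of it. This is the routine-but-delicate heart; I would lean on the already-cited Lemma~3.9 of \cite{LiuZ21} and only indicate the modifications needed for uniformity in $k$.

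The third step handles $y_-'$. When $\mathcal I_2=\emptyset$ — i.e. $c$ stays bounded away from $U([-h,0])$ on the $\mu^{3/2}$ scale — the kernel in the $y_-'$ equation is uniformly $O(\cosh(\mu^{-1}(x_2+h)))$ on all of $[-h,0]$ and, plugging in the $O(\mu\sinh)$ bound for $y_--\phi_0$ from step two together with $\int|U-c|^{-1}\lesssim1$, one directly gets \eqref{E:y-_2}. When $\mathcal I_2\ne\emptyset$: on $\mathcal I_1=[-h,x_{2l})$ the point $x_2$ lies before the singular layer, the relevant integral only runs over $[-h,x_2]\subset\mathcal I_1$ where $|U-c|^{-1}$ is still $O(\mu^{-3/2})$ at worst but multiplied by the small factors, yielding the $\mu^\alpha\sinh$ bound \eqref{E:y-_3}(first case); on $\mathcal I_3=(x_{2r},0]$ one has passed the layer, the accumulated error from crossing $\mathcal I_2$ contributes the $\cosh$ (not $\sinh$) size, hence \eqref{E:y-_3}(second case). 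Finally, for $x_2\in\mathcal I_2$ itself the leading singular behavior of the integral $\int^{x_2}K\tfrac{U''}{U-c}y_-$ is extracted: near $x_2^c$ one writes $\tfrac{U''(s)}{U(s)-c}=\tfrac{U''(x_2^c)}{U'(x_2^c)}\cdot\tfrac{1}{s-x_2^c - \cdots}+(\text{bounded})$, integrates to get the $\log|U(x_2)-c|$ term with coefficient $\tfrac{U''(x_2^c)}{U'(x_2^c)}y_-(x_{2l})$ (using that $y_-\approx y_-(x_{2l})$ is roughly constant across the thin layer, itself a consequence of step two), and collects the remainder into the stated $C\mu^\alpha(1+\mu|\log|U(x_2)-c||)\cosh$ error \eqref{E:y-_4}.

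The main obstacle is the bookkeeping in step three across the singular layer $\mathcal I_2$: one must simultaneously (a) extract exactly the logarithmic term with the correct constant, (b) show the ``frozen coefficient'' approximations ($U''(s)/U'(s)\approx U''(x_2^c)/U'(x_2^c)$, $y_-(s)\approx y_-(x_{2l})$, $U(s)-c\approx U'(x_2^c)(s-x_2^c)-ic_I$) each cost only $O(\mu^\alpha)$ relative to $\cosh$, and (c) keep every constant independent of $k$ and of $c_I>0$ so that the $c_I\to0+$ limit is legitimate — the exponents $3/2$ in the definition of $\mathcal I_2$ and $\alpha<1/2$ are tuned precisely so that (width of $\mathcal I_2$)$\times\mu^{-3/2}\times\mu \sim \mu^{1/2}\ll\mu^\alpha$ fails to be automatic and genuine care with the $\log|c_I|$ factor is required. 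Since this is essentially Lemma~3.9 of \cite{LiuZ21}, I would present the Volterra setup and the layer decomposition in detail and refer to that source for the fully expanded estimates, noting only where uniformity in large $k$ demands the extension \eqref{E:U-ext} of $U$ and the $\mu$-dependent choice of $\rho_0$.
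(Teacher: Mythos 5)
First, a remark on what ``the paper's own proof'' is here: the paper does not reprove this lemma at all --- it is quoted as part of Lemma 3.9 of \cite{LiuZ21} --- so your closing plan to present the Volterra setup and defer the expanded estimates to that source is in fact exactly what the authors do, and your Duhamel skeleton (perturbing off $\mu\sinh(\mu^{-1}(x_2+h))$, kernel $\mu\sinh(\mu^{-1}(x_2-s))$, decomposition $[-h,0]=\CI_1\cup\CI_2\cup\CI_3$, freezing coefficients across the layer to extract the logarithm in \eqref{E:y-_4}) is the right shape of the argument carried out there.

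However, the quantitative mechanism you give in step two has a genuine gap. The lemma requires $C$ independent of $c\in\C\setminus U([-h,0])$, and this uniformity is exactly what Remark \ref{R:y-pm} and the construction of $y_{0-}=\lim_{c_I\to0+}y_-$ later rely on; but your argument runs through $\int_{-h}^0|U(s)-c|^{-1}\,ds\lesssim 1+|\log|c_I||$ and the asserted chain $|\tilde w|\lesssim\mu\big(1+|\log|c_I||\big)\lesssim\mu^\alpha$, which is false once $|c_I|$ is, say, of size $e^{-\mu^{-2}}$. Your bookkeeping for the layer is also inverted: by definition, on $\CI_2$ one has $|U-c|^{-1}>\rho_0\mu^{-3/2}$ (it can be as large as $|c_I|^{-1}$), so ``(width of $\CI_2$)$\,\times\,$(sup of $|U-c|^{-1}$)'' is not bounded; the bound $|U-c|^{-1}\le\rho_0\mu^{-3/2}$ is what holds on $\CI_1\cup\CI_3$. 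Consequently an absolute-value Gr\"onwall iteration cannot yield \eqref{E:y-_1} uniformly in $c$: it produces bounds of size $\mu\log(1/|c_I|)$, which blow up precisely in the regime $c\to U((-h,0))$ that the rest of the paper needs. The missing idea is to exploit the cancellation in $1/(U-c)$ near $x_2^c$, e.g. by writing $\tfrac{U''}{U-c}=\partial_{x_2}\big(\tfrac{U''}{U'}\log(U-c)\big)-\big(\tfrac{U''}{U'}\big)'\log(U-c)$ and integrating by parts inside the Duhamel integral, so that only the uniformly integrable $\log(U-c)$ enters the iteration; this is also why the constant is allowed to depend on $|U'|_{C^2}$ and is the source of the explicit term $\tfrac{U''(x_2^c)}{U'(x_2^c)}\,y_-(x_{2l})\log|U(x_2)-c|$ in \eqref{E:y-_4}, which in your sketch appears only as an ad hoc extraction in step three rather than as the structural device that makes all of \eqref{E:y-_1}--\eqref{E:y-_4} uniform in $c$.
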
 

\begin{remark} \label{R:y-pm}
Even though $c \notin U([-h, 0])$ is assumed in the lemma, the estimates are uniform in $c$ and hence they also hold for the limits of solutions as $c_I \to 0+$, while the limits as $c_I \to 0-$ are the conjugates of those as $ c_I \to 0+$. 
\end{remark}

To study the limits as $c$ converges to the singular set $U([-h, 0])$, let 
\be \label{E:y0}
y_{0-} (k, c, x_2) = \lim_{c_I \to 0+} y_- (k, c+ic_I, x_2), \quad c \in U([-\tfrac {h_0}2-h, \tfrac {h_0}2]).
\ee
The following lemma (a combination of parts of Lemmas 3.10, 3.12, 3.13, 3.14, 3.15, 3.16, and 3.19 in \cite{LiuZ21}) gives some basic properties of $y_{0-}(k, c, x_2)$ including their dependence in $c\in U([-\tfrac {h_0}2-h, \tfrac {h_0}2])$. 

\begin{lemma} \label{L:y0}
Assume $U \in C^{l_0}$, $l_0\ge 3$. For $c = U(x_2^c) \in U([-\tfrac {h_0}2-h, \tfrac {h_0}2])$ and $x_2 \in [-h, 0]$, the following hold, where $C>0$ is determined only by $U$.
\begin{enumerate} 
\item As $c_I \to 0+$, $y_- (k, c + ic_I, x_2) \to y_{0-} (k, c, x_2)$ uniformly in $x_2$ and $c$, $y_-' \to y_{0-}'$ locally uniformly in $\{U(x_2) \ne c\}$.
\item For each $c$, $y_{0-} (x_2) \in \R$ if $x_2 \le x_2^c$. Moreover  $y_{0-} \in C^\alpha ([-\tfrac {h_0}2-h, \tfrac {h_0}2 ])$ for any $\alpha \in [0, 1)$ and $y_{0-}$ is $C^{l_0}$ in $x_2\ne x_2^c$. 
\item There exists a unique continuous-in-$\tau$ real $2\times 2$ matrix valued $B(\mu, c, \tau)$ determined only by $U$ such that 
\[
\begin{pmatrix} \tfrac 1\mu y_{0-} (x_2)  \\ y_{0-}' (x_2) \end{pmatrix}= B \big(\mu, c, \tfrac 1\mu(x_2-x_2^c)\big) \begin{pmatrix} 1 & 0 \\ \Gamma_0^\#\big(\mu, c, \tfrac 1\mu(x_2-x_2^c)\big) & 1\end{pmatrix}\begin{pmatrix} \tfrac 1\mu y_{0-} (x_2^c) \\ b_{2-} \end{pmatrix}. 
\]
Here, 
\[
\Gamma_0^\# (\mu, c, \tau) = \tau + \frac {\mu U''(x_2^c)}{U'(x_2^c)} \big( \log |\tau| + \frac {i\pi}2 (sgn(\tau)+1) \big). 
\]
\[
b_{2-} = \lim_{x_2 \to x_2^c} \Big( y_{0-}' (x_2) - \tfrac {U''(x_2^c)}{U'(x_2^c)} y_{0-} (x_2^c) \big( \log \big(\tfrac{U'(x_2^c)}\mu  |x_2 - x_2^c|\big) + \tfrac {i\pi}2 (sgn(x_2-x_2^c)+1)  \big) \Big)  \text{ exists},
\]
and $B(\mu, c, \tau)$ is  
$C^{l_0-2}$ in $c\in U\big([-\tfrac 12 h_0-h, \tfrac 12 h_0]\big)$, $\tau$, and $\mu$ and 
\[
\det B=1, \quad B(\mu, c, 0)=I_{2\times 2}, \quad B(0, c, \tau) =  \begin{pmatrix} \cosh \tau -\tau \sinh \tau & \sinh \tau \\ \sinh \tau - \tau \cosh \tau  & \cosh \tau \end{pmatrix}. 
\]
\item For any $k \in \R$, $y_{0-} (k, c, x_2)>0$ for any $x_2\in (-h, 0]$ and $c\in \R \setminus U\big((-h, 0)\big)$.
\item  For any $k , c\in \R$, it holds 
\be \label{E:y-lower-b-1} 
(Ck)^{-1} \sinh k(x_2+h) \le y_{0-} (k, c, x_2) \le C k^{-1} \sinh k(x_2+h)
\ee
for $(x_2, c)$ satisfying $\big(U(-h)-c\big)\big(U(x_2)-c\big)\geq 0$ and moreover, for $c \in U\big((-h, 0) \big)$, 
\be \label{E:y-lower-b-2} 
C^{-1}|U''(x_2^c)|
\le \frac {| \IP\, y_{0-} (k, c, 0)|}{\mu^2  \sinh \mu^{-1}(x_2^c+h) \sinh \mu^{-1} |x_2^c|} \le C |U''(x_2^c)|. 
\ee
\item $y_{0-}$ is locally $C^\alpha$ in both $k$ and $c$ for any $\alpha \in [0, 1)$ and $(y_{0-}, y_{0-}')$ are $C^\infty$ in $k$ at any $(k, c, x_2)$ except for $y_{0-}'$ at $c = U(x_2)$.
\item $(y_{0-}, y_{0-}')$ are locally $C^\alpha$ in both $k$ and $c$ for any $\alpha \in [0, 1)$ at any $(k, c, x_2)$ satisfying $U(x_2) \ne c$.  
\item $(y_{0-}, y_{0-}')$
are $C^{l_0-2}$ in both $k$ and $c$ at any $(k, c, x_2)$ satisfying  $U(x_2) \ne c$ and $c \ne U(-h)$ . Moreover, the following estimates hold for $c$ in a neighborhood of $U([-h, 0])$ (uniform in $k$), where $\rho_0$ is given in \eqref{E:CIs}.
\begin{enumerate} 
\item For any $j_1, j_2\ge 0$ with $j_1+j_2>0$ and $x_2 \in \CI$, where $\CI \subset [-h, 0]$ is any interval satisfying $ -h \in \CI \subset  \{ |U-c| \ge \rho_0^{-1} \mu\}$, 
we have 
\be \label{E:pcy1} \begin{split} 
\mu^{-1} |\p_k^{j_1} \p_c^{j_2} y_{0-} (x_2)| + |\p_k^{j_1} \p_c^{j_2} y_{0-}'(x_2)| \le& C_{j_1, j_2} \mu  \big(|U(x_2) -c|^{-j_2} + |U(-h) -c|^{-j_2}\big) \\
&\times \big( 1+ \mu^{-j_1} (x_2+h)^{j_1} \big) \sinh (\mu^{-1} (x_2+ h)),
\end{split}\ee
where $C_{j_1, j_2}>0$ depends only on $j_1$, $j_2$,  $|U'|_{C^2}$, and $|(U')^{-1}|_{C^0}$. 
\item Suppose $[-h, 0] \not\subset \{ |U-c| \ge \rho_0^{-1} \mu\}$ and $l_0\ge 5$, then, for $x_2 \in [-h, 0]$ satisfying $U(x_2) -c \ge \rho_0^{-1} \mu$,   
\be \label{E:pcy2} 
\mu^{-1} |\p_c y_{0-} (x_{2})| + |\p_c y_{0-}' (x_{2})| \le C \Big(1 +  \log \frac \mu{\min\{\mu,  |U(-h)-c|\}} \Big) \cosh (\mu^{-1}(x_{2}+h)), 
\ee
and for $2\le j \le l_0-4$, 
\[
\tfrac 1\mu |\p_c^j y_{0-} (x_{2})| + |\p_c^j y_{0-}' (x_{2})| \le C\big( \mu^{1-j} +  |U(-h)-c|^{1-j}  \big) \cosh (\mu^{-1}(x_{2}+h)).
\]
\end{enumerate}
\item $y_{0-} (k, c, x_2^c)$ is $C^{l_0-2}$ in $c$ and $k$ for $c \in U([-h, 0])$. In addition,  for $1\le j \le l_0-2$, it holds 
\be \label{E:pcy4}
| \p_c^j  \big(y_{0-} (k, c, x_2^c)\big) |  \le  C \mu^{1-j} \cosh (\mu^{-1}(x_2^c +h)).
\ee
\end{enumerate}
\end{lemma}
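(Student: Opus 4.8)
Lemma~\ref{L:y0} collects, with minor modifications, statements proved in \cite[\S3]{LiuZ21}, so the honest route is to quote those; below I sketch how one would establish them from scratch. The overarching plan is to separate the behaviour of $y_{0-}$ \emph{away} from the critical layer $x_2=x_2^c$, where \eqref{E:Ray-H1-1} is a regular second order ODE and everything reduces to continuous dependence plus comparison with the constant coefficient equation $y''=k^2y$ (Lemma~\ref{L:y_pm-1}), from its behaviour \emph{inside} the layer, where $U-c$ has a simple zero and the equation has a regular singular point with indicial roots $0$ and $1$, so a Frobenius/desingularisation analysis is needed. First I would recall that for $c_I>0$ the coefficient $U''/(U-c)$ is smooth, giving analyticity of $y_-$ in $(c,k^2)$ and $C^{l_0}$ regularity in $x_2$ (Lemma~\ref{L:basic-y_pm}), together with the $c$-uniform two sided bounds $y_-,\,y_-'\sim\sinh(\mu^{-1}(x_2+h))$ of Lemma~\ref{L:y_pm-1}, which hold up to $c_I\to0+$ (Remark~\ref{R:y-pm}).

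\emph{Convergence and Hölder regularity (parts (1), (2)).} The uniform bounds plus the integral form of \eqref{E:Ray-H1-1} give equicontinuity of $\{y_-(k,c+ic_I,\cdot)\}$ on compact subsets of $\{x_2\neq x_2^c\}$; the limiting coefficient has a definite branch of $\log(U-c)$ determined by $c_I\to0+$, so the limiting (singular) Volterra equation has a unique solution, forcing convergence of the whole family to $y_{0-}$ and of $y_-'$ to $y_{0-}'$ locally uniformly off $x_2^c$ (but not at $x_2^c$, where $y_{0-}'$ carries a $\log|x_2-x_2^c|$ singularity). On $[-h,x_2^c]$ the limiting coefficient and the data are real, so $y_{0-}$ is real there; it is $C^{l_0}$ off $x_2^c$ since the coefficient is $C^{l_0-2}$ there, and $C^\alpha$ across $x_2^c$ for every $\alpha<1$ because the worst local behaviour is $(x_2-x_2^c)\log|x_2-x_2^c|$.

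\emph{The desingularising factorisation (part (3)) and the bounds (parts (4), (5)).} Writing $s=x_2-x_2^c$, $\tau=\mu^{-1}s$, recast \eqref{E:Ray-H1-1} as a first order system for $(\mu^{-1}y,y')^T$ whose coefficient matrix has a single singular entry $\sim\frac{U''(x_2^c)}{U'(x_2^c)}s^{-1}$ in the $(2,1)$ slot. Conjugating by the lower triangular matrix with off diagonal entry $\Gamma_0^\#(\mu,c,\tau)$ — the explicit regularised antiderivative of that entry, the term $\tfrac{i\pi}{2}(\mathrm{sgn}\,\tau+1)$ recording the side on which $\log$ is continued as $c_I\to0+$ — removes the singularity and leaves a system with $C^{l_0-2}$ coefficients whose fundamental solution, normalised at $\tau=0$, is $B$; then $\det B\equiv1$ by Liouville's formula (the conjugated system is traceless), $B(\mu,c,0)=I$ by construction, and $B(0,c,\tau)$ by explicitly integrating the $\mu=0$ system, while the existence of $b_{2-}$ is precisely that $y_{0-}'$ has a finite one sided value after the explicit $\log$/jump term is subtracted. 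Positivity (part (4)) follows from the Rayleigh form of \eqref{E:Ray-H1-1}: with $w=y_{0-}/(U-c)$ one has $[(U-c)^2w']'=k^2(U-c)^2w$, a Sturm--Liouville equation with positive weight (since $c\in\R\setminus U((-h,0))$ keeps $U-c$ one signed on $(-h,0)$) and nonnegative potential; $w(-h)=0$, $w'(-h)\neq0$, and integration against $w$ up to a would be first interior zero forces $w'\equiv0$, a contradiction, so $y_{0-}>0$ on $(-h,0]$ ($c=U(-h)$ by a limiting argument). For part (5), the $\sinh$ bounds \eqref{E:y-lower-b-1} pass to the limit from Lemma~\ref{L:y_pm-1} on the interval between $-h$ and $x_2^c$, while \eqref{E:y-lower-b-2} is read off the factorisation: since $B$, $\mu^{-1}y_{0-}(x_2^c)$ and $b_{2-}$ are real, the imaginary part of $\mu^{-1}y_{0-}(0)$ equals $B_{12}(\mu,c,\mu^{-1}|x_2^c|)\,\IP\Gamma_0^\#\,\mu^{-1}y_{0-}(x_2^c)$ with $\IP\Gamma_0^\#=\pi\mu U''(x_2^c)/U'(x_2^c)$; using $B_{12}\sim\sinh(\mu^{-1}|x_2^c|)$ and $y_{0-}(x_2^c)\sim\mu\sinh(\mu^{-1}(x_2^c+h))$ gives both bounds, the lower one requiring the uniform (in $\mu$) comparability of the desingularised propagator $B_{12}$ with $\sinh$ together with $y_{0-}(x_2^c)>0$.

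\emph{Parameter regularity (parts (6)--(9)) and the main obstacle.} These come from differentiating the Volterra equations for $y_{0-}$ on the regular pieces and for $B$ inside the layer with respect to $k$ (through $\mu$, i.e. $k^2$) and $c$, and running Gronwall with weights dictated by the singular coefficient. Differentiation in $k$ costs nothing, since $k$ enters only the regular part of the equation and never the $\log$ structure, which is why $y_{0-},y_{0-}'$ are $C^\infty$ in $k$ except $y_{0-}'$ at $c=U(x_2)$; differentiation of $U''/(U-c)$ in $c$ produces poles $(U-c)^{-j}$, the source of the $|U(-h)-c|^{1-j}$ and $\log\frac{\mu}{\min\{\mu,|U(-h)-c|\}}$ factors in \eqref{E:pcy1}--\eqref{E:pcy4} and \eqref{E:pcy2} and of the $C^{l_0-2}$ loss, and $c=U(-h)$ is excluded in part (8) because there the critical layer collides with the boundary $x_2=-h$ at which the data are imposed. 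I expect the main difficulty to be twofold: (a) the sharp \emph{lower} bound in \eqref{E:y-lower-b-2}, which needs the uniform comparability of $B_{12}$ with $\sinh$ and hence rules out cancellation across the layer; and (b) making every estimate uniform in $k$, in particular as $k\to\infty$ where $\mu\to0$ and the layer width $\sim\mu$ shrinks, so that all the $\mu$-powers in \eqref{E:pcy1}--\eqref{E:pcy4} must be tracked through the conjugated system $B$ — this uniform-in-$k$ bookkeeping inside the critical layer is the technically heaviest part, and is exactly the content of \cite[\S3]{LiuZ21}.
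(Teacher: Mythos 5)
Your proposal is essentially right: the paper itself gives no proof of Lemma \ref{L:y0} — it is stated verbatim as a combination of Lemmas 3.10–3.19 of \cite{LiuZ21} — and your sketch (citing \cite{LiuZ21}, then outlining the critical-layer desingularization via $\Gamma_0^\#$ and the regular propagator $B$, the Sturm--Liouville positivity argument, and the weighted Volterra/Gronwall bookkeeping in $k$, $c$) follows the same route as the cited source, whose structure is visible in the very form of the statement. The points you flag as the technical core — the uniform-in-$\mu$ comparability of $B_{12}$ with $\sinh$ needed for the lower bound in \eqref{E:y-lower-b-2} (and likewise the two-sided bound \eqref{E:y-lower-b-1} for bounded $k$, where Lemma \ref{L:y_pm-1} alone does not give a lower bound) — are indeed exactly what the quoted lemmas of \cite{LiuZ21} supply.
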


\begin{remark} 
In many cases, to simplify the notation, we denote $y_- (k, c, x_2) = y_{0-} (k, c, x_2)$ for $c\in \R$.  In \eqref{E:y-lower-b-1}, $k^{-1} \sinh kx$ is understood as $x$ when $k=0$. The regularity of $y_{0-}$, the analyticity of $y_-$ and $y_-'$ in $k$ and $c$ with $c_I>0$, and its convergence as $c_I \to 0+$ imply $y_- (k, c, x_2)$ is also $C^{l_0-2}$ in $k$ and $c \notin \{U(x_2), U(-h)\}$  when restricted to $c_I\ge 0$.  
\end{remark}




Finally,  the following quantity related to the Reynolds stress is crucial for the linearized water wave problem: 
\be \label{E:Y} \begin{split}
&Y(k, c) = Y_R (k, c) + iY_I (k, c) := \frac {y_{-}' (k, c, 0)}{y_{-} (k, c, 0)},  \quad  c = c_R + i c_I \in \C \setminus U([-h, 0]), \\
&Y(k, c) = \lim_{\ep \to 0+} Y(k, c+i\ep)= \frac {y_{0-}' (k, c, 0)}{y_{0-} (k, c, 0)}, \quad c\in U\big([-h, 0)\big),
\end{split} \ee
with the domain 
\[
D(Y) = \{ (k, c) \in \R \times \C \mid c\ne U(0), \ y_- (k, c, 0)\ne 0\}.
\] 
Those excluded points (except $c= U(0)$) exactly correspond to the eigenvalues of the linearized Euler equation in the fixed channel $x_2 \in (-h, 0)$ at the shear flow $U(x_2)$. The end point $c = U(0)$ is also excluded due to the singularity of $y_-'(k, c=U(0), x_2)$ at $x_2 =0$. The following lemma (Lemmas 3.20, and 3.22 and parts -- some in the proofs -- of Lemmas 3.23, 3.24, and 4.5 in \cite{LiuZ21}) summarizes some basic properties of $Y(k, c)$. 

\begin{lemma} \label{L:Y-def}
Assume $U\in C^{l_0}$, $l_0\ge 3$. It holds that $Y(k, \bar c) = Y(-k, c)= \overline {Y(k, c)}$ and  $Y$ is a.) analytic in $(k, c) \in D(Y)\setminus (\R \times U([-h, 0]))$; and, when restricted to $c_I\ge 0$, b.) $C^{l_0-2}$ in $(k, c) \in D(Y) \setminus (\R \times \{U(-h)\})$, and c.) $C^{\infty}$ in $k$ and locally $C^\alpha$ in $(k, c) \in D(Y)$  for any $\alpha \in [0, 1)$.  Moreover, 
\begin{enumerate}
\item $Y(k, c) \in \R$ for all $c \in \R \setminus U\big((-h, 0]\big)$ and $Y(0, U(-h)) =\frac {U'(0)}{U(0)-U(-h)}$.
\item There exists $C, \rho>0$ depending only on $U$ such that 
\[
|Y(k, c)| \le C \big(\mu^{-1} + \big|\log  \min \big\{1, |U(0) -c|\big\}\big| \big), \; \;  \forall k\in \R, \; |c - U(0)| \le \rho. 
\]
\item For any $\alpha \in (0, \frac 12)$, there exist $k_0>0$ and $C>0$ depending only on  $\alpha$, $|U'|_{C^2}$, and $|(U')^{-1}|_{C^0}$ such that,  
\[
|Y(k, c) - k \coth kh| \le C (\mu^{\alpha-1} + |\log \min\{1, \, |U(0)-c|\}|), \quad \forall |k| \ge k_0, \; c \ne U(0).  
\]
\item For any $M>0$ and $k_*>0$, there exists $C>0$ depending on $k_*$ and $M$ such that 
\[
|Y(k, c) - k \coth kh| \le \frac C{ dist(c, U([-h, 0]))}, 
\]
if $k$ and $c$ satisfy 
\[ 
|k| \le k_* \; \text{ and } \; \Big|c - \frac {U(-h) + U(0)}2\Big|\ge M + \frac {U(0)-U(-h)}2. 
\]
\item $Y_I (k, c)=0$ for $c\in \R\backslash U\big((-h, 0]\big)$ and 
\[
Y_I (k, c) =\frac {\pi U''(x_2^c) y_{-} (k, c, x_2^c)^2}{U'(x_2^c) |y_{-} (k, c, 0)|^2}, \quad c \in \CI \triangleq \{ c\in U\big( (-h, 0)\big) \mid y_-(k, c, 0) \ne 0\}.   
\]  
Moreover, the above formula implies $Y_I(k, c)$ is $C^\infty$ in $k$ and $C^{l_0-2}$ in $(k, c) \in D(Y) \cap \big(\R \times U\big((-h, 0)\big)\big)$.  
\item Assume $l_0\ge 4$, then for any $q\in [1, \infty)$, $j_1, j_2\ge 0$, $j_2 \le 2$, and $j_1+j_2\le l_0-4$, $\p_k^{j_1} \p_{c_R}^{j_2} Y_I$ is $L_k^\infty W_{c_R}^{1, q}$ locally in $(k, c) \in D(Y) \cap \big(\R \times U\big([-h, 0)\big)\big)$.
\item Assume $k \in \R$ satisfies $y_-(k, c, 0)\ne 0$ for all $c \in \C$, then,  for any $c\in \C \backslash U([-h, 0])$,  
\be \label{E:Y-Cauchy-1} \begin{split}
&Y(k, c)=\frac{1}{\pi}\int_{U(-h)}^{U(0)} \frac{Y_I (k, c')}{c'-c}dc'+k\coth kh, \\
\end{split} \ee
and for $c\in U\big([-h, 0)\big)$, 
\be \label{E:Y-Cauchy-2} \begin{split} 
&Y(k, c)= - \CH \big(Y_I(k, \cdot)\big) (c) + i Y_I(k,c) +k\coth kh.
\end{split} \ee 
\item For any $k\in \R$ and $c\in \R \setminus U\big((-h, 0]\big)$
\[
\p_K \big(Y\big(\sqrt{K}, c \big)\big) >0, \quad \p_K^2 \big(Y\big(\sqrt{K}, c \big)\big) <0, 
\]
\[
(U(0)-c)^2\p_K Y(0, c) \le (U(0) - U(-h))^2 \p_K Y(0, U(-h)) = \int_{-h}^0 (U(x_2) - U(-h))^2 dx_2,
\]
where $K=k^2$ and the equal sign in the second inequality happens if and only if $c = U(-h)$. 
\end{enumerate} 
\end{lemma}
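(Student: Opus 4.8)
The plan is to reduce the whole statement to the Rayleigh initial value problem for $y_-=y_-(\sqrt K,c,x_2)$, with $K=k^2$, and to differentiate it in the spectral parameter $K$; since the coefficient $K+U''/(U-c)$ is polynomial in $K$, such differentiation is legitimate (at $c=U(-h)$ there is a regular singular point at $x_2=-h$, but Frobenius theory with parameter still gives smooth dependence on $K$). First I would set $\phi:=\p_K y_-$, which solves $-\phi''+\big(K+\tfrac{U''}{U-c}\big)\phi=-y_-$ with $\phi(-h)=\phi'(-h)=0$. The ``Wronskian'' $\mathcal W:=y_-\phi'-y_-'\phi$ then satisfies $\mathcal W'=y_-\phi''-y_-''\phi=y_-^2$ (the potential term cancels) and $\mathcal W(-h)=0$, so $\mathcal W(x_2)=\int_{-h}^{x_2}y_-^2$. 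Because $\p_K(y_-'/y_-)=\mathcal W/y_-^2$, this yields
\[
\p_K\big(Y(\sqrt K,c)\big)=\frac{\int_{-h}^0 y_-(\sqrt K,c,x_2)^2\,dx_2}{y_-(\sqrt K,c,0)^2}.
\]
Since $c\in\R\setminus U((-h,0])$ gives $c\notin U((-h,0))$, Lemma \ref{L:y0}(4) says $y_-(\sqrt K,c,\cdot)$ is real and positive on $(-h,0]$, and therefore $\p_K\big(Y(\sqrt K,c)\big)>0$ for every $K\ge 0$.

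For concavity in $K$ I would pass to the Riccati variable $z:=y_-'/y_-$, which solves $z'=K+\tfrac{U''}{U-c}-z^2$. Then $Z:=\p_K z$ solves the \emph{linear} equation $Z'=1-2zZ$, so $(y_-^2 Z)'=y_-^2$; and since $y_-\sim x_2+h$ and $\phi=\p_K y_-=O((x_2+h)^2)$ near $x_2=-h$ one has $y_-^2 Z\to 0$ there, hence $y_-^2 Z=\int_{-h}^{x_2}y_-^2$, so in particular $Z>0$ on $(-h,0]$. Differentiating once more, $W:=\p_K Z$ solves $W'=-2Z^2-2zW$, whence $(y_-^2 W)'=-2y_-^2 Z^2\le 0$ and $y_-^2 W\to 0$ at $x_2=-h$, giving
\[
\p_K^2\big(Y(\sqrt K,c)\big)=W(0)=-\,\frac{2\int_{-h}^0 y_-^2 Z^2\,dx_2}{y_-(\sqrt K,c,0)^2}<0 .
\]

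The last claim is a computation at $K=0$. There the Rayleigh equation $-y''+\tfrac{U''}{U-c}y=0$ has the explicit solution $U-c$ (because $(U-c)''=U''=\tfrac{U''}{U-c}(U-c)$), and since the equation has no first-order term, reduction of order gives, for $c\notin U([-h,0])$, the unique normalized solution
\[
y_-(0,c,x_2)=(U(-h)-c)(U(x_2)-c)\int_{-h}^{x_2}\frac{ds}{(U(s)-c)^2},
\]
and at $c=U(-h)$ one has $y_-(0,U(-h),x_2)=(U(x_2)-U(-h))/U'(-h)$. Feeding these into the displayed formula for $\p_K Y$, the factors $(U(-h)-c)^2(U(0)-c)^2$ cancel, and one obtains the asserted identity $(U(0)-U(-h))^2\,\p_K Y(0,U(-h))=\int_{-h}^0(U-U(-h))^2\,dx_2$ together with, writing $G(x_2):=\int_{-h}^{x_2}(U-c)^{-2}\,ds$,
\[
(U(0)-c)^2\,\p_K Y(0,c)=\frac{\int_{-h}^0 (U(x_2)-c)^2\,G(x_2)^2\,dx_2}{G(0)^2}
\]
for every $c\notin U([-h,0])$. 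It remains to compare this with $\int_{-h}^0(U-U(-h))^2\,dx_2$ and to show equality holds only when $c=U(-h)$.

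This scalar comparison is, I expect, the main obstacle. After the substitution $t=G(x_2)$ (so $dx_2=(U(x_2)-c)^2\,dt$, and $\omega(t):=(U(x_2)-c)^2$ is increasing on $[0,T]$, where $T:=G(0)$ and $\omega(0)=(U(-h)-c)^2$, in the relevant case $c<U(-h)$), the right-hand side above becomes $T^{-2}\int_0^T\omega(t)^2t^2\,dt$, while $\int_{-h}^0(U-U(-h))^2\,dx_2$ becomes $\int_0^T\big(\sqrt{\omega(t)}-\sqrt{\omega(0)}\big)^2\omega(t)\,dt$ (using $U-c=\sqrt\omega$); the comparison thus reduces to an inequality between these two integrals of the single monotone weight $\omega$. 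I would establish it by writing $c=U(-h)-\delta$ and proving the left quantity is monotone in $\delta\ge 0$: on differentiating in $\delta$, the $\delta$-derivatives of the nested integrals $\int_{-h}^{x_2}(U-c)^{-2}$ have a definite sign, which survives after a Cauchy--Schwarz/Chebyshev-type rearrangement exploiting the monotonicity of $U-c$; and since $\delta\to 0^+$ recovers the value $\int_{-h}^0(U-U(-h))^2$ computed above, equality occurs exactly at $\delta=0$, i.e. $c=U(-h)$. (All $c\in\R\setminus U((-h,0])$ lie in $D(Y)$ for every $k$, and $Y$ is $C^\infty$ in $k$ — hence in $K\ge 0$ — there, so the $K$-derivatives above are genuine.)
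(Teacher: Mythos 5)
Your proposal addresses only item (8) of the lemma. The statement you were asked to prove also contains the analyticity/regularity assertions a.)--c.) and items (1)--(7): the reality and the value $Y(0,U(-h))$, the logarithmic bound near $c=U(0)$, the large-$|k|$ asymptotics $Y\approx k\coth kh$, the far-field bound, the formula $Y_I=\pi U''(x_2^c)y_-(k,c,x_2^c)^2/\big(U'(x_2^c)|y_-(k,c,0)|^2\big)$ with its regularity, the $L^\infty_k W^{1,q}_{c_R}$ regularity of $Y_I$, and the Cauchy-type representations \eqref{E:Y-Cauchy-1}--\eqref{E:Y-Cauchy-2}. These are the bulk of the lemma; in the paper they are imported from the boundary-layer/limiting-absorption analysis of the Rayleigh equation (the estimates summarized in Lemmas \ref{L:y_pm-1} and \ref{L:y0}, and a contour-integral argument of the type redone in Lemma \ref{L:Y-1}), and none of them follows from the $K$-differentiation you set up. So as a proof of the stated lemma the proposal has a large structural gap, independent of the quality of what you did prove.

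Within item (8), your Wronskian/Riccati computations are correct and in fact coincide (after exchanging the order of integration) with the identities the paper itself records in \eqref{E:F-signs-0.1}--\eqref{E:F-signs-0.2}, and your $k=0$ formulas agree with \eqref{E:y_-0}. But the final comparison is not proved: you explicitly defer it ("I would establish it by \dots"), you only treat $c<U(-h)$ although the stated range also contains $c>U(0)$, and the route you sketch cannot work. Indeed, since $U''/(U-c)\to0$ as $c\to-\infty$, one has $y_-(0,c,\cdot)\to x_2+h$ and hence $(U(0)-c)^2\p_K Y(0,c)\sim (U(0)-c)^2h/3\to\infty$, so the quantity is not monotone decreasing in $\delta=U(-h)-c$ and the displayed inequality fails for $c$ sufficiently negative (already for linear $U$ it reads $(U(0)-c)^2h/3\le (U(0)-U(-h))^2h/3$). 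The inequality as printed in this summary lemma is evidently a misstatement of the correct comparison recorded in Lemma \ref{L:e-v-basic-2}(2), namely $\p_K F(0,c)=(U(0)-c)^2\p_K Y(0,c)\le \int_{-h}^0(U-c)^2\,dx_2$ with equality iff $c=U(-h)$ (whose right-hand side grows with $|c|$); your write-up neither proves the printed claim nor detects this discrepancy, and the Chebyshev/rearrangement step you propose would be attempting to verify a false inequality. You should either prove the corrected version (which follows from your formula $\p_K Y=\int_{-h}^0 y_-^2/y_-(0)^2$ together with a pointwise comparison of $y_-(0,c,x_2)$ with $U(x_2)-c$), or restrict and justify the range on which the printed display is meant.
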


Here we recall that $\CH$ denotes the Hilbert transform, namely,
\[
\CH (f) (\tau) =\frac{1}{\pi} \text{P.V.} \int_\R \frac{f(s)}{\tau-s}ds, 
\] 
 for any function $f(s)$ defined on $\R$, where P.V.$\int$ is the principle value of the singular integral. 


\subsection{Basic properties of eigenvalues} \label{SS:e-values-0}

From \eqref{E:e-func}, $-ikc \in \C$, with $c \in \C \setminus U([-h, 0])$, is an eigenvalue of \eqref{E:LEuler} in the $k$-th Fourier modes iff 
\be \label{E:BF} \begin{split} 
& \BF (k, c) = \BF_R + i \BF_I =  \big(U(0)-c\big)^2 y_-'(k, c, 0) - \big(U'(0)\big(U(0)-c\big)+ g\big) y_-(k, c, 0) =0.  
\end{split} \ee
 In the limit as $c \to U([-h, 0])$, let 
\[
\BF(k, c)= \lim_{\ep \to 0+} \BF(k, c+i\ep) = \overline{\lim_{\ep \to 0+} \BF(k, c-i\ep)}, \quad c \in U\big([-h, 0]\big). 
\]
Clearly, $y_-(k, c, x_2)$ also generates the corresponding eigenfunction of \eqref{E:LEuler}
if $\BF(k, c)=0$. The roots $c$ of $\BF$ with $c_I>0$ are often referred to as unstable modes, while those roots $c\in \R$ as neutral modes. We recall that Yih proved that the semicircle theorem also holds for \eqref{E:LEuler}, i.e., \eqref{E:semi-circle} holds for all unstable modes \cite{Yih72}.

Since $\BF$ may not be $C^1$ in $c$ near $c=U(-h)$ (see Lemma \ref{L:y0}) which would turn out to be a key bifurcation point, we also consider an actually equivalent equation 
\be \label{E:dispersion}
F(k, c) = y_-(k, c, 0)^{-1} \BF = F_R + i F_I = Y(k, c)\big(U(0)-c\big)^2-U'(0)\big(U(0)-c\big)-g =0,  
\ee
with $Y(k, c)$ defined in \eqref{E:Y}, and 
\[
F(k, c)= \lim_{\ep \to 0+} F(k, c+i\ep) = \overline{\lim_{\ep \to 0+} F(k, c-i\ep)},  \quad  c \in U\big([-h, 0]\big). 
\]
Clearly it holds 
\be \label{E:BF-conj}
\BF(-k, c) = \BF(k, c) = \overline {\BF(k, \bar c)}, \; \forall c\notin U\big((-h, 0)\big); 
\ee
\be \label{E:f-conj}
F(-k, c) = F(k, c) = \overline {F(k, \bar c)}, \;  c\in D(Y)\setminus U\big((-h, 0)\big). 
\ee
According to the following Lemma \ref{L:Y-1}, which is an improvement of Lemma 3.24 in \cite{LiuZ21}, $F$ is $C^{1, \alpha}$ near $c=U(-h)$ (without additional assumptions, unlike Lemma 3.24 in \cite{LiuZ21}). 
Obviously $\BF$ and $F$ are analytic in their domains except for $c \in U([-h, 0])$. 
  
Our strategy to analyze the eigenvalue distribution includes the following key ingredients:
\begin{itemize} 
\item asymptotic analysis of eigenvalues for $|k|\gg 1$,
which turn out to accumulate at $U(0)$;
\item the existence and bifurcation of (possibly unstable) eigenvalues for $c$ near singular neutral modes at $c_0 =U(-h)$ or interior inflection values of $U$;
\item an analytic continuation argument on the extension of the non-singular modes $c(k)$ away from the above bifurcation points.   
\end{itemize}
This strategy had been successfully applied to analyze the eigenvalue distribution of the linearized capillary gravity  water waves at monotonic shear flows in Section 4 in \cite{LiuZ21}. The absence of the surface tension does not affect some basic properties of $\BF$ and $F$, some of the bifurcation analysis of unstable eigenvalues, or the continuation argument. In the rest of this subsection, we shall outline the basic properties and the continuation argument obtained in \cite{LiuZ21}. The bifurcation analysis will be given in Section \ref{S:e-values} with a similar approach, but under considerably relaxed assumptions.  The existence of singular neutral modes and the eigenvalues for $|k|\gg1$ will also be studied in Section \ref{S:e-values}, where we shall see the phenomena would turn out to be substantially different from the capillary gravity waves. \\

\noindent $\bullet$ We first give some {\bf elementary properties} of $\BF$ and $F$ for a monotonic shear flow $U$, starting with some relatively qualitative properties. 

\begin{lemma} \label{L:e-v-basic-1}
Assume $U \in C^{l_0}$, $l_0\ge 3$, then for any $k \in \R$, the following hold. 
\begin{enumerate}
\item $\BF$ is well defined for $\forall (k ,c) \in \R \times \C$. When restricted to $c_I\ge 0$, $\BF$  is $C^\infty$ in $k$ and is $C^{l_0-2}$ in both $k$ and $c \notin \{U(-h), U(0)\}$ and $\BF$ is also $C^\alpha$ in both $k$ and $c$ with $c_I\ge 0$. The same holds for $F(k, c)$ except at $(k, c)$ where $y_-(k, c, 0)=0$. 
\item $\BF(k, c) \ne 0$ if $y_-(k, c, 0)=0$ and thus $\{ \BF(k, c)=0\} = \{ F(k, c) =0\}$.  
\item  $\BF(k, c)=0$ iff there exists a $C^2$ solution $y(x_2)$ to \eqref{E:Ray-H1-1} satisfying the corresponding homogeneous boundary conditions of (\ref{E:Ray-2}-\ref{E:Ray-3}).
\item For any $x_2^c\in (-h, 0)$ and $c = U(x_2^c)$, 
\[
F_I(k, c) = \big(U(0)-c\big)^2 Y_I(k, c) = \frac {\pi (U(0)-c)^2 U''(x_{2}^c) y_{-} (k, c, x_{2}^c)^2}{U'(x_{2}^c) |y_{-} (k, c, 0)|^2},
\]
and thus $\BF_I(k, U(x_2))\ne 0$ if $x_2 \in (-h, 0)$ and $U''(x_2)\ne 0$. 
\end{enumerate}
\end{lemma}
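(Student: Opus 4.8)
The statement to prove is Lemma~\ref{L:e-v-basic-1}, which collects four elementary properties of $\BF$ and $F$.

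\medskip

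\textbf{Plan of proof.} The proof is a direct assembly of the regularity and structural facts about $y_{0-}$, $y_{0-}'$, and $Y$ that were catalogued in Lemmas~\ref{L:basic-y_pm}, \ref{L:y0}, and \ref{L:Y-def}. Statement (1) follows by inspecting the explicit algebraic formulas \eqref{E:BF} and \eqref{E:dispersion}: $\BF$ is a polynomial in $c$ with coefficients built from $y_-(k,c,0)$ and $y_-'(k,c,0)$, so I would simply transport the regularity assertions of Lemma~\ref{L:y0}(2)(6)(7)(8) and the attached Remark (which upgrades $y_-$ restricted to $c_I\ge 0$ to $C^{l_0-2}$ in $k$ and $c\notin\{U(0),U(-h)\}$, $C^\alpha$ in general, and $C^\infty$ in $k$) through the multiplication and addition of these finitely many terms. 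The only subtlety is the point $c=U(0)$: there $y_-'(k,c,x_2)$ is singular at $x_2=0$, but it appears in $\BF$ with the prefactor $(U(0)-c)^2$, so I would note that this product extends continuously (indeed with the claimed Hölder regularity) to $c=U(0)$, using the logarithmic-type bound on $y_{0-}'$ near the singularity implicit in Lemma~\ref{L:y0}(2) (it is $C^{l_0}$ away from $x_2^c$ and only mildly singular as $x_2\to x_2^c$). For $F$ one divides by $y_-(k,c,0)$, which is admissible and regularity-preserving precisely off the zero set of $y_-(k,c,0)$, giving the stated exception.

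\medskip

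Statement (2) is where the genuine (though short) argument lies. If $y_-(k,c,0)=0$, then from \eqref{E:BF} one has $\BF(k,c)=(U(0)-c)^2 y_-'(k,c,0)$. I would argue that $y_-'(k,c,0)\ne 0$: if both $y_-(k,c,0)=0$ and $y_-'(k,c,0)=0$ then, by uniqueness for the linear ODE \eqref{E:Ray-H1-1} (away from the singularity $c\in U([-h,0])$, and using the limiting solution $y_{0-}$ otherwise), $y_-\equiv 0$, contradicting $y_-'(-h)=1$ from the normalization \eqref{E:y-pm}. One also needs $c\ne U(0)$: but $y_-(k,U(0),0)=0$ would similarly force (via the limiting structure in Lemma~\ref{L:y0}) a contradiction with the non-vanishing of $y_{0-}$ on $(-h,0]$ stated in Lemma~\ref{L:y0}(4)--(5) for $c\in\R\setminus U((-h,0))$; I need to confirm the boundary case $c=U(0)$ is handled, and if the Hölder/limit framework does not immediately exclude it I would instead observe that the vanishing of the whole product $(U(0)-c)^2 y_-'(k,c,0)$ at $c=U(0)$ is moot because Lemma~\ref{L:y0}(5), specifically \eqref{E:y-lower-b-1} applied with $x_2=0$ and $c=U(-h)$ or a limiting argument, combined with the fact that $y_{0-}(k,U(0),0)$ is the limit of strictly positive quantities, shows $y_-(k,U(0),0)>0$, so $c=U(0)$ is never in the zero set of $y_-(k,\cdot,0)$ anyway. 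Hence $\BF(k,c)\ne 0$ whenever $y_-(k,c,0)=0$, and since $F=y_-(k,c,0)^{-1}\BF$ off that set, the two zero sets coincide.

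\medskip

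Statement (3) is the translation between the root equation $\BF(k,c)=0$ and the existence of a nontrivial solution of the Rayleigh boundary value problem \eqref{E:Ray-1}--\eqref{E:Ray-3}: since $y_-$ already satisfies the ODE \eqref{E:Ray-H1-1} and the lower boundary condition \eqref{E:Ray-2} by its normalization \eqref{E:y-pm}, the only remaining condition is \eqref{E:Ray-3}, and evaluating the bracket in \eqref{E:Ray-3} on $y_-$ gives exactly $\BF(k,c)$ up to the removal of the $y_-(k,c,0)$ denominator; conversely any nontrivial $C^2$ solution of the homogeneous problem is proportional to $y_-$ by ODE uniqueness (it cannot vanish to second order at $x_2=-h$), so it satisfies \eqref{E:Ray-3} iff $\BF(k,c)=0$. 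Statement (4) is then immediate from the definition \eqref{E:dispersion}, $F=Y(U(0)-c)^2-U'(0)(U(0)-c)-g$, whose imaginary part for real $c$ is $(U(0)-c)^2 Y_I(k,c)$ since the remaining terms are real; plugging in the formula for $Y_I$ from Lemma~\ref{L:Y-def}(5) yields the displayed expression, and since $\BF_I=y_-(k,c,0)F_I + \IP(y_-(k,c,0))F_R$ one checks that at $c=U(x_2)$ with $U''(x_2)\ne 0$ the stated non-vanishing of $\BF_I$ follows from $F_I\ne 0$ together with the reality of $y_-(k,c,x_2^c)$ and the positivity/non-vanishing information already in hand.

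\medskip

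\textbf{Main obstacle.} There is no deep obstacle: the lemma is a bookkeeping exercise over the results already established for $y_{0-}$ and $Y$. The only place demanding care is the boundary value $c=U(0)$ in statements (1) and (2) --- verifying that the factor $(U(0)-c)^2$ genuinely absorbs the singularity of $y_-'$ there (so that $\BF$ retains its claimed Hölder regularity and does not spuriously vanish), and that $y_-(k,U(0),0)\ne 0$ --- which I would settle by the limiting/monotonicity arguments of Lemma~\ref{L:y0}(4)(5) rather than by any new estimate.
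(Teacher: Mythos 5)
The paper does not actually reprove this lemma; it is quoted from Lemma 4.1 of \cite{LiuZ21} with the remark that $\sigma=0$ changes nothing, so your reconstruction from Lemmas \ref{L:basic-y_pm}, \ref{L:y0} and \ref{L:Y-def} is the right kind of argument, and parts (1)--(3) and the displayed formula in (4) are essentially fine: the $(U(0)-c)^2$ prefactor does absorb the logarithmic singularity of $y_{0-}'$ at $c=U(0)$, and $y_-(k,U(0),0)>0$ does follow from Lemma \ref{L:y0}(4) since $U(0)\in\R\setminus U((-h,0))$. Two caveats. First, in (2)--(3), for singular $c=U(x_2^c)$ with $x_2^c\in(-h,0)$ ``ODE uniqueness'' only gives $y_{0-}\equiv 0$ on $(x_2^c,0]$; to contradict $y_{0-}'(-h)=1$ you must push the vanishing through the critical layer, which is exactly what the invertible transfer representation of Lemma \ref{L:y0}(3) ($\det B=1$, unipotent middle factor) provides. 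You gesture at this but do not supply it.

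The genuine gap is the last clause of (4). From $\BF=y_-(k,c,0)F$ one gets $\BF_I=\RP\big(y_-(k,c,0)\big)F_I+\IP\big(y_-(k,c,0)\big)F_R$, and $F_I\ne 0$ does \emph{not} imply $\BF_I\ne 0$: the two terms can cancel, so ``one checks that \dots follows'' is a non sequitur. Concretely, for real $c=U(x_2^c)$ the critical-layer jump gives $\big(\IP y_{0-}(0),\IP y_{0-}'(0)\big)=\pi\tfrac{U''(x_2^c)}{U'(x_2^c)}\,y_{0-}(x_2^c)\,\big(\phi(0),\phi'(0)\big)$, where $\phi$ solves \eqref{E:Ray-H1-1} on $[x_2^c,0]$ with $\phi(x_2^c)=0$, $\phi'(x_2^c)=1$, so
\[
\BF_I=\pi\tfrac{U''(x_2^c)}{U'(x_2^c)}\,y_{0-}(x_2^c)\Big[(U(0)-c)^2\phi'(0)-\big(U'(0)(U(0)-c)+g\big)\phi(0)\Big],
\]
and the non-vanishing of the bracket is an additional claim that your argument never addresses (it is not a consequence of $F_I\ne0$). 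What your route does give, and what the paper actually uses downstream (e.g.\ in Theorems \ref{T:e-values} and \ref{T:e-values-1}), is the weaker fact that $\BF(k,U(x_2))\ne 0$ when $U''(x_2)\ne 0$: the jump identity $\IP\big(\overline{y_{0-}(0)}\,y_{0-}'(0)\big)=\pi\tfrac{U''(x_2^c)}{U'(x_2^c)}y_{0-}(x_2^c)^2$ together with $y_{0-}(k,c,x_2^c)>0$ from \eqref{E:y-lower-b-1} forces $y_-(k,c,0)\ne0$, hence $F$ is defined, $F_I\ne0$, hence $F\ne0$ and $\BF\ne0$. Either prove the bracket above is nonzero or retreat to this weaker (and sufficient) conclusion; as written, the step is unjustified.
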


These statements are mainly contained in Lemma 4.1 in \cite{LiuZ21} where one could easily see that the lack of surface tension (namely, $\sigma=0$) does not affect the proof. The next lemma is on some quantitative properties of $\BF$ and $F$. 

\begin{lemma} \label{L:e-v-basic-2}
Suppose $U \in C^3$. The following hold for any $k \in \R$.  
\begin{enumerate}
\item  $F(k, c)$  is $C^\infty$ in $k$ and is well-defined for $c$ close to $U(-h)$ and $U(0)$, $C^1$ near $c=U(0)$, and 
\[
F(k, U(-h)) \in \R, \quad F\big(0, U(-h)\big) =F\big(k, U(0)\big)=  -g, \quad 
\p_c F\big(k, U(0)\big)= U'(0),
\] 
\[
F(0, c) =  \frac 1{\int_{-h}^0 (U-c)^{-2} dx_2} -g = \frac {(U(0) - c)(U(-h)-c)}{y_-(0, c, 0)}-g, \quad c  \in \C\setminus U((-h, 0)).
\]
\item Let $K=k^2$, we have 
\[
\p_K \big(F\big(\sqrt{K}, c \big)\big) >0, \quad \p_K^2 \big(F\big(\sqrt{K}, c \big)\big) <0, \;\ \forall k\in \R, \; c\in \R \setminus U\big((-h, 0]\big),
\]
\[
\p_K F(0, c) \le \int_{-h}^0 \big(U(x_2) - c \big)^2 dx_2, \quad \forall c \in \R \setminus U((-h, 0)),  
\]
where ``='' occurs only at $c= U(-h)$. 
\end{enumerate}
\end{lemma}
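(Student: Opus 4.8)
The plan is to derive everything from the corresponding facts about $Y(k,c)$ collected in Lemma \ref{L:Y-def}, together with the defining identity \eqref{E:dispersion}, namely $F(k,c) = Y(k,c)(U(0)-c)^2 - U'(0)(U(0)-c) - g$. First, for statement (1): the regularity of $F$ in $k$ and for $c$ near $U(-h)$ and $U(0)$ is immediate from parts (b), (c) of Lemma \ref{L:Y-def} and the $C^1$-near-$U(0)$ claim follows from the bound on $|Y|$ in Lemma \ref{L:Y-def}(2) and the vanishing of the prefactor $(U(0)-c)^2$; this also gives $F(k,U(0)) = -g$ and $\p_c F(k,U(0)) = U'(0)$ by differentiating the product and noting $(U(0)-c)^2 Y \to 0$ and $(U(0)-c)Y \to 0$ as $c\to U(0)$ (using the logarithmic bound in Lemma \ref{L:Y-def}(2)). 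That $F(k,U(-h)) \in \R$ follows from $Y(k,U(-h)) \in \R$, which is part of Lemma \ref{L:Y-def}(1); and $F(0,U(-h)) = -g$ follows from $Y(0,U(-h)) = \frac{U'(0)}{U(0)-U(-h)}$ in Lemma \ref{L:Y-def}(1), since then $Y(0,U(-h))(U(0)-U(-h))^2 - U'(0)(U(0)-U(-h)) = 0$. For the $k=0$ formula for $F(0,c)$: here I would use the explicit representation of $y_-(0,c,x_2)$ when $k=0$ — the Rayleigh equation becomes $-y'' + \frac{U''}{U-c}y = 0$, one checks $U-c$ is the non-vanishing solution corresponding to the particular structure, and $y_-(0,c,x_2) = (U(x_2)-c)\int_{-h}^{x_2}(U-c)^{-2}\,ds'$ after normalizing with \eqref{E:y-pm}. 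Then $Y(0,c) = y_-'(0,c,0)/y_-(0,c,0)$ simplifies; plugging into \eqref{E:dispersion} and doing the algebra yields $F(0,c) = \big(\int_{-h}^0(U-c)^{-2}dx_2\big)^{-1} - g$, and the second expression follows since $y_-(0,c,0) = (U(0)-c)\int_{-h}^0(U-c)^{-2}dx_2$ and $(U(0)-c)(U(-h)-c) = (U(0)-c)^2 - (U(0)-c)(U(0)-U(-h))$, matched against the Wronskian-type identity.

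For statement (2), the monotonicity in $K = k^2$: from \eqref{E:dispersion} with $c \in \R\setminus U((-h,0])$, we have $\p_K\big(F(\sqrt K,c)\big) = (U(0)-c)^2 \p_K\big(Y(\sqrt K, c)\big)$, and Lemma \ref{L:Y-def}(8) gives $\p_K(Y(\sqrt K,c)) > 0$ and $\p_K^2(Y(\sqrt K, c)) < 0$; since the prefactor $(U(0)-c)^2 > 0$ (as $c \notin U([-h,0])$ or $c = U(-h)$ only at an endpoint where it is still positive), both sign statements for $F$ follow verbatim. The inequality $\p_K F(0,c) \le \int_{-h}^0 (U(x_2)-c)^2\,dx_2$ with equality iff $c = U(-h)$ is then exactly the last line of Lemma \ref{L:Y-def}(8) multiplied through by $(U(0)-c)^2$: that line states $(U(0)-c)^2 \p_K Y(0,c) \le (U(0)-U(-h))^2 \p_K Y(0,U(-h)) = \int_{-h}^0(U-U(-h))^2 dx_2$ with equality iff $c = U(-h)$, but one must be slightly careful — at $c = U(-h)$ the bound reads $\p_K F(0,U(-h)) = \int_{-h}^0(U-U(-h))^2 dx_2$, which is the equality case, and for other $c$ one gets strict inequality directly since $(U(0)-c)^2\p_K Y(0,c) < \int_{-h}^0(U-U(-h))^2 dx_2 \le \int_{-h}^0(U-c)^2 dx_2$ — wait, the last step needs $\int(U-U(-h))^2 \le \int(U-c)^2$, which is false in general. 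So instead I would prove $\p_K F(0,c) \le \int_{-h}^0(U-c)^2 dx_2$ directly from the $k=0$ formula: differentiating $F(0,c) = \big(\int(U-c)^{-2}\big)^{-1} - g$ is not available since that is only the $k=0$ value; rather I differentiate the relation $(U(0)-c)^2 = y_-(k,c,0)(Y(k,c) \cdot (\text{stuff}))$...

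Let me restate the cleanest route for the final inequality: write $G(K,c) := F(\sqrt K, c) + g = (U(0)-c)^2 Y(\sqrt K,c) - U'(0)(U(0)-c)$. One has $G(0,c) = (U(0)-c)(U(-h)-c)\big/\big((U(0)-c)\int_{-h}^0(U-c)^{-2}dx_2\big) \cdot (U(0)-c) \cdots$ — better to use $\p_K F(0,c) = (U(0)-c)^2\p_K Y(0,c)$ and then invoke a direct variational/Wronskian computation of $\p_K Y(0,c) = \p_K\big(y_-'(k,c,0)/y_-(k,c,0)\big)|_{k=0}$. Differentiating the Rayleigh equation in $K$ at $k=0$ and using the explicit $y_-(0,c,\cdot)$ gives, via the standard reduction-of-order/Green's function identity, $\p_K Y(0,c) = \big(\int_{-h}^0(U-c)^{-2}dx_2\big)^{-2}\int_{-h}^0 \big(\int_{-h}^{x_2}(U-c)^{-2}ds'\big)^2 \cdots$; pairing with $(U(0)-c)^2$ and applying Cauchy–Schwarz (with weight $(U-c)^{-2}$) yields the bound $\le \int_{-h}^0(U-c)^2 dx_2$, with equality exactly when the relevant functions are proportional, which forces $c = U(-h)$. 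I expect \textbf{this last inequality and its equality case to be the main obstacle}: it is the only part not a direct corollary of Lemma \ref{L:Y-def}, requiring the explicit $k=0$ solution formula, a careful first-variation computation in $K$, and a sharp Cauchy–Schwarz argument to pin down the equality case; all the regularity and boundary-value claims in statement (1), and the two sign claims in statement (2), are routine consequences of Lemma \ref{L:Y-def} together with the product structure of \eqref{E:dispersion}.
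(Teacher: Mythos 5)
Your overall route is the paper's own: statement (1) from the product structure of \eqref{E:dispersion} together with Lemma \ref{L:Y-def} and the explicit $k=0$ formulas \eqref{E:y_-0} (the paper defers precisely these computations to Lemmas 4.1 and 4.5 of \cite{LiuZ21}), and the two sign claims in (2) from Lemma \ref{L:Y-def}(8) via $\p_K F(\sqrt K,c)=(U(0)-c)^2\,\p_K Y(\sqrt K,c)$. You were also right to reject the chain $\p_K F(0,c)\le\int_{-h}^0(U-U(-h))^2\,dx_2\le\int_{-h}^0(U-c)^2\,dx_2$: the second inequality genuinely fails for $c\ge U(0)$. Two smaller points: the $C^1$ regularity at $c=U(0)$ and $\p_c F(k,U(0))=U'(0)$ need control of $\p_c\big((U(0)-c)^2Y\big)$, i.e. of $\p_c y_-'(k,c,0)$ near the critical layer (Lemma \ref{L:y0}(8)--(9)), not only the logarithmic bound of Lemma \ref{L:Y-def}(2); and your $k=0$ formula for $y_-$ omits the factor $U(-h)-c$ forced by the normalization \eqref{E:y-pm} (harmless for $Y(0,c)$, but needed to match the second expression for $F(0,c)$).

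The genuine gap is in your final step. No Cauchy--Schwarz is needed, and the equality case as you state it does not work. Using $\p_K Y(0,c)=\int_{-h}^0\tilde y^2\,dx_2$ with $\tilde y=y_-/y_-(0)$ (this is \eqref{E:F-signs-0.1}) and \eqref{E:y_-0}, one gets, for every $c\in\R\setminus U([-h,0])$, that $\p_K F(0,c)=\big(\int_{-h}^0(U-c)^{-2}dx_2\big)^{-2}\int_{-h}^0(U(x_2)-c)^2\big(\int_{-h}^{x_2}(U-c)^{-2}dx_2'\big)^2dx_2$, and the bound by $\int_{-h}^0(U-c)^2dx_2$ follows from the pointwise monotonicity $\int_{-h}^{x_2}(U-c)^{-2}\le\int_{-h}^{0}(U-c)^{-2}$; the same monotonicity shows the inequality is \emph{strict} for every such $c$ (the ratio is $<1$ for $x_2<0$), so ``equality when the functions are proportional, which forces $c=U(-h)$'' cannot be the mechanism. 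Indeed at $c=U(-h)$ the weight $(U-c)^{-2}$ is non-integrable and your quotient formula is an $\infty/\infty$ expression, so the equality case must be handled separately: either note that $y_-(0,U(-h),x_2)=(U(x_2)-U(-h))/U'(-h)$ solves the $k=0$ Rayleigh equation, whence $\p_K F(0,U(-h))=\int_{-h}^0(U-U(-h))^2dx_2$ exactly, or simply quote the last line of Lemma \ref{L:Y-def}(8), which states this identity. (At $c=U(0)$, strictness is immediate since $(U(0)-c)^2=0$ there.) With this patch your argument is complete and coincides with the paper's.
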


Again, as the missing surface has no impact on the proofs, the results in this lemma are mostly contained in the statements and the proofs of Lemmas 4.1 and 4.5 in \cite{LiuZ21}, except $\p_K F>0$ in (2) follows directly from Lemma \ref{L:Y-def}(8). In particular, the form of $F(0, c)$ for $c \in \C\setminus U([-h, 0])$ was obtained in Section 4 in \cite{LiuZ21} based on the explicit formulas   
\be \label{E:y_-0} \begin{split}
& y_- (0, c, x_2) = (U(x_2)-c) \int_{-h}^{x_2} \frac {U(-h)-c}{(U(x_2')-c)^2} dx_2', \\ 
&Y(0, c) = \frac{U'(0) \int_{-h}^0 (U-c)^{-2} dx_2 + (U(0)-c)^{-1} }{ (U(0)-c) \int_{-h}^0 (U-c)^{-2} dx_2}, 
\end{split} \qquad c \in \C\setminus U([-h, 0]). 
\ee

\noindent $\bullet$ As $\BF$ and $F$ are analytic functions in their domains outside $c \in U([-h, 0])$, the {\bf analytic continuation} argument is a standard tool in the study of the spectra of the linearized Euler equation at shear flows. The following lemma also applies to $F(k, c)$ due to Lemma \ref{L:e-v-basic-1}(2). 

\begin{lemma} \label{L:continuation}
Assume $U \in C^3$. Suppose $k_0\in \R$ and $c_0 \in \C \setminus U([-h, 0])$ satisfy $\BF(k_0, c_0)=0$ and $\p_c \BF(k_0, c_0)\ne 0$, then the following hold. 
\begin{enumerate}
\item There exists an analytic function $c(k) \in \C \setminus U([-h, 0])$ defined on a max interval $ (k_-, k_+) \ni k_0$ such that $\BF\big(k, c(k)\big)=0$ and $\p_c \BF\big(k, c (k)\big)\ne 0$. 
\item $c(k) \in \R$ for all $k \in (k_-, k_+)$ if and only if $c_0 \in \R$.  
\item If $k_+ < \infty$ (or $k_- > -\infty$), then 
\begin{enumerate} \item 
$\lim_{k\to (k_+)-} dist(c(k), U([-h, 0])) = 0$ (or $\lim_{k\to (k_-)+} dist(c(k), U([-h, 0])) = 0$ if $k_->-\infty$), or 
\item $\liminf_{k\to (k_+)-} \min \{ |c(k) -c| \, : \, \forall c \text{ s. t. } \BF(k, c)=0, \, c\ne c(k) \} = 0$ (or $\liminf_{k\to (k_-)+} \min \{ |c(k) -c| \, : \,  \BF(k, c)=0, \, c\ne c(k) \} = 0$ if $k_- > -\infty$).
\end{enumerate}
\end{enumerate}
\end{lemma}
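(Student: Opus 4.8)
\emph{Part 1 (existence of the maximal branch).} As noted after Lemma \ref{L:e-v-basic-1}, $\BF$ is analytic on $\R\times\big(\C\setminus U([-h,0])\big)$ (since $y_-(k,c,x_2)$ is analytic in $c$ and in $k^2$ by Lemma \ref{L:basic-y_pm}), and so is $\p_c\BF$. The plan is the standard analytic-continuation argument. Applying the analytic implicit function theorem at $(k_0,c_0)$, where $\BF=0$ and $\p_c\BF\ne0$, produces a locally unique analytic branch $c(k)$ with $c(k_0)=c_0$, $\BF\big(k,c(k)\big)=0$, and $\p_c\BF\big(k,c(k)\big)\ne0$ for $k$ near $k_0$. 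Define $(k_-,k_+)\ni k_0$ to be the union of all open intervals around $k_0$ carrying such a branch with values in $\C\setminus U([-h,0])$; on overlaps the branches agree by the local uniqueness of the implicit function theorem (valid wherever $\p_c\BF\ne0$), so $c(k)$ is a well-defined analytic function on $(k_-,k_+)$ with the asserted properties. Since $\{\BF=0\}=\{F=0\}$ (Lemma \ref{L:e-v-basic-1}(2)), the same $c(k)$ serves for $F$.

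\emph{Part 2 (real branches stay real).} By \eqref{E:BF-conj}, $\BF(k,c)=\overline{\BF(k,\bar c)}$ for $c\notin U((-h,0))$; since $c(k)\notin U([-h,0])$, this applies along the branch, so $\overline{c(k)}$ is again a zero of $\BF(k,\cdot)$, with $\p_c\BF\big(k,\overline{c(k)}\big)=\overline{\p_c\BF\big(k,c(k)\big)}\ne0$. If $c_0\in\R$, then $k\mapsto\overline{c(k)}$ is an analytic branch through the same point $(k_0,c_0)$ meeting the same requirements, hence coincides with $c(k)$ near $k_0$ by local uniqueness; thus the analytic function $\IP\,c(k)$ vanishes on an interval, hence on all of $(k_-,k_+)$, i.e. $c(k)\in\R\setminus U([-h,0])$ throughout. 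Conversely, if $c_0\notin\R$ then $c(k_0)=c_0$ is not real.

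\emph{Part 3 (behaviour at a finite endpoint).} Assume $k_+<\infty$ (the case $k_->-\infty$ is symmetric), and suppose (a) fails, i.e. $\liminf_{k\to k_+-}\mathrm{dist}\big(c(k),U([-h,0])\big)=:2\delta>0$. First, $c(k)$ is bounded as $k\to k_+$: as $|c|\to\infty$ the coefficient $U''/(U-c)\to0$ uniformly on $[-h,0]$, so $y_-,y_-'$ converge to the solutions of $-y''+k^2y=0$ with the same data, whence $\BF(k,c)=(U(0)-c)^2\cosh kh+o(|c|^2)$ as $|c|\to\infty$, uniformly for $k$ in the compact interval $[k_0,k_+]$; hence $\BF(k,c)\ne0$ for $|c|$ large and $c(k)$ stays bounded. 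Choosing $k_n\uparrow k_+$ and passing to a subsequence, $c(k_n)\to c_*$ with $c_*\notin U([-h,0])$ and $\BF(k_+,c_*)=0$ by continuity. If $\p_c\BF(k_+,c_*)\ne0$, the analytic branch through $(k_+,c_*)$ is locally unique and must contain $c(k_n)$ for large $n$, so $c(k)$ extends analytically past $k_+$, contradicting maximality; hence $\p_c\BF(k_+,c_*)=0$. Since $\C\setminus U([-h,0])$ is connected and $\BF(k_+,\cdot)$ is analytic there and not identically zero (it blows up at $\infty$), $c_*$ is a zero of some order $m\ge2$. Then for any small disk $D$ about $c_*$ with $\overline D\subset\C\setminus U([-h,0])$ and $\BF(k_+,\cdot)\ne0$ on $\partial D$, the argument principle gives that $\BF(k,\cdot)$ has $m\ge2$ zeros (with multiplicity) in $D$ for $k$ close to $k_+$; as $c(k_n)\in D$ is a simple zero for large $n$ (because $\p_c\BF\big(k_n,c(k_n)\big)\ne0$), there is a zero of $\BF(k_n,\cdot)$ in $D$ distinct from $c(k_n)$. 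Shrinking $D$ and diagonalizing yields a sequence along which $\min\{|c(k)-c|:\BF(k,c)=0,\ c\ne c(k)\}\to0$, which is (b).

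\emph{Main obstacle.} Parts 1--2 are routine implicit-function-theorem and uniqueness bookkeeping. The real content is in Part 3: one must (i) exclude $c(k)\to\infty$, which requires the large-$|c|$ asymptotics of $\BF$ (equivalently of $y_-$) uniformly for $k$ in a compact interval; and (ii) when the branch terminates because $\p_c\BF$ degenerates at the limiting point, extract a genuinely distinct root colliding with $c(k)$, which is exactly where the analyticity of $\BF$ in $c$ on the connected domain $\C\setminus U([-h,0])$ and the argument principle are indispensable. (In the non-real case one may instead invoke Howard's semicircle theorem \eqref{E:semi-circle} to bound $c(k)$, but the asymptotic estimate is needed for real branches regardless.)
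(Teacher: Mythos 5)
Your overall route is the same as the paper's: the paper simply quotes Lemma 4.3 of \cite{LiuZ21} and, in Remark \ref{R:continuation}, sketches exactly the ingredients you use — the implicit function theorem, the conjugation symmetry \eqref{E:BF-conj}, the winding-number/argument-principle count on bounded domains where $\BF(k,\cdot)$ is holomorphic and nonvanishing on the boundary, and boundedness of the roots for $k$ in compact intervals (supplied there by Lemma \ref{L:e-v-large}(2), which you essentially re-derive from the large-$|c|$ behavior of the Rayleigh equation). Parts 1 and 2 are fine as written.

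There is, however, a genuine gap in Part 3, in the subcase $\p_c\BF(k_+,c_*)\ne 0$. You know only that $c(k_n)=\tilde c(k_n)$ along a sequence $k_n\uparrow k_+$, where $\tilde c$ is the implicit-function-theorem branch through $(k_+,c_*)$. From this you conclude ``so $c(k)$ extends analytically past $k_+$''; but agreement at a sequence accumulating only at the endpoint $k_+$ does not give $c\equiv\tilde c$ on a left neighborhood of $k_+$ (the identity theorem needs an interior accumulation point), and a priori the continuous curve $c(k)$ could leave the neighborhood of $c_*$ between the $k_n$'s, in which case $\tilde c$ is not an extension of $c$ and no contradiction with maximality is reached. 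The repair is the contour argument the paper's remark alludes to: fix $r>0$ with $\overline{B(c_*,r)}\subset\C\setminus U([-h,0])$ and $c_*$ the only zero of $\BF(k_+,\cdot)$ in $\overline{B(c_*,r)}$; by continuity and compactness $\BF(k,c)\ne 0$ for all $c\in\p B(c_*,r)$ and $k$ close to $k_+$, so once $c(k_n)\in B(c_*,r)$ for such a $k_n$, the continuous zero curve $c(k)$ cannot cross $\p B(c_*,r)$ and stays in the disk for all $k\in[k_n,k_+)$; uniqueness of the zero in the disk (index $=1$) then gives $c=\tilde c$ there, and $\tilde c$ furnishes the forbidden extension. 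A smaller slip: the negation of (3a) is $\limsup_{k\to k_+-}\mathrm{dist}\big(c(k),U([-h,0])\big)>0$, not $\liminf>0$; this is harmless because your argument only uses a subsequence staying at distance $\ge\delta$ from $U([-h,0])$, which the correct negation provides (alternatively, for nonreal branches the semicircle bound \eqref{E:semi-circle} gives the needed compactness directly). With these repairs your proof is complete and matches the paper's intended argument.
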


\begin{remark} \label{R:continuation}
This is exactly Lemma 4.3 in \cite{LiuZ21} where the absence of the surface tension does not affect the proof. It is based on the non-negative integer valued index of $\BF$ (or any general complex analytic functions) on various appropriate bounded piecewise smooth domains $\Omega \subset \C \setminus U([-h, 0])$ satisfying that $\BF(k, \cdot)$ is holomorphic in $\Omega$, $C^0$ in $\bar \Omega$, and $\BF(k, \cdot) \ne 0$ on $\p \Omega$, 
\be \label{E:ind}
\text{Ind}\big(\BF(k, \cdot), \Omega\big):=\frac 1{2\pi i} \oint_{\p \Omega} \frac {\p_c \BF(k, c)}{\BF(k, c)} dc \in \mathbb{N} \cup \{0\},
\ee 
which is equal to the total number of zeros of $\BF(k, \cdot)$ inside $\Omega$, counting their multiplicities. In particular, given a bounded domain $\Omega \subset \C$ with piecewise smooth $\p \Omega$ and $k_1 <k_2$ such that $\BF(k, c) \ne 0$ for any $k \in [k_1, k_2]$ and $c \in \p \Omega$, Ind$\big(\BF(k, \cdot), \Omega\big)$ is a constant in $k \in [k_1, k_2]$. If Ind$\big(\BF(k, \cdot), \Omega\big)=1$, then the unique root $c(k)$ of $\BF(k, \cdot)$ in $\Omega$ is simple and analytic in $k$. In addition, if $k_2=-k_1$, then $c(k)$ is even in $k$ due to the evenness of $\BF$ in $k$ and the uniqueness of its root in $\Omega$. The proof of statement also needs Lemma \ref{L:e-v-large}(2) which ensures the roots to be bounded for $k$ in any compact interval. 
\end{remark}

\section{Eigenvalue distribution of the linearized gravity water waves} \label{S:e-values}

In this section, we study the eigenvalues of the linearized gravity waves in details. A complete eigenvalue distribution will be obtained under certain conditions. On the one hand, we shall focus on those aspects which are different from the linearized capillary gravity waves, e.g. the instability in wave number $|k|\gg1$. On the other hand, some similar results as in \cite{LiuZ21} will be obtained under substantially weaker assumptions, which requires some further detailed analysis on $Y(k, c)$ and $F(k, c)$ given in the next two subsections.

\subsection{Further analysis of $Y(k, c)$} \label{SS:Y}

In this subsection, we extend the analysis of $Y(k, c)$ defined in \eqref{E:Y}. In particular, instead of using Lemma 3.24 in \cite{LiuZ21}, we shall obtain the same improved regularity of $Y(k, \cdot)$ near $c= U(-h)$ 
without any assumption additional to the monotonicity of $U(x_2)$. 

\begin{lemma} \label{L:Y-1}
Suppose $U\in C^{l_0}$. Assume $l_0\ge 3$, then for any $M>0$ there exists $C>0$ depending on $U$ and $M$ such that for any $j \ge 0$ and $c \in \C$ satisfying 
\be \label{E:circle-M}
\big|c - \tfrac 12 \big( U(-h) + U(0)\big)\big| \ge \tfrac 12 \big( U(0) - U(-h)\big) + M, 
\ee
it holds 
\be \label{E:Y-bound}
|\p_c^j \big(Y(k, c) - k \coth kh\big)| \le Cj! \mu  M^{-j-1}, \quad \mu = (1+k^2)^{-\frac 12}. 
\ee
Moreover, if $l_0\ge 4$ 
then for any $q\in [1, \infty)$, $j_1, j_2\ge 0$, $j_2 \le 2$, and $j_1+j_2\le l_0-4$, $\p_k^{j_1} \p_{c_R}^{j_2} Y(k, c)$ are $L_k^\infty L_{c_R}^q$ locally in $k$ and $c_R$ in the domain $D(Y)$. 
\end{lemma}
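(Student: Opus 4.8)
The plan is to reduce both assertions to two representations of $Y$ already available from Lemma \ref{L:Y-def}: the explicit formula \eqref{E:y_-0} at $k=0$ together with the monotonicity/concavity in $K=k^2$ from Lemma \ref{L:Y-def}(8), and the Cauchy-type integral representations \eqref{E:Y-Cauchy-1}--\eqref{E:Y-Cauchy-2}. For the first (pointwise derivative) bound, I would start from \eqref{E:Y-Cauchy-1},
\[
Y(k,c) - k\coth kh = \frac 1\pi \int_{U(-h)}^{U(0)} \frac{Y_I(k,c')}{c'-c}\, dc',
\]
valid whenever $y_-(k,\cdot,0)$ has no zero in $\C$; for general $k$ one first dispenses with the finitely many exceptional $c$ (the eigenvalues of the fixed channel flow) by noting they all lie in the Howard semicircle and hence are excluded by \eqref{E:circle-M} once $M>0$. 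Under \eqref{E:circle-M} one has $|c'-c|\ge M$ for every $c' \in U([-h,0])$, so differentiating $j$ times in $c$ under the integral sign gives
\[
|\p_c^j(Y - k\coth kh)| \le \frac{j!}{\pi} M^{-j-1}\int_{U(-h)}^{U(0)} |Y_I(k,c')|\, dc'.
\]
It then remains to bound $\int |Y_I(k,c')|\,dc' \le C\mu$ uniformly in $k$. Using the formula for $Y_I$ in Lemma \ref{L:Y-def}(5) together with the lower bound \eqref{E:y-lower-b-1} on $y_{0-}(k,c,0)$ in the range where $(U(-h)-c)(U(x_2)-c)\ge 0$ — which for $c\in U([-h,0))$ is automatic at $x_2=x_2^c$ — and the sharp two-sided estimate \eqref{E:y-lower-b-2} for $|\IP\,y_{0-}(k,c,0)|$, one gets $|Y_I(k,c')| \lesssim \mu^2 |U''(x_2^{c'})| \sinh(\mu^{-1}(x_2^{c'}+h))\sinh(\mu^{-1}|x_2^{c'}|) / |y_{0-}(k,c',0)|^2 \lesssim \mu^2 / \sinh^2(\ldots)$ after cancellation, and integrating in $c' = U(x_2^{c'})$ (changing variables, with $dc' = U'(x_2^{c'})\,dx_2^{c'}$ and $U'$ bounded above and below) yields the desired $O(\mu)$; this also recovers, and slightly sharpens, Lemma \ref{L:Y-def}(2)--(4). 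Alternatively, and more cheaply, at $k=0$ the bound $|\p_c^j(Y(0,c))| \le C\, j!\, M^{-j-1}$ follows directly from the explicit rational expression \eqref{E:y_-0} for $Y(0,c)$ — the denominator $(U(0)-c)\int_{-h}^0 (U-c)^{-2}dx_2$ is bounded below and analytic in the region \eqref{E:circle-M}, so Cauchy estimates on a disk of radius $\sim M$ apply — and for $|k|\ge 1$ one uses $\mu \asymp |k|^{-1}$ and Lemma \ref{L:Y-def}(3) for the leading behavior; but routing everything through \eqref{E:Y-Cauchy-1} is cleanest because the $\mu$-dependence is then transparent.

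For the second assertion — the $L^\infty_k L^q_{c_R}$-type regularity of $\p_k^{j_1}\p_{c_R}^{j_2} Y$ in $D(Y)$ — the point is that away from the singular set $\R \times U([-h,0])$ everything is already smooth by Lemma \ref{L:Y-def}(b), so the only issue is integrability of $c_R \mapsto \p_k^{j_1}\p_{c_R}^{j_2}Y(k,c_R)$ as $c_R$ crosses $U([-h,0))$, uniformly in $k$ locally. Here I would use \eqref{E:Y-Cauchy-2},
\[
Y(k,c) = -\CH(Y_I(k,\cdot))(c) + i Y_I(k,c) + k\coth kh, \qquad c \in U([-h,0)),
\]
which splits $Y$ on the singular interval into the Hilbert transform of $Y_I(k,\cdot)$ plus $Y_I(k,\cdot)$ itself (plus a smooth piece). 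The claimed regularity of $Y$ then follows from the corresponding regularity of $Y_I$, which is exactly Lemma \ref{L:Y-def}(6): for $j_1+j_2 \le l_0-4$, $j_2\le 2$, one has $\p_k^{j_1}\p_{c_R}^{j_2}Y_I \in L^\infty_k W^{1,q}_{c_R}$ locally, hence in particular $\p_k^{j_1}\p_{c_R}^{j_2}Y_I \in L^\infty_k L^q_{c_R}$; and since the Hilbert transform $\CH$ is bounded on $L^q(\R)$ for $1<q<\infty$ (and one handles $q=1$ by the embedding $W^{1,q'}\hookrightarrow L^\infty$ on a bounded interval for an appropriate $q'>1$, or simply restricts to $q>1$ and interpolates), $\p_{c_R}^{j_2}$ commutes with $\CH$, so $\p_k^{j_1}\p_{c_R}^{j_2}\CH(Y_I) = \CH(\p_k^{j_1}\p_{c_R}^{j_2}Y_I)$ is also in $L^\infty_k L^q_{c_R}$ locally. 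Matching this against the already-known smooth behavior on $\R\times(\C\setminus U([-h,0]))$ along the real axis from either side (using the one-sided limits and the $C^\alpha$ statement of Lemma \ref{L:Y-def}(c) to see the pieces agree as distributions/measures) gives the global statement on $D(Y)$.

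The main obstacle I expect is the uniformity in $k$ — specifically, extracting the clean factor $\mu = (1+k^2)^{-1/2}$ in \eqref{E:Y-bound} rather than just a $k$-dependent constant, and keeping the $W^{1,q}_{c_R}$ bounds on $\p_k^{j_1}\p_{c_R}^{j_2}Y_I$ uniform in $k$ near the worst singular point $c=U(-h)$, where $y_{0-}$ and its derivatives are only $C^\alpha$ (not $C^1$) in $c$ as recorded in Lemma \ref{L:y0}. This is precisely where one must use the sharp estimates \eqref{E:y-lower-b-1}--\eqref{E:y-lower-b-2} and the endpoint bounds \eqref{E:pcy2}--\eqref{E:pcy4} of Lemma \ref{L:y0} (which carry the logarithmic loss $\log\big(\mu/\min\{\mu,|U(-h)-c|\}\big)$ — harmless in $L^q_{c_R}$ but not in $L^\infty_{c_R}$, which is why the statement is phrased with $L^q$ and with $j_2\le 2$), rather than the cruder Lemma 3.24 of \cite{LiuZ21}; handling the near-$U(-h)$ region carefully, without the auxiliary hypothesis $U''\ne 0$, is the whole content of the improvement and the only genuinely delicate estimate. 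The rest is bookkeeping: splitting $[-h,0]$ into $\CI_1\cup\CI_2\cup\CI_3$ as in \eqref{E:CIs}, using Lemma \ref{L:y_pm-1} on $\CI_1\cup\CI_3$ where $y_-$ is a controlled perturbation of $\mu\sinh(\mu^{-1}(\cdot+h))$, and using the $B$-matrix normal form of Lemma \ref{L:y0}(3) together with \eqref{E:y-_4} inside the logarithmic layer $\CI_2$.
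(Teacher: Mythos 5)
Your overall architecture (a Cauchy-type representation of $Y-k\coth kh$ against $Y_I$ for the derivative bound, and a reduction of the $L^\infty_k L^q_{c_R}$ statement to the regularity of $Y_I$ plus boundedness of the Hilbert transform / convolution with $\tfrac1{c'+ic_I}$) is the same as the paper's, and your large-$k$ computation of $\int|Y_I|\,dc'\lesssim\mu$ by the change of variables $c'=U(x_2)$ is essentially the paper's. But there is a genuine gap at the central technical point: both \eqref{E:Y-Cauchy-1} and \eqref{E:Y-Cauchy-2} are stated (Lemma \ref{L:Y-def}(7)) only for those $k$ with $y_-(k,c,0)\ne 0$ for \emph{all} $c\in\C$. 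For general $k$ the fixed-channel problem can have unstable/stable modes; these are poles of $Y(k,\cdot)$ lying inside the contour, so the representation acquires extra residue/contour terms, and your remark that the "finitely many exceptional $c$" lie in the Howard semicircle and are "excluded by \eqref{E:circle-M}" does not repair this — the evaluation point being far from the poles does not restore the formula, and in fact the zeros with $c_I>0$ need not be finitely many: they can accumulate on $U\big((U'')^{-1}(0)\big)\cap U((-h,0))$. Handling exactly this is the content of the paper's proof: for each fixed $k_0$ it builds a region bounded by the graphs of $\pm\phi$ enclosing the accumulation set, leaving only finitely many isolated zeros $c_j$, and derives the modified representation \eqref{E:Y-Cauchy-2.5}--\eqref{E:Y-Cauchy-2.8} with additional integrals over $\p B(c_j,2\ep)$ and over the graphs of $\pm\phi$; the second assertion (regularity near $c=U(-h)$, uniformly and locally in $k$) is then read off from that formula together with Lemma \ref{L:Y-def}(6). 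Without this (or some substitute controlling the channel eigenvalues), your use of \eqref{E:Y-Cauchy-2} for the $L^\infty_kL^q_{c_R}$ claim, and of \eqref{E:Y-Cauchy-1} for \eqref{E:Y-bound} at moderate $k$, is unjustified.

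Two further points. First, your fallback for \eqref{E:Y-bound} at $|k|\ge 1$ via Lemma \ref{L:Y-def}(3) cannot work: that lemma gives $O(\mu^{\alpha-1})$, which grows in $|k|$, whereas \eqref{E:Y-bound} demands the decaying factor $\mu$; the decay must come from the $Y_I$-integral (valid there because \eqref{E:y-_1} shows $y_-(k,\cdot,0)$ has no zeros at all once $|k|$ is large, which is also the correct justification of \eqref{E:Y-Cauchy-1} in that regime — a justification you never give). Second, for bounded $k$ (where $\mu\sim1$ and only a uniform constant is needed) the paper replaces the inner contour by $\p B_M$, a circle of radius $\tfrac12(U(0)-U(-h))+\tfrac M2$, on which $Y$ is bounded by the semicircle theorem and compactness; your explicit-formula/Cauchy-estimate idea only covers $k=0$ as written, though it could be salvaged for $|k|\le k_1$ by combining Cauchy estimates on disks of radius $\sim M$ with the sup bound of Lemma \ref{L:Y-def}(4). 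As it stands, the proposal identifies the right ingredients but omits the construction that makes the representation formulas legitimate in the presence of channel-flow eigenvalues, which is the only genuinely delicate part of the lemma.
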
 
  

\begin{proof} 
We first work on the regularity of $Y$ near $c=U(-h)$. Fix $k_0 \in \R$. 
Our proof is based on an integral formula of $Y(k, c)$ for $k$ near $k_0$ and $c\in D(Y(k, \cdot)) \setminus U([-h, 0])$. 

The first step is to identify a domain where there are only finitely many zero points of $y_-(k, \cdot, 0)$.  
Due to the Semi-circle Theorem for the linearized Euler equation at a shear flow in a fixed 2-dim channel $x_2 \in (-h, 0)$, its unstable and stable modes, which correspond to the zero points of $y_-(k , \cdot, 0)$, are contained inside the disk \eqref{E:semi-circle} with a diameter $U([-h, 0])$. The analyticity of $y_-$ in $c$ with $c_I >0$ yields all zero points of $y_-(k_0 , \cdot, 0)$ with $c_I>0$ are isolated. Due to the continuity of $y_-$ in $c$ restricted to $c_I\ge 0$, Lemma \ref{L:y0}(4)(5), and the Semi-circle Theorem, the set $\CK$ of  all accumulation points of the zero points of $y_-(k_0 , \cdot, 0)$ is compact and  
\[
\CK \subset \subset U\big((U'')^{-1}(0)\big) \cap U\big((-h, 0)\big).
\]   
Therefore, for any $\ep_0>0$, there exists a function $\phi \in C^\infty (\R, [0, \ep_0])$ such that 
\[ \begin{split} 
&\exists n \in \N \cup \{0\}, \ep \in (0, \ep_0], \;\; \inf \{ |c - \tilde c_R - i \phi(\tilde c_R)|: \   y_-(k_0, c, 0) =0, \, \tilde c_R \in \R \} > 2\ep, \\
&\phi|_{\R \setminus [U(-h) +\ep, U(0)-\ep]} \equiv 0,  \;\;  \{ c\in \CU \mid y_- (k_0, c, 0) =0\} = \{ c_j = c_{jR} + i c_{jI} \mid j=1, \ldots, n\},  \\
&\text{where } \;  \CU \triangleq \{c_R +i c_I \mid  \pm c_I \ge \phi(c_R)\}. 
\end{split}\]
Roughly $\phi (c_R) \in [0, \ep_0]$ is supported inside $U\big( (-h, 0)\big)$ and the region bounded by the graphs of $\pm \phi$ contains all except finitely many roots of $y_-(k_0, \cdot, 0)$.  
Clearly $\phi$ can be constructed so that it is supported in any prescribed neighborhood of $\CK \subset \C$. Let 
\[
\wt \CK = \{ c_R + ic_I \mid c_R \in [U(-h), U(0)], \, |c_I|\le \phi (c_R)\} \supset [U(-h), U(0)], 
\]
which is a compact subset of $\C$ with its smooth upper and lower boundaries given by the graph of $\pm \phi$ restricted to $[U(-h), U(0)]$. For $k$ close to $k_0$, $Y(k, c)$ is holomorphic in $c \in \C\setminus \big(\wt \CK \cup (\cup_{j=1}^n B(c_j, \ep)) \cup (\cup_{j=1}^n B(\overline {c_j}, \ep))\big)$, where $B(c, \ep)$ denotes the open ball in $\C$ centered at $c$ with radius $\ep$. 

Our next step is to derive an integral formula of $Y(k, c)$. For any $r>0$, let 
\be \label{E:CD_r}
\wt \CK_r = \{ c_R + ic_I \mid c_R \in [U(-h) - r, U(0)+r], \, |c_I| \le \phi(c_R) + r\}  \subset \C,
\ee 
which is a neighborhood of $\wt \CK$ roughly with the margin $r$. For 
\[
r_2 \gg 1\gg r_1>0 \; \text{ and } \; c \in \wt \CK_{r_2} \setminus \big(\wt \CK_{r_1} \cup (\cup_{j=1}^n B(c_j, \ep)) \cup (\cup_{j=1}^n B(\overline {c_j}, \ep)) \big), 
\]
the Cauchy Integral Theorem yields 
\begin{align*} 
Y(k, c) = & \frac 1{2\pi i} \Big( \oint_{\p \wt \CK_{r_2}} - \oint_{\p \wt \CK_{r_1}} \Big) \frac {Y(k, c')}{c'-c} dc' - \frac 1{2\pi i} \sum_{j=1}^{n} \Big( \oint_{\p B(c_j, 2\ep)} + \oint_{ \p B(\overline {c_j}, 2\ep)} \Big) \frac {Y(k, c')}{c'-c} dc'. 
\end{align*}
Since the term $\frac {U'' (x_2)}{U(x_2) - c'} \to 0$ in the Rayleigh equation \eqref{E:Ray-H1-1} as $ |c'|\to \infty$, one may prove $y_-(k, c', x_2) \to k^{-1} \sinh k(x_2+h)$ and $Y(k, c') \to k\coth kh$ as $ |c'|\to \infty$ (see Lemma 3.3 and the proof of Lemma 3.21 in \cite{LiuZ21}), which also holds even if $k=0$. Therefore  the outer integral along $\p \wt \CK_{r_2}$ converges to $k\coth kh$ as $r_2 \to +\infty$ and we obtain 
\begin{align*} 
Y(k, c) =& k\coth kh - \frac 1{2\pi i} \oint_{\p \wt \CK_{r_1}} \frac {Y(k, c')}{c'-c} dc' - \frac 1{2\pi i} \sum_{j=1}^{n} \Big( \oint_{\p B(c_j, 2\ep)} + \oint_{ \p B(\overline {c_j}, 2\ep)} \Big) \frac {Y(k, c')}{c'-c} dc'.   
\end{align*}
We observe that $\p \wt \CK_{r_1}$ is the union of the two graphs of $\pm (r_1+\phi)$ over $[U(-h), U(0)]$, the left half of the boundary of the square centered at $U(-h)$ with the horizontal and vertical side length $2r_1$, and the right half boundary of such a square centered at $U(0)$. As $r_1 \to 0+$,  due to the continuity of $Y$ at $c\ne U(0)$ and its logarithmic upper bound near $U(0)$ (Lemma \ref{L:Y-def}(2)), the Cauchy integrals along the half boundaries of the squares converge to zero as $r_1 \to 0+$. Therefore the above integral formula yields 
\be \label{E:Y-Cauchy-2.5} 
Y(k, c) = k\coth kh - \frac 1{2\pi i} \sum_{j=1}^{n} \Big( \oint_{\p B(c_j, 2\ep)} + \oint_{ \p B(\overline {c_j}, 2\ep)} \Big) \frac {Y(k, c')}{c'-c} dc'   + I_1(k, c), 
\ee
for 
\[
c\in \C \setminus \big(\wt \CK \cup (\cup_{j=1}^n B(c_j, \ep)) \cup (\cup_{j=1}^n B(\overline {c_j}, \ep)) \big),
\]
where, for  $c \notin \wt \CK$, 
\be \label{E:Y-Cauchy-2.7} \begin{split}
I_1 (k, c) = & \lim_{r\to 0+} \frac{1}{2\pi i} \sum_\pm \Big(\pm \int_{U(-h)}^{U(0)} \frac{Y (k, c' \pm i\phi(c')\pm ir) }{c' \pm i\phi(c') \pm ir-c}d(c' \pm i\phi(c'))\Big)\\
=&  \frac{1}{2\pi i} \sum_\pm \Big(\pm \int_{c'\in supp(\phi)} \frac{Y (k, c' \pm i\phi(c') \pm 0i) }{c' \pm i\phi(c') -c}d(c' \pm i\phi(c'))\Big) \\
&+  \frac 1\pi \int_{\R \setminus supp(\phi)}\frac {Y_I(k, c')}{c'-c} dc'.
\end{split} \ee  
Here we used the regularity of $Y(k, c)$ (Lemma \ref{L:Y-def}) and the property $supp(Y_I) \cap \R \subset [U(-h), U(0)]$. For $c \in \R \setminus supp (\phi)$ and $c_I>0$, the above formula applies to $c+ i c_I \notin \wt \CK$. By taking $c_I \to 0+$, we obtain  
\be \label{E:Y-Cauchy-2.8} \begin{split}
I_1 (k, c)= &  \frac{1}{2\pi i} \sum_\pm \Big(\pm \int_{c'\in supp(\phi)} \frac{Y (k, c' \pm i\phi(c') \pm 0i) }{c' \pm i\phi(c') -c}d(c' \pm i\phi(c'))\Big) \\
&-\CH \big( \chi_{\R\setminus supp(\phi)}  Y_I(k, \cdot)\big) (c) + i Y_I(k, c), \qquad\qquad c \in \R \setminus supp (\phi). 
\end{split} \ee  
For any $c\in \C$ such that $(k_0, c) \in D(Y)$ (namely, $y_-(k_0, c, 0)\ne 0$), by taking sufficiently small $\ep_0$ in the choice of $\phi$, the above formulas \eqref{E:Y-Cauchy-2.5}--\eqref{E:Y-Cauchy-2.8} apply to $Y(k, c)$ for $k$ close to $k_0$. 

While 
analytic in $(k, c) \in D(Y)\setminus \R \times U([-h, 0])$ and, when restricted to $c_I\ge 0$, $Y$ is $C^{l_0-2}$ in $c \in D(Y(k, \cdot)) \setminus \R \times \{U(-h)\}$ (Lemma \ref{L:Y-def}), so we only need to focus on $c$ near $U(-h)$ restricted to $c_I\ge 0$. In the above formulas, the integrals along $\p B(c_j, 2\ep)$ and its conjugate are smooth in $c$ near $U(-h)$ and thus we only need to consider the regularity of the term $I_1(k, c)$. We recall $U(-h)$ is an interior point of $supp(\phi)^c \subset \R$ according to the definition of $\phi$. From Lemma \ref{L:Y-def}(6), even though $Y_I$ is $W^{3, q}$ locally in $c\in U\big([-h, 0)\big)$ if $l_0\ge 4$, when viewed as a function of $c$ in a whole neighborhood of $U(-h)$, only $\p_k^{j_1} \p_{c_R}^2 Y_I \in L^\infty$ holds due to the jump of $\p_{c_R}^2 Y_I$ at $c=U(-h)$. The desired regularity of $Y$ follows from that of $Y_I$, the above representation formula of $Y$, and the boundedness in $L^q$ of the convolution by $\frac 1{c' + i c_I}$ uniform in the parameter $c_I \ge 0$. 

Finally, it remains to prove inequality \eqref{E:Y-bound}.  According to \eqref{E:y-_1}, there exists $k_1>0$ such that for any $|k| \ge k_1$, it holds  
\[
|y_-(k, c, 0)| \ge \tfrac 12 \mu \sinh \mu^{-1}h, \quad \forall \, c \in \C, 
\]
and thus $Y(k, c)$ is well-defined for all $c\in \C$. In this case, \eqref{E:Y-Cauchy-1} for $c \notin U([-h, 0])$  becomes 
\begin{align*}
Y(k, c) - k\coth kh= & \frac{1}{\pi}\int_{U(-h)}^{U(0)} \frac{Y_I (k, c')}{c'-c}dc' =  \frac{1}{\pi}\int_{U(-h)}^{U(0)} \frac {\pi U''(U^{-1} (c')) y_{-} (k, c', U^{-1}(c'))^2}{U'(U^{-1} (c')) |y_{-} (k, c', 0)|^2 (c'-c)}  dc'. 
\end{align*}
Again,  \eqref{E:y-_1} implies, for $|k| \ge k_1$ and $c' = U(\tilde x_2^c)$ with $\tilde x_2^c \in [-h, c]$, 
\[
|y_-(k, c', \tilde x_2^c)| \le 2 \mu \sinh \mu^{-1} (\tilde x_2^c +h) \implies |Y_I (k, c')| \le C e^{\frac 2\mu \tilde x_2^c}. 
\]
Therefore 
\begin{align*}
& |\p_c^j \big(Y(k, c) - k \coth kh\big)| \le \frac {C (j!)}{dist(c, U([-h, 0]))^{j+1}} \int_{U(-h)}^{U(0)} e^{\frac 2\mu \tilde x_2^c} dc' \\
\le & \frac {C (j!)}{dist(c, U([-h, 0]))^{j+1}} \int_{-h}^0 e^{\frac {2x_2}\mu } dx_2 \le  \frac {C (j!) \mu}{dist(c, U([-h, 0]))^{j+1}},
\end{align*}
where the substitution $c'= U(\tilde x_2^c)$ was used in the above integration. Hence inequality \eqref{E:Y-bound} follows for $|k|\ge k_1$. For $|k| \le k_1$, let $B_M \subset \C$ be the open disk centered at  $\tfrac 12 \big( U(-h) + U(0)\big)$ with radius $\tfrac 12 \big( U(0) - U(-h)\big) + \frac M2$. From the Semi-circle Theorem, $y_- (k, c, 0) \ne 0$ for all $k$ and $c \notin B_M$ and $Y(k, c)$ is analytic. From the same procedure in deriving \eqref{E:Y-Cauchy-2.5}, but replacing the inner contour by $\p B_M$, we obtain 
\[
\p_c^j \big(Y(k, c) - k \coth kh\big) =  - \frac{j!}{2\pi i}\oint_{\p B_M} \frac {Y(k, c')}{(c' -c)^{j+1}} dc'. 
\]
Since $\max \{|Y(k, c')| \mid |k| \le k_1, \, c' \in \p B_M\} <\infty$, due to the Semi-circle Theorem and the regularity of $Y$, \eqref{E:Y-bound} follows immediately.  
\end{proof}

The following corollary is a direct consequence of the above lemma, the definition \eqref{E:dispersion} of $F(k, c)$, and Lemma \ref{L:Y-def}. 

\begin{corollary} \label{C:F}
Assume $U\in C^{l_0}$, $l_0\ge 4$, then, when restricted to $c_I\ge 0$ and near $c=U(-h)$, $F$ is $C^{\infty}$ in $k$ and $\p_k^{j_1} \p_c^{j_2} F$ is $C^{\alpha}$ for any $\alpha \in [0, 1)$, $j_2=0,1$, and $0\le j_1 \le l_0-4-j_2$. 
\end{corollary}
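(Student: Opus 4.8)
The plan is to derive Corollary~\ref{C:F} directly from the definition \eqref{E:dispersion} of $F(k,c)$, namely
\[
F(k,c) = Y(k,c)\big(U(0)-c\big)^2 - U'(0)\big(U(0)-c\big) - g,
\]
by reading off the regularity of each factor. First I would note that the polynomial tail $-U'(0)(U(0)-c)-g$ is entire in $c$ and constant in $k$, so it contributes nothing to the regularity obstruction; likewise $(U(0)-c)^2$ is a fixed polynomial in $c$ that is smooth (indeed entire) and $k$-independent. Hence all the regularity of $F$ near $c=U(-h)$ is inherited from that of the factor $Y(k,c)$, and the claimed statements are exactly a transcription of the corresponding regularity of $Y$ established in Lemma~\ref{L:Y-1} (together with the qualitative smoothness-in-$k$ from Lemma~\ref{L:Y-def}).

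The key steps, in order, are: (i) observe that $c=U(-h)$ is an interior point of the domain $D(Y)$, since by the Semicircle Theorem $y_-(k,U(-h),0)\neq 0$ (this is recorded, e.g., via $Y(0,U(-h))=U'(0)/(U(0)-U(-h))$ in Lemma~\ref{L:Y-def}(1)), so $F$ is genuinely well-defined in a full neighborhood of $U(-h)$ when restricted to $c_I\ge 0$; (ii) invoke Lemma~\ref{L:Y-def}, which gives that $Y$ is $C^\infty$ in $k$ on $D(Y)$ and, when restricted to $c_I\ge 0$, that $Y_I$ (hence, through the representation formulas, $Y$) has the relevant Sobolev-type regularity; (iii) invoke the new Lemma~\ref{L:Y-1}, whose second assertion states that for $j_1+j_2\le l_0-4$, $j_2\le 2$, the mixed derivative $\p_k^{j_1}\p_{c_R}^{j_2} Y$ lies in $L^\infty_k L^q_{c_R}$ locally, and whose proof (the representation formula \eqref{E:Y-Cauchy-2.5}--\eqref{E:Y-Cauchy-2.8} plus the $L^q$-boundedness of convolution with $1/(c'+ic_I)$ uniformly in $c_I\ge 0$) in fact yields that the only loss of regularity at $c=U(-h)$ comes from the jump of $\p_{c_R}^2 Y_I$ there; (iv) conclude that for $j_2=0,1$ the derivatives $\p_{c_R}^{j_2}Y$ are continuous — indeed $C^\alpha$ for any $\alpha\in[0,1)$ — near $U(-h)$, because then the distributional $c_R$-derivative of $\p_{c_R}^{j_2}Y$ is an $L^\infty$ (or $W^{1,q}$) function with no jump worse than a bounded one, which embeds into $C^\alpha$; and (v) multiply by the smooth $k$-independent polynomial factors to transfer all of this to $F$, keeping track that the number of $k$-derivatives one may take is reduced by $j_2$, i.e. $0\le j_1\le l_0-4-j_2$.

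The main obstacle — really the only nontrivial point — is step (iv): translating the statement "$\p_k^{j_1}\p_{c_R}^{j_2}Y\in L^\infty_k L^q_{c_R}$ locally for $j_2\le 2$" into the pointwise conclusion "$\p_k^{j_1}\p_c^{j_2}F$ is $C^\alpha$ for $j_2=0,1$". This requires the observation that $j_2=0,1$ is one derivative below the threshold $j_2=2$ at which the jump of $\p_{c_R}^2 Y_I$ (and hence of $\p_{c_R}^2 Y$) appears, so $\p_{c_R}^{j_2}Y$ for $j_2=0,1$ has a $c_R$-derivative that is an honest $L^\infty$ function near $U(-h)$; the one-dimensional Morrey/Sobolev embedding $W^{1,\infty}\hookrightarrow C^{0,1}\subset C^\alpha$ (or, if one only has $W^{1,q}$, $W^{1,q}\hookrightarrow C^{1-1/q}$) then gives Hölder continuity in $c_R$, and combining with the $C^\infty$ dependence on $k$ and joint continuity away from the singular set gives the claimed joint $C^\alpha$ regularity. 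One must also check that differentiating in $c$ (the complex variable, restricted to $c_I\ge 0$) versus $c_R$ is harmless here: for $c_I>0$ analyticity makes this automatic, and the limiting values as $c_I\to 0+$ match by the continuity statements in Lemmas~\ref{L:Y-def} and \ref{L:Y-1}. With these routine points dispatched, the corollary follows immediately.
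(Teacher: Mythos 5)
Your overall route is the same as the paper's: Corollary~\ref{C:F} is meant to be read off from the definition \eqref{E:dispersion} (the polynomial factors being harmless), from the fact that $c=U(-h)$ lies in the interior of $D(Y)$, and from the regularity of $Y$ in Lemmas~\ref{L:Y-def} and~\ref{L:Y-1}; the paper offers no further argument. Two small remarks on your setup: the non-vanishing $y_-(k,U(-h),0)\ne 0$ is the positivity statement of Lemma~\ref{L:y0}(4) (restated in Lemma~\ref{L:U(-h)}), not a consequence of the semicircle theorem, which only constrains modes with $c_I\neq 0$; and $Y(0,U(-h))=U'(0)/(U(0)-U(-h))$ covers only $k=0$.

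The substantive issues are in your step (iv). First, the derivative counting does not reach the stated range. Your mechanism is to take one more $c_R$-derivative, use the $L^\infty_kL^q_{c_R}$ bound from the \emph{statement} of Lemma~\ref{L:Y-1}, and apply $W^{1,q}\hookrightarrow C^{1-1/q}$; but that bound requires $j_1+(j_2+1)\le l_0-4$, so it only yields $C^\alpha$ of $\partial_k^{j_1}\partial_c^{j_2}Y$ for $j_1\le l_0-5-j_2$, missing the top-order cases $j_1=l_0-4-j_2$ claimed in the corollary (e.g.\ $(j_1,j_2)=(2,0),(1,1)$ when $l_0=6$), which you assert rather than derive. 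To get the full range one must argue as in the \emph{proof} of Lemma~\ref{L:Y-1}: by Lemma~\ref{L:Y-def}(6), $\partial_k^{j_1}\partial_{c_R}^{j_2}Y_I$ is locally $L^\infty_kW^{1,q}_{c_R}$ for $j_2\le 2$, $j_1+j_2\le l_0-4$; for $j_2\le 1$ its extension by zero across $U(-h)$ remains $W^{1,q}$ because $Y_I$ vanishes quadratically there, and then the representation \eqref{E:Y-Cauchy-2.5}--\eqref{E:Y-Cauchy-2.8} together with the $L^q$-boundedness of the Cauchy integral, uniform in $c_I\ge 0$, gives $\partial_k^{j_1}\partial_c^{j_2}Y\in W^{1,q}\hookrightarrow C^{1-1/q}$ at exactly the order $(j_1,j_2)$, with no extra derivative consumed. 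Second, your claim that the jump of $\partial_{c_R}^2Y_I$ produces only a bounded jump of $\partial_{c_R}^2Y$, so that the latter is an ``honest $L^\infty$ function,'' is false: the Hilbert transform of a jump discontinuity is logarithmically unbounded, so $\partial_{c_R}^2Y_R$ is only in $L^q$, $q<\infty$, near $U(-h)$ unless $U''(-h)=0$. This is precisely why the corollary (and Lemma~\ref{L:Y-1}) claim $C^\alpha$ for $\alpha<1$ rather than Lipschitz regularity; your parenthetical fallback $W^{1,q}\hookrightarrow C^{1-1/q}$ is the correct branch, and the $W^{1,\infty}$ alternative should be dropped.
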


\subsection{Basic properties of $\BF(k,c)$ and $F(k, c)$} \label{SS:F}

Recall that eigenvalues of the linearized gravity waves at the shear flow $U(x_2)$ are given in the form of $-ikc$ where $(k, c)$ satisfy $\BF(k, c) =F(k, c)=0$ with $\BF$ and $F$ defined in \eqref{E:BF} and \eqref{E:dispersion}. In this subsection, we derive some basic estimates of them. In particular, in Lemma \ref{L:pcF} we prove a non-degenerate sign property of $\p_c F$ at $c=U(-h)$, crucial for the bifurcation of eigenvalues, without assuming the convexity/concavity of $U$ as in Lemma 4.9 in \cite{LiuZ21}.   

\begin{lemma} \label{L:e-v-large}
Assume $U \in C^3$, then we have the following for any $\alpha \in (0, \frac 12)$. 
\begin{enumerate} 
\item There exists $C>0$ depending on $\alpha$, $|U'|_{C^2}$, and $|(U')^{-1}|_{C^0}$,  such that 
\begin{align*}
&|\BF - (U(0)-c)^2 \cosh \mu^{-1}h| \le  C \big(  \mu +  \mu^\alpha|U(0)-c|^2  \big)\cosh \mu^{-1} h,
\end{align*}
where we recall $\mu = (1+k^2)^{-\frac 12}$.
\item For any $k_*, M>0$, there exists $C>0$ depending only on $k_*$, $M$, and $|U''|_{C^0}$,  such that, for any $|k|\le k_*$ and  $c$ satisfying $dist(c, U(-h, 0]) \ge M$, 
\[
|\BF -  (U(0)-c)^2 \cosh kh| \le C \big(1+ |c| + |U(0)-c|^2  dist\big(c, U([-h, 0]) \big)^{-1} \big).
\]
\end{enumerate}
\end{lemma}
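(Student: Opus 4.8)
Throughout abbreviate $y_-=y_-(k,c,x_2)$ for the normalized solution of \eqref{E:Ray-H1-1}--\eqref{E:y-pm} (taking the boundary limits $c_I\to0+$ when $c\in U([-h,0])$, which is legitimate by Remark \ref{R:y-pm}), and record from \eqref{E:BF} the identity
\[
\BF-(U(0)-c)^2\cosh(\mu^{-1}h)=(U(0)-c)^2\big(y_-'(0)-\cosh(\mu^{-1}h)\big)-\big(U'(0)(U(0)-c)+g\big)\,y_-(0).
\]
Both parts reduce to estimating the two terms on the right at $x_2=0$, where $\mu^{-1}(x_2+h)=\mu^{-1}h$, using the asymptotics of $(y_-,y_-')$ from Lemma \ref{L:y_pm-1}.

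\emph{Part (1).} For the term carrying $y_-(0)$, \eqref{E:y-_1} gives $|y_-(0)|\le C\mu\sinh(\mu^{-1}h)\le C\mu\cosh(\mu^{-1}h)$ and $|U'(0)(U(0)-c)+g|\le C(1+|U(0)-c|)$, so it suffices to invoke the elementary inequality $\mu(1+|U(0)-c|)\le C(\mu+\mu^{\alpha}|U(0)-c|^2)$ for $0<\mu\le1$ and $\alpha<1$ (treat $|U(0)-c|\le1$ and $|U(0)-c|>1$ separately, using $\mu\le\mu^{\alpha}$ in the latter). For the term carrying $y_-'(0)-\cosh(\mu^{-1}h)$ I would split on the position of $x_2=0$ relative to the singular layer $\CI_2$ from \eqref{E:CIs}. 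If $\CI_2=\emptyset$ (so $[-h,0]=\CI_1$) or $0\in\CI_3$, then \eqref{E:y-_2}, respectively \eqref{E:y-_3}, gives $|y_-'(0)-\cosh(\mu^{-1}h)|\le C\mu^{\alpha}\cosh(\mu^{-1}h)$, and multiplying by $(U(0)-c)^2$ is of the claimed form. If $0\in\CI_2$, then by the definition of $\CI_2$ and \eqref{E:rho_0} one has $|U(0)-c|\le\rho_0^{-1}\mu^{3/2}$, so $(U(0)-c)^2=O(\mu^3)$; feeding $|y_-(x_{2l})|\le C\mu\cosh(\mu^{-1}h)$ (from \eqref{E:y-_1}) into \eqref{E:y-_4} yields the crude bound $|y_-'(0)|\le C\big(1+|\log|U(0)-c||\big)\cosh(\mu^{-1}h)$, and since $x^2|\log x|$ is bounded near $x=0$, the whole term is $O\big(\mu^3|\log\mu|\big)\cosh(\mu^{-1}h)=O(\mu)\cosh(\mu^{-1}h)$. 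Combining the cases gives (1).

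\emph{Part (2).} Here $|k|\le k_*$ forces $\mu^{-1}\le(1+k_*^2)^{1/2}$, so $\cosh(\mu^{-1}h)$ and $\cosh kh$ are bounded by constants depending only on $k_*$. I would compare $y_-$ with the constant-coefficient solution via the Volterra equation
\[
y_-(x_2)=\frac{\sinh k(x_2+h)}{k}+\int_{-h}^{x_2}\frac{\sinh k(x_2-s)}{k}\,\frac{U''(s)}{U(s)-c}\,y_-(s)\,ds,
\]
read as $(x_2+h)$ when $k=0$. Since $dist(c,U([-h,0]))\ge M$ we have $|U(s)-c|^{-1}\le dist(c,U([-h,0]))^{-1}\le M^{-1}$ on $[-h,0]$, and $|\sinh k\tau/k|\le h\cosh(k_*h)$ for $|\tau|\le h$; a Gronwall argument then bounds $|y_-|$, and hence $|y_-'|$, on $[-h,0]$ by a constant $C=C(k_*,M,|U''|_{C^0})$, uniformly in such $c$ (this needs no assumption on $y_-(0)$, so the case of a channel eigenvalue is covered). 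Differentiating the Volterra equation and setting $x_2=0$ gives $|y_-'(0)-\cosh kh|\le C\,dist(c,U([-h,0]))^{-1}\int_{-h}^{0}|U''(s)|\,|y_-(s)|\,ds\le C\,dist(c,U([-h,0]))^{-1}$, while $|(U'(0)(U(0)-c)+g)y_-(0)|\le C(1+|c|)$. Substituting into the identity above yields (2).

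\emph{Main obstacle.} The argument is essentially a careful bookkeeping of Lemma \ref{L:y_pm-1} plus elementary inequalities, so the only genuinely delicate point is the subcase $0\in\CI_2$ in Part (1), where the critical layer reaches the surface and \eqref{E:y-_4} controls $y_-'(0)$ only up to a logarithmic factor; this is harmless precisely because the factor is weighted by $(U(0)-c)^2=O(\mu^3)$. Finally, the polynomial-in-$c$ estimate of (2) is exactly what is needed in the analytic continuation argument (Remark \ref{R:continuation}) to keep the roots of $\BF$ bounded as $k$ ranges over a compact interval.
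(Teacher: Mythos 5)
Your proposal is correct and follows essentially the same route as the paper: for (1) the same decomposition of $\BF-(U(0)-c)^2\cosh(\mu^{-1}h)$ estimated via the asymptotics of Lemma \ref{L:y_pm-1} (with the logarithmic critical-layer contribution tamed by the factor $(U(0)-c)^2$, which you spell out slightly more explicitly than the paper's one-line remark), and for (2) the same variation-of-parameters/Volterra comparison with the constant-coefficient solution plus a Gronwall bound giving the $dist(c,U([-h,0]))^{-1}$ error.
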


Statement (1) is mainly applied for $|k|\gg 1$ while statement (2) mainly for $|c|\gg1$. 

\begin{proof}
From Lemma \ref{L:y_pm-1}, one observes that the logarithmic singularity  in $y_-'(k, c,0)$ is significant only when $|c| \le C$ and it becomes bounded when multiplied by $U(0)-c$. Therefore 
\begin{align*}
|\BF - (U(0)-c)^2 \cosh \mu^{-1}h| \le & C(1+ |U(0) - c|)|y_-(k, c, 0)| \\
& \qquad  + |U(0)-c|^2 |y_-'(k , c, 0) - \cosh \mu^{-1}h| \\
\le & C(\mu + \mu |U(0) - c|+ \mu^\alpha |U(0)-c|^2 ) \cosh \mu^{-1}h,  
\end{align*}
which implies statement (1) in the lemma. 

When $|k|\le k_*$,  $dist\big(c, U([-h, 0])\big) \ge M$, and $x_2 \in [-h, 0]$, it is easy to estimate the Rayleigh equation \eqref{E:Ray-H1-1} and derive that $y_- (k, c, x_2)$ and $y_-'(k, c, x_2)$ are uniformly bounded by some $C>0$ depending on $k_*$, $M$, and $|U''|_{C^0}$.  Hence by regarding $\frac {U''}{U-c} y_-$ in \eqref{E:Ray-H1-1} as a perturbation term, from the variation of parameter formula, it is straight forward to estimate 
\[
|y_-(k, c, 0) - k^{-1} \sinh kh| + |y_-'(k, c, 0) - \cosh kh| \le C dist\big(c, U([-h, 0])\big)^{-1}. 
\]
Statement (2) follows immediately. 
\end{proof}

The next lemma states that $U(-h)$ becomes a singular neutral mode for a unique $k_-$. 

\begin{lemma} \label{L:U(-h)}
Assume $U\in C^3$, then $y_-(k, U(-h), 0) > 0$ for any $k \in \R$ and there exists $k_- >0$, unique among $k \in [0, \infty)$, such that $F(k_-, U(-h))=0$. 
\end{lemma}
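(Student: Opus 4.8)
The plan is to prove the two assertions separately, exploiting the explicit structure of $F$ at $c=U(-h)$ and the monotonicity properties of $Y$ established in Lemma \ref{L:Y-def}(9) and Lemma \ref{L:e-v-basic-2}(2).

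First I would establish $y_-(k,U(-h),0)>0$ for all $k\in\R$. This is immediate from Lemma \ref{L:y0}(4): since $c=U(-h)\notin U\big((-h,0)\big)$ (as $U$ is strictly increasing), the value $U(-h)$ lies on the boundary of the singular set, and $y_{0-}(k,U(-h),x_2)>0$ for all $x_2\in(-h,0]$; in particular at $x_2=0$. (Alternatively one can use the explicit formula \eqref{E:y_-0} when $k=0$, which gives $y_-(0,U(-h),x_2)=(U(x_2)-U(-h))\int_{-h}^{x_2}(U(x_2')-U(-h))^{-2}\,dx_2'>0$, and combine with positivity of $y_{0-}$ for $k\ne0$.) Consequently $(k,U(-h))\in D(Y)$ for every $k$, so $F(k,U(-h))$ and $Y(k,U(-h))$ are well-defined and, by Lemma \ref{L:Y-def}(1), real; indeed $F(k,U(-h))\in\R$ by Lemma \ref{L:e-v-basic-2}(1).

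Next, set $K=k^2$ and consider $g(K):=F(\sqrt K,U(-h))$ for $K\ge0$. By Lemma \ref{L:e-v-basic-2}(1), $g(0)=F(0,U(-h))=-g<0$. By Lemma \ref{L:e-v-basic-2}(2), $\p_K\big(F(\sqrt K,c)\big)>0$ for $c\in\R\setminus U\big((-h,0]\big)$, so $g$ is strictly increasing in $K$; thus there is at most one $K_->0$ with $g(K_-)=0$. For existence, I would show $g(K)\to+\infty$ as $K\to+\infty$: from the definition \eqref{E:dispersion}, $F(k,U(-h))=Y(k,U(-h))\big(U(0)-U(-h)\big)^2-U'(0)\big(U(0)-U(-h)\big)-g$, and from Lemma \ref{L:Y-def}(3) (or directly from \eqref{E:y-_1}, which gives $Y(k,U(-h))=y_-'/y_-\to k\coth kh\to+\infty$ as $|k|\to\infty$), we have $Y(k,U(-h))\to+\infty$. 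Since $(U(0)-U(-h))^2>0$, $g(K)\to+\infty$. By the intermediate value theorem applied to the continuous strictly increasing function $g$ on $[0,\infty)$, there is a unique $K_->0$ with $g(K_-)=0$; set $k_-=\sqrt{K_-}>0$, and it is the unique element of $[0,\infty)$ with $F(k_-,U(-h))=0$.

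The only mild subtlety — the part I would be most careful about — is the regularity/continuity needed to apply the monotonicity $\p_K F>0$ and the intermediate value theorem at the endpoint $c=U(-h)$, since $F$ (and $Y$) is only $C^{1,\alpha}$ in $c$ there, not analytic; but in $K$ (equivalently in $k$) the relevant function $F(\cdot,U(-h))$ is smooth by Lemma \ref{L:e-v-basic-2}(1) (it is $C^\infty$ in $k$, hence in $K$ for $K>0$, and continuous up to $K=0$), so $g$ is $C^1$ on $(0,\infty)$ and continuous on $[0,\infty)$, which is all that is required. No genuine obstacle arises; the proof is a short assembly of Lemma \ref{L:y0}(4), Lemma \ref{L:e-v-basic-2}, and Lemma \ref{L:Y-def}(3).
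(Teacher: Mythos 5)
Your proof is correct and follows essentially the same route as the paper: positivity of $y_-(k,U(-h),0)$ from Lemma \ref{L:y0}(4), then $F(0,U(-h))=-g<0$ together with the strict monotonicity $\p_K F>0$ from Lemma \ref{L:e-v-basic-2}, plus positivity of $F(k,U(-h))$ for $|k|\gg1$. The only (immaterial) difference is that you obtain the large-$k$ positivity from the asymptotics of $Y$ in Lemma \ref{L:Y-def}(3), whereas the paper cites the asymptotics of $\BF$ in Lemma \ref{L:e-v-large}(1); both give the same conclusion.
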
 

\begin{proof} 
The statement $y_-(k, U(-h), 0) > 0$ had been given Lemma \ref{L:y0}(4) and thus $F(k, U(-h)) \in \R$ is well-defined for all $k \in \R$ and even in $k$. The existence and uniqueness of $k_-$ is a direct corollary of Lemma \ref{L:e-v-basic-2}, which says $F(0, U(-h))=-g$ and $F(\cdot, U(-h))$ is increasing in $k>0$, and Lemma \ref{L:e-v-large}(1), which implies $F(k, U(-h))>0$ for $k \gg 1$. 
\end{proof}

\subsection{Interior singular neutral modes} \label{SS:int-N-M}

Lemma \ref{L:continuation} on the analytic continuation and the continuity of $\BF$ (Lemma \ref{L:e-v-basic-1}) when restricted to $c_I \ge 0$ imply that any branch of roots of $\BF$ can be continued until it either collides with another branch or approaches singular neutral modes -- roots of $\BF$ with $c \in U([-h, 0])$ which are at the boundary of analyticity of $\BF$. While $c =U(0)$ is impossible (Lemma \ref{L:e-v-basic-2}(1)), besides $(k_-, U(-h))$ obtained in lemma \ref{L:U(-h)}, in the following we give some basic properties of singular neutral modes with $c\in U((-h, 0))$. 

\begin{lemma} \label{L:neutral-M}
Assume $U \in C^3$. The following are equivalent for $c_0 = U(x_{20}) \in U((-h, 0])$.
\begin{enumerate}
\item There exists $k_0 \in \R$ such that $\BF(k_0, c_0)=0$.
\item  There exists a $k_0 \ge 0$ such that $F(k_0, c_0)=0$.
\item $U''(x_{20}) =0$ and the Sturm-Liouville operator $\CR$ has a non-positive eigenvalue, where $\CR :=  - \p_{x_2}^2 + \tfrac {U''}{U-c_0}$ with boundary conditions 
\[
y(-h)=0, \quad (U(0)-c_0)^2 y'(0) - (U'(0)(U(0)-c_0) +g) y(0)=0.
\]
\end{enumerate}
Moreover, it is also  satisfied that $\p_K F(\sqrt{K}, c_0)|_{K=k^2} >0$ for such $c_0$ and any $k \in \R$ with $y_- (k, c_0, 0)\ne 0$. 
\end{lemma}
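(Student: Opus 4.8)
The plan is to prove the equivalence of (1)--(3) together with the sign property $\p_K F(\sqrt K, c_0)|_{K=k^2}>0$ by exploiting the equivalence $\{\BF(k_0,c_0)=0\}=\{F(k_0,c_0)=0\}$ (Lemma \ref{L:e-v-basic-1}(2)), the self-adjoint structure of the Rayleigh operator when $c_0\in U((-h,0))$ is real, and the monotonicity/concavity of $K\mapsto F(\sqrt K,c_0)$ from Lemma \ref{L:e-v-basic-2}(2). First I would observe that $(1)\Leftrightarrow(2)$ is immediate from Lemma \ref{L:e-v-basic-1}(2) (and evenness of $\BF,F$ in $k$). The heart of the matter is the necessity of $U''(x_{20})=0$: if $c_0=U(x_{20})$ with $x_{20}\in(-h,0)$ were a singular neutral mode with $U''(x_{20})\neq 0$, then by Lemma \ref{L:e-v-basic-1}(4), $\BF_I(k,c_0)=(U(0)-c_0)^2 Y_I(k,c_0)\neq 0$, contradicting $\BF(k_0,c_0)=0$. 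The case $c_0=U(0)$ is excluded by Lemma \ref{L:e-v-basic-2}(1), which gives $F(k,U(0))=-g\neq 0$, so indeed only $x_{20}\in(-h,0)$ with $U''(x_{20})=0$ can occur.

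Next, once $U''(x_{20})=0$ is known, the Rayleigh equation \eqref{E:Ray-H1-1} at $c=c_0$ has the bounded (indeed $C^{l_0}$) coefficient $U''/(U-c_0)$, so it is a genuine regular Sturm--Liouville problem on $[-h,0]$ with the Dirichlet condition at $-h$ and the Robin-type condition at $0$ coming from \eqref{E:Ray-3}. I would recast $\BF(k,c_0)=0$ as the statement that $\lambda=k^2$ is an eigenvalue of the operator $\CR=-\p_{x_2}^2+U''/(U-c_0)$ with those boundary conditions, viewing $k^2$ as the spectral parameter. The subtlety is that the boundary condition \eqref{E:Ray-3} at $x_2=0$ does not depend on $k$ (after dividing by $(U(0)-c_0)^2$, using $c_0\neq U(0)$), so $\CR$ is a fixed self-adjoint operator and $k^2\in\mathrm{spec}(\CR)$ exactly when $\BF(k,c_0)=0$. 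Then $(2)\Rightarrow(3)$: the existence of some $k_0\ge 0$ with $k_0^2\in\mathrm{spec}(\CR)$ forces $\CR$ to have an eigenvalue $\le 0$ precisely because $\mathrm{spec}(\CR)$ is bounded below and discrete; but I must rule out the possibility that the only eigenvalue realized is some $k_0^2>0$ with no non-positive one. This is where Lemma \ref{L:e-v-basic-2}(2) enters: $\p_K F(\sqrt K,c_0)>0$ and $\p_K^2 F<0$ for $c_0\in\R\setminus U((-h,0])$ — but here $c_0\in U((-h,0))$, so I would instead use that $F(0,c_0)$ and the concavity/monotonicity argument show $F(\sqrt K,c_0)$ is strictly increasing in $K$ wherever it is real and $y_-(k,c_0,0)\neq 0$, hence the root in $K$, if it exists, is unique, which pins down the correspondence with the lowest eigenvalue of $\CR$ being $\le 0$.

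For the sign property $\p_K F(\sqrt K,c_0)|_{K=k^2}>0$ whenever $y_-(k,c_0,0)\neq 0$, I would differentiate the identity $F(k,c)=Y(k,c)(U(0)-c)^2-U'(0)(U(0)-c)-g$ from \eqref{E:dispersion} in $K=k^2$ at $c=c_0$: only the $Y$-term depends on $k$, so $\p_K F(\sqrt K,c_0)=(U(0)-c_0)^2\,\p_K\big(Y(\sqrt K,c_0)\big)$, and then invoke Lemma \ref{L:Y-def}(8), which gives $\p_K(Y(\sqrt K,c))>0$ for $c\in\R\setminus U((-h,0])$ — so I would need the extension of that monotonicity to $c_0\in U((-h,0))$, valid on the set where $y_-(k,c_0,0)\neq 0$, obtained by the limiting argument $c_I\to 0+$ already packaged in Lemma \ref{L:Y-def}; alternatively via the Reynolds-stress/quadratic-form representation $\p_K Y=\|y_-(\cdot)\|^2/(\text{something positive})$ using that $y_{0-}(k,c_0,x_2)$ solves the real Rayleigh equation. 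The main obstacle I anticipate is item (3) in both directions — carefully matching "$\CR$ has a non-positive eigenvalue" with "$\exists k_0\ge 0$, $F(k_0,c_0)=0$" requires (i) verifying that the $k$-independent boundary condition genuinely makes $\CR$ self-adjoint and its spectrum discrete and bounded below, so that $\lambda\le 0$ in the spectrum and the existence of a square root $k_0$ are equivalent modulo the monotonicity in $K$, and (ii) handling the continuity of $F(\cdot,c_0)$ restricted to $c_I\ge 0$ at the singular value $c=c_0$ (Lemma \ref{L:e-v-basic-1}(1)) so that the variational characterization of eigenvalues of $\CR$ lines up with the real-analytic-in-$K$ behavior of $F(\sqrt K,c_0)$; the genuinely delicate point is that $\p_K F>0$ and the boundary conditions together guarantee at most one root $k_0\ge 0$, and that root exists iff the bottom of $\mathrm{spec}(\CR)$ is $\le 0$, which is exactly condition (3).
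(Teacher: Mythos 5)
Your reductions at the outer layer are fine and match the paper: $(1)\Leftrightarrow(2)$ via Lemma \ref{L:e-v-basic-1}(2) and evenness, the exclusion of $c_0=U(0)$ via $F(k,U(0))=-g$, and the necessity of $U''(x_{20})=0$ via Lemma \ref{L:e-v-basic-1}(4) (together with Lemma \ref{L:y0}(5), which guarantees $y_-(k,c_0,x_2^c)\neq 0$). But the spectral identification at the heart of $(2)\Leftrightarrow(3)$ has the wrong sign, and it is not a cosmetic slip. From \eqref{E:Ray-H1-1}, a nontrivial solution of the Rayleigh boundary value problem at $c=c_0$ with wave number $k$ satisfies $\CR y=-k^2y$, so by Lemma \ref{L:e-v-basic-1}(3) the correspondence is $\BF(k,c_0)=0\Leftrightarrow -k^2\in\mathrm{spec}(\CR)$, not $k^2\in\mathrm{spec}(\CR)$. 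With the correct sign both directions are immediate: a root $k_0$ produces the non-positive eigenvalue $-k_0^2$, and conversely an eigenvalue $\lambda\le 0$ with eigenfunction $y$ solves the Rayleigh problem with $k_0=\sqrt{-\lambda}\ge 0$, whence $\BF(k_0,c_0)=0$. Your convention manufactures a spurious obstacle in $(2)\Rightarrow(3)$ (``rule out that the only eigenvalue realized is some $k_0^2>0$''), which the monotonicity of $F$ in $K$ cannot repair — it says nothing about where the bottom of $\mathrm{spec}(\CR)$ lies — and it leaves $(3)\Rightarrow(2)$ with no argument at all, since in your convention a non-positive eigenvalue corresponds to $k^2\le 0$ and hence to no real wave number.

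The second gap is the strict inequality $\p_K F(\sqrt K,c_0)>0$ at the singular value $c_0\in U((-h,0))$, which is where the paper does its actual work. Lemma \ref{L:Y-def}(8) and Lemma \ref{L:e-v-basic-2}(2) are stated only for $c\in\R\setminus U((-h,0])$, and the extension is not ``already packaged'' there: for nonreal $c$ the quantity $\p_K Y$ is not real, so there is no inequality to pass to the limit $c_I\to 0^+$, and even formally a limit would only yield $\ge 0$. The paper proves the extension directly: since $U''(x_{20})=0$ makes the potential $U''/(U-c_0)$ continuous and $y_-(k,c_0,\cdot)$ real, one sets $\tilde y=y_-/y_-(k,c_0,0)$, differentiates in $K$ to get $(\CR+k^2)\p_K\tilde y=-\tilde y$ with $\p_K\tilde y(-h)=\p_K\tilde y(0)=0$, and the Green/Wronskian identity \eqref{E:F-signs-0} gives $\p_K Y(k,c_0)=\int_{-h}^0\tilde y^2\,dx_2>0$, hence $\p_K F(k,c_0)=(U(0)-c_0)^2\,\p_K Y(k,c_0)>0$. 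You mention this quadratic-form route only as an ``alternative'' and do not carry it out, while simultaneously leaning on the unproved extended monotonicity to patch the (mis-signed) equivalence with (3); as written the proposal therefore defers, or argues circularly around, precisely the two points the lemma requires.
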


We observe that the assumption $U''(x_{20})=0$ implies that $\CR$ along with its boundary conditions is a self-adjoint operator with $C^0$ coefficient. 

\begin{remark} 
The last statement also applies to $c_0 = U(-h)$. Namely $-(k_-)^2$ is the eigenvalue of $\CR$ at $c= U(-h)$. 
\end{remark}

\begin{proof}
Before we prove the equivalence of the above statements, we first extend the monotonicity, as well as the concavity under certain conditions, of $Y(k, c)$ and $F(k, c)$ with respect to $K=k^2$ (originally for $c \in \R \setminus U((-h, 0])$ in Lemma \ref{L:Y-def}(8) and \ref{L:e-v-basic-2}(2), respectively) also to any $c_0 = U(x_{20}) \in U((-h, 0))$ satisfying $U''(x_{20})=0$. Here we observe that $U''(x_{20})=0$ implies $y_-(k, c_0, x_2) \in \R$ and is $C^2$ even for $x_2$ near $x_{20}$ for such $c_0$ and any $k \in \R$ 
and thus $Y(k, c_0), F(k, c_0) \in \R$ as well. The proof is similar to that of Lemma 4.5 in \cite{LiuZ21}, starting with the following claim. 

{\it Claim.} Assume $c_0\in \C \setminus U\big((-h, 0]\big)$ or $c_0= U(x_{20}) \in  U\big((-h, 0)\big)$ with $U''(x_{20})=0$ and $\tilde y, y\in C^0([-h, 0])\cap C^2((-h, 0))$ are solutions to 
\[
(\CR+k^2) \tilde y=0, \; \tilde y(-h)=0, \; \tilde y(0)=1; \quad (\CR+k^2) y = f\in C^0([-h, 0]), \; y(-h)=y(0)=0; 
\]
then we have 
\be \label{E:F-signs-0}
y'(0) =- \int_{-h}^0 \tilde yf dx_2.
\ee
Moreover, if $\tilde y\ne 0$ on $(-h, 0)$ which in particular is true if $c_0\in \R \setminus U\big((-h, 0)\big)$ due to lemma \ref{L:y0}(4), then it also holds 
\be \label{E:F-signs-1}
y (x_2) =  \tilde y(x_2) \int_{x_2}^0 \frac 1{\tilde y (x_2')^{2}} \int_{-h}^{x_2'} \tilde y(x_2'') f(x_2'') dx_2'' dx_2'.
\ee

Here the assumption on $c_0$ insures that $\CR$ has $C^0$ coefficients over $[-h, 0]$. The claim follows through direct computations using $(\tilde y'  y - \tilde y  y')' = \tilde y f$. 

If $y_-(k, c_0, 0) \ne 0$, then apparently $\tilde y = \tfrac {y_-}{y_-(0)} \in \R$ and $Y(k, c_0)= \tilde y'(0)$. It is straight forward to compute by differentiating with respect to $K=k^2$,   
\[
\p_K Y(k, c_0) = \p_K \tilde y' (0), \;\; \p_K^2 Y(k, c_0) = \p_K^2 \tilde y' (0), \;\; (\CR+k^2) \p_K \tilde y = - \tilde y, \;\; (\CR+k^2) \p_K^2 \tilde y = - 2 \p_K \tilde y,
\] 
and $\p_K \tilde y$ and $\p_K^2 \tilde y$ satisfy the zero Dirichlet boundary conditions assumed in the claim. Applying \eqref{E:F-signs-0} to $\p_K \tilde y$ and $\p_{K}^2 \tilde y$, along with Lemma \ref{L:y0}(4)(5), implies, for $c_0\in \R \setminus U\big((-h, 0]\big)$ or $c_0= U(x_{20}) \in  U\big([-h, 0)\big)$ with $U''(x_{20})=0$ as long as $y_-(k, c_0, 0) \ne 0$,  
\be \label{E:F-signs-0.1} 
\p_K Y (k, c_0) = \p_K \tilde y' (0) = \int_{-h}^0 \tilde y^2 dx_2 >0, \quad \p_K F(k, c_0) > 0.
\ee
Moreover, if $y_- (k, c_0, x_2) \ne 0$ on $(-h, 0)$, then 
\be \label{E:F-signs-0.2} 
\p_{K}^2 Y(k, c_0) = -2 \int_{-h}^0 \tilde y(x_2)^2 \int_{x_2}^0 \tilde y (x_2')^{-2} \int_{-h}^{x_2'} \tilde y(x_2'')^2 dx_2'' dx_2' dx_2 <0, \quad \p_K^2 F(k, c_0) <0. 
\ee

We are ready to prove the lemma. According to Lemmas \ref{L:e-v-basic-1}(2)(3) and \ref{L:e-v-basic-2}(1), $F(k_0, c_0)=0$ iff $\BF(\pm k_0, c_0)=0$ where $y_-(k_0, c_0, 0)\ne 0$ and $x_{20} \ne 0$ are necessary. Consequently \eqref{E:F-signs-0.1} immediately yields the equivalence of statements (1) and (2). Moreover Lemmas \ref{L:e-v-basic-1}(4) and \ref{L:y0}(5) imply $U''(x_{20})=0$ or $c_0= U(-h)$, where $y_- (k, c_0, x_2) \in \R$ in both cases. We obtain the equivalence with statement (3) simply by observing the associated Sturm-Liouville problem structure for the neutral modes. 
\end{proof}

Next we prove that any interior inflection value of $U$ is a singular neutral mode for some wave number, which is through a different proof as in \cite{Yih72, HL13}. 

\begin{lemma} \label{L:neutral-M-1} 
Suppose $U\in C^3$ and $c_0 = U(x_{20}) \in U((-h, 0))$ satisfy $U''(x_{20}) =0$, then there exists $k_0 >0$ such that $F(k_0, c_0)=0$
with its eigenfunction $y_-(k_0, c_0, x_2) >0$ for all $x_2 \in (-h, 0]$. 
\end{lemma}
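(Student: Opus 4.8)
The plan is to exploit the real-analytic structure of $F(\cdot, c_0)$ in $K = k^2$ that has just been established (Lemma~\ref{L:neutral-M}), together with the two endpoint values $F(0, c_0)$ and the asymptotics of $F(k, c_0)$ as $|k| \to \infty$, and then conclude by the intermediate value theorem. Since $c_0 = U(x_{20})$ with $x_{20} \in (-h, 0)$ and $U''(x_{20}) = 0$, Lemma~\ref{L:y0}(5) (or the remark there) guarantees $y_-(k, c_0, x_2) \in \R$ and is $C^2$ across $x_2 = x_{20}$, so $F(k, c_0) \in \R$ for every $k \in \R$; and by Lemma~\ref{L:e-v-basic-1}(2), $F$ is well-defined wherever $y_-(k, c_0, 0) \ne 0$. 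The key sign inputs are: (i) $F(0, c_0) = \bigl(1/\int_{-h}^0 (U - c_0)^{-2}\,dx_2\bigr) - g$ from Lemma~\ref{L:e-v-basic-2}(1) — wait, this is problematic because $c_0 \in U((-h,0))$ makes that integral divergent; instead one should use that $F(0, c_0) = -\infty$ in the appropriate sense, i.e. $F(k, c_0) \to -\infty$ as $k \to 0^+$, since $y_-(0, c_0, 0) = 0$ (by Lemma~\ref{L:y0}(4), $y_{0-}(0, c_0, x_2) = x_2 + h > 0$ for $x_2 > x_{20}$... so actually $y_-(0, c_0, 0) \ne 0$). Let me restate: for small $k$, $F(k, c_0)$ is negative because the dominant term is $-g$ with $Y(k, c_0)(U(0)-c_0)^2$ and $U'(0)(U(0)-c_0)$ bounded, so $F(0, c_0) = Y(0,c_0)(U(0)-c_0)^2 - U'(0)(U(0)-c_0) - g$, a finite negative-or-not quantity; and (ii) from Lemma~\ref{L:e-v-large}(1), $F(k, c_0) = y_-(k,c_0,0)^{-1}\BF \sim (U(0) - c_0) Y(k,c_0) \sim (U(0)-c_0)|k|\coth(|k|h) \to +\infty$ as $|k| \to \infty$ since $U(0) > c_0$.

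First I would verify that $F(0, c_0) < 0$: this needs a short computation. Using the explicit formula $Y(0, c) = \frac{U'(0)\int_{-h}^0 (U-c)^{-2}dx_2 + (U(0)-c)^{-1}}{(U(0)-c)\int_{-h}^0(U-c)^{-2}dx_2}$ from \eqref{E:y_-0} — but this is for $c \notin U([-h,0])$; for $c_0 \in U((-h,0))$ one instead uses that $F(0, c_0) = \lim_{\epsilon \to 0+} F(0, c_0 + i\epsilon)$. The cleanest route is: $F(0, c_0)$ is real by the reality of $y_-$, and one can compute $\BF(0, c_0) = (U(0)-c_0)^2 y_-'(0, c_0, 0) - (U'(0)(U(0)-c_0) + g) y_-(0, c_0, 0)$ directly from the $k = 0$ Rayleigh solution, which under $U''(x_{20}) = 0$ is still explicitly integrable near $x_{20}$. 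One expects $F(0, c_0) < 0$ because gravity dominates at $k = 0$; I would confirm the sign is negative (or at worst, rule out that it is $\ge 0$ together with the monotonicity $\p_K F > 0$ giving an immediate contradiction with... no — monotonicity plus $F(0,c_0)\ge 0$ would make $k_0 = 0$ if equality, or no root, so I need strict negativity). Second, I would invoke $\p_K F(\sqrt K, c_0)\big|_{K = k^2} > 0$ from \eqref{E:F-signs-0.1} (the last statement of Lemma~\ref{L:neutral-M}), valid for all $k$ with $y_-(k, c_0, 0) \ne 0$. Third, I'd note that $y_-(k, c_0, 0) > 0$ for all $k \ge 0$: this follows from \eqref{E:y-lower-b-1}, which gives $(Ck)^{-1}\sinh k(x_2 + h) \le y_{0-}(k, c_0, x_2)$ precisely when $(U(-h) - c_0)(U(x_2) - c_0) \ge 0$ — at $x_2 = 0$ this sign condition fails since $U(-h) < c_0 < U(0)$, so I instead need Lemma~\ref{L:y0}(4): $y_{0-}(k, c, x_2) > 0$ for $x_2 \in (-h, 0]$ when $c \in \R \setminus U((-h,0))$ — but $c_0 \in U((-h,0))$! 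So positivity of $y_-(k, c_0, 0)$ for all $k \ge 0$ is genuinely something to establish, perhaps via a Sturm comparison / continuity argument tracking sign changes of $y_-(k, c_0, \cdot)$ as $k$ varies, using that $U'' \ge 0$ near $x_{20}$ is not assumed but $U''(x_{20}) = 0$ gives the coefficient $U''/(U - c_0)$ is continuous.

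Given those three facts, the proof concludes quickly: $F(\cdot, c_0)$ is continuous and strictly increasing in $K = k^2$ on the set where $y_-(k, c_0, 0) \ne 0$, starts at $F(0, c_0) < 0$, and tends to $+\infty$; hence there is a unique $k_0 > 0$ with $F(k_0, c_0) = 0$, and at that $k_0$ we have $y_-(k_0, c_0, 0) > 0$ (so in particular $y_-(k_0, c_0, 0) \ne 0$ along the whole path, justifying the monotonicity use), giving the eigenfunction $y_-(k_0, c_0, x_2)$, which is positive on $(-h, 0]$ by the same positivity argument. The main obstacle I anticipate is precisely the positivity claim $y_-(k, c_0, 0) > 0$ for all relevant $k$ — the standard tool (Lemma~\ref{L:y0}(4)) applies only to $c \notin U((-h,0))$, and at an interior inflection value one must argue separately that no zero of $y_-(k, c_0, 0)$ in $k$ is encountered before reaching $k_0$. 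I expect this is handled by observing that $U''(x_{20}) = 0$ keeps the Rayleigh coefficient bounded so the ODE theory is classical, and that a zero of $y_-(k, c_0, 0)$ would correspond to an eigenvalue of the fixed-channel problem at the real value $c_0$ which, being an interior inflection value, can be excluded by a Rayleigh-type / Sturm oscillation argument (the eigenfunction would have to be sign-definite, forcing $c_0$ to be an extreme value of $U$ on its nodal interval, a contradiction); alternatively one shows directly via \eqref{E:F-signs-0.1} and continuity that $F(k, c_0)$ cannot blow up on $(0, k_0)$ because $y_-(k, c_0, 0)$ stays away from zero. I would present this positivity step carefully, as it is the only place where the interior-inflection-value hypothesis is essentially used beyond making $F$ real.
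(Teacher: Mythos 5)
Your plan (monotonicity of $F(\cdot,c_0)$ in $K=k^2$ plus the intermediate value theorem between $k=0$ and $k=\infty$) has a genuine gap at exactly the two points you flagged, and the fixes you sketch do not work. First, the sign $F(0,c_0)<0$ is not true in general: by Lemma \ref{L:neutral-M-3}(2)(4b), when the fixed-channel problem has a singular neutral mode at $c_0$ (i.e.\ $y_-(k_C,c_0,0)=0$ for some $k_C>0$) and $g<F^0(c_0)$, one has $F(0,c_0)=F^0(c_0)-g>0$; moreover, in the paper the inequality $F(0,c_0)<0$ (in the case where no $k_C$ exists) is itself deduced \emph{from} Lemma \ref{L:neutral-M-1} in the proof of Lemma \ref{L:neutral-M-3}, so taking it as an input is close to circular. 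Second, your claim that $y_-(k,c_0,0)\neq 0$ for all relevant $k$ also fails in general: zeros $k_C$ of $k\mapsto y_-(k,c_0,0)$ are exactly the classical Tollmien neutral wave numbers of the channel flow at the inflection value $c_0$ (e.g.\ $U=\tanh$), and your proposed exclusion via Sturm oscillation rests on a false premise --- the channel eigenfunction at $c_0$ \emph{is} sign-definite (this is proved in Lemma \ref{L:neutral-M-3}(1)) and nothing forces $c_0$ to be an extreme value of $U$; such $k_C$ genuinely occur, and the paper devotes Lemma \ref{L:neutral-M-3} to that case. At a pole $k_C$ of $F(\cdot,c_0)$ your monotonicity-in-$K$ argument breaks down; to salvage it you would have to restrict to $(k_C,\infty)$, determine the signs of $\BF(k_C,c_0)$ and of $y_-(k,c_0,0)$ for $k$ just above $k_C$, and show $F(k,c_0)\to-\infty$ as $k\downarrow k_C$ --- none of which is in your write-up, and in the no-$k_C$ case you would still need an independent proof that $F(0,c_0)<0$.

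The paper avoids all of this with a variational argument: since $U''(x_{20})=0$, the boundary value problem is a genuine self-adjoint Sturm--Liouville problem for $\CR=-\partial_{x_2}^2+U''/(U-c_0)$ with the quadratic functional $I(c_0,y)$, and the explicit test function $y=(U-c_0)\chi_{[x_{20},0]}$ (extended by $0$ on $[-h,x_{20}]$) gives $I(c_0,y)=-g/2<0$ after an integration by parts. Hence the first eigenvalue is some $-k_0^2<0$, its eigenfunction is positive on $(-h,0]$ (so in particular $y_-(k_0,c_0,0)\neq0$), and Lemma \ref{L:neutral-M} converts this into $F(k_0,c_0)=0$. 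This sidesteps both the sign of $F(0,c_0)$ and the possible zero $k_C$, which is precisely where your route gets stuck; if you want to pursue your approach you must either supply the pole analysis at $k_C$ or switch to the variational formulation.
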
 

\begin{proof}
According to Lemma \ref{L:neutral-M}, $F(k, c_0)=0$ iff $y_-(k, c_0, x_2)$ is an eigenfunction with the eigenvalue $-k^2$ of the Sturm-Liouville operator $\CR$ along with boundary conditions in \eqref{E:Ray}. It corresponds to the variational functional 
\[
I(c_0, y) = \frac 12 \int_{-h}^0 |y'(x_2)|^2 + \frac {U''(x_2)}{U(x_2) -c_0} |y(x_2)|^2 dx_2 - \frac {U'(0) (U(0) - c_0) +g}{2 (U(0) - c_0)^2} |y(0)|^2,
\]
for $y \in H^1 \big((-h, 0)\big)$ with $y(-h)=0$. 
Consider 
\[
y(x_2) = 0, \; \text{ if } \; x_2 \in [-h, x_{20}]; \quad y(x_2) = U(x_2) - c_0, \; \text{ if } \; x_2 \in [x_{20}, 0], 
\]
which is clearly a qualified test function. It is straight forward to verify 
\begin{align*}
I(c_0, y)=& \frac 12 \int_{x_{20}}^0 U'(x_2)^2 + U''(x_2)(U(x_2) -c_0) dx_2 - \frac 12 \big(U'(0) (U(0) - c_0) +g\big)\\
=& \frac 12 U'(x_2)(U(x_2) -c_0)\big|_{x_{20}}^0 - \frac 12 \big(U'(0) (U(0) - c_0) +g\big) = - \frac g2 <0. 
\end{align*}
It yields the negative sign of the first eigenvalue $-k_0^2<0$ of $\CR$ with its eigenfunction $y_-(k_0, c_0, x_2) >0$. 
\end{proof}

\begin{remark} \label{R:inflectionV}
The proof of Lemma \ref{L:neutral-M-1}, which is different from that in \cite{Yih72, HL08, HL13}, actually implies the existence of singular neutral modes at inflection values without assuming the monotonicity of $U$. More precisely, it holds that\\  
"Suppose $U\in C^3$, $U^{-1}(c_0) = \{x_{2,1} < \ldots < x_{2,n} \} \subset [-h, 0)$, $n \ge 1$, and $U''(x_2)=0$ for all $x_2 \in U^{-1} (c_0)$, then $F(\cdot, c_0)$ has at least $n$ distinct roots in $[0, +\infty)$ and at least $\max\{1, n-1\}$ roots in $\R^+$." \\
To see this, one may consider the test functions $y_j (x_2) = (U-c_0) \chi_{(x_{2,j}, x_{2, j+1})}$ where $x_{2, n+1}=0$ is understood. Clearly they are $L^2$-orthogonal and $I(y_1) = \ldots = I(y_{n-1})=0$ and $I(y_n) = -g/2$. Hence the non-positive subspace is at least $n$-dim. The statement follows immediately since each eigenvalue of this Sturm-Liouville problem has only one linear independent eigenfunction. The above assumptions on $U$ is more general than those on the so-called class $\mathcal{K}^+$ in \cite{HL08, HL13}. 
\end{remark}

Given an interior inflection value $c_0$, the property $\p_K F(k, c_0)>0$, whenever $y_-(k, c_0, 0)\ne 0$, does not imply the uniqueness of the root of $F(\cdot, c_0)=0$ due to the possibility of $y_-(k, c_0, 0)=0$ for some $k > 0$.  In the following we address the number of wave numbers which make $c_0$ a singular neutral mode. We first prove a lemma on the relationship among several critical wave numbers. 

\begin{lemma} \label{L:neutral-M-2} 
Suppose $U\in C^3$ and $c_0 = U(x_{20}) \in U((-h, 0))$ satisfy $U''(x_{20}) =0$ and $k_0>0$ is the maximal root of $F(\cdot, c_0)$, then the following hold.
\begin{enumerate} 
\item If $k_C \ge 0$ satisfies $y_-(k_C, c_0, 0)=0$, then $k_0> k_C$. 
\item If $\frac {U''}{U-c_0} \le 0$ on $(-h, 0)$, then $k_0> k_-$ where $k_-$ is the unique root of $F(\cdot, U(-h))$ given in Lemma \ref{L:U(-h)}. 
\end{enumerate}
\end{lemma}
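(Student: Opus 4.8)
The plan is to exploit the monotonicity and concavity of $F(\sqrt{K}, c_0)$ in $K = k^2$ established in \eqref{E:F-signs-0.1}--\eqref{E:F-signs-0.2}, together with the sign and asymptotic behavior of $F$ at $K=0$ and $K \to +\infty$, to locate the maximal root $k_0$ relative to the two competing wave numbers. For statement (1), the key observation is that at $k = k_C$ the function $Y(k_C, c_0)$ is undefined (since $y_-(k_C, c_0, 0) = 0$), so $F(k_C, c_0) = y_-(k_C,c_0,0)^{-1}\BF(k_C,c_0)$ blows up; more precisely, near $k_C$ one has $F(k, c_0) = \BF(k_C, c_0)/y_-(k,c_0,0) + o(\cdots)$ with $\BF(k_C, c_0) \ne 0$ by Lemma \ref{L:e-v-basic-1}(2). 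Since $\BF(k,c_0)$ is real and $C^2$ near $(k_C, c_0)$ (here $U''(x_{20})=0$ makes everything real, cf.\ the discussion after the Claim in the proof of Lemma \ref{L:neutral-M}), and since $y_-(k, c_0, 0)$ changes sign or touches zero at $k = k_C$, I would argue that $F(k, c_0) \to \pm\infty$ as $k \to k_C$, which combined with $\partial_K F(\sqrt K, c_0) > 0$ wherever $F$ is finite forces the graph of $K \mapsto F(\sqrt K, c_0)$ on $(k_C^2, \infty)$ to emerge from $-\infty$ and increase; since $F(k, c_0) > 0$ for $k \gg 1$ by Lemma \ref{L:e-v-large}(1), there is a root in $(k_C^2, \infty)$, hence $k_0 > k_C$. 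A small subtlety to handle: one must rule out the possibility that $F$ is already positive just to the right of $k_C$ with no further roots — but monotonicity in $K$ plus the sign at infinity makes the root in $(k_C, \infty)$ unique and strictly to the right of $k_C$, so in particular $k_0$, being the maximal root overall, satisfies $k_0 \ge$ that root $> k_C$.

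For statement (2), I would compare $F(\cdot, c_0)$ with $F(\cdot, U(-h))$ directly. The hypothesis $\frac{U''}{U-c_0} \le 0$ on $(-h,0)$ is precisely what makes the Sturm--Liouville operator $\CR = -\partial_{x_2}^2 + \frac{U''}{U-c_0}$ have a nonnegative potential, so by Lemma \ref{L:y0}(4) (or a direct maximum-principle / Sturmian argument) $y_-(k, c_0, x_2) > 0$ on $(-h, 0]$ for all $k$, and in particular $y_-(k, c_0, 0) \ne 0$ everywhere, so $F(\cdot, c_0)$ is smooth and $\partial_K F(\sqrt K, c_0) > 0$ holds for all $K \ge 0$; thus $F(\cdot, c_0)$ has a \emph{unique} root, which is $k_0$. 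Now I would show $F(k, c_0) \ge F(k, U(-h))$ for the relevant $k$, or more simply compare at a fixed $k$: the two differ through the potential term $\frac{U''}{U-c_0}$ versus $\frac{U''}{U-U(-h)}$ and through the boundary term involving $(U(0)-c_0)^2$ versus $(U(0)-U(-h))^2$. The cleanest route is via the variational functional $I(c, y)$ from the proof of Lemma \ref{L:neutral-M-1}: the first eigenvalue $-k_0^2$ of $\CR$ equals $2\min I(c_0, \cdot)$ over the admissible class, and one wants $-k_0^2 < -k_-^2$, i.e.\ $k_0 > k_-$. Evaluating $I(U(-h), \cdot)$ at the minimizer $y_{0-}(k_-, U(-h), \cdot)$ of the $c_0 = U(-h)$ problem (which makes $I(U(-h), \cdot) = -\tfrac12 k_-^2 \int y^2$), and then comparing $I(c_0, y)$ with $I(U(-h), y)$ termwise using $c_0 = U(x_{20}) > U(-h)$ and the sign condition $\frac{U''}{U-c_0}\le 0$, should give $I(c_0, y) < I(U(-h), y)$ strictly, hence $\min I(c_0, \cdot) < -\tfrac12 k_-^2 \int y^2 \le -\tfrac12 k_-^2 \|y_{\min}\|^2$ after normalizing, yielding $k_0 > k_-$.

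The main obstacle I anticipate is the termwise comparison in statement (2): the potential $\frac{U''}{U-c_0}$ has a removable singularity at $x_{20}$ where $U'' $ vanishes, so one must be careful that the comparison $\frac{U''(x_2)}{U(x_2)-c_0} \le \frac{U''(x_2)}{U(x_2)-U(-h)}$ (or its reverse, depending on the sign of $U''$ near each point) is handled uniformly across the inflection point and across the sign changes of $U''$ — the hypothesis is only on the \emph{product} $\frac{U''}{U-c_0}$, not on $U''$ pointwise. I would circumvent this by not comparing potentials pointwise but instead comparing the full quadratic forms: write $I(c_0, y) - I(U(-h), y)$ as a single integral plus a boundary term, and show the integrand is $\le 0$ using $\frac{1}{U-c_0} - \frac{1}{U-U(-h)} = \frac{c_0 - U(-h)}{(U-c_0)(U-U(-h))} > 0$ (both denominators positive on $(x_{20}, 0)$ where the test function is supported, if I use the same one-sided test function as in Lemma \ref{L:neutral-M-1}), so that $\big(\frac{1}{U-c_0} - \frac{1}{U-U(-h)}\big) U'' \le 0$ exactly on the support where $U'' \le 0$ — and on $(x_{20}, 0)$ with $U$ increasing past its inflection point, $U''$ has a definite sign determined by the hypothesis $\frac{U''}{U-c_0}\le 0$ and $U - c_0 > 0$ there, namely $U'' \le 0$. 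Combined with the boundary term comparison (which uses $g > 0$ and $c_0 > U(-h)$ to get strict inequality), this closes the argument.
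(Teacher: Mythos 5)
For part (1) you take a genuinely different route from the paper: the paper compares the two variational problems, noting $I(c_0,\cdot)\le I_C(c_0,\cdot)$ (the boundary term is negative since $U'(0)>0$, $U(0)>c_0$, $g>0$) and that $I_C$ carries the extra constraint $y(0)=0$, so the lowest eigenvalue $-k_0^2$ of the water-wave problem lies strictly below the channel eigenvalue $-k_C^2$. Your pole-plus-monotonicity argument for $K\mapsto F(\sqrt K,c_0)$ is essentially sound: $\BF(k_C,c_0)\ne 0$ gives blow-up, monotonicity in $K$ forces the one-sided limit from the right of $k_C$ to be $-\infty$, and $F(k,c_0)>0$ for $k\gg1$ then produces a root above $k_C$. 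The one point you gloss over is the implicit assumption that $y_-(\cdot,c_0,0)$ has no further zeros above $k_C$, so that $F(\cdot,c_0)$ is finite on all of $(k_C,\infty)$; this is easily repaired by running the argument to the right of the largest zero (the zero set is compact since $y_-\ne0$ for $|k|\gg1$), or by the uniqueness of $k_C$ proved in Lemma \ref{L:neutral-M-3}(1), whose proof does not use this lemma.

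Part (2), however, contains genuine errors. First, $\frac{U''}{U-c_0}\le 0$ is a \emph{non-positive} potential, not nonnegative, and Lemma \ref{L:y0}(4) only covers $c\in\R\setminus U((-h,0))$; under exactly this hypothesis (Rayleigh's class $\CK^+$, e.g.\ the $\tanh$-type profiles unstable in the channel) a wave number $k_C>0$ with $y_-(k_C,c_0,0)=0$ may well exist, so your claims that $y_-(k,c_0,0)\ne 0$ for all $k$ and that $F(\cdot,c_0)$ has a unique root are false in general (cf.\ Lemma \ref{L:neutral-M-3}(4a), where there are two roots). Second, and more seriously, your final ``circumvention'' breaks the comparison: to conclude $-k_0^2<-(k_-)^2$ you must evaluate $I(c_0,\cdot)$ at a test function $y$ for which you already know $I(U(-h),y)\le -\tfrac12 (k_-)^2\|y\|_{L^2}^2$, i.e.\ at the first eigenfunction of the $c=U(-h)$ problem (which is positive on $(-h,0]$ with $y(0)\ne0$), not at the one-sided function $(U-c_0)\chi_{(x_{20},0)}$; for an arbitrary test function the variational principle gives only $I(U(-h),y)\ge -\tfrac12 (k_-)^2\|y\|_{L^2}^2$, the wrong direction. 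The worry that prompted the switch is unfounded: one computes $I(c_0,y)-I(U(-h),y)=\tfrac12\int_{-h}^0 (c_0-U(-h))\,\frac{U''}{U-c_0}\,\frac{y^2}{U-U(-h)}\,dx_2$ minus a positive multiple of $|y(0)|^2$, and the integrand is $\le 0$ on all of $(-h,0)$ precisely because the hypothesis controls the product $\frac{U''}{U-c_0}$ (with $U-U(-h)>0$ and $c_0>U(-h)$); no pointwise sign of $U''$ is needed, and strictness comes from $y(0)\ne 0$. Had you stayed with your first plan — evaluate at the minimizer of the $U(-h)$ problem and use the product hypothesis on the whole interval — the argument would close and would coincide with the paper's proof.
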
 

Clearly the above $(k_C, c_0)$ coincides with a singular neutral mode for the linearized channel flow at the monotonic $U$. 

\begin{proof}
It is clear that $y_-(k, c_0, 0)=0$ iff $y_-(k, c_0, x_2)$ is an eigenfunction with the eigenvalue $-k^2$ of the Sturm-Liouville problem
\be \label{E:S-L-channel}
\CR y =\lambda y, \quad y(-h)=y(0)=0,
\ee
which corresponds to the variational functional 
\[
I_C(c_0, y) = \frac 12 \int_{-h}^0 |y'(x_2)|^2 + \frac {U''(x_2)}{U(x_2) -c_0} |y(x_2)|^2 dx_2, \quad y \in H_0^1 \big((-h, 0)\big). 
\]
Since obviously $I \le I_C$ and $I_C$ has one more restriction $y(0)=0$ on its domain, so the first eigenvalue $-k_0^2$ of $I(c_0, y)$ must satisfy $-k_0^2 < - k_C^2$. 

One may verify
\begin{align*}
I(c_0, y) - I(U(-h), y) =& \frac 12 \int_{-h}^0 \frac {(c_0-U(-h)) U'' y^2}{(U -c_0)(U - U(-h))} dx_2 -\Big( \frac {U'(0) (c_0-U(-h))}{(U(0) - c_0) (U(0)-U(-h))} \\
&+\frac {g (c_0-U(-h)) (2U(0) -c_0-U(-h))}{ (U(0) - c_0)^2 (U(0) - U(-h))^2} \Big) \frac {|y(0)|^2}2. 
\end{align*}
Hence the assumption $\frac {U''}{U-c_0} \le 0$ on $(-h, 0)$ implies $I(c_0, y) \le I(U(-h), y)$. Since $y_-(k_-, U(-h), 0) \ne 0$,  we obtain that the first eigenvalue $-k_0^2$ of $I(c_0, y)$ satisfy $-k_0^2 < - (k_-)^2$.
\end{proof}

Finally we show how whether an interior inflection value $c_0$ has either one or two critical wave numbers depends on $k_C$ and $g$. 

\begin{lemma} \label{L:neutral-M-3} 
Suppose $U\in C^3$ and $c_0 = U(x_{20}) \in U((-h, 0))$ satisfy $U''(x_{20}) =0$, then the following hold. 
\begin{enumerate}
\item There exists at most one $k_C \ge 0$ such that $y_-(k_C, c_0, 0)=0$. 
\item Such $k_C> 0$ exists iff $F^0 (c_0) >0$, where 
\be \label{E:F0}
F^0 (c_0) \triangleq F(0, c_0) + g = (U(0)-c_0)^2 Y(0, c_0) - U'(0) (U(0)-c_0).
\ee
\item Let $k_*= k_C$ if $k_C$ exists or $k_*=0$ otherwise, there exists a unique $k_0> k_*$ such that $F(k_0, c_0)=0$. It also holds that the corresponding eigenfunction $y_-(k_0, c_0, x_2) >0$ on $(-h, 0]$ and $\p_{K}^2 F(k_0, c_0) <0$. 
\item If $k_C>0$, then 
\begin{enumerate}
\item if $g \ge F^0(c_0)$, then there exists  $k_1\in [0, k_C)$ such that $\{ k\ge 0 \mid F(k, c_0) =0\} =\{k_1, k_0\}$, and 
\item if $g < F^0(c_0)$, then $k_0$ is the only root of $F(\cdot, c_0)$ on $[0, \infty)$. 
\end{enumerate}\end{enumerate}
\end{lemma}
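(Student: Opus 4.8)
The plan is to study the two real-analytic-in-$K$ functions $G(K):=y_-(\sqrt K,c_0,0)$ and $F(\sqrt K,c_0)$ on $K=k^2\ge 0$. Since $U''(x_{20})=0$ makes the coefficient $U''/(U-c_0)$ continuous across $x_{20}$, $y_-(k,c_0,\cdot)$ is real, $C^2$, analytic in $K$, and $F(\sqrt K,c_0)=(U(0)-c_0)^2 y_-'(\sqrt K,c_0,0)/G(K)-U'(0)(U(0)-c_0)-g$ wherever $G(K)\ne 0$. The two facts driving everything are: (i) $\p_K F(\sqrt K,c_0)>0$ wherever $G(K)\ne 0$, and $\p_K^2 F(\sqrt K,c_0)<0$ wherever in addition $y_-(\sqrt K,c_0,\cdot)\ne 0$ on $(-h,0)$ (Lemma \ref{L:neutral-M} and \eqref{E:F-signs-0.2}); and (ii) at any zero $K_C$ of $G$ one has $y_-'(\sqrt{K_C},c_0,0)\ne 0$ (else $y_-(\sqrt{K_C},c_0,\cdot)\equiv 0$), so $F$ blows up there, and since by (i) $F$ is increasing in $K$ on each side, necessarily $F\to+\infty$ as $K\to K_C^-$ and $F\to-\infty$ as $K\to K_C^+$. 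Together with $F(0,c_0)=F^0(c_0)-g$ (see \eqref{E:F0}) and $F(\sqrt K,c_0)\to+\infty$ as $K\to\infty$ (Lemma \ref{L:e-v-large}(1), using $G(K)>0$ for $K\gg 1$ by Lemma \ref{L:y_pm-1} and Remark \ref{R:y-pm}), this reduces the whole lemma to counting zeros of $G$ and tracking the monotone pieces of $F$.

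For statement (1) I would use the dictionary between zeros of $G$ and the channel problem \eqref{E:S-L-channel}: $y_-(k_C,c_0,0)=0$ iff $-k_C^2$ is an eigenvalue of $\CR y=\lambda y$, $y(-h)=y(0)=0$, and by Sturm's oscillation theorem the number of such eigenvalues $\le 0$ equals the number of zeros of the solution $y_-(0,c_0,\cdot)$ in $(-h,0]$. The key observation is that $U-c_0$ is an explicit solution of the Rayleigh equation \eqref{E:Ray-H1-1} at $k=0$, $c=c_0$, because $-(U-c_0)''+\tfrac{U''}{U-c_0}(U-c_0)\equiv 0$; $U$ being strictly monotone, it has exactly one, simple, zero in $[-h,0]$, namely $x_{20}$. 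Since $y_-(0,c_0,\cdot)$ vanishes at $-h$, where $U-c_0\ne 0$, the two solutions are linearly independent, so their zeros strictly interlace and $y_-(0,c_0,\cdot)$ has at most one zero in $(-h,0]$. Hence there is at most one $k_C\ge 0$.

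For statement (2): if $y_-(0,c_0,\cdot)$ has no zero in $(-h,0)$ then, being positive near $-h$, it is positive throughout, so $G(0)\ge 0$; if it has its unique interior zero $z$, it crosses downward there and cannot return (no second interior zero), so $G(0)<0$. On the other hand, combining $F(0,c_0)=\lim_{c_I\to 0+}F(0,c_0+ic_I)$ with the explicit formulas of Lemma \ref{L:e-v-basic-2}(1) (valid at $c=c_0$ by continuity of both sides up to the real axis), together with \eqref{E:y_-0}, gives $F^0(c_0)=(U(0)-c_0)(U(-h)-c_0)/y_-(0,c_0,0)$, whose numerator is negative. Therefore $F^0(c_0)>0\iff G(0)<0\iff y_-(0,c_0,\cdot)$ has an interior zero $\iff$ (by the oscillation argument of (1)) $k_C>0$ exists. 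In the borderline case $G(0)=0$ (i.e.\ $k_C=0$), $F^0(c_0)$ is undefined and statements (2), (4) do not apply, while (1), (3) go through unchanged.

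For (3) and (4): by (1), $G$ has at most one zero $K_C=k_C^2$ on $[0,\infty)$; since $G>0$ for large $K$ and $G$ is continuous, $G>0$ on $(K_C,\infty)$ (on all of $[0,\infty)$ if there is no $k_C$), and when $k_C>0$, $G(0)<0$ from (2) forces $G<0$ on $[0,K_C)$. On $(K_*,\infty)$ with $K_*:=k_*^2$, $F(\sqrt K,c_0)$ is smooth, strictly increasing, and $\to+\infty$ as $K\to\infty$; its left-endpoint value/limit is $F(0,c_0)=F^0(c_0)-g$ when $K_*=0$, which must be negative (otherwise monotonicity leaves $F$ with no root on $(0,\infty)$, contradicting Lemma \ref{L:neutral-M-1}), and $-\infty$ when $K_*=K_C$ (fact (ii), with $y_-'(\sqrt{K_C},c_0,0)<0$ since $y_-(\sqrt{K_C},c_0,\cdot)$, being the only non-positive Dirichlet eigenfunction, is the first one, hence positive on $(-h,0)$). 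Either way there is a unique root $K_0>K_*$; and since the roots of $F(\cdot,c_0)$ are exactly the water-wave eigenvalues $-k^2$ (Lemma \ref{L:e-v-basic-1}) and $-k_0^2$ is the smallest among them (any other root is $\le k_C<k_0$), $y_-(k_0,c_0,\cdot)$ is the first eigenfunction, so $y_-(k_0,c_0,\cdot)>0$ on $(-h,0]$ and then \eqref{E:F-signs-0.2} gives $\p_K^2 F(k_0,c_0)<0$. Finally, when $k_C>0$, $F$ restricted to $[0,K_C)$ is smooth, strictly increasing from $F^0(c_0)-g$ to $+\infty$ (fact (ii) at $K_C^-$), so it has exactly one root there precisely when $F^0(c_0)-g\le 0$ (at $K=0$ if $g=F^0(c_0)$, in $(0,K_C)$ if $g>F^0(c_0)$) and none when $g<F^0(c_0)$; combined with the unique root $K_0>K_C$ and the fact that $K_C$ itself is not a root (as $\BF(k_C,c_0)=(U(0)-c_0)^2 y_-'(k_C,c_0,0)\ne 0$), this is exactly (4a)--(4b). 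The main obstacle is statement (1): everything downstream is monotonicity bookkeeping, but ``at most one $k_C$'' rests on the non-obvious fact that $U-c_0$ is an exact solution of the $k=0$ Rayleigh equation with a single simple zero, together with the Sturm oscillation/interlacing correspondence.
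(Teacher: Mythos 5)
Your proof is correct, and while it rests on the same Sturm--Liouville skeleton as the paper (the dictionary between roots of $F(\cdot,c_0)$, zeros of $y_-(\cdot,c_0,0)$, and the Robin/Dirichlet eigenvalue problems, together with $\p_K F>0$ and $\p_K^2 F<0$ from Lemma \ref{L:neutral-M}, \eqref{E:F-signs-0.1} and \eqref{E:F-signs-0.2}), several key steps go through genuinely different mechanisms. For (1), you exploit that $U-c_0$ is an exact solution of the $k=0$ Rayleigh equation with a single simple zero and combine Sturm separation with the oscillation count of non-positive Dirichlet eigenvalues; the paper instead shows, via the two-sided positivity in Lemma \ref{L:y0}(5), that any eigenfunction attached to a non-positive Dirichlet eigenvalue is nodeless, hence the ground state, hence unique. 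For (2), your limiting identity $F^0(c_0)=(U(0)-c_0)(U(-h)-c_0)/y_{0-}(0,c_0,0)$ --- obtained from the $y_-$-form of the formula in Lemma \ref{L:e-v-basic-2}(1), which does pass to the limit $c_I\to 0^+$ since $y_{0-}(0,c_0,0)$ is real and nonzero in the relevant case --- reduces the equivalence to the sign of $y_-(0,c_0,0)$ in one stroke; the paper argues the two implications separately, one via the $g$-independence of $F^0$ combined with monotonicity and Lemma \ref{L:neutral-M-1}, the other via the explicit evaluation $F^0(c_0)=\big(\int_{\tilde x_2}^0(U-c_0)^{-2}dx_2\big)^{-1}$ at the interior zero $\tilde x_2$ (your formula agrees with this through the Wronskian of $y_-$ and $U-c_0$, so it is a cleaner packaging of the same computation). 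For (3)--(4) the paper simply cites the variational Lemmas \ref{L:neutral-M-1} and \ref{L:neutral-M-2}; you re-derive $k_0>k_C$ from the neat observation that monotonicity alone forces $F\to+\infty$ as $K\to K_C^-$ and $F\to-\infty$ as $K\to K_C^+$, and you obtain positivity of $y_-(k_0,c_0,\cdot)$ from ``largest root equals smallest Robin eigenvalue equals ground state'' rather than from the paper's test-function construction; note, however, that you still invoke Lemma \ref{L:neutral-M-1} when $k_C$ does not exist, so the variational input is not entirely bypassed. Your explicit flagging of the borderline case $k_C=0$, where $F^0(c_0)$ is undefined, matches the paper's implicit ``if $F(0,c_0)$ is well-defined'' caveat. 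What your route buys is transparent monotone bookkeeping plus a clean sign formula for $F^0$; what the paper's buys is independence from the eigenvalue-counting form of oscillation theory and variational comparisons (such as Lemma \ref{L:neutral-M-2}) that are reused elsewhere.
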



\begin{proof}
Recall that $y_-(k, c_0, 0)=0$ iff $y_-(k, c_0, x_2)$ is an eigenfunction with the eigenvalue $-k^2$ of \eqref{E:S-L-channel} corresponding to the variational functional $I_C(y)$ on $H_0^1 \big((-h, 0)\big)$. 
Suppose $-\lambda \le 0$ is an eigenvalue with an eigenfunction $y(x_2)$, then Lemma \ref{L:y0}(5) applied to initial conditions at both $x_2=-h$ and $x_2=0$ implies $y \ne 0$ on $(-h, x_{20}]$ and $[x_{20}, 0)$, and thus $y \ne 0$ on both $(-h, 0)$. Hence it is the first eigenvalue and therefore the only non-negative one of \eqref{E:S-L-channel}, which also implies $k_C^2 =\lambda\ge 0$ is unique if it exists. This proves statement (1). 

On the one hand, suppose such $k_C>0$ does not exist, then $F(k, c_0)$ is well-defined for all $k > 0$. The fact $\p_k F(k, c_0)>0$ for all $k>0$ (Lemma \ref{L:neutral-M}) and $F(k_0, c_0)=0$ with $k_0>0$ (Lemma \ref{L:neutral-M-1}) imply $F(0, c_0)<0$ if it is well-defined. Since this conclusion is independent of $g> 0$, we obtain $F^0 (c_0) \le 0$ if $F(0, c_0)$ is well-defined. On the other hand, assume $k_C>0$ exists. From the Sturm-Liouville theory (or one may easily prove it directly), $y_-(0, c_0, x_2)$ has a root $\tilde x_2 \in (-h, 0)$ with $y_-'(0, c_0, \tilde x_2)<0$. Moreover Lemma \ref{L:y0}(5) yields $\tilde x_2 > x_{20}$. As in \eqref{E:y_-0}, it is straight forward to verify, for $x_2 >\tilde x_2$,   
\[
y_- (0, c_0, x_2) = y_-'(0, c_0, \tilde x_2) (U(x_2) - c_0) \int_{\tilde x_2}^{x_2} \frac {U(\tilde x_2) -c_0}{(U-c_0)^2} dx_2.  
\]
Consequently, one may compute 
\[
F^0 (c_0) = \Big(\int_{\tilde x_2}^0 \frac 1{(U-c_0)^2} dx_2\Big)^{-1} >0.
\]
This completes the proof of statement (2).  

Statements (3) and (4) follow from Lemmas \ref{L:neutral-M-1}, \ref{L:neutral-M-2}, and $\p_k F(k, c_0)>0$ for all $k \ne k_C$ if $k_C\ge0$ exists (Lemma \ref{L:neutral-M}).  
\end{proof}

\subsection{Non-degeneracy of $F$ at $(k_-, c=U(-h))$.} \label{SS:non-deg}

As indicated in Lemmas \ref{L:e-v-basic-1}(4) and \ref{L:e-v-basic-2}(1), $c=U(-h)$ is the only singular neutral mode which is not an inflection value of $U$. It would be one of the key bifurcation points where the instability occurs. 
Our goal in the following is to verify a non-degeneracy of $F$ at $c=U(-h)$ to be used in the bifurcation analysis. 
We need the following two lemmas of technical preparation. The first is an estimate related to the fundamental solution $y_- (k, c, x_2)$. 

\begin{lemma} \label{L:temp-1}
Suppose $U \in C^{l_0}$, $l_0\ge 4$, and let $\tilde y(k, c, x_2) = \frac {y_-(k, c, x_2)}{y_-(k, c, 0)}$ if $y_-(k, c, 0)\ne 0$, then, when restricted to $c_I\ge 0$ and $x_2 \in (-h, 0]$, for $j_1, j_2 \ge 0$, $j_1+j_2\le l_0-4$, $j_2\le 2$, and $q\in [1, \infty)$, it holds that $\p_k^{j_1} \p_c^{j_2}\tilde y, \, \p_k^{j_1} \p_c^{j_2}\tilde y' \in L_k^\infty L_{c_R}^q$ locally in $k$ and $c_R$. 
Moreover,   at $c= U(-h)$,  it holds,
\[
\lim_{x_2 \to (-h)+} \frac 1{(x_2+h)^{2}} \Big( \tilde y' (x_2) -\frac {U'(x_2) \tilde y(x_2)}{U(x_2) - U(-h)} \Big) = \frac 13 k^2 \tilde y' ( -h),   
\]
and there exists $\tilde C>0$ depending on $U$ and $k$ such that, for $x_2 \in (-h, 0]$,  
\[
|\tilde y'(x_2)| \le \tilde C, 
\quad \tilde C (x_2+h) \ge \tilde y (x_2) \ge  \tilde C^{-1} (x_2+h), 
\] 
\[
|\p_c \tilde y( x_2)| \le \tilde C (x_2+h) (1+ | \log (x_2+h)|), \;\; |\p_c \tilde y' (x_2)| \le \tilde C (1+ | \log (x_2+h)|). 
\]
\end{lemma}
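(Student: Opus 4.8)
The plan is to build everything from the structure of the Rayleigh equation \eqref{E:Ray-H1-1} written for the normalized solution $\tilde y = y_-/y_-(0)$. First I would record that $\tilde y$ solves the same equation $-\tilde y'' + (k^2 + \frac{U''}{U-c})\tilde y = 0$ with $\tilde y(0) = 1$ and $\tilde y(-h) = 0$, so all the interior analysis of $y_{0-}$ from Lemma \ref{L:y0} transfers after dividing by $y_-(k,c,0)$, which by Lemma \ref{L:y0}(4) and (5) is bounded below by $C^{-1}k^{-1}\sinh kh > 0$ uniformly for $c$ near $U(-h)$ (since $(U(-h)-c)(U(x_2)-c)\ge 0$ fails only inside the singular set, which stays away from $U(-h)$ as long as $c$ is close to $U(-h)$ — more precisely one uses that $U(-h)$ is not an interior value so the singularity $x_2^c$ stays near $-h$). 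The $L_k^\infty L_{c_R}^q$ regularity of $\p_k^{j_1}\p_c^{j_2}\tilde y$ and $\p_k^{j_1}\p_c^{j_2}\tilde y'$ then follows from Lemma \ref{L:y0}(6)--(9) together with Lemma \ref{L:Y-def}(6) (the $L_k^\infty W_{c_R}^{1,q}$ statement for $Y_I$, hence for $y_{0-}$-type quantities), dividing by the nonvanishing $y_-(k,c,0)$ and using the Leibniz rule; the restriction $j_2 \le 2$ is exactly the one inherited from those earlier statements because $\p_{c_R}^3 Y_I$ jumps at $c = U(-h)$.

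For the pointwise asymptotics near $x_2 = -h$, I would expand the equation there. Since $U(-h)$ is the boundary value and $U'(-h) > 0$, near $x_2 = -h$ we have $U(x_2) - U(-h) = U'(-h)(x_2+h) + O((x_2+h)^2)$, so $\frac{U''}{U-U(-h)} = \frac{U''(-h)}{U'(-h)(x_2+h)} + O(1)$ is a regular singular point of Fuchsian type with indicial roots $0$ and $2$; the solution vanishing at $-h$ is the one with the root $2$ removed in the sense that $\tilde y(x_2) = a(x_2+h) + b(x_2+h)^2\log(x_2+h) + \ldots$ — actually since $\tilde y(-h)=0$ and $\tilde y'(-h)$ is finite and nonzero, $\tilde y \sim \tilde y'(-h)(x_2+h)$. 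The cleanest route is to set $w(x_2) = \tilde y'(x_2) - \frac{U'(x_2)\tilde y(x_2)}{U(x_2)-U(-h)}$ and compute $w'$ directly using the ODE: one finds $w' = \tilde y'' - \frac{U''\tilde y}{U-U(-h)} - \frac{U'\tilde y'}{U-U(-h)} + \frac{(U')^2\tilde y}{(U-U(-h))^2}$, and substituting $\tilde y'' = (k^2 + \frac{U''}{U-U(-h)})\tilde y$ the two $\frac{U''\tilde y}{U-U(-h)}$ terms cancel, leaving $w' = k^2\tilde y - \frac{U'}{U-U(-h)}(\tilde y' - \frac{U'\tilde y}{U-U(-h)}) = k^2\tilde y - \frac{U'}{U-U(-h)}w$. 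This is a linear first-order ODE for $w$ with $w(-h) = 0$ (checking the limit: $\tilde y'(-h) - \frac{U'(-h)\tilde y'(-h)(x_2+h)+O((x_2+h)^2)}{U'(-h)(x_2+h)+O((x_2+h)^2)} \to 0$). Integrating with the integrating factor $\mu(x_2) = \exp(\int_{-h}^{x_2}\frac{U'}{U-U(-h)}) \sim (x_2+h)$ (since $\frac{U'}{U-U(-h)} \sim \frac{1}{x_2+h}$), one gets $w(x_2) = \frac{1}{\mu(x_2)}\int_{-h}^{x_2} k^2\tilde y(s)\mu(s)\,ds$, and using $\tilde y(s) \sim \tilde y'(-h)(s+h)$ and $\mu(s) \sim (s+h)$ gives $w(x_2) \sim \frac{k^2\tilde y'(-h)}{(x_2+h)}\cdot\frac{(x_2+h)^3}{3} = \frac13 k^2\tilde y'(-h)(x_2+h)^2$, which is the claimed limit (with $\tilde y'(-h)$ equal to the stated $\tilde y'(-h)$ on the right side — note the limit is $\frac13 k^2\tilde y'(-h)$, matching).

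The two-sided bounds $\tilde C(x_2+h) \ge \tilde y(x_2) \ge \tilde C^{-1}(x_2+h)$ follow from Lemma \ref{L:y0}(5) (i.e. \eqref{E:y-lower-b-1}) applied at $c = U(-h)$, since then $(U(-h)-c)(U(x_2)-c) = 0 \ge 0$, giving $y_{0-}(k,U(-h),x_2) \sim k^{-1}\sinh k(x_2+h) \sim (x_2+h)$ near $-h$; dividing by $y_-(k,U(-h),0) > 0$ gives the bound for $\tilde y$, and $\tilde y \in C^1$ up to $-h$ with $\tilde y'(-h) > 0$ gives $|\tilde y'| \le \tilde C$ globally on $[-h,0]$. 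For the $\p_c$ estimates I would differentiate the ODE in $c$ at $c = U(-h)$: $z = \p_c\tilde y$ solves $-z'' + (k^2 + \frac{U''}{U-U(-h)})z = -\frac{U''}{(U-U(-h))^2}\tilde y$, whose right side is $\sim \frac{U''(-h)\tilde y'(-h)}{U'(-h)^2(x_2+h)}$, a simple-pole forcing; Duhamel against the two local solutions (one $\sim (x_2+h)$, one $\sim 1$) produces the logarithmic correction, yielding $|\p_c\tilde y| \lesssim (x_2+h)(1+|\log(x_2+h)|)$ and $|\p_c\tilde y'| \lesssim 1+|\log(x_2+h)|$. The main obstacle I anticipate is bookkeeping the constant $y_-(k,c,0)$ and its $c$-derivatives uniformly as $c \to U(-h)$ — one must confirm $y_-(k,U(-h),0) \ne 0$ (Lemma \ref{L:U(-h)}) and control $\p_c^j(y_-(k,c,0))$, for which Lemma \ref{L:y0}(9b) and \eqref{E:pcy2} provide exactly the needed (at worst logarithmic) bounds — and making sure the regular-singular-point expansions are justified with error terms uniform in $k$ on compact $k$-sets, which is where the Fuchsian theory and the integrating-factor representation above do the real work.
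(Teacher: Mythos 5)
The pointwise part of your proposal is essentially the paper's own argument. Your first-order equation $w'=k^2\tilde y-\frac{U'}{U-U(-h)}w$ for $w=\tilde y'-\frac{U'\tilde y}{U-U(-h)}$, once you take the integrating factor to be $U-U(-h)$ itself (the integral $\int_{-h}^{x_2}\frac{U'}{U-U(-h)}$ you wrote diverges at the lower endpoint), is exactly the paper's identity $\big((U-U(-h))\tilde y'-U'\tilde y\big)'=k^2(U-U(-h))\tilde y$, and it gives the limit $\tfrac13 k^2\tilde y'(-h)$ as you say; the two-sided bound $\tilde y\sim x_2+h$ from Lemma \ref{L:y0}(4)(5) and the Duhamel/Green's-function treatment of $z=\p_c\tilde y$ (zero boundary values at $-h$ and $0$, simple-pole forcing $-\frac{U''}{(U-U(-h))^2}\tilde y$) reproduce the paper's representation of $\p_c\tilde y$ and yield the stated logarithmic bounds.

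The genuine gap is in the first claim, the local $L_k^\infty L_{c_R}^q$ regularity of $\p_k^{j_1}\p_c^{j_2}\tilde y,\ \p_k^{j_1}\p_c^{j_2}\tilde y'$ up to $j_2=2$. You propose to obtain it by the Leibniz/quotient rule from $y_{0-}/y_{0-}(k,c,0)$ using Lemma \ref{L:y0}(6)--(9) together with Lemma \ref{L:Y-def}(6) ``hence for $y_{0-}$-type quantities''. This does not close near the critical value $c=U(-h)$: the bounds available for the individual factors, \eqref{E:pcy1}, \eqref{E:pcy2} and \eqref{E:pcy4}, degenerate like $\log\frac{\mu}{\min\{\mu,|U(-h)-c|\}}$ for one $c$-derivative and like $|U(-h)-c|^{-1}$ for two, and $|U(-h)-c_R|^{-1}$ is not locally in $L^q_{c_R}$ for any $q\ge1$; moreover Lemma \ref{L:Y-def}(6) concerns $Y_I$ only, and nothing in the lemmas you cite transfers its $W^{1,q}_{c_R}$ regularity to $y_{0-}$ itself. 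The whole point of the lemma is that the normalized solution $\tilde y$ is better behaved in $c$ than $y_-$ and $y_-(k,c,0)$ separately, i.e.\ there is a cancellation which a crude quotient-rule estimate cannot detect. The paper captures it in two steps: first it proves the improved regularity of $Y(k,c)=\tilde y'(k,c,0)$ near $c=U(-h)$ (Lemma \ref{L:Y-1}, via the Cauchy-integral representation of $Y$ in terms of $Y_I$ and the $L_k^\infty W_{c_R}^{1,q}$ regularity of $Y_I$), and then it regards $\tilde y$ on $[x_2,0]$ as the solution of the Rayleigh equation with data $(1,Y(k,c))$ at $x_2=0$, where the coefficient $k^2+\frac{U''}{U-c}$ is smooth for the relevant $c$, so that $\tilde y,\tilde y'$ inherit the regularity of $Y$ by smooth dependence on initial data. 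Without this (or an equivalent cancellation argument) your first claim is unproven. A minor further slip: it is $\p_{c_R}^2 Y_I$, not $\p_{c_R}^3 Y_I$, that jumps at $c=U(-h)$, which is why $j_2\le2$ is the correct threshold.
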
 

According to Lemmas \ref{L:y_pm-1} and \ref{L:y0}(4)(5), it holds $y_-(k, c, x_2) \ne 0$ for any $x_2\in (-h, 0]$ and $(k, c) \in \R \times \C$  in an open set containing  $\R \times \{U(-h)\}$, and thus $\tilde y$ is well defined for such $(k, c)$. 
For $c= U(-h)$, Lemma \ref{L:y0}(3) and $y_-(-h)=0$ imply $\tilde y$ is $C^4$ in $x_2$ and $\tilde y' - U' \tilde y /(U- U(-h))$ vanishes at $x_2=-h$. The above lemma shows that the latter is actually of quadratic order and gives the leading order coefficient. 

\begin{proof}
From the Rayleigh equation \eqref{E:Ray-H1-1}, $\tilde y$ satisfies 
\be \label{E:temp-4} 
-\tilde y'' + \big(k^2 + \frac {U''}{U-c}\big) \tilde y =0, \quad \tilde y(-h)=0, \;  \tilde y (0)=1, \; Y(k, c) = \tilde y' (0).
\ee
For $x_2 \in (-h, 0]$, $\tilde y(k, c, x_2)$ obeys the above Rayleigh equation with smooth coefficients and initial values $(1, Y(k, c))$ given at $x_2=0$. Lemma \ref{L:Y-1} implies the regularity of $\tilde y$ and $\tilde y'$.  

The desired estimates of $\tilde y$ and $|\tilde y'|$ at $c=U(-h)$ follow immediately from Lemma \ref{L:y0}(3)(4) along with $\tilde y'(-h)\ne 0$. With the differentiability in $c$ near $U(-h)$ had been obtained, differentiating \eqref{E:temp-4} in $c$ yields 
\be \label{E:temp-8}
- \p_c \tilde y'' + \big(k^2 + \tfrac {U''}{U - c}\big)\p_c \tilde y = - \p_c \big( \tfrac {U''}{U - c}\big) \tilde y, \quad \p_c \tilde y (-h)=\p_c \tilde y(0)=0, \quad \p_c Y(k, c) = \p_c \tilde y'(0). 
\ee
For $c\le  U(-h)$, the claim in the proof of Lemma \ref{L:neutral-M} applies and Lemma \ref{L:y0}(4) and \eqref{E:F-signs-1} imply 
\be \label{E:temp-8.5} \begin{split}
\p_c \tilde y (x_2) = & - \int_{x_2}^0 \frac {\tilde y (x_2)}{\tilde y (x_2')^{2}} \int_{-h}^{x_2'}  \p_c \big( \frac {U''}{U - c}\big) \tilde y^2  dx_2'' dx_2' 
= - \int_{x_2}^0 \frac {\tilde y (x_2)}{\tilde y (x_2')^{2}} \int_{-h}^{x_2'}   \frac {U'' \tilde y^2}{(U - c)^2}    dx_2'' dx_2'
\end{split} \ee
Using the estimates on $\tilde y$ and $\tilde y'$ we obtain, at $c=U(-h)$,    
\begin{align*}
\big|\p_c \tilde y (x_2)\big| \le & \int_{x_2}^0 \frac {\tilde y (x_2)}{\tilde y (x_2')^{2}} \int_{-h}^{x_2'}   \frac {|U''| \tilde y^2 }{(U - U(-h))^2}   dx_2'' dx_2'  \\
\le & \tilde C \int_{x_2}^0 \frac {x_2+h }{x_2'+h}dx_2' \le \tilde C (x_2+h) \big(1+ | \log (x_2+h)|\big),
\end{align*}
\begin{align*}
\big|\p_c \tilde y' (x_2)  \big| = & \Big|- \int_{x_2}^0 \frac {\tilde y' (x_2)}{\tilde y (x_2')^{2}} \int_{-h}^{x_2'}   \frac {U'' \tilde y^2}{(U - U(-h))^2}    dx_2'' dx_2' + \frac 1{\tilde y (x_2)} \int_{-h}^{x_2}   \frac {U'' \tilde y^2}{(U - U(-h))^2}  dx_2' \Big| \\
\le &  \tilde C  \big(1+ | \log (x_2+h)|\big).
\end{align*}

To complete the proof of the lemma, for $c= U(-h)$, we consider the property of $\tilde y' - U' \tilde y /(U- U(-h))$ near $x_2=-h$. From \eqref{E:temp-4}, it holds 
\[
\big( (U- U(-h)) \tilde y' - U' \tilde y\big)' = k^2 (U-U(-h)) \tilde y. 
\]
Hence 
\[
\tilde y' (x_2) -\frac {U'(x_2) \tilde y(x_2)}{U(x_2) - U(-h)} = \frac {k^2}{U(x_2) - U(-h)} \int_{-h}^{x_2} (U(x_2')-U(-h)) \tilde y (x_2') dx_2',
\]
which is clearly of the quadratic order in $x_2 +h$. We obtain the desired asymptotics using the leading order expansions of $U$ and $\tilde y$ near $x_2 =-h$.  
\end{proof} 

The next lemma is a property of families of real analytic functions whose roots obey certain properties related to the Semi-circle Theorem \eqref{E:semi-circle}. 

\begin{lemma} \label{L:temp-2} 
Let $a, \rho_0, q_0 >0$ and $f(\rho, q, z=z_1+iz_2)$ be a real analytic function of $z \in \Omega= \{ z= z_R+iz_I \in \C \mid |z + a/2| < a/2\}$ (i.e. additionally it satisfies $f(\bar z) = \overline{f(z)}$) and is also a $C^1$ function of $\rho, q, z_R, z_I$ for $|\rho|\le \rho_0$, $|q| \le q_0$, and $z\in \overline{\Omega}$. Moreover assume 
\be \label{E:temp-4.2}
f(\rho, q, z) \ne 0 \; \text{ if } \; z \notin [-a, 0]; \quad f(0, 0, z) =0 \Leftrightarrow z = 0; \quad \p_q f (0, 0, 0) \ne 0; 
\ee 
then there exists $\tilde \rho \in (0, \rho_0]$ and a $C^1$ function $\psi(\rho)$ defined for $|\rho|\le \tilde \rho$ such that, for any such that $\rho$, 
\[
f(\rho, \psi(\rho), z) =0 \Leftrightarrow z = 0,
\]
which clearly implies 
\[
f(\rho, \psi(\rho), -a) \p_z f(\rho, \psi (\rho), 0) \le 0.  
\]
\end{lemma}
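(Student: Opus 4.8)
\emph{Overview of the plan.} The idea is to fix the branch $\psi$ by the implicit function theorem, then use a continuity/compactness argument to confine every zero of $f(\rho,\psi(\rho),\cdot)$ to a small neighborhood of $z=0$, and finally run a local analysis at $z=0$ showing that the choice of $\psi$ forces $z=0$ to absorb all nearby zeros. First, since $f(\bar z)=\overline{f(z)}$, the value $f(\rho,q,0)$ is real; as a $C^1$ function of $(\rho,q)$ it vanishes at $(0,0)$ with $\partial_q f(0,0,0)\neq0$, so the implicit function theorem produces $\rho_1\in(0,\rho_0]$ and a $C^1$ function $\psi$ on $\{|\rho|\le\rho_1\}$ with $\psi(0)=0$, $f(\rho,\psi(\rho),0)=0$, and, near $(0,0)$, the zero set of $(\rho,q)\mapsto f(\rho,q,0)$ equal to the graph of $\psi$. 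Next I would fix $r_0\in(0,a)$ and use that $f(0,0,\cdot)$ vanishes on $\overline{\Omega}$ only at $z=0$, hence is bounded away from $0$ on the compact set $\{z\in\overline{\Omega}\mid|z|\ge r_0\}$; by joint continuity of $f$ there is $\rho_2\le\rho_1$ with $f(\rho,\psi(\rho),\cdot)\neq0$ on this set for $|\rho|\le\rho_2$. Since by hypothesis every zero of $f(\rho,q,\cdot)$ in $\overline{\Omega}$ lies on $[-a,0]$ and $\{z\in\overline{\Omega}\mid|z|<r_0\}\cap[-a,0]=(-r_0,0]$, the only possible zeros of $f(\rho,\psi(\rho),\cdot)$ in $\overline{\Omega}$ for $|\rho|\le\rho_2$ are $z=0$ and points of $(-r_0,0)$; in particular $z=-a$ is not a zero. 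It then remains to exclude zeros on $(-r_0,0)$.

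\emph{The non-degenerate case.} If $\partial_z f(0,0,0)\neq0$ this last step is routine. Shrinking $r_0$ and $\rho$, one keeps $|\partial_z f(\rho,\psi(\rho),0)|\ge m_0>0$ for $|\rho|\le\rho_3$; writing, for $z\in\overline{\Omega}$ with $|z|\le r_0$, $f(\rho,\psi(\rho),z)=\partial_z f(\rho,\psi(\rho),0)\,z+\int_0^1\big(\partial_z f(\rho,\psi(\rho),tz)-\partial_z f(\rho,\psi(\rho),0)\big)z\,dt$ (the segment $[0,z]$ lies in $\overline{\Omega}$ by convexity) and invoking uniform continuity of $\partial_z f=\partial_{z_R} f$ on a compact set, one gets $|f(\rho,\psi(\rho),z)|\ge\tfrac{m_0}{2}|z|$ on $\{|z|\le r_0\}\cap\overline{\Omega}$ for $|\rho|\le\rho_3$, so $z=0$ is the only zero there; combined with the previous step this finishes the claim.

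\emph{The degenerate case (main obstacle).} The hard case is $\partial_z f(0,0,0)=0$, which genuinely occurs --- e.g. for $f(\rho,q,z)=z^2+bq$ with $b\neq0$, where $\psi\equiv0$ and $f(\rho,0,\cdot)=z^2$. Here I would exploit the rigidity forced by the hypothesis on the zero set: $f(\rho,q,\cdot)$ never vanishes off the real axis (hence never on the upper or lower open half of $\Omega$), and near $z=0$ the circle $\partial\Omega$ is tangent to the imaginary axis, so a real zero lying in $(-r_0,0)$ can only be exchanged with $\overline{\Omega}$ through the single point $z=0$; this already forbids, for instance, $f(0,0,\cdot)$ from having a zero of order $\ge3$ at $z=0$. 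For $q\neq\psi(\rho)$ and $\rho$ small, $f(\rho,q,\cdot)$ does not vanish on the boundary of the lens $\Omega\cap B(0,r_0)$, so by the argument principle the number of its zeros in that lens --- all lying on $(-r_0,0)$ --- is a constant integer $\mathcal{W}^{+}$ for $q>\psi(\rho)$ and $\mathcal{W}^{-}$ for $q<\psi(\rho)$; by continuity of the zeros, the number $M$ of zeros of $f(\rho,\psi(\rho),\cdot)$ in $(-r_0,0)$ satisfies $M\le\min(\mathcal{W}^{+},\mathcal{W}^{-})$. The point is then to show $\min(\mathcal{W}^{+},\mathcal{W}^{-})=0$: as $q$ crosses $\psi(\rho)$ real zeros can only enter or leave $\overline{\Omega}$ through $z=0$, and $q=\psi(\rho)$ is exactly the resonant value at which the zeros near $0$ coalesce at $z=0$; making this bookkeeping precise --- e.g. by tracking $\arg f$ along the upper half of the lens boundary, where at its two endpoints on $\mathbb{R}$ the function $f$ is real --- would give $M=0$. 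I expect this degenerate case, i.e. making the ``$z=0$ absorbs the nearby zeros'' mechanism rigorous using only $C^1$ regularity of $f$ up to the boundary point $z=0$, to be the main technical obstacle.

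\emph{The sign inequality.} Once $z=0$ is established as the unique zero of $f(\rho,\psi(\rho),\cdot)$ in $\overline{\Omega}$, the displayed inequality is immediate: $f(\rho,\psi(\rho),\cdot)$ is real and nonvanishing on $(-a,0)$, hence of a single sign $s\in\{+1,-1\}$ there, and by continuity $f(\rho,\psi(\rho),-a)$ also has sign $s$; since $f(\rho,\psi(\rho),0)=0$ and $f$ is holomorphic in $z$ (so $\partial_z f(\rho,\psi(\rho),0)$ is real), one has $\partial_z f(\rho,\psi(\rho),0)=\lim_{x\to0^-}f(\rho,\psi(\rho),x)/x$, which is either $0$ or of sign $-s$, so $f(\rho,\psi(\rho),-a)\,\partial_z f(\rho,\psi(\rho),0)\le0$. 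Taking $\tilde\rho$ to be the smallest of the radii produced above completes the plan.
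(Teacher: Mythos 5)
Your first two steps (defining $\psi$ by the implicit function theorem at $z=0$ and using compactness to confine all zeros of $f(\rho,\psi(\rho),\cdot)$ to a small real interval $(-r_0,0]$) match the paper, and your non-degenerate case $\p_z f(0,0,0)\ne 0$ is fine. But the lemma stands or falls on the degenerate case $\p_z f(0,0,0)=0$ (in the application this is exactly the situation $\p_c\CF_\sigma(k_0,U(-h))=0$ that is being targeted by contradiction), and there your argument is a plan, not a proof: the assertions that ``real zeros can only enter or leave $\overline\Omega$ through $z=0$'' and that ``$q=\psi(\rho)$ is exactly the resonant value at which the zeros near $0$ coalesce at $z=0$'' are precisely what must be established, and you end with ``making this bookkeeping precise \dots would give $M=0$.'' Moreover the bookkeeping as set up is shaky: zeros can also cross the lens boundary at $z=-r_0$, so the counts $\mathcal W^\pm$ need not be constant in $q$ without further choices, and the inequality $M\le\min(\mathcal W^+,\mathcal W^-)$ by itself cannot yield $M=0$, since $\min(\mathcal W^+,\mathcal W^-)=0$ at the resonant value is equivalent to the statement being proved.

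The missing idea, which is how the paper closes this gap, is to use the implicit function theorem not only at $z=0$ but on the whole segment: applying it to $\RP f$ near $(0,0,0)$ gives a $C^1$ function $\phi(\rho,z)$, $z\in[-\delta,0]$, such that all zeros with small $(\rho,q)$ and $z\in[-\delta,0]$ are exactly the graph $q=\phi(\rho,z)$, and $\psi(\rho)=\phi(\rho,0)$. Then argue by contradiction: if $f(\rho_*,q_*,z_*)=0$ with $q_*=\psi(\rho_*)$ and $z_*\in[-\delta,0)$, then $\phi(\rho_*,z_*)=q_*=\phi(\rho_*,0)$, while analyticity of $f(\rho_*,q_*,\cdot)$ in $z$ forces $\phi(\rho_*,\cdot)$ to be non-constant on $[z_*,0]$, so it has an interior extremum value $q_\#\ne q_*$, say a maximum $q_\#>q_*$ attained at $z_\#\in(z_*,0)$. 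Take the disk $\Omega_1$ with diameter $[z_*,0]$: its only real boundary points are $z_*$ and $0$, where a zero would force $q=q_*$, so $f(\rho_*,q,\cdot)\ne0$ on $\p\Omega_1$ for all $q>q_*$ and the index $\mathrm{Ind}(f(\rho_*,q,\cdot),\Omega_1)$ is constant there; it is $\ge1$ at $q=q_\#$ (because of the zero $z_\#$) but $=0$ for $q>q_\#$ (no zeros on $(z_*,0)$ since $q\ne\phi(\rho_*,z)$), a contradiction. This homotopy in $q$ over a disk whose two real boundary points lie on the same level set of $\phi$ is the mechanism your sketch is missing; without it (or an equivalent rigorous substitute), the degenerate case, and hence the lemma, is not proved. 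Your final sign argument from the uniqueness of the zero is fine.
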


Note that  $f$ being a real analytic function of $z \in \Omega$ and $C^1$ up to $\p \Omega$ yields $f \in \R$ for $z \in [-a, 0]$, hence $\p_q f(\rho, q, z), \p_z f(\rho, q, z) \in \R$ for any $z\in [-a, 0]$. The first assumption in \eqref{E:temp-4.2} is crucial. Otherwise a counter example is $f = q - z^2 + 2 \rho z$ where clearly it has to be $\psi(\rho)=0$, but $f(\rho, 0, \cdot)$ has another root $z=2\rho \in (-a, 0)$ for $\rho<0$. 

\begin{proof} 
It is easy to see how $\psi$ should be defined. In fact, 
from $\p_q (\RP\,  f)(0, 0, 0) =\p_q f(0, 0, 0) \ne 0$ and the Implicit Function Theorem applied to $\RP\, f$, there exists $\delta, q_1, \rho_1>0$ and a $C^1$ function $\phi(\rho, z)$ defined for $|\rho|\le \rho_1$ and $z\in [-\delta, 0]$ such that $\phi(0, 0)=0$ and  
\be \label{E:temp-4.2.5}
f(\rho, q, z)= \RP\, f(\rho, q, z)=0, \; |\rho|\le \rho_1, \; |q| \le q_1, \;   z\in [-\delta, 0] \Longleftrightarrow q = \phi(\rho, z). 
\ee
Let $\psi(\rho) = \phi(\rho, 0)$. 
As we assumed $f(0, 0, z)=0$ iff $z=0$ and $f=0$ only if $z\in [-\delta, 0]$, by the compactness of $\overline{\Omega} \setminus \{z\in \C \mid |z +\delta/3| < 2\delta/3\}$, there exist $\tilde \rho\in (0, \rho_1]$ and $\ep>0$ such that 
\be \label{E:temp-4.3}
f(\rho, q, z)=0, \; |\rho| \le \tilde \rho, \; |q| \le \max_{|\rho'|\le \tilde \rho} |\psi(\rho')| +\ep \implies z\in [-\delta, 0] \implies q = \phi(\rho, z).  
\ee
It remains to prove $f(\rho, \psi(\rho), z) \ne 0$ if $z \in [-\delta, 0)$ and $|\rho| \le \tilde \rho$, which we proceed by an argument by contradiction. 

Assume 
\be \label{E:temp-4.4}
\exists \rho_* \in [-\tilde \rho, \tilde \rho], \; z_* \in [-\delta, 0) \; \text{ such that } \; f(\rho_*, q_*, z_*)=0, \; \text{ where } \; q_* = \psi(\rho_*).
\ee
The definition of $\phi$ yields $ \phi(\rho_*, z_*) =q_* = \phi(\rho_*, 0)$. When $\tilde \rho$ is sufficiently small,  the function $f(\rho_*, q_*, \cdot)$ of $z$ is not identically zero, which, along with its analyticity, implies that it is not identically zero on $[z_*, 0]$. Therefore $\phi(\rho_*, \cdot)$ is not identically equal to $q_*$ for $z \in [z_*, 0]$. Without loss of generality, suppose 
\[
z_\# \in (z_*, 0), \quad q_\# =\phi(\rho_*, z_\#) =  \max_{z\in [z_*, 0]} \phi(\rho_*, z) > q_*.
\] 
Let 
\[
\Omega_1 = \{ z\in \C \mid |z-z_*/2| < |z_*|/2\} \subset \Omega.
\] 
From \eqref{E:temp-4.2.5}, \eqref{E:temp-4.3}, and $ \phi(\rho_*, 0) =q_* = \phi(\rho_*, z_*)$, the index Ind$\big(f(\rho_*, q, \cdot), \Omega_1\big)$ is well defined and takes a constant non-negative integer value for $q> q_*$. Since $f(\rho_*, q_\#, z_\#) =0$ with $z_\# \in (z_*, 0) \subset \Omega_1$ and $q_\# \ge q_*$, it holds Ind$\big(f(\rho_*, q, \cdot), \Omega_1\big) \ge 1$ for $q > q_*$. However, $f(\rho_*, q, z)\ne 0$ for any $z \in (z_*, 0)$ and $q > q_\#$. It implies Ind$\big(f(\rho_*, q, \cdot), \Omega_1\big)=0$ for $q > q_\#$, which is a contradiction. Therefore \eqref{E:temp-4.4} can not be true and we complete the proof of the lemma. 
\end{proof}

We are ready to prove the main lemma of the non-degeneracy of $F$ at $c=U(-h)$. In order for this result to be also applicable to the capillary gravity waves to improve  results in \cite{LiuZ21}, we include the surface tension in the consideration. Let 
\be \label{E:dispersion-CG}
\CF_\sigma (k, c) = F(k, c) - \sigma k^2 = Y(k, c)\big(U(0)-c\big)^2-U'(0)\big(U(0)-c\big)-g -\sigma k^2, \quad \sigma \ge 0,  
\ee 
whose zero points correspond to the eigenvalues of the capillary gravity waves linearized at the shear flow $U(x_2)$ if $\sigma>0$ (see \cite{LiuZ21}). 

\begin{lemma} \label{L:pcF}
Suppose $U\in C^6$, $k_0\in \R$, and $c_1< U(-h)$ satisfy 
\[
\CF_\sigma (k_0, U(-h))=0, \quad \CF_\sigma (k_0, c)\ne 0, \; c \in [c_1, U(-h)),  
\]
then  
\[
\CF_\sigma (k_0, c_1) \p_c \CF_\sigma (k_0, U(-h)) <0.
\]
\end{lemma}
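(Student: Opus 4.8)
The whole statement reduces to the single sign fact
\[
D:=\p_c\CF_\sigma\big(k_0,U(-h)\big)<0 .
\]
Indeed, on $[c_1,U(-h))$ the map $c\mapsto\CF_\sigma(k_0,c)$ is real (by \eqref{E:f-conj} together with $y_-(k_0,\cdot,0)>0$ near $U(-h)$, Lemma~\ref{L:U(-h)}), continuous (Corollary~\ref{C:F}) and nonvanishing by hypothesis, hence of a constant sign $s$; since $\CF_\sigma(k_0,U(-h))=0$ and $D=\lim_{c\uparrow U(-h)}\CF_\sigma(k_0,c)/(c-U(-h))$ is the limit of quotients of sign-$s$ numerators over negative denominators, $D$ has sign $-s$ (when nonzero). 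Thus $D<0$ forces $s=+1$, i.e.\ $\CF_\sigma(k_0,c_1)>0$, and the claimed product inequality follows. Note also $\p_c\CF_\sigma=\p_c F$, so this sign fact is really a statement about $F$, independent of $\sigma$, and — unlike the hypothesis on $[c_1,U(-h))$ — it will hold for \emph{every} $k_0\in\R$.

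\textbf{The sign of $D$ by an integration by parts.} Write $c_0=U(-h)$, $a_0=U(0)-c_0>0$, $\tilde y=\tilde y(k_0,c_0,\cdot)=y_-(k_0,c_0,\cdot)/y_-(k_0,c_0,0)$, $\gamma=\tilde y'(-h)=1/y_-(k_0,c_0,0)>0$. By Lemma~\ref{L:temp-1}, $Y(k_0,\cdot)=\tilde y'(k_0,\cdot,0)$ is differentiable in $c$ at $c_0$ (restricted to $c_I\ge0$), $\tilde y(x_2)$ is comparable to $x_2+h$ near $x_2=-h$, and formula \eqref{E:temp-8.5} holds; differentiating \eqref{E:temp-8.5} in $x_2$ and evaluating at $x_2=0$, where the kernel is nonsingular, gives the absolutely convergent identity $\p_cY(k_0,c_0)=\int_{-h}^0 U''\tilde y^2(U-c_0)^{-2}\,dx_2$. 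Since $\p_c\CF_\sigma(k_0,c_0)=a_0^2\,\p_cY(k_0,c_0)-2a_0\,Y(k_0,c_0)+U'(0)$, everything reduces to integrating this by parts. Set $P:=(U-c_0)\tilde y'-U'\tilde y$; the Rayleigh equation \eqref{E:Ray-H1-1} for $\tilde y$ yields $P'=k_0^2(U-c_0)\tilde y$, $P(-h)=0$, and $P=O((x_2+h)^3)$ near $-h$, so the integrals below converge. Writing $U''\tilde y/(U-c_0)=\tilde y''-k_0^2\tilde y$ and integrating by parts twice (using $(\tilde y/(U-c_0))'=P/(U-c_0)^2$, $((U-c_0)\tilde y)'=(U-c_0)\tilde y'+U'\tilde y$, reading the boundary values at $x_2=0$ from $Y(k_0,c_0)$ and the limits at $x_2=-h$ from the asymptotics $\tilde y\sim\gamma(x_2+h)$, $\tilde y/(U-c_0)\to\gamma/U'(-h)$ of Lemma~\ref{L:temp-1}) collapses everything to
\[
\p_c\CF_\sigma\big(k_0,U(-h)\big)=-\frac{a_0^2}{U'(-h)\,y_-(k_0,c_0,0)^2}-2a_0^2\!\int_{-h}^0\!\frac{\big((U-c_0)\tilde y'-U'\tilde y\big)^2}{(U-c_0)^3}\,dx_2-2k_0^2a_0^2\!\int_{-h}^0\!\frac{\tilde y^2}{U-c_0}\,dx_2 .
\]
Since $U-c_0>0$ on $(-h,0]$ and $U'(-h)>0$, all three terms are $\le0$ and the first is strictly negative, so $D<0$, and with the reduction above the lemma follows.

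\textbf{Main obstacle and the role of Lemma~\ref{L:temp-2}.} The only delicate point is legitimising the computation at the singular endpoint $c=U(-h)$, where $U-c_0$ vanishes at $x_2=-h$: the differentiability of $F$ in $c$ there and the convergence of $\int P^2(U-c_0)^{-3}$ and $\int\tilde y^2(U-c_0)^{-1}$ hinge entirely on the $C^{1,\alpha}$-regularity (Corollary~\ref{C:F}) and the precise boundary behaviour of $\tilde y,\tilde y',\p_c\tilde y,\p_c\tilde y'$ provided by Lemma~\ref{L:temp-1}; this is where the bulk of the work goes. A softer route, which avoids touching the singularity and yields the weaker conclusion $\CF_\sigma(k_0,c_1)\,\p_c\CF_\sigma(k_0,U(-h))\le0$, is available via Lemma~\ref{L:temp-2}: put $z=c-U(-h)$ and consider the real-analytic family $f(\rho,q,z)=\CF_\sigma(\sqrt{k_0^2+\rho},U(-h)+z)$ with $g$ replaced by $g+q$ on the disk $\Omega=\{|z+\tfrac a2|<\tfrac a2\}$, $a=U(-h)-c_1$ (so $z\in[-a,0]\Leftrightarrow c\in[c_1,U(-h)]$); the closed disk of diameter $[c_1,U(-h)]$ meets $U([-h,0])$ and Howard's/Yih's semicircle disk of diameter $[U(-h),U(0)]$ only at the single point $U(-h)$, so $y_-(k,\cdot,0)\ne0$ there for $k$ near $k_0$ and no complex mode of the perturbed systems lies in $\overline\Omega$ — which, with the hypothesis, gives exactly the assumptions of Lemma~\ref{L:temp-2} ($\p_q f=-1\ne0$), whence $\le0$. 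Upgrading ``$\le0$'' to ``$<0$'' in this route again requires $D\ne0$, precisely the content of the integration-by-parts formula above.
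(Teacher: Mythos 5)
Your proposal is correct, and it takes a genuinely different --- and much more direct --- route than the paper. The paper never computes $\p_c \CF_\sigma(k_0, U(-h))$ explicitly: it first applies Lemma \ref{L:temp-2} to a family obtained by perturbing the shear flow itself, $\BBU(\rho,\cdot)=U+\rho\,\delta\,\gamma\big((\cdot-x_{20})/\delta\big)$, to get the weak inequality $\CF_\sigma(k_0,c_1)\,\p_c\CF_\sigma(k_0,U(-h))\le 0$, and then excludes the degenerate case $\p_c\CF_\sigma(k_0,U(-h))=0$ by a long contradiction argument, estimating $\p_\rho\big(\p_c\BBF(\rho,k_*(\rho),U(-h))\big)\big|_{\rho=0}$ up to $O(\delta^{1/2})$ errors and showing it can be made nonzero by pushing $x_{20}$ toward $-h$. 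You replace all of this by the exact identity (writing $c_0=U(-h)$, $a_0=U(0)-c_0$, $P=(U-c_0)\tilde y'-U'\tilde y$)
\[
\p_c\CF_\sigma(k_0,c_0)=-\frac{a_0^2}{U'(-h)\,y_-(k_0,c_0,0)^2}-2a_0^2\int_{-h}^0\frac{P^2}{(U-c_0)^3}\,dx_2-2k_0^2a_0^2\int_{-h}^0\frac{\tilde y^2}{U-c_0}\,dx_2,
\]
which follows from $\p_c Y(k_0,c_0)=\int_{-h}^0 U''\tilde y^2(U-c_0)^{-2}dx_2$ (the paper's own \eqref{E:temp-8}--\eqref{E:temp-8.5} evaluated at $x_2=0$, at $c=U(-h)$, exactly as the paper itself uses them) together with two integrations by parts driven by $P'=k_0^2(U-c_0)\tilde y$, $P=O\big((x_2+h)^3\big)$ and the endpoint asymptotics of Lemma \ref{L:temp-1}; I verified the algebra closes as you state, and it is consistent with the explicit case $k_0=0$, where \eqref{E:y_-0} gives $\p_c F(0,U(-h))=-U'(-h)$ while your formula gives the same value since then $P\equiv 0$ and $y_-(0,U(-h),0)=a_0/U'(-h)$. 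What your route buys is a strictly stronger statement: $\p_c\CF_\sigma(k,U(-h))<0$ for every $k$ and every $\sigma\ge 0$, with the quantitative bound $|\p_c\CF_\sigma|\ge a_0^2/(U'(-h)y_-(k,U(-h),0)^2)$, so the hypothesis on $[c_1,U(-h))$ is needed only for the elementary bookkeeping of the sign of $\CF_\sigma(k_0,c_1)$ in your reduction; the regularity inputs you invoke (Corollary \ref{C:F}, Lemma \ref{L:temp-1}, the validity of \eqref{E:temp-8.5} at $c=U(-h)$, dominated convergence at the endpoint) are the same ones the paper already relies on. The paper's indirect perturbation machinery remains of independent interest (and your aside via Lemma \ref{L:temp-2} with the $(k^2,g)$-parameter family correctly reproduces its first, weak-inequality step), but as a proof of this lemma your direct computation is shorter and sharper.
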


\begin{proof}
According to Lemma \ref{L:e-v-basic-2}(1), it must hold $k_0\ne 0$. Due to the evenness of $\CF_\sigma$ in $k$, without loss of generality, we assume $k_0 >0$. Essentially we need to prove $\p_c \CF_\sigma (k_0, U(-h))\ne 0$, which will be done by an argument by contradiction based on Lemma \ref{L:temp-2} and a carefully constructed localized perturbation to $U$.  
We shall first prove the lemma under the assumption 
\[
\p_k \CF_\sigma (k_0, U(-h))\ne 0
\]
and then remove it at the end of the proof. 

Let $\gamma \in C^\infty(\R, [0, 1])$ be an auxiliary function satisfying 
\be \label{E:gamma-temp-1}
\gamma''(s)\ge 0, \; \forall s\in \R, \; \; \gamma''(s) = 0, \; \forall |s|\ge 1, \;\;  \int_{-1}^1 \gamma''(s') ds' = 1,  \;\; \gamma(-1) = \gamma'(-1)=0,
\ee  
which implies 
\be \label{E:gamma-temp-1.3}
|\gamma(s) - s \chi_{s > -1} |\le C, \quad \gamma' \in [0, 1], 
\ee
where $\chi$ is the characteristic function. For $ |\rho|\ll 1$ and 
\be \label{E:gamma-temp-2}
x_{20} \in (-h, 0), \quad 0< \delta <  L(x_{20})/2 \triangleq \min\{ -x_{20}, \, x_{20}+h \}/2, 
\ee
to be determined later, let 
\[
\BBU(\rho, x_2) = U(x_2) + \rho \delta \gamma \big( (x_2 - x_{20})/\delta \big),
\]   
which coincides with $U(x_2)$ in the $\delta$-neighborhood of $-h$. 
Sometimes we skip some of the variables to prevent the notations from being overly complicated. Clearly,  
\[
\BBU(\rho, \cdot) \in C^6, \quad  
U'+1 \ge \BBU' \ge U'. 
\]
Let $y_-(\rho, k, c, x_2)$ be the solution to the homogeneous Rayleigh equation \eqref{E:Ray-H1-1} at the shear flow $\BBU(\rho, \cdot)$ with the initial condition \eqref{E:y-pm} and $\BBY(\rho, k, c)$ and $\BBF (\rho, k, c)$ be defined in terms of $y_-(\rho, k, c, x_2)$ as in \eqref{E:Y} and \eqref{E:dispersion-CG}, respectively, which also depend on $x_{20}$ and $\delta$. 

To see the regularity of $\BBY$ and $\BBF$ in $\rho$ as well as $c$ near $\BBU(\rho, -h)=U(-h)$, 
let $u(\rho, k, c, x_2)$ be the solution to the homogeneous Rayleigh equation \eqref{E:Ray-H1-1} with the initial conditions 
\[
u\big(-h + L(x_{20})/2\big) = 1, \quad u'\big( -h + L(x_{20})/2\big) = \tilde Y (k, c) \triangleq \tfrac {y_-'(\rho, k, c, -h + L(x_{20})/2)}{y_-(\rho, k, c, -h + L(x_{20})/2)}, 
\]
which clearly implies 
\[
\BBY (\rho, k, c) = \tfrac {u'(\rho, k, c, 0)}{u(\rho, k, c, 0)}.
\]
Recall that $y_-(\rho, k, c, x_2)$ is independent of $\rho$ for $x_2 \in [-h, -h + L(x_{20})/2]$ due to the definition of $\BBU$. In particular, $y_-(\rho, k, c, -h + L(x_{20})/2) \ne 0$ as $c \le U(-h)$ (Lemma \ref{L:y0}(4)), which implies that  $\tilde Y(k, c)$ is well-defined for $c$ near $U(-h)$.  Through the same proof of Lemma \ref{L:Y-1} and Corollary \ref{C:F} (where $x_2=0$ being replaced by $x_2= -h + L(x_{20})/2$ does not affect the arguments), when restricted to $c_I\ge 0$, $\p_k^{j_1} \p_c^{j_2}\tilde Y(k, c) \in L_k^\infty L_{c_R}^q$ locally in $k$ and $c_R$, for $j_1, j_2 \ge 0$, $j_2 \le 2$, $j_1+j_2 \le 3$, and $q\in [1, \infty)$. 
Apparently $u(\rho, k, c, 0)$ is smooth in $k$, $\rho$, $\tilde Y$, and $c$ near $U(-h)$. Therefore $\BBY$ and $\BBF$ are smooth in $\rho$ and satisfy the same regularity in $k$ and $c$. 
Due to the assumptions on $k_0$ and $\p_k \CF_\sigma (k_0, U(-h))\ne 0$ and the semi-circle Theorem of the unstable modes of the water wave problems, the hypotheses of Lemma \ref{L:temp-2} are satisfied. Hence there exist $\rho_0>0$ and a $C^{1, \alpha}$ function $k_*(\rho)$ defined for $|\rho|\ll \rho_0$ such that 
\be \label{E:sign-pcF-1}
 \BBF (\rho, k_*(\rho), U(-h))=0, \quad \BBF (\rho, k_0, c_1) \p_c \BBF (\rho, k_*(\rho), U(-h)) \le 0, \quad \forall |\rho| \le \rho_0.      
\ee 

To prove the lemma by an argument by contradiction, we assume 
\be \label{E:temp-neg}
\p_c \CF_\sigma (k_0, U(-h)) = 0,  
\ee
and then prove that there exist $x_{20}$ and $\delta$ satisfying \eqref{E:gamma-temp-2} such that 
\be \label{E:temp-2.4}
\p_\rho \big(\p_c \BBF(\rho, k_*(\rho), U(-h))\big)\big|_{\rho=0}\ne 0. 
\ee
This would immediately lead to a contradiction to \eqref{E:sign-pcF-1} for some small $\rho\ne 0$.  

The definition \eqref{E:dispersion-CG} and $\BBF (\rho, k_*(\rho), U(-h))=0$ yield   
\be \label{E:temp-2.4.5} \begin{split}
\BBY(k_*(\rho), U(-h))= (\BBU( 0)- U(-h))^{-1} \BBU'(0) + (\BBU( 0)- U(-h))^{-2} (g+ \sigma k_*(\rho)^2),  
\end{split} \ee
where we skipped some $\rho$ arguments. Subsequently we can compute  
\be \label{E:temp-2.4.6} \begin{split}
\p_c \BBF (\rho, k_*(\rho), U(-h)) = & \big( (\BBU( 0)- c)^2 \p_c \BBY  - 2(\BBU(0)- c) \BBY + \BBU'(0) \big)\big|_{(k, c) = (k_*(\rho), U(-h))} \\
=& \big( (\BBU( 0)- c)^2 \p_c \BBY  - \BBU'(0) - \tfrac {2(g + \sigma k^2)} {\BBU(0)- c}  \big)\big|_{(k, c) = (k_*(\rho), U(-h))}. 
\end{split} \ee 
The negation assumption \eqref{E:temp-neg} also yields 
\be \label{E:temp-2.4.7} 
\p_c Y (k_0, U(-h)) = (U( 0)- U(-h))^{-2} U'(0) + 2 (U( 0)- U(-h))^{-3} (g+ \sigma k_0^2). 
\ee
Meanwhile, from the Implicit Function Theorem, the definition of $\BBU$, \eqref{E:gamma-temp-1}, and \eqref{E:gamma-temp-2}, 
\be \label{E:temp-2.5}\begin{split}
\p_\rho k_* (0) = & - \frac {\p_\rho \BBF (0, k_0, U(-h))}{\p_k \CF_\sigma (k_0, U(-h))} \\
=& - \big(\p_k \CF_\sigma (k_0, U(-h)) \big)^{-1} \Big( (U(0) - U(-h))^2 \p_\rho \BBY(0, k_0, U(-h))  \\
& + \delta \gamma (- \tfrac {x_{20}}\delta) \big(2 (U(0)-U(-h))Y(k_0, U(-h)) - U'(0)\big)-(U(0) - U(-h)) \Big)  \\
=& - \big(\p_k \CF_\sigma (k_0, U(-h)) \big)^{-1} \Big( (U(0) - U(-h))^2 \p_\rho \BBY(0, k_0, U(-h))  \\
& + \delta \gamma (- \tfrac {x_{20}}\delta) \big(U'(0) + \tfrac {2(g+\sigma k_0^2)}{U(0)-U(-h)}\big)-(U(0) - U(-h)) \Big). 
\end{split} \ee
Moreover, from \eqref{E:temp-2.4.6} one may compute  
\begin{align*} 
\p_\rho \big(\p_c \BBF(\rho, k_*(\rho), & U(-h))\big)\big|_{\rho=0} =  - 1 -\frac {4\sigma k_0 } {U(0)- U(-h)}\p_\rho k_* (0) \\
& + (U(0)- U(-h))^2 \Big(\p_{c\rho} \BBY(0, k_0, U(-h))  + \p_{ck} Y (k_0, U(-h)) \p_\rho k_* (0) \Big) \\
&+ 2\delta \gamma (- \tfrac {x_{20}}\delta) \Big(  \frac {g + \sigma k_0^2} {(U(0)- U(-h))^2} + (U(0)-U(-h))\p_c Y(k_0, U(-h))\Big),
\end{align*}
where $\p_\rho k_*(0)$ and $\p_c Y(k_0, U(-h))$ should be substituted by \eqref{E:temp-2.5} and \eqref{E:temp-2.4.7}, respectively. According to  \eqref{E:gamma-temp-1.3}, $\delta \gamma (- \tfrac {x_{20}}\delta)$ can be approximated by $-x_{20}$. Along with  \eqref{E:temp-2.5}  and \eqref{E:temp-2.4.7}, we obtain 
\be \label{E:temp-2.6} \begin{split}
\big| \p_\rho \big(\p_c \BBF(\rho, k_*(\rho), U(-h))\big)\big|_{\rho=0} -& A_0 + A_1x_{20} - A_2 \p_\rho \BBY(0, k_0, U(-h)) \\
& - (U(0)- U(-h))^2 \p_{c\rho} \BBY(0, k_0, U(-h)) \big| 
\le \tilde C \delta, 
\end{split}\ee
where 
\[
A_0= -1 + \big( \p_k \CF_\sigma (k_0, U(-h))\big)^{-1} \big( -4\sigma k_0  +  (U(0)- U(-h))^3 \p_{ck} Y (k_0, U(-h)) \big),
\]
\begin{align*}
A_1=& - \frac {(U(0)- U(-h))^2 }{\p_k \CF_\sigma (k_0, U(-h))} \Big( -\frac {4\sigma k_0 } {(U(0)- U(-h))^3} +\p_{ck} Y (k_0, U(-h)) \Big) \\
&\times \Big(U'(0) + \frac {2(g+\sigma k_0^2)}{U(0)-U(-h)}\Big)  + 2 \Big(\frac {U'(0)}{U(0)-U(-h)} + \frac {3(g+\sigma k_0^2)}{(U(0)-U(-h))^2}\Big),
\end{align*}
\[
A_2= - \frac {(U(0)- U(-h))^4 }{\p_k \CF_\sigma (k_0, U(-h))} \Big( -\frac {4\sigma k_0 } {(U(0)- U(-h))^3} +\p_{ck} Y (k_0, U(-h)) \Big), 
\]
and $\tilde C$ is proportional to $|A_1|$ depending on $U$, $g$, $\sigma$, $\gamma$, $k_0$, but independent of $x_{20}$ and $\delta$ satisfying \eqref{E:gamma-temp-2}.
In the rest of the proof, we will estimate $\p_\rho \BBY(0, k_0, U(-h))$ and $\p_{c\rho} \BBY(0, k_0, U(-h))$ carefully to show that there exists $x_{20} \in (-h, 0)$ such that \eqref{E:temp-2.4} holds for $0< \delta \ll 1$ based on \eqref{E:temp-2.6}. In the following the generic constant $\tilde C$ depends on $U$, $g$, $\sigma$, $\gamma$, and $k_0$, but always independent of $x_{20}$ and $0 < \delta \ll 1$.

$\bullet$ {\it Estimates of $\p_\rho \BBY (0, k_0, U(-h))$.}  For $c \in (-\infty, U(-h)]$, let  
\[
\tilde y (\rho, k, c, x_2) = \tfrac {y_- (\rho, k, c, x_2)}{y_- (\rho, k, c, 0)}> 0, 
\;\; \tilde y_0(x_2) = \tilde y (0, k_0, U(-h), x_2), 
\quad \forall x_2 \in (-h, 0], 
\]
which are well-defined and positive due to Lemma \ref{L:y0}(4) and satisfies \eqref{E:temp-4} and those estimates given in Lemma \ref{L:temp-1}. 
Differentiating \eqref{E:temp-4} with respect to $\rho$, we have 
\[
- \p_\rho \tilde y'' + \big(k^2 + \tfrac {\BBU''}{\BBU - c}\big)\p_\rho \tilde y = - \p_\rho \big( \tfrac {\BBU''}{\BBU - c}\big) \tilde y, \quad \p_\rho \tilde y (-h)=\p_\rho \tilde y(0)=0.
\]
For $c\le  U(-h)$, 
the claim in the proof of Lemma \ref{L:neutral-M} applies 
and yields  
\be \label{E:temp-5.5} 
\p_\rho \tilde y (x_2) = - \int_{x_2}^0 \frac {\tilde y (x_2)}{\tilde y (x_2')^{2}} \int_{-h}^{x_2'}  \p_\rho \big( \frac {\BBU'' (\rho, x_2'')}{\BBU(\rho, x_2'') - c}\big) \tilde y(x_2'')^2  dx_2'' dx_2', 
\ee
\[
\p_\rho \BBY (0, k_0, U(-h)) = \p_\rho \tilde y' (0, k_0, U(-h), 0) = \int_{-h}^{0}  \p_\rho \big( \frac {\BBU''}{\BBU - U(-h)}\big)\big|_{\rho =0} \tilde y_0^2  dx_2.
\] 
One may compute 
\be \label{E:temp-6} \begin{split} 
& \p_\rho \big( \frac {\BBU''(\rho, x_2) }{\BBU(\rho, x_2)- c}\big)\Big|_{\rho=0} = \frac {\frac 1\delta \gamma''(\frac {x_2-x_{20}}\delta) }{U(x_2)- c} - \frac {\delta \gamma(\frac {x_2-x_{20}}\delta) U''(x_2) }{(U(x_2)- c)^2}. 
\end{split}\ee
From \eqref{E:gamma-temp-1}, Lemma \ref{L:temp-1}, and 
\[
\big| \big(\tfrac {\tilde y_0^2}{U-U(-h)} \big)'\big| \le 2 |\tilde y_0'| \big|\tfrac {\tilde y_0}{U-U(-h)}\big|  + U' \big| \tfrac {\tilde y_0^2}{(U-U(-h))^2} \big| \le \tilde C, 
\]
along with \eqref{E:gamma-temp-1.3} we obtain an estimate on $\p_\rho \BBY(0, k_0, U(-h))$   
\be \label{E:temp-7} \begin{split}
& \Big| \p_\rho \BBY (0, k_0, U(-h)) - \frac {\tilde y_0 (x_{20})^2}{U(x_{20})-U(-h)} + \int_{x_{20}}^0 \frac {(x_2-x_{20}) U''(x_2) \tilde y_0(x_2)^2 }{(U(x_2)- U(-h))^2} dx_2  \Big| \\
\le & \tilde C \delta   +  \int_{x_{20}-\delta}^{x_{20}+\delta} \frac 1\delta \gamma''(\frac {x_2-x_{20}}\delta) \Big| \frac {\tilde y_0^2}{U - U(-h)} - \frac {\tilde y_0 (x_{20})^2}{U(x_{20})-U(-h)}\Big| dx_2 \le \tilde C\delta, 
\end{split} \ee
where we used the fact that $\gamma''\ge 0$ is supported in $[-1, 1]$ with total integral equal to 1. 

$\bullet$ {\it Estimates of $\p_{c\rho} \BBY (0, k_0, U(-h))$.} Differentiating \eqref{E:temp-4} with respect to $\rho$ and $c$ yields 
\[
- \p_{c\rho} \tilde y'' + \big(k^2 + \tfrac {\BBU''}{\BBU - c}\big)\p_{c\rho} \tilde y = - \p_{c\rho} \big( \tfrac {\BBU''}{\BBU - c}\big) \tilde y - \p_{\rho} \big( \tfrac {\BBU''}{\BBU - c}\big) \p_c \tilde y - \p_{c} \big( \tfrac {\BBU''}{\BBU - c}\big) \p_\rho \tilde y,
\]
with zero boundary values at $x_2=-h, 0$. 
Again from \eqref{E:F-signs-0}, we have 
\be \label{E:temp-7.5} \begin{split}
& \p_{c\rho} \BBY (0, k_0, U(-h))= \p_{c\rho} \tilde y' (0, k_0, U(-h), 0) = I_1+ I_2+I_3\\
\triangleq & \int_{-h}^0  \p_{c\rho} \big( \frac {\BBU''}{\BBU - c}\big) \tilde y^2 +\p_{\rho} \big( \frac {\BBU''}{\BBU - c}\big) \tilde y  \p_c\tilde y + \p_{c} \big( \frac {\BBU''}{\BBU - c}\big)  \tilde y \p_\rho\tilde y   dx_2\Big|_{(\rho, k, c)= (0, k_0, U(-h))},
\end{split} \ee
where $\p_c \tilde y$ and  $\p_\rho\tilde y$ were given in \eqref{E:temp-8.5} and \eqref{E:temp-5.5}, respectively. Using  \eqref{E:temp-8.5} and  through an integration by parts, 
it follows 
\begin{align*}
I_2 = & \int_{-h}^0  \p_{\rho} \big( \frac {\BBU''}{\BBU - c}\big)  \tilde y \p_c \tilde y   dx_2\Big|_{(\rho, k, c)= (0, k_0, U(-h))}  \\
=& - \int_{-h}^0   \Big(\p_{\rho} \big( \frac {\BBU''}{\BBU - c}\big)  \tilde y^2\Big)(x_2) \int_{x_2}^0 \frac 1{\tilde y (x_2')^{2}} \int_{-h}^{x_2'}  \Big(\p_c \big( \frac {\BBU''}{\BBU - c}\big) \tilde y^2 \Big)(x_2'')  dx_2'' dx_2'   dx_2\\
=& - \int_{-h}^0  \frac 1{\tilde y (x_2)^{2}}\int_{-h}^{x_2}  \Big(\p_{\rho} \big( \frac {\BBU''}{\BBU - c}\big)  \tilde y^2\Big)(x_2') dx_2'  \int_{-h}^{x_2}  \Big(\p_c \big( \frac {\BBU''}{\BBU - c}\big) \tilde y^2 \Big)(x_2') dx_2'   dx_2,
\end{align*}
evaluated at $(\rho, k, c)= (0, k_0, U(-h))$. Through another  integration by parts in a similar fashion applied to $I_3$, we obtain $I_2=I_3$. 
 
Like \eqref{E:temp-7} for $\p_\rho \BBY(0, k_0, U(-h))$, we will also identify the leading terms in $I_{1,2,3}$ of $\p_{c\rho} \BBY$. From Lemma \ref{L:temp-1}, we have 
\begin{align*}
\big| \big(\tfrac {(\tilde y \p_c \tilde y)(0, k_0, U(-h), x_2) }{U(x_2)-U(-h)} \big)' \big| \le & \tilde C \big( 1+ | \log (x_2+h)|\big),
\end{align*}
which implies the $C^\alpha$ continuity of $\frac {\tilde y \p_c \tilde y }{U-c}\big|_{(\rho, k)= (0, k_0)}$ in $x_2$ for any $\alpha\in [0, 1)$. Since \eqref{E:gamma-temp-2} implies $x_{20} -\delta +h > \delta$, with the above inequality, \eqref{E:gamma-temp-1.3}, and \eqref{E:temp-6}, 
we are ready to obtain the leading order term of $I_2=I_3$ 
\be  \label{E:temp-9} \begin{split}
& \Big| I_2- \frac {(\tilde y \p_c \tilde y)(0, k_0, U(-h), x_{20}) }{U(x_{20})-U(-h)} + \int_{x_{20}}^0 \frac {(x_2-x_{20}) (\tilde y \p_c \tilde y)(0, k_0, U(-h), x_{2}) U'' }{(U(x_{2})-U(-h))^2}dx_2  \Big| \\
\le & \Big(\int_{x_{20}-\delta}^{x_{20}+\delta}  \frac 1\delta \gamma''(\frac {x_2-x_{20}}\delta) \Big| \frac {\tilde y \p_c \tilde y}{U - c} - \frac {(\tilde y \p_c \tilde y)(x_{20}) }{U(x_{20})-c}\Big| dx_2\\
& + \tilde C \delta\int_{\delta-h}^0   \frac { |\p_c \tilde y|\tilde y}{(U-c)^2} dx_2\Big) \Big|_{(\rho, k, c)=(0, k_0, U(-h))} \\
\le & \tilde C \int_{x_{20}-\delta}^{x_{20}+\delta} \delta^{\alpha-1} \gamma''(\frac {x_2-x_{20}}\delta) dx_2 + \tilde C \delta\int_{\delta-h}^0 \big(1+ | \log (x_2+h)|\big)  dx_2 \le \tilde C \delta^\alpha.
\end{split} \ee

The last term $I_1$ is handled similarly starting with 
\[
\p_{c\rho} \Big( \frac {\BBU''}{\BBU - c}\Big) (x_2) \Big|_{\rho=0} = \frac {\frac 1\delta \gamma''(\frac {x_2-x_{20}}\delta) }{(U(x_2)- c)^2} - \frac {2\delta \gamma(\frac {x_2-x_{20}}\delta) U''(x_2) }{(U(x_2)- c)^3}. 
\]
Moreover we can estimate using 
Lemma \ref{L:temp-1} 
\begin{align*}
\Big|\Big(\frac {\tilde y_0^2}{(U-U(-h))^2} \Big)'\Big| =& \frac {2\tilde y_0}{(U-U(-h))^2} \Big| \tilde y_0' - \frac {U' \tilde y_0}{U-U(-h)} \Big|  \le \tilde C |x_2+h|. 
\end{align*}
Again, using \eqref{E:gamma-temp-1.3} we obtain   
\begin{align*} 
& \Big| I_1- \frac {\tilde y_0 (x_{20})^2 }{(U(x_{20})-U(-h))^2} + \int_{x_{20}}^0 \frac{2(x_2-x_{20}) U''(x_2) \tilde y_0(x_2)^2}{(U(x_2)-U(-h))^3} dx_2  \Big|\\
\le & \int_{x_{20}-\delta}^{x_{20}+\delta} \frac 1\delta \gamma''(\frac {x_2-x_{20}}\delta)\Big| \frac {\tilde y_0^2}{(U - U(-h))^2} - \frac {\tilde y_0 (x_{20})^2 }{(U(x_{20})-U(-h))^2}\Big| dx_2
+  \int_{\delta-h}^0  \frac {\tilde C \delta  \tilde y_0^2}{(U-U(-h))^3} dx_2  \\
\le & \tilde C \delta +\tilde C \int_{\delta-h}^{0}   \frac \delta{x_2+h}   dx_2 \le \tilde C \delta^\alpha, \quad \; \forall \alpha \in [0, 1).
\end{align*}
Therefore \eqref{E:temp-7.5} implies  
\be  \label{E:temp-10} \begin{split}
\Big|\p_{c\rho} \BBY (0, & k_0,  U(-h))  - \frac {\tilde y_0 (x_{20})^2 }{(U(x_{20})-U(-h))^2} - \frac {2\tilde y_0 (x_{20})  \tilde y_1 (x_{20}) }{U(x_{20})-U(-h)} \\
&+  \int_{x_{20}}^0 \frac{2(x_2-x_{20}) U''(x_2) }{(U(x_2)-U(-h))^2} \Big( \tilde y_0 (x_2) \tilde y_1 (x_2) + \frac {\tilde y_0 (x_2) ^2}{U(x_{2})-U(-h)} \Big) dx_2  \Big|\le \tilde C \delta^{\frac 12},
\end{split}\ee
where 
\[
\tilde y_1 (x_2)= \p_c \tilde y(0, k_0, U(-h), x_2).
\]

From \eqref{E:temp-2.6}, \eqref{E:temp-7}, and \eqref{E:temp-10} we obtain 
\be \label{E:temp-11}
|\p_\rho \big(\p_c \BBF(\rho, k_*(\rho), U(-h))\big)\big|_{\rho=0} - 
I(x_{20}) | \le \tilde C \delta^{\frac 12}
\ee
where 
\be \label{E:temp-12} \begin{split}
I(x_{20}) = & A_0 - A_1 x_{20} + A_2 I_4(x_{20}) + (U(0)- U(-h))^2 I_5(x_{20}),\\
I_4(x_{20}) = &  \frac {\tilde y_0 (x_{20})^2}{U(x_{20})-U(-h)} - \int_{x_{20}}^0 \frac {(x_2-x_{20}) U''(x_2) \tilde y_0(x_2)^2 }{(U(x_2)- U(-h))^2} dx_2 \\
I_5(x_{20}) = & \frac {\tilde y_0 (x_{20})^2 }{(U(x_{20})-U(-h))^2} + \frac {2 \tilde y_0 (x_{20})  \tilde y_1 (x_{20}) }{U(x_{20})-U(-h)},\\
& -  \int_{x_{20}}^0 \frac{2(x_2-x_{20}) U''(x_2) }{(U(x_2)-U(-h))^2} \Big( \tilde y_0 (x_2) \tilde y_1 (x_2) + \frac {\tilde y_0 (x_2) ^2}{U(x_{2})-U(-h)} \Big) dx_2, \\
\end{split}\ee
and $A_0, A_1$, and $A_2$ are defined right bellow \eqref{E:temp-2.6}. Clearly for \eqref{E:temp-2.4} to hold for some $x_{20} \in (-h, 0)$ and $0< \delta \ll 1$, it suffices to show $I(x_{20})$, which is independent of $\delta$, is not identically zero. This will be achieved by computing $I'' (x_{20})$. 

Direct calculations and using the Rayleigh equation \eqref{E:temp-4} yield  
\begin{align*} 
I_4'' =&  \frac {2 (\tilde y_0')^2 + 2 \tilde y_0 \tilde y_0''}{U-U(-h)} -  \frac {4 U' \tilde y_0 \tilde y_0' + U'' \tilde y_0^2}{(U-U(-h))^2} +  \frac { 2(U')^2 \tilde y_0^2}{(U-U(-h))^3} -  \frac {U'' \tilde y_0^2}{(U-U(-h))^2} \notag \\
= & \frac {2 (\tilde y_0')^2 + 2 k_0^2 \tilde y_0^2 }{U-U(-h)} -  \frac {4 U' \tilde y_0 \tilde y_0' }{(U-U(-h))^2} +  \frac { 2(U')^2 \tilde y_0^2}{(U-U(-h))^3} \notag \\
=& \frac {2 k_0^2 \tilde y_0^2 }{U-U(-h)} + \frac {2}{U-U(-h)} \Big( \tilde y_0' - \frac {U' \tilde y_0 }{U-U(-h)} \Big)^2.
\end{align*}
Similar direct calculations lead to 
 \begin{align*} 
I_5'' =&  \frac {2 (\tilde y_0')^2 + 2 \tilde y_0 \tilde y_0''}{(U-U(-h))^2} -  \frac {8 U' \tilde y_0 \tilde y_0' + 2 U'' \tilde y_0^2}{(U-U(-h))^3} +  \frac { 6(U')^2 \tilde y_0^2}{(U-U(-h))^4} \notag \\
& + \frac {4 \tilde y_0'  \tilde y_1' + 2\tilde y_0'' \tilde y_1 + 2 \tilde y_0 \tilde y_1''}{U-U(-h)} - \frac {4 U' (\tilde y_0'  \tilde y_1 + \tilde y_0 \tilde y_1') + 2U'' \tilde y_0 \tilde y_1}{(U-U(-h))^2} + \frac {4 (U')^2 \tilde y_0 \tilde y_1}{(U-U(-h))^3} \notag \\
&- \frac {2U'' \tilde y_0 \tilde y_1}{(U-U(-h))^2} -  \frac {2U'' \tilde y_0^2}{(U-U(-h))^3}.  
\end{align*}
Using the Rayleigh equations \eqref{E:temp-4} and \eqref{E:temp-8} it follows 
\begin{align*}
I_5'' =& \frac {2 (\tilde y_0')^2 + 2 k_0^2 \tilde y_0^2}{(U-U(-h))^2} -  \frac {8 U' \tilde y_0 \tilde y_0' }{(U-U(-h))^3} +  \frac { 6(U')^2 \tilde y_0^2}{(U-U(-h))^4} \\
& + \frac {4 \tilde y_0'  \tilde y_1' + 4k_0^2 \tilde y_0 \tilde y_1 }{U-U(-h)} - \frac {4 U' (\tilde y_0'  \tilde y_1 + \tilde y_0 \tilde y_1') }{(U-U(-h))^2} + \frac {4 (U')^2 \tilde y_0 \tilde y_1}{(U-U(-h))^3} 
\end{align*}
Reorganizing the terms we obtain 
\begin{align*} 
I_5''  =&  \frac { 2 k_0^2 \tilde y_0^2}{(U-U(-h))^2}  + \frac {2}{(U-U(-h))^2} \Big( \tilde y_0' - \frac {U' \tilde y_0 }{U-U(-h)} \Big) \Big( \tilde y_0' - \frac {3U' \tilde y_0 }{U-U(-h)} \Big) \notag \\
&+ \frac {4k_0^2 \tilde y_0 \tilde y_1 }{U-U(-h)} + \frac {4}{U-U(-h)} \Big( \tilde y_0' - \frac {U' \tilde y_0 }{U-U(-h)} \Big) \Big( \tilde y_1' - \frac {U' \tilde y_1 }{U-U(-h)} \Big).
\end{align*}
As $A_0, A_1, A_2$ are independent of $x_{20}$, Lemma \ref{L:temp-1} and the above computations imply 
\[
\lim_{x_{20} \to (-h)+} I'' (x_{20}) = 2 k_0^2 (U(0)- U(-h))^2 \tilde y_0'(-h)^2 /(3 U' (-h)^2) >0, 
\]
and thus $I(x_{20})$ is not a constant of $x_{20}$. 

Therefore, according to \eqref{E:temp-11}, \eqref{E:temp-2.4} holds if $x_{20}$ is close to $-h$ and $0< \delta \ll1$. This contradicts \eqref{E:sign-pcF-1} and thus \eqref{E:temp-neg} can not occur. This prove the lemma under the assumption $\p_k \CF_\sigma (k_0, U(-h)) \ne 0$.  

Finally, we shall complete the proof of the lemma in the case of $\p_k \CF_\sigma (k_0, U(-h))=0$. From Lemma \ref{L:e-v-basic-2}, this can happen only if $\sigma>0$, namely, in the case of the linearized capillary gravity waves. Our strategy is to consider the problem with modified  parameters 
\[
\tilde g(\ep)= g + \ep k_0^2, \quad \tilde \sigma(\ep)= \sigma -\ep, \ \text{ where } \
0< \ep \ll 1. 
\]
The corresponding function $\tilde \CF (\ep, k, c)$ associated to the eigenvalue problem becomes 
\[
\tilde \CF (\ep, k, c) = (U(0) - c)^2 Y(k, c) - U'(0) (U(0)-c) - \tilde g -\tilde \sigma k^2 = \CF_\sigma (k, c) + \ep(k^2 - k_0^2),
\]
which satisfies 
\[
\tilde \CF (\ep, k_0, c) = \CF_\sigma (k_0, c), \quad \p_k \tilde \CF (\ep, k_0, U(-h)) = 2\ep k_0. 
\]
Therefore, for any $\ep\le \sigma$, $\tilde \CF (\ep, \cdot, \cdot)$ satisfies the assumption on $\CF_\sigma$ along with $\p_k \tilde \CF (\ep, k_0, U(-h)) \ne 0$. Hence the above proof implies $\p_c \CF_\sigma (k_0, U(-h)) = \p_c \tilde \CF (\ep, k_0, U(-h)) \ne0$. It completes the proof of the lemma. 
\end{proof}


\subsection{Bifurcation analysis} \label{SS:bifurcation} 

With the technical preparations of the previous subsections, we shall consider the bifurcation of the unstable eigenvalues from limiting neutral modes. 
In the following lemma we incorporate  the bifurcation analysis of $c$ near $U(-h)$ and inflection values of $U$, which often leads to linear instability. The lemma is stated for $\CF_\sigma (k, c)$ defined in \eqref{E:dispersion-CG} with $\sigma \ge 0$ so that it also applies to linearized capillary gravity waves.    

\begin{lemma} \label{L:bifurcation}
Suppose $U\in C^{l_0}$ and $(k_0, x_{20}) \in \big(\R \times [-h, 0) \big) 
$ satisfy  
\[
\CF_\sigma (k_0, c_0)=0, \; \text{ where } \; c_0 =U(x_{20}), \;\; \text{ and }\;\; \p_{c_R} \CF_\sigma (k_0, c_0) =\p_{c_R} F (k_0, c_0)\ne 0, 
\]
then there exist $\ep>0$, $0< \rho \in U(0)- c_0$, and $\CC \in C^{1, \alpha} \big([k_0-\ep, k_0+ \ep], \C\big)$ for any $\alpha \in [0, 1)$ if $x_{20}=-h$ and $l_0\ge 5$, or $\CC \in C^{l_0-2}$ if $x_{20} \in (-h, 0)$ and $l_0\ge 4$, such that $\CC(k_0)=c_0$, 
\[
\CC'(k_0) = - \p_k \CF_\sigma (k_0, c_0) / \p_{c_R} F (k_0, c_0) \implies \CC_I'(k_0) = \p_k \CF_\sigma  \p_{c_R} F_I/ |\p_{c_R} F |^2 \big|_{(k, c)=(k_0, c_0)},  
\]
and 
\be \label{E:bif-curve-1}
\CF_\sigma (k, c) =0, \;\; |k -k_0|\le \ep, \; |c_R-c_0|,c_I\in [0, \rho], \;
\text{ iff } \; c = \CC(k), \; \CC_I(k)\ge 0. 
\ee
Moreover, the following properties hold for $k \in [k_0-\ep, k_0+ \ep]$ and some $\tilde C>0$ determined by $\alpha$, $k_0$ and $U$. 
\begin{enumerate}
\item Suppose $\p_{c_R} \RP\,  \CF_\sigma (k_0, c_0)=\p_{c_R} F_R (k_0, c_0) \ne 0$, then  
\[
\big| \CC_I(k) + \big(U(0)- c_0\big)^2Y_I \big(k, \CC_R(k)\big)/ \p_{c_R} F_R (k_0, c_0 )\big| \le \tilde C |k- k_0|^\alpha \big| Y_I \big(k, \CC_R(k)\big)\big|. 
\]
\item Suppose $\p_k \CF_\sigma (k_0, c_0) =0$, then either i.) $\sigma >0$ or ii.) $\sigma =0$, $k_0=0$, and $x_{20} \in (-h, 0)$. Moreover the following hold. 
\begin{enumerate}
\item If $k_0 \ne 0$, then 
\[
\big|\CC'(k) +  \p_k \CF_\sigma (k, c_0 )|\p_{c_R} F (k_0, c_0 )|^{-2} \overline {\p_{c_R} F (k_0, c_0 )} \big|
\le  \tilde C  |k-k_0|^{\alpha(1+\alpha)}. 
\]
\item If $k_0 = 0$, then 
\[
\big|\CC'(k)/k +  2\p_K \CF_\sigma (k, c_0 )|\p_{c_R} F (0, c_0 )|^{-2} \overline {\p_{c_R} F (0, c_0 )} \big|\le  \tilde C  |K|^{\alpha(1+\beta)},  
\]
where $K=k^2$, $\beta=0$ if $\p_K \CF_\sigma(0, c_0) \ne 0$, and any $\beta \in (0, 1)$ if $\p_K \CF_\sigma(0, c_0) = 0$. 
\end{enumerate}
\end{enumerate}
\end{lemma}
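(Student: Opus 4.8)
The plan is to treat $\CF_\sigma(k,c)=0$ as a scalar equation in $(k,c)$ and extract the branch $\CC(k)$ by a variant of the implicit function theorem adapted to the limited regularity near $c_0$. First I would observe that the hypothesis $\p_{c_R}\CF_\sigma(k_0,c_0)=\p_{c_R}F(k_0,c_0)\neq0$ means the real and/or imaginary parts of $F$ are non-degenerate in $c_R$ at the singular neutral mode, and the key is to split the two cases $x_{20}=-h$ and $x_{20}\in(-h,0)$. In the interior inflection case $x_{20}\in(-h,0)$, Lemma~\ref{L:Y-def}(5)(6) and Lemma~\ref{L:e-v-basic-1}(4) give that $Y$ (hence $F$ and $\CF_\sigma$) is $C^{l_0-2}$ jointly in $(k,c)$ near such a $c_0$ when restricted to $c_I\ge0$, so a direct application of the real (two-variable, i.e.\ $(c_R,c_I)$) implicit function theorem to the map $(k,c_R,c_I)\mapsto(\RP\,\CF_\sigma,\IP\,\CF_\sigma)$ would be too much — instead I would use that $\CF_\sigma$ is analytic off $U([-h,0])$ in $c$, so the root is governed by the $\p_{c_R}F\neq0$ condition alone via analyticity, and the continuation is $C^{l_0-2}$ in $k$. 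Near $c_0=U(-h)$ only $C^{1,\alpha}$ joint regularity is available (Corollary~\ref{C:F}, requiring $l_0\ge5$), and there the implicit function argument must be carried out at the $C^{1,\alpha}$ level: one solves $\RP\,\CF_\sigma=0$ for $c_R=c_R(k,c_I)$ using $\p_{c_R}F_R\neq0$ when it holds, or more robustly uses the analyticity-in-$c$ structure of $\CF_\sigma$ on $\{c_I>0\}$ combined with the boundary regularity to pin down a single $C^{1,\alpha}$ branch $\CC(k)$ with $\CC(k_0)=c_0$, and the uniqueness statement \eqref{E:bif-curve-1} follows from a degree/index argument (as in Remark~\ref{R:continuation}) showing $\mathrm{Ind}(\CF_\sigma(k,\cdot),\Omega)=1$ on a small half-disk $\Omega$ around $c_0$ in the closed upper half plane.

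For the derivative formula $\CC'(k_0)=-\p_k\CF_\sigma(k_0,c_0)/\p_{c_R}F(k_0,c_0)$, I would differentiate the identity $\CF_\sigma(k,\CC(k))=0$ formally — this is legitimate once the $C^1$-in-$k$ regularity of $\CC$ is established — noting that along the real axis $\p_c$ and $\p_{c_R}$ agree up to the usual Cauchy–Riemann bookkeeping, and that $\p_k\CF_\sigma=\p_kF$ is real at $(k_0,c_0)$ since $F(k_0,c_0)=0\in\R$ implies by Lemma~\ref{L:e-v-basic-1} that the relevant $k$-derivative lies in $\R$ (at interior inflection values $F$ is real there, at $c_0=U(-h)$ likewise). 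Then $\CC_I'(k_0)=\IP\big(-\p_kF/\p_{c_R}F\big)=\p_kF\cdot\p_{c_R}F_I/|\p_{c_R}F|^2$ at $(k_0,c_0)$, since $\p_kF\in\R$; this is exactly the stated expression. The sign of $\CC_I'(k_0)$ is thus tied to the sign of $\p_{c_R}F_I(k_0,c_0)$, which by Lemma~\ref{L:Y-def}(5) is proportional to $\p_{c_R}$ of $U''(x_2^c)y_-^2/(U'|y_-|^2)$, i.e.\ governed by $U'''(x_{20})$ in the inflection case — this is what ultimately yields the sign claim $\CC_I(k)$ has the same sign as $(k-k_*)U'''(x_{20})$ in Theorem~\ref{T:e-values}(3b).

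For the quantitative estimates: statement (1) assumes $\p_{c_R}F_R(k_0,c_0)\neq0$; here I would write $\RP\,\CF_\sigma(k,\CC_R(k)+i\CC_I(k))=0$ and Taylor-expand in $\CC_I$ about the real axis. Since $\CF_\sigma$ is analytic in $c$ for $c_I>0$ and $C^{1,\alpha}$ (resp.\ $C^{l_0-2}$) up to the boundary, $\RP\,\CF_\sigma(k,c_R+i0^+)=F_R(k,c_R)$ and $\IP=F_I(k,c_R)=(U(0)-c_R)^2Y_I$, and the Cauchy–Riemann relations on the boundary give, to leading order, $0=F_R(k,\CC_R(k))-\CC_I(k)\,\p_{c_R}F_I(k,\CC_R(k))+O(\CC_I^{1+\alpha})$ once one uses the $C^\alpha$ modulus of continuity of $\p_{c_R}F$; but more directly, solving the real equation $\RP\,\CF_\sigma=0$ and the imaginary equation $\IP\,\CF_\sigma=0$ simultaneously and eliminating gives $\CC_I(k)\approx -(U(0)-c_0)^2Y_I(k,\CC_R(k))/\p_{c_R}F_R(k_0,c_0)$ with the $C^\alpha$ error $\tilde C|k-k_0|^\alpha|Y_I|$ coming from replacing $\p_{c_R}F_R$ at $(k,\CC_R(k))$ by its value at $(k_0,c_0)$ and from the $C^\alpha$ regularity in $k$. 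Statement (2), the degenerate case $\p_k\CF_\sigma(k_0,c_0)=0$: first the dichotomy — by Lemma~\ref{L:e-v-basic-2}(2), $\p_KF(\sqrt K,c_0)>0$ for $c_0\in\R\setminus U((-h,0])$, and the analogous positivity at interior inflection values from \eqref{E:F-signs-0.1}, so $\p_k\CF_\sigma=\p_kF-2\sigma k$ can vanish only if $\sigma>0$, or if $\sigma=0$ and $k_0=0$ (where $\p_kF(0,\cdot)=0$ automatically by evenness) with $x_{20}\in(-h,0)$ forced since $\p_KF(0,c_0)>0$ strictly unless $c_0=U(-h)$ — wait, that is the wrong direction; rather $\p_KF(0,U(-h))$ equals the integral in Lemma~\ref{L:e-v-basic-2}(2) which is strictly positive, so $c_0=U(-h)$ is excluded, leaving $x_{20}\in(-h,0)$. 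For (2a) with $k_0\neq0$: differentiate $\CF_\sigma(k,\CC(k))=0$ twice, using $\CC'(k_0)=-\p_k\CF_\sigma(k_0,c_0)/\p_{c_R}F(k_0,c_0)=0$, to get $\CC'(k)=-(\p_k\CF_\sigma(k,c_0)+\text{h.o.t.})/\overline{\p_{c_R}F}\cdot|\p_{c_R}F|^{-2}$ with the $|k-k_0|^{\alpha(1+\alpha)}$ error arising from composing the $C^\alpha$ modulus with the fact that $\CC$ itself is now $C^{1,\alpha}$ so $\CC(k)-c_0=O(|k-k_0|^{1+\alpha})$ in the degenerate direction. For (2b) with $k_0=0$: switch to $K=k^2$ as variable (natural since $\CF_\sigma$ is even in $k$, and analytic in $K$ by the remarks after \eqref{E:e-func}), apply the same expansion to $\CF_\sigma(\sqrt K,\CC)=0$, and the two sub-cases $\beta=0$ vs.\ $\beta\in(0,1)$ reflect whether $\p_K\CF_\sigma(0,c_0)\neq0$ (non-degenerate, clean rate) or $=0$ (one more order of degeneracy absorbed into a worse-by-$\epsilon$ exponent).

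The main obstacle I expect is the $c_0=U(-h)$ case at only $C^{1,\alpha}$ regularity: the ordinary implicit function theorem does not apply to produce a $C^{1,\alpha}$ branch from a merely $C^{1,\alpha}$ function of $(k,c)$, so one must exploit the genuine analyticity of $\CF_\sigma$ in $c$ on the open half-plane $\{c_I>0\}$ together with the continuity up to the boundary — the correct tool is the index/winding-number argument (as in Remark~\ref{R:continuation} and Lemma~\ref{L:temp-2}) to show that in a small closed half-disk about $c_0$ there is exactly one root for each nearby $k$, and then separately bootstrap its regularity in $k$ from Corollary~\ref{C:F} and the non-degeneracy $\p_{c_R}F\neq0$. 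Getting the correct Hölder exponents $\alpha(1+\alpha)$ and $\alpha(1+\beta)$ in part~(2) — rather than a naive $\alpha$ — requires carefully tracking that the degeneracy $\CC'(k_0)=0$ upgrades the local size of $\CC(k)-c_0$ from $O(|k-k_0|)$ to $O(|k-k_0|^{1+\alpha})$ before feeding it back into the $C^\alpha$ modulus of continuity of the coefficients; this self-improving estimate is the delicate bookkeeping step.
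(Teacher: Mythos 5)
Your overall outline (Cauchy--Riemann structure, mean-value/Taylor expansion for part (1), monotonicity in $K$ for the dichotomy in part (2), and the self-improving H\"older bookkeeping giving the exponents $\alpha(1+\alpha)$, $\alpha(1+\beta)$) matches the paper, but the central construction of the branch $\CC(k)$ and the ``iff'' statement \eqref{E:bif-curve-1} has a genuine gap. The paper's device is to take the one-sided function $\CF_\sigma|_{c_I\ge 0}$, which is $C^{1,\alpha}$ near $c_0=U(-h)$ (Corollary \ref{C:F}) and $C^{l_0-2}$ at an interior inflection value, extend it by a Whitney-type $C^{1,\alpha}$ (resp.\ $C^{l_0-2}$) extension $\tilde F$ to a full complex neighborhood of $(k_0,c_0)$, and then apply the ordinary real two-variable implicit function theorem: by the Cauchy--Riemann relations valid from $c_I\ge 0$, $\det D_c\tilde F(k_0,c_0)=|\p_c\CF_\sigma(k_0,c_0)|^2\neq 0$, so \emph{all} roots of $\tilde F$ near $(k_0,c_0)$ form one graph $c=\CC(k)$, and intersecting with $\{c_I\ge 0\}$ gives \eqref{E:bif-curve-1}, the $C^{1,\alpha}$ (or $C^{l_0-2}$) regularity, and the formula $\CC'=-D_c\tilde F^{-1}\p_k\tilde F$ used in parts (1), (2a), (2b). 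Your proposal rejects exactly this route: you assert that the IFT cannot produce a $C^{1,\alpha}$ branch from merely $C^{1,\alpha}$ data (it can --- the only obstruction is that $\CF_\sigma$ is a priori defined only on the half-space $c_I\ge 0$, which is what the extension fixes), and for the interior case you invoke ``analyticity'' of $\CF_\sigma$ in $c$, which fails precisely at $c_0\in U([-h,0])$ where the bifurcation takes place; only the one-sided boundary regularity is available there.

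The replacement you propose for $c_0=U(-h)$ --- a winding-number/index count on a small closed half-disk about $c_0$, as in Remark \ref{R:continuation} or Lemma \ref{L:temp-2} --- does not deliver \eqref{E:bif-curve-1}. The index argument of Remark \ref{R:continuation} requires a contour in $\C\setminus U([-h,0])$ on which $\CF_\sigma$ is nonvanishing; here part of the proposed boundary lies on $U([-h,0])$, where $\CF_\sigma$ is only a continuous (non-holomorphic) boundary value and, more seriously, where it can vanish: the branch $\CC(k)$ is genuinely real for those $k$ with $\CC_I(k)=0$ (e.g.\ whenever $U''(U^{-1}(\CC_R(k)))=0$ or $\CC_R(k)\le U(-h)$), and capturing these boundary roots in a \emph{whole} half-neighborhood, rather than in a cone avoiding the real axis, is exactly the point of the lemma as emphasized in the introduction. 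You would also still need a separate argument for the $C^{1,\alpha}$ dependence on $k$ and for the derivative identity, which your ``bootstrap'' leaves unspecified but which come for free from the extension-plus-IFT route. Two smaller points: in the dichotomy of part (2), the reason $(k_0,c_0)=(0,U(-h))$ is excluded is simply $F(0,U(-h))=-g\neq 0$ (Lemma \ref{L:e-v-basic-2}(1)), not the positivity of $\p_K F(0,U(-h))$ that you cite; and the paper obtains (2a) directly from the IFT formula for $\CC'(k)$ together with $|\CC(k)-c_0|\le \tilde C|k-k_0|^{1+\alpha}$, with no second differentiation needed.
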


According to \eqref{E:bif-curve-1}, clearly $\CC(k)$ is relevant if and only if $\CC_I(k)\ge 0$. The 
formula for $\CC'(k_0)$ 
indicates the behavior of $\CC_I(k)$ for $|k-k_0| \ll 1$ if $\p_k \CF_\sigma (k_0, c_0) \p_{c_R} F_I(k_0, c_0) \ne 0$. The above statements (1) and (2) in combination are useful to provide such information in the degenerate cases including when $c_0= U(-h)$. 

\begin{remark} \label{R:bifurcation}
a.) Due to Lemma 
\ref{L:e-v-basic-2}(1), 
$F(0, U(-h)) = F(k, U(0))=-g$ and thus $x_{20}=0$ and $(k_0, x_{20})=(0, -h)$  
are actually excluded. \\
b.) Assume $\p_{c_R} F_R (k_0, c_0) \ne 0$, which in particular holds if $x_{20}=-h$ due to Lemma \ref{L:pcF}. Statement (1) yields that $\CC_I(k)$ has the opposite sign as $\p_{c_R} F_R (k_0, c_0) Y_I (k, \CC_R(k))$. Therefore, Lemma \ref{L:Y-def}(5) implies that $\CC_I(k)=0$ if $\CC_R(k) \notin U\big((-h, 0) \big)$ or $U'' \big( U^{-1} (\CC_R(k))\big) =0$. More importantly, \eqref{E:bif-curve-1} implies that whether $\CF_\sigma (k, \cdot)$ has zero points near $c_0$ for $0< |k-k_0| \ll 1$ is determined by the sign of $\p_{c_R} F_R (k_0, c_0) Y_I \big(k, \CC_R(k)\big)$. A sufficient test for the latter is obviously the signs of $U''(x_{20})$ and $U'''(x_{20})$.     \\
c.) From Lemma \ref{L:e-v-basic-1}(4), it holds that either $x_{20} =-h$, where $\p_{c_R} \IP\, \CF_\sigma (k_0, c_0)=\p_{c_R} F_I (k_0, c_0)=0$, or $x_{20} \in (-h, 0)$ with $U''(x_{20}) =0$. In the latter case, $U'''(x_{20}) \ne 0$ is equivalent to $\p_{c_R} F_I (k_0, c_0) \ne 0$ which is sufficient for $\p_{c_R} \CF_\sigma (k_0, c_0)\ne 0$. \\
d.) According to Lemmas \ref{L:e-v-basic-2}(2) and \ref{L:neutral-M-2} and \eqref{E:F-signs-0.2}, if a.) $x_{20}=-h$, or b.) $x_{20} \in (-h, 0)$ and $k_0>0$ is the greatest solution to $\CF_\sigma(k_0, c_0) =0$, then $\p_K^2 F(k_0, c_0)<0$. Hence, when $\p_k \CF_\sigma (k_0, c_0 )=0$, we have $\p_k^2 \CF_\sigma (k_0, c_0 )<0$ and 
\[
(k_0-k) \p_k \CF_\sigma (k, c_0 ) \ge \tilde C^{-1} |k-k_0|^2.
\]
Hence in the above statement (2a), $-\p_k \CF_\sigma (k, c_0 )/ \p_{c_R} F (k_0, c_0 )$ gives the leading order term of $\CC_I(k)$ for $k$ near $k_0$. 
\end{remark}   
  
\begin{proof}   
Since $x_{20} \ne 0$, when restricted to the upper half plane $c_I\ge 0$, according to Lemma \ref{L:e-v-basic-1}(1)(2) and Corollary \ref{C:F}, $\CF_\sigma$ is $C^{1, \alpha}$ in $k$ and $c$ near $(k_0, c_0)$  (actually $C^{l_0-2}$ if $x_{20} \in (-h, 0)$ due to Lemma \ref{L:e-v-basic-1}(1)). Since $F_I = \IP\, \CF_\sigma$ is not continuous at $c \in U\big((-h, 0)\big) \subset \C$ in general, let $\tilde F(k, c) = \tilde F_R + i \tilde F_I \in \C$ be a $C^{1, \alpha}$ 
extension of $\CF_\sigma$ into a neighborhood of $(k_0, c_0) \in \R \times \C$ which coincides with $\CF_\sigma$ for $c_I\ge 0$ (or $C^{l_0-2}$ extension if $x_{20} \in (-h, 0)$). The $2\times 2$ real  Jacobian matrix of $D_c \tilde F$ satisfies 
\[
D_c \tilde F (k_0, c_0) = \begin{pmatrix} \p_{c_R} \tilde F_R & \p_{c_I} \tilde F_R \\ \p_{c_R} \tilde F_I & \p_{c_I} \tilde F_I \end{pmatrix}\Big|_{(k_0, c_0)} =
\begin{pmatrix} \p_{c_R} \RP\, \CF_\sigma &- \p_{c_R} \IP\, \CF_\sigma \\ \p_{c_R} \IP\, \CF_\sigma & \p_{c_R} \RP \, \CF_\sigma  \end{pmatrix}\Big|_{(k_0, c_0)}, 
\]
where we also used the Cauchy-Riemann equation satisfied by $\CF_\sigma$ when restricted to $c_I\ge 0$. From 
\[
\det D_c \tilde F (k_0, c_0) = |\p_c \CF_\sigma (k_0, c_0)|^2 \ne 0,
\]
the Implicit Function Theorem implies that all roots of $\tilde F(k, c)$ near $(k_0, c_0)$ form the graph of a $C^{1, \alpha}$ complex-valued function $\CC(k)$ which contains $(k_0, c_0)$ (or $\CC(k) \in C^{l_0-2}$ if $x_{20} \in (-h, 0)$). This and \eqref{E:f-conj} prove the existence and the basic properties of $\CC(k)$ and \eqref{E:bif-curve-1}. In the rest of the proof, we study the properties of $\CC(k)$. 

Suppose $\p_{c_R} F_R (k_0, c_0) \ne 0$. 
For any $k \in [k_0-\ep, k_0+ \ep]$, from the Mean Value Theorem, there exists $\tau$ between $0$ and $\CC_I(k)$ such that   
\[
0= \tilde F_I \big(k, \CC(k)\big) = F_I \big(k, \CC_R(k)\big) + \CC_I(k) \p_{c_I} \tilde F_I \big(k, \CC_R(k) + i \tau\big). 
\]
The $C^{1, \alpha}$ regularity of $\tilde F$ and $\CC (k)$ and the Cauchy-Riemann equation
imply  
\begin{align*}
\CC_I (k)= &-\frac {F_I \big(k, \CC_R(k)\big)}{\p_{c_I} \tilde F_I \big(k, \CC_R(k) + i \tau\big)}=-\frac { \big(U(0)- \CC_R(k)\big)^2Y_I \big(k, \CC_R(k)\big)}{\p_{c_I} \tilde F_I \big(k, \CC_R(k)\big) + O\big(|\CC_I(k)|^\alpha\big)} \\
=&-\frac {\big(U(0)- \CC_R(k)\big)^2Y_I \big(k, \CC_R(k)\big) }{\p_{c_I} F_I \big(k, \CC_R(k)\big) + O\big(|\CC_I(k)|^\alpha\big)} 
= -\frac { \big(U(0)- c_0 +O( |k-k_0|)\big)^2Y_I \big(k, \CC_R(k)\big)}{\p_{c_R} F_R (k_0, c_0) + O\big(|k- k_0|^\alpha\big)}.  
\end{align*}
The desired estimate on $\CC_I(k)$ in statement (1) follows immediately. 

In the rest of the proof, we consider the case of $\p_k \CF_\sigma (k_0, c_0) =0$. In this situation, Lemma \ref{L:e-v-basic-2}(1) and \ref{L:neutral-M} (and \eqref{E:F-signs-0.1} as well) implies either i.) $\sigma >0$ or ii.) $\sigma =0$, $k_0=0$, and $x_{20} \in (-h, 0)$, where we used $\p_k = 2 k \p_K$. The analysis relies on the following equality obtained from the Implicit Function Theorem 
\[
\CC'(k) =  - D_c \tilde F (k, \CC(k) )^{-1} \p_k \tilde F (k, \CC(k) ).  
\]

Let us first consider the case of $k_0\ne 0$. Without loss of generality, we may assume $k_0>0$. The $C^{1, \alpha}$ regularity of $\tilde F$ and $\CC$ yields 
\[
|\p_k \tilde F (k, \CC(k) )|, |\p_k \tilde F (k, c_0 )|, |\CC'(k)| \le\tilde C |k-k_0|^\alpha, \quad |\CC(k)- c_0| \le \tilde C |k-k_0|^{1+\alpha},
\]
and thus 
\begin{align*}
\big|\CC'(k) +  \p_k \CF_\sigma (k, c_0 )|\p_{c_R} F (k_0, c_0 )|^{-2} \overline {\p_{c_R} F (k_0, c_0 )} \big|=& \big|\CC'(k) +  \p_k \tilde F (k, c_0 )/\p_{c_R} \tilde F (k_0, c_0 )\big| \\
\le & \tilde C  |k-k_0|^{\alpha(1+\alpha)}.  
\end{align*}

In the case of $k_0=0$, where $x_{20} \in( -h, 0)$ must hold. 
Since the dependence of $Y$ and $F$ on $k$ is actually through $K=k^2$, the same conclusions in Lemmas \ref{L:y0}(8) and \ref{L:e-v-basic-1}(1) still hold that $F$ is $C^3$ in both $K$ and $c$ near $(0, c_0)$ when restricted to $c_I\ge 0$. Therefore $\CC$ can also be viewed as a function of $K$ and we have 
\[
\p_K \CC =  - D_c \tilde F (k, \CC )^{-1} \p_K \tilde F (k, \CC ).  
\]
Much as in the above, we first obtain 
\[
|\p_K \tilde F (k, \CC(k) )|, |\p_K \tilde F (k, c_0 )|, |\p_K \CC| \le\tilde C |K|^\beta, \quad |\CC(k)- c_0| \le \tilde C |K|^{1+\beta},
\] 
where $\beta=0$ if $\p_K \CF_\sigma(0, c_0) \ne 0$ and for any $\beta\in (0, 1)$ if $\p_K \CF_\sigma(0, c_0) = 0$. Along with the evenness of $\tilde F$ and $\CC$ in $k$, it implies, for $|k| \in (0, \ep]$,  
\begin{align*}
\big|\CC'(k)/k +  2\p_K \CF_\sigma (k, c_0 )|\p_{c_R} F (0, c_0 )|^{-2} \overline {\p_{c_R} F (0, c_0 )} \big|=&  2\big|\p_K \CC +  \p_K \tilde F (k, c_0 )/\p_{c_R} \tilde F (0, c_0 )\big| \\
\le & \tilde C  |K|^{\alpha( 1+\beta)}.  
\end{align*}
It completes the proof of the lemma. 
\end{proof}

In the following lemma, we consider a special case of bifurcation of unstable modes from an interior non-degenerate inflection value. 

\begin{lemma} \label{L:bifurcation-P}
Suppose $U\in C^4$ and $c_0 = U(x_{20}) \in U((-h, 0))$ satisfy 
\[
U''(x_{20}) =0 >U''' (x_{20}), \quad g_0 \triangleq (U(0)-c_0)^2 Y(0, c_0) - U'(0) (U(0)-c_0) >0, 
\]
then there exists $\ep_0, \delta>0$ determined by $U$ and $c_0$ such that for any $g= g_0+ \ep \in (g_0, g_0+\ep_0)$, there exist $C>0$ and $k(\ep)>0$ depending on $U$ and $c_0$ only and $\CC(\ep, k) = \CC_R(\ep, k) + \CC_I(\ep, k)$, $|k| \le k(\ep)$, $C^1$ in $\ep>0$ and $k$ and even in $k$, such that  
\[
\CC(\ep, k(\ep)) = c_0,  \quad \p_k \CC_I (\ep, k)/(2k) + \big(\p_K F \p_{c_R} F_I\big)\big|_{(0, c_0)} \to 0 \;\text{ as } \; \ep \to 0+  \text{ uniformly}, 
\]
where $K=k^2$, hence $\CC_I(\ep, k) > 0$ for $|k| < k(\ep)$, and moreover 
\[
F(\ep, k, c) =0, \quad |k| \le k(\ep), \; |c-c_0| \le \delta, 
\]
iff $c=\CC(\ep, k)$ or $c=\overline {\CC(\ep, k)}$. 
Here $F$ is defined as in \eqref{E:dispersion} in terms of $U$ and $g= g_0+\ep$. 
\end{lemma}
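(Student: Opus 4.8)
## Proof proposal for Lemma \ref{L:bifurcation-P}

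The plan is to realize this as a local bifurcation analysis of the two-parameter family $F(\ep,k,c)$ near the singular neutral mode $(k,c)=(0,c_0)$, where $\ep = g - g_0$ and $g_0$ is exactly the critical gravity for which $k_C=0$ is a channel-flow singular neutral mode (cf. Lemma \ref{L:neutral-M-3}(2), since $g_0 = F^0(c_0)$). The key structural facts I would establish first: (i) at $\ep = 0$ and $k=0$ one has $F(0,0,c_0) = F(0,c_0) = g_0 - g = 0$, so $(0,0,c_0)$ really is a zero; (ii) since $U''(x_{20})=0$, $F$ depends on $k$ only through $K=k^2$ and, when restricted to $c_I\ge 0$, is $C^2$ (indeed $C^{l_0-2}$ with $l_0\ge 4$, hence $C^2$) in $(K,c)$ near $(0,c_0)$ by Lemma \ref{L:y0}(8) and Lemma \ref{L:e-v-basic-1}(1); (iii) the relevant non-degeneracies: $\p_{c_R}F(0,c_0)\ne 0$ — which follows because $U'''(x_{20})\ne 0$ forces $\p_{c_R}F_I(0,c_0)\ne 0$ by Lemma \ref{L:e-v-basic-1}(4) (differentiating the formula for $F_I$ in $c_R$ at an inflection value) — and $\p_K F(0,c_0)>0$ by Lemma \ref{L:e-v-basic-2}(2) / Lemma \ref{L:Y-def}(8), together with $\p_K F(0,c_0) \le \int_{-h}^0 (U-c_0)^2\,dx_2$. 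Also $\p_\ep F = -1$ identically.

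With these in hand, the first step is to solve $F(\ep,k,c)=0$ for $c$ near $c_0$. Treating $F$ on $\{c_I\ge 0\}$ and extending it to a $C^2$ function $\tilde F$ of the real pair $(c_R,c_I)$ in a full $\C$-neighborhood (as in the proof of Lemma \ref{L:bifurcation}), the real Jacobian $D_c\tilde F(0,0,c_0)$ has determinant $|\p_c F(0,c_0)|^2 \ne 0$; the Implicit Function Theorem then produces a $C^1$ (in fact $C^2$, and by the $K$-dependence even-in-$k$) function $\CC(\ep,k)$ with $\CC(0,0)=c_0$ and, near $(0,c_0)$, $F(\ep,k,c)=0 \iff c=\CC(\ep,k)$ (for $c_I\ge 0$), with $\overline{\CC(\ep,k)}$ covering the $c_I\le 0$ case by \eqref{E:f-conj}. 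This is exactly the mechanism of Lemma \ref{L:bifurcation}, now carried out with the extra parameter $\ep$; I would in fact invoke Lemma \ref{L:bifurcation}(2b) directly at $(k_0,x_{20})=(0,x_{20})$, $\sigma=0$, to get the expansion $\CC_I'(k)/k \to -2\big(\p_K F\, \p_{c_R}F_I\big)\big|_{(0,c_0)}$; wait — I need to track the sign. Since $U'''(x_{20})<0$, the sign of $\p_{c_R}F_I(0,c_0)$ is determined (differentiate $F_I = \pi(U(0)-c)^2 U''(U^{-1}(c))y_-^2/(U'|y_-|^2)$ in $c$ at $c=c_0$: only the $U''$ factor contributes, giving a factor proportional to $U'''(x_{20})/U'(x_{20})^2 < 0$), hence $\p_K F\cdot\p_{c_R}F_I\big|_{(0,c_0)}<0$ and so $\p_k\CC_I(\ep,k)/(2k)\to +(\text{positive})$ as $\ep\to 0+$, i.e. $\CC_I$ increases from $0$ as $|k|$ grows.

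The second step is to locate $k(\ep)$: the endpoint of the unstable branch, where $\CC_I = 0$ and $\CC_R$ returns to $c_0$. Here I would use that $F(\ep,k,c_0) = F(0,k,c_0) - \ep$ is real (because $U''(x_{20})=0 \Rightarrow F(k,c_0)\in\R$), strictly increasing in $K=k^2$ near $K=0$ with $\p_K F(0,c_0)>0$, and equals $-\ep<0$ at $k=0$; so there is a unique small $K(\ep)=k(\ep)^2>0$ with $F(\ep,k(\ep),c_0)=0$, and $k(\ep)\sim \sqrt{\ep/\p_K F(0,c_0)}$. By the uniqueness in the IFT, $\CC(\ep,k(\ep))=c_0$, forcing $\CC_I(\ep,k(\ep))=0$; combined with the sign of the expansion in Step 1 and the evenness, $\CC_I(\ep,k)>0$ for $|k|<k(\ep)$ once $\ep$ is small. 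To make the quantitative statement on $\p_k\CC_I/(2k)$ uniform in $k$ on $[-k(\ep),k(\ep)]$ as $\ep\to 0+$, I would note $k(\ep)\to 0$, so the whole interval shrinks to $\{0\}$, and the $C^2$-in-$(K,c)$ regularity gives $\p_K\CC(\ep,k) = \p_K\CC(0,0) + O(|K|+|\CC-c_0|+|\ep|) = -\p_KF\,\overline{\p_{c_R}F}/|\p_{c_R}F|^2\big|_{(0,c_0)} + o(1)$ uniformly; taking imaginary parts and using $\p_{c_R}F_R(0,c_0)$'s value gives the claimed limit $\p_k\CC_I(\ep,k)/(2k) + (\p_K F\,\p_{c_R}F_I)|_{(0,c_0)}\to 0$.

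The third step is the \emph{global} (in the small $\delta$-ball) uniqueness: \eqref{E:dispersion} says $F(\ep,k,c)=0,\ |k|\le k(\ep),\ |c-c_0|\le\delta$ iff $c=\CC$ or $c=\overline\CC$. The IFT only gives this in a neighborhood whose size a priori depends on $\ep$, so I must choose $\delta$ and $\ep_0$ uniformly. I would do this by the index/winding-number argument recalled in Remark \ref{R:continuation}: fix a small disk $B_\delta = B(c_0,\delta)$ on whose boundary $F(\ep,k,\cdot)\ne 0$ for all $|k|\le k(\ep)$ and $\ep\le\ep_0$ — possible because $F(0,0,\cdot)$ has $c_0$ as its only zero (by the strict sign $\p_{c_R}F(0,c_0)\ne0$ and continuity, shrinking $\delta$), and then by continuity of $F$ in $(\ep,k)$ restricted to $c_I\ge0$ together with the $\C$-analyticity in $c$ for $c_I>0$ and the reflection $\overline{F(k,\bar c)}$; here one also uses that for $c_I<0$ small the only zeros are conjugates of those with $c_I>0$. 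Then $\mathrm{Ind}(F(\ep,k,\cdot),B_\delta)$ is locally constant in $(\ep,k)$, equals $1$ at $(\ep,k)$ near $(0,0)$ by the simplicity of the root $\CC$, hence $\equiv 1$ (upper half) — or $\equiv 2$ counting the conjugate when $\CC_I>0$. This pins the zero set to exactly $\{\CC,\overline\CC\}$ on the whole parameter range, completing the proof.

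The part I expect to be genuinely delicate is \textbf{not} the local IFT/bifurcation — that is essentially Lemma \ref{L:bifurcation}(2b) with an added parameter — but rather making $\delta$ and $\ep_0$ \emph{independent} of one another, i.e. the uniform separation of the zero set from $\p B_\delta$ and the exclusion of spurious zeros inside $B_\delta$ uniformly for $|k|\le k(\ep)$. The subtlety is that $F$ is only $C^\alpha$, not $C^1$, in $c$ \emph{across} the singular line $c\in U((-h,0))\cap\R$ (the jump in $\p_{c_R}^2 Y_I$ at $c=U(-h)$ notwithstanding, the relevant issue here is the logarithmic/Hölder behavior at $c_R\in U((-h,0))$), so standard $C^1$-compactness is unavailable on $\p B_\delta$ where it meets the real axis; the index argument sidesteps this by only needing continuity of $F$ restricted to $\{c_I\ge 0\}$ plus analyticity for $c_I>0$, but one must be careful that the winding number is computed along a contour that stays off the singular set except at the crossing points, exactly as in the proofs of Lemmas \ref{L:Y-1} and \ref{L:continuation}. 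I would handle it by the same contour-deformation bookkeeping used there, reducing the boundary integral to a principal-value integral of $\p_c F/F$ whose finiteness follows from $\p_{c_R}F_I(0,c_0)\ne0$ (so $F\ne0$ on the real segment $\p B_\delta\cap\R$ once $\delta$ is small), and from the $L^q$-in-$c_R$ / $L^\infty$-in-$k$ bounds on $\p_k^{j_1}\p_{c_R}^{j_2}Y_I$ from Lemma \ref{L:Y-def}(6) controlling the contribution near the crossings.
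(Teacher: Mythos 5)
Your overall scheme is viable and close in spirit to the paper's: you define $k(\ep)$ exactly as the paper does (via the real, strictly $K$-increasing function $F(\ep,\cdot,c_0)$ with value $-\ep$ at $k=0$), and you get the branch from the Implicit Function Theorem using $\det D_c\tilde F(0,0,c_0)=|\p_cF(0,c_0)|^2\ge|\p_{c_R}F_I(0,c_0)|^2>0$. The paper secures uniformity over the whole branch by rescaling $\tau=k/k(\ep)$ and applying the IFT on the fixed interval $|\tau|\le 1$ (where $\tilde F(0,\tau,c_0)\equiv 0$), whereas you apply a single IFT at $(\ep,k,c)=(0,0,c_0)$ and use $k(\ep)\to 0$ to fit $[-k(\ep),k(\ep)]$ into the fixed IFT neighborhood; that is a legitimate alternative, and it in fact makes your winding-number step largely redundant, since the IFT uniqueness ball in $c$ is already uniform in $(\ep,k)$.

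The genuine gap is the sign and monotonicity bookkeeping for $\CC_I$, which is the heart of the lemma. You assert $\p_k\CC_I(\ep,k)/(2k)\to$ a positive limit, "i.e.\ $\CC_I$ increases from $0$ as $|k|$ grows." Implicit differentiation gives $\p_K\CC=-\p_KF/\p_cF$, and at $(0,c_0)$, where $\p_KF>0$ is real and $\p_{c_R}F_I\propto U'''(x_{20})<0$, its imaginary part is $\p_KF\,\p_{c_R}F_I/|\p_cF|^2<0$: so $\CC_I$ is strictly \emph{decreasing} in $|k|$ on $(0,k(\ep))$, is strictly positive at $k=0$ (it cannot vanish there: $F(\ep,0,c_0)=-\ep\ne 0$, and for real $c\ne c_0$ near $c_0$ one has $F_I(0,c)\ne 0$, so there is no nearby real root), and reaches $0$ exactly at $|k|=k(\ep)$. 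This is precisely how the paper concludes: it shows $\p_\tau\tilde\CC_I<0$ on $\tau\in(0,1)$ and combines this with $\tilde\CC_I(\ep,\pm 1)=0$. With the sign you claim, the endpoint condition $\CC_I(\ep,k(\ep))=0$ would force $\CC_I<0$ on $|k|<k(\ep)$, i.e.\ \emph{no} unstable long waves—the opposite of what must be proved—so your Step~2 deduction "combined with the sign of the expansion in Step~1 \dots\ $\CC_I>0$ for $|k|<k(\ep)$" does not go through as written, and the premise "$\CC_I$ increases from $0$" wrongly presupposes $\CC_I(\ep,0)=0$. (You appear to have taken the displayed formula in the lemma statement at face value; compared with the computation above it is missing the factor $|\p_{c_R}F(0,c_0)|^{-2}$ and carries a sign slip—the version consistent with the proof and with the "hence $\CC_I>0$" conclusion is $\p_k\CC_I/(2k)\to\big(\p_KF\,\p_{c_R}F_I/|\p_{c_R}F|^2\big)\big|_{(0,c_0)}<0$.) Fixing this sign and arguing monotonically from the endpoint $|k|=k(\ep)$, as the paper does, repairs your argument.
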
 

According to Lemma \ref{L:neutral-M-3}(2), the assumption $g_0>0$ holds only if there exists $k_C > 0$ such that $y_-(k_C, c_0, 0)=0$. 
This lemma shows that the branch of the unstable eigenvalues bifurcating from the interior inflection value $c_0$, starting at the smaller wave number $k(\ep)$ which makes $(k(\ep), c_0)$ a singular neutral mode, connect back to $(-k(\ep), c_0)$. 

\begin{proof} 
Apparently $y_-(k, c_0, x_2)$ and $y_-'(k, c_0, x_2)$, and thus $Y(k, c_0)$ and $F(k, c_0)$ as well if $y_-(k, c_0, 0)\ne 0$, are smooth in $K=k^2$. From $\p_K F(k, c_0) >0$ given by \eqref{E:F-signs-0.1}, there exists $k(\ep)>0$ such that $k(0)=0$,  $F(\ep, k(\ep), c_0) =0$, and $k(\ep)^2$ is smooth in $0\le \ep \ll 1$. Hence there exists $C>0$ depending only on $U$ and $c_0$ such that $C^{-1} \sqrt {\ep} \le k(\ep) \le C \sqrt {\ep}$. Lemma \ref{L:bifurcation} and the assumption $U'''(x_{20})<0$ imply that, for any $\ep\ll1$, $F(\ep, k, c)=0$ has roots near $(k(\ep), c_0)$ with $0<  k(\ep) - |k| \ll 1$ and $\IP\, c >0$. In order to obtain a global branch for each small $\ep>0$, let   
\[
\tilde F(\ep, \tau, c) = F(\ep, k(\ep) \tau, c), \quad |\tau| \le 1, \; |c-c_0| \ll 1, \; 0\le \ep \ll 1.  
\]
According to Lemma \ref{L:Y-def}, $F$ is $C^2$ in $c$ (when restricted to $c_I\ge 0$ near $c_0$) and $K= k^2$, while $k(\ep)^2$ is smooth in $\ep$, $\tilde F$ is also $C^2$ in such $c$, $\ep$, and (also even in) $\tau$. We also extend $F$ and $\tilde F$ as $C^2$ functions defined on a whole neighborhood of $c_0$. Since $\tilde F(0, \tau, c_0)=0$ and the Jacobian satisfies 
\[
\det D_c \tilde F(0, 0, c_0) = \det D_c F(0, 0, c_0) = |\p_c F(0, 0, c_0)|^2 \ge |\p_{c_R} F_I(0, 0, c_0)|^2 >0, 
\]
the Implicit Function Theorem yields the (even in $\tau$) roots $\tilde \CC(\ep, \tau)$ of $\tilde F(\ep, \tau, c)$. Clearly $\tilde \CC(\ep, \pm 1) = c_0$ due to the definition of $k(\ep)$. One may compute, for $\tau \in (0, 1)$ and $0< \ep \ll 1$, 
\begin{align*}
\p_\tau \tilde \CC_I & =  - \IP \big( (D_c \tilde F)^{-1} \p_{\tau} \tilde F\big) = - 2 k(\ep)^2 \tau \IP \big( (D_c \tilde F)^{-1} \p_{K} F\big)\\
& \le k(\ep)^2 \tau  \big(\p_K F \p_{c_R} F_I / |\p_{c}  F|^2\big)\big|_{(0, c_0)} < 0, 
\end{align*}
where $U'''(x_{20}) <0$ is used. Hence $\tilde \CC_I > 0$ for $|\tau|<1$. Obviously $\CC(\ep, k) = \tilde \CC(\ep, k/k(\ep))$ satisfies the desired properties. 
\end{proof}

\subsection{Eigenvalues for $|k|\gg1$} \label{SS:E-asymp}

A major difference between the eigenvalue distributions of the linearized gravity waves and that of the capillary gravity waves is when $|k|\gg1$. We shall work on $\BF$ and $F$ of the linearized gravity waves. The analysis is divided into several steps starting with some rough bounds of the zeros of $\BF(k, c)$. 

\begin{lemma} \label{L:e-v-asymp-1}
Assume $l_0\ge 3$, here exists $k_0>0$ depending only on $U$, such that, for any $|k|\ge k_0$, solutions to \eqref{E:BF} (which is $\BF(k, c)=0$) belong to $S_{k}^L \cup S_{k}^R$ where  
\[
S_{k}^{L, R}= \{ c = c_R + c_I \in \C \mid \pm (|k|/g)^{\frac 12} (U(0) -c_R)\in [1/2, 2], \ |c_I| \le k^{-\frac 78}\}.  
\]
\end{lemma}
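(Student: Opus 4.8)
The plan is to exploit the asymptotic formula in Lemma~\ref{L:e-v-large}(1), which gives for $|k|\ge k_0$ and all $c$,
\[
\bigl|\BF(k,c) - (U(0)-c)^2 \cosh(\mu^{-1}h)\bigr| \le C\bigl(\mu + \mu^\alpha |U(0)-c|^2\bigr)\cosh(\mu^{-1}h),
\]
with $\mu=(1+k^2)^{-1/2}$. So at any root one has $(U(0)-c)^2 = O\bigl(\mu + \mu^\alpha|U(0)-c|^2\bigr)$, i.e.\ after absorbing the $\mu^\alpha|U(0)-c|^2$ term (legitimate for $|k|$ large) one gets $|U(0)-c|^2 \le C\mu \le C|k|^{-1}$, hence $|U(0)-c|\le C|k|^{-1/2}$. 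This already localizes all zeros to a disk of radius $O(|k|^{-1/2})$ about $U(0)$; the real content of the lemma is to sharpen this to the two crescent-shaped regions $S_k^{L,R}$.

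The main work is to rescale. First I would invoke Howard's Semicircle Theorem (cited in the excerpt, and valid for \eqref{E:LEuler} by Yih) to conclude $c_R \ge U(-h) - o(1)$ and more importantly that $c_I$ is controlled; combined with the radius bound this forces $c$ near $U(0)$ with $c_R \le U(0) + o(|k|^{-1/2})$. Write $c = U(0) - s$ with $|s|\le C|k|^{-1/2}$. Expand $(U(0)-c)^2 = s^2$ and, from $\BF=0$, divide through the estimate by $\cosh(\mu^{-1}h)$ to obtain, up to relative error $O(|k|^{-\alpha})$,
\[
s^2 \coth\!\Bigl(\tfrac{h}{\mu}\Bigr) \;=\; \frac{\BF(k,c)}{y_-(k,c,0)\,\cosh(\mu^{-1}h)} + (\text{error}) \cdot (\text{bounded}),
\]
but it is cleaner to go back to the leading-order dispersion relation $F(k,c)=Y(k,c)(U(0)-c)^2 - U'(0)(U(0)-c) - g = 0$ and use Lemma~\ref{L:Y-def}(3): $Y(k,c) = k\coth kh + O(\mu^{\alpha-1} + |\log|s||)$. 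Since $k\coth kh = |k|(1+O(e^{-2|k|h}))$, and since on $|s|\sim |k|^{-1/2}$ the term $U'(0)(U(0)-c)=O(|k|^{-1/2})$ and the error terms are $o(|k|)$, the equation $F=0$ becomes $|k| s^2 = g + o(1)\cdot(\text{stuff})$. Matching the dominant balance $|k| s^2 \approx g$ gives $s = \pm (g/|k|)^{1/2}(1+o(1))$, which yields the claimed bracket $\pm(|k|/g)^{1/2}(U(0)-c_R)\in[1/2,2]$ once $|k|\ge k_0$ is taken large enough; the sign split produces the $L$ and $R$ components.

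The bound $|c_I|\le |k|^{-7/8}$ on the imaginary part is the last and somewhat delicate point. Taking imaginary parts in $F(k,c)=0$, one has $F_I = \IP\bigl(Y(k,c)(U(0)-c)^2 - U'(0)(U(0)-c)\bigr)=0$. With $c_R = U(0) - s_R$, $s_R\sim (g/|k|)^{1/2}$, and $Y_R \sim |k|$, the dominant imaginary contribution is roughly $-2 Y_R (U(0)-c_R) c_I \sim -2|k|\cdot(g/|k|)^{1/2}\cdot c_I = -2(g|k|)^{1/2} c_I$, which must be balanced against $Y_I (U(0)-c_R)^2$ plus lower-order real-coefficient terms times $c_I$. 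Now $Y_I$ is controlled: if $c_R \notin U((-h,0))$ then $Y_I=0$ and $c_I=0$ outright; otherwise Lemma~\ref{L:Y-def}(5) gives $Y_I = \pi U''(x_2^c)y_-^2 / (U'(x_2^c)|y_-(k,c,0)|^2)$, and the exponential estimates \eqref{E:y-_1} show $|y_-(k,c,x_2^c)|^2/|y_-(k,c,0)|^2 \le C e^{-2\mu^{-1}|x_2^c|}$, which for $c_R$ within $O(|k|^{-1/2})$ of $U(0)$ means $|x_2^c| = O(|k|^{-1/2})$, so $Y_I = O(1)$ — actually much smaller is not needed. Then $Y_I(U(0)-c_R)^2 = O(1)\cdot O(|k|^{-1}) = O(|k|^{-1})$, so balancing $(g|k|)^{1/2}|c_I| \lesssim |k|^{-1} + (\text{self-consistent }c_I\text{ terms})$ gives $|c_I|\lesssim |k|^{-3/2}\ll |k|^{-7/8}$. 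The exponent $7/8$ in the statement is evidently a deliberately lossy bound leaving generous room; I expect the self-consistency bookkeeping — making sure the $O(|c_I|)$ terms in $F_I$ multiplied by $Y_R\sim|k|$ do not overwhelm the $(g|k|)^{1/2}c_I$ main term (they have the same order, so one needs the precise sign/coefficient or to absorb them for $|k|$ large) — to be the main obstacle, together with handling the logarithmic factor in the $Y$-asymptotics uniformly as $c_R \to U(0)$.
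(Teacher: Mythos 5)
Your overall architecture is essentially the paper's: for $|k|\ge k_0$ one has $y_-(k,c,0)\ne 0$ uniformly (Lemma \ref{L:y_pm-1}), so roots of $\BF$ are roots of $F=Y(k,c)(U(0)-c)^2-U'(0)(U(0)-c)-g$, and the large-$k$ asymptotics $|Y-k\coth kh|\le C(\mu^{\alpha-1}+|\log\min\{1,|U(0)-c|\}|)$ from Lemma \ref{L:Y-def}(3) reduce $F=0$ to the dominant balance $|k|(U(0)-c)^2\approx g$; the real part then pins $U(0)-c_R$ in the stated bracket and the imaginary part bounds $c_I$. Your preliminary localization via Lemma \ref{L:e-v-large}(1) is a harmless variant (the paper instead rules out $|U(0)-c|\le C_1|k|^{-1/2}$ and $|U(0)-c|\ge\sqrt{2g/|k|}$ directly from $F$), and the logarithmic factor you worry about is innocuous because it is multiplied by $|U(0)-c|^2=O(|k|^{-1})$. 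Two side points are wrong but not load-bearing: the semicircle theorem constrains only non-real modes, so it does not give $c_R\le U(0)+o(|k|^{-1/2})$ — indeed roots with $c_R-U(0)\sim+\sqrt{g/|k|}$ exist (the branch $c^+(k)$), which is what $S_k^R$ is for; and your ``self-consistency'' worry is moot, since in $F_I=Y_I(s_R^2-c_I^2)-2Y_Rs_Rc_I+U'(0)c_I=0$ (with $s=U(0)-c$) the term $-2Y_Rs_Rc_I\sim\sqrt{g|k|}\,c_I$ has no competitor of comparable size, the others being $U'(0)c_I$ and $Y_Ic_I^2$.

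The genuine gap is in your control of $Y_I$ at the root. Lemma \ref{L:Y-def}(5) (and the vanishing of $Y_I$ for $c\in\R\setminus U((-h,0])$) concerns the boundary values of $Y$ at \emph{real} $c$, i.e.\ the limit $c_I\to0+$; at a complex root with $c_I\ne0$ the quantity $\IP\,Y(k,c)$ is not given by that formula, and your assertion ``if $c_R\notin U((-h,0))$ then $Y_I=0$ and $c_I=0$ outright'' fails at a complex point, where $\IP\,Y=O(|c_I|\,|\p_cY|)$ rather than $0$. Transferring the boundary-value bound to the complex point would require an estimate on $\p_{c_I}\IP\,Y$ in this regime (derivable from \eqref{E:Y-Cauchy-1}, but you neither state nor prove it), and this is precisely where your claimed bound $|c_I|\lesssim|k|^{-3/2}$ must come from. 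The lemma needs none of this: since $k\coth kh\in\R$, Lemma \ref{L:Y-def}(3) gives $|\IP\,Y(k,c)|\le|Y(k,c)-k\coth kh|\le C|k|^{1-\alpha}$ for all relevant $c$, and then the imaginary-part balance yields $|c_I|\le C(g/|k|)|k|^{1-\alpha}/\sqrt{g|k|}=C|k|^{-\frac12-\alpha}$, which with (say) $\alpha=2/5$ gives $|c_I|\le|k|^{-7/8}$ for $|k|$ large — exactly the paper's route, and the reason the exponent $7/8$ is deliberately lossy. (The sharper decay of $c_I$ is true, but it is established later, in Lemma \ref{L:e-v-asymp-2}, not by the argument you sketch.)
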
 

\begin{proof}
From Lemma \ref{L:y_pm-1}, 
\[
\exists k_0>0, \text{ s. t. } | y_-(k, c, x_2)| \ge (\mu/2) \sinh \mu^{-1} (x_2+h)  >0, \quad \forall |k| \ge k_0, \, c\in \C, \, x_2 \in (-h, 0], 
\]
where we recall $\mu= (1+k^2)^{-\frac 12}$. 
Hence we are able to work on \eqref{E:dispersion} with $F(k, c)$ and $Y(k, c)$. Due to the evenness of the problem in $k$, we only consider $k >0$.   

From Lemma \ref{L:Y-def}(3), for any $\alpha \in (0, \frac 12)$, there exist $k_0, C>0$ depending only on $\alpha$ and $U$, such that for any $k> k_0$,  
\[
\big|F(k, c) - (U(0)-c)^2k + U'(0) (U(0)-c) +g\big| \le C k^{1-\alpha} \big(|U(0)-c|^{\frac 74}+ |U(0)-c|^2\big). 
\]

On the one hand, if $|U(0)-c| \le C_1 k^{-\frac 12}$ where $C_1 \in (0, 1)$, then 
\[
|F(k, c)| \ge g - C_1^2 - C C_1 k^{\frac 18 -\alpha}. 
\]
Taking $\alpha =\frac 13$ and  sufficiently large $k_0$ and $C_1^{-1}$ determined by $U$, we obtain $F(k, c)\ne0$ if $k\ge k_0$ and $|U(0)-c| \le C_1 k^{-\frac 12}$. 

On the other hand, if $|U(0)-c| \ge \sqrt{2g /k}$, then  
\begin{align*}
|F(k, c)| \ge & |U(0)-c|^2 k - |U'(0) (U(0)-c) +g| - C k^{1-\alpha} \big(|U(0)-c|^{\frac 74}+ |U(0)-c|^2\big) \\
\ge & |U(0)-c|^2 \big( k  - g |U(0)-c|^{-2}- C \big( k^{1-\alpha} (1+ |U(0)-c|^{-\frac 14}) + |U(0)-c|^{-1} \big)\big) \\
\ge & |U(0)-c|^2 \big( k/2 - C( k^{\frac 98-\alpha} +  k^{\frac 12} )\big).   
\end{align*} 
Again, taking $\alpha =\frac 13$ and $k_0$ sufficiently large determined by $U$, we obtain $F(k, c)\ne0$, either.

The above analysis implies 
\be \label{E:temp-14}
F(k, c) =0, \; k\ge k_0 \implies 
k^{\frac 12} |c- U(0)|\in [C_1 , \sqrt{2g}], 
\ee
which also yields the desired upper bound of $|U(0) - c_R|$.
Let us take the real part of $F(k, c)=0$, namely, 
\[
\big((U(0)-c_R)^2 - c_I^2) Y_R = g + U'(0) (U(0)-c_R) - 2 (U(0)-c_R) c_I Y_I. 
\]
For $k\ge k_0$ and $c$ satisfying \eqref{E:temp-14}, a simple bound of $Y$ can be derived from Lemma \ref{L:Y-def}(3), 
\[
\big|Y(k, c) - k \big| \le C k^{1-\alpha}, \; \alpha \in (0, 1/2)  \implies k \big((U(0)-c_R)^2 - c_I^2) \ge g/2, 
\]
which implies the desired lower bound of $|U(0) - c_R|$. 
Taking the imaginary part of $F(k, c)=0$, we obtain 
\be \label{E:temp-15} \begin{split}
&\big((U(0)-c_R)^2 - c_I^2) Y_I - 2 (U(0)-c_R) c_I Y_R + U'(0) c_I =0 \\
\implies & c_I = \frac {(U(0)-c_R)^2 Y_I(k, c)}{2 (U(0)-c_R) Y_R + c_I Y_I - U'(0)},
\end{split} \ee
and thus the estimate on $c_I$ follows from \eqref{E:temp-14}, the above bounds on $|Y- k|$ and $|U(0)-c_R|$ and letting $\alpha = 2/5$ and $k_0\gg1 $. 
\end{proof}

Due to the evenness and conjugacy property \eqref{E:f-conj}, we shall only need to consider 
\[
S_{k+}^{L, R} = \{ c = c_R + c_I \in S_{k}^{L, R} \mid c_I \ge 0 \}.  
\]
Before we proceed to obtain the roots of $F(k, \cdot)$, we first establish some estimates on $Y$ and $\p_c Y$ for $c \in S_{k+}^{L, R}$. 

\begin{lemma} \label{L:Y-2} 
Assume $l_0\ge 5$. There exist $k_0, C>0$ depending only on $U$, such that, for any $|k|\ge k_0$, $c \in S_{k+}^\dagger$ where $\dagger =L, R$, and $0\le j\le l_0-5$, it holds 
\[
|\p_c^j (Y(k, c)- |k|)| \le C|k|^{\frac {j-1}2}. 
\]
\end{lemma}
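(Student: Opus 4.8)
The plan is to bootstrap from the base case $j=0$, which is essentially Lemma \ref{L:Y-def}(3) restricted to $c \in S_{k+}^{\dagger}$, and then handle the derivatives by exploiting the fact that $Y(k,c) = \tilde y'(k,c,0)$ where $\tilde y$ solves the Rayleigh equation \eqref{E:temp-4}, so that $\p_c^j Y$ can be written via the variation-of-parameters formula from the Claim in the proof of Lemma \ref{L:neutral-M} (equations \eqref{E:F-signs-0}, \eqref{E:F-signs-1}). First I would fix $\dagger = R$ (the case $\dagger = L$ is symmetric), and record that for $c \in S_{k+}^R$ one has $|U(0) - c| \sim |k|^{-1/2}$ and $c_I \le |k|^{-7/8}$, so that the singularity locus $U(x_2^c)$ of the Rayleigh coefficient $U''/(U-c)$ is at distance $O(|k|^{-1/2})$ from $x_2 = 0$; in particular $\tilde y$ and its $c$-derivatives are analytic in $c$ on $S_{k+}^R$ since $c_I > 0$ there (or, on the boundary slice $c_I = 0$, we use the limiting values and the regularity from Lemma \ref{L:y0}, valid because $l_0 \ge 5$).

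The key estimate for $j=0$: from \eqref{E:y-_1}--\eqref{E:y-_2} (with $\CI_2$ possibly nonempty but controlled) one gets $|\tilde y(k,c,x_2) - \mu \sinh(\mu^{-1}(x_2+h))/(\mu\sinh(\mu^{-1}h))|$ small and $Y = \tilde y'(0) = \mu^{-1}\coth(\mu^{-1}h) + O(\mu^{\alpha-1} + |\log|U(0)-c||)$, and since $|U(0)-c|\sim |k|^{-1/2}$ the logarithm is $O(\log|k|) = O(|k|^{\alpha})$, giving $|Y(k,c) - |k|| \le C|k|^{1-\alpha}$ for any $\alpha \in (0,\frac12)$; then one must sharpen this to $O(|k|^{-1/2})$. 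The sharpening comes from writing out $Y - |k|\coth(|k|h) = (U(0)-c)^{-? }\cdots$ more carefully — actually, using the Cauchy-type representation \eqref{E:Y-Cauchy-1} of Lemma \ref{L:Y-def}(7), $Y(k,c) - |k|\coth|k|h = \frac1\pi \int_{U(-h)}^{U(0)} \frac{Y_I(k,c')}{c'-c}\,dc'$, and the bound $|Y_I(k,c')| \le C e^{2\tilde x_2^c/\mu}$ from the proof of Lemma \ref{L:Y-1} shows this integral is $O(\mu) = O(|k|^{-1})$; since $|k|\coth|k|h - |k| = O(|k|e^{-2|k|h})$ is negligible, we get $|Y(k,c) - |k|| \le C|k|^{-1/2}$ on $S_{k+}^R$ (the $|k|^{-1/2}$ slack coming from $\dist(c, U([-h,0]))^{-1}\lesssim |k|^{1/2}$ entering the $j$-fold differentiated kernel).

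For $j \ge 1$, I would differentiate the representation $Y(k,c) - |k|\coth|k|h = \frac1\pi \int_{U(-h)}^{U(0)} \frac{Y_I(k,c')}{c'-c}\,dc'$ directly in $c$: $\p_c^j(Y - |k|\coth|k|h) = \frac{j!}{\pi}\int_{U(-h)}^{U(0)} \frac{Y_I(k,c')}{(c'-c)^{j+1}}\,dc'$, and then bound using $\dist(c, U([-h,0])) \gtrsim |k|^{-1/2}$ together with $\int_{U(-h)}^{U(0)} |Y_I(k,c')|\,dc' \le C\mu$ from the change of variables $c' = U(\tilde x_2^c)$ and $|Y_I| \le Ce^{2\tilde x_2^c/\mu}$. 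This yields $|\p_c^j(Y - |k|\coth|k|h)| \le C j!\, \mu \cdot (\text{something}) \dist(c,U([-h,0]))^{-(j+1)} \le C|k|^{-1}\cdot |k|^{(j+1)/2} = C|k|^{(j-1)/2}$, which is exactly the claimed bound (and $\p_c^j(|k|\coth|k|h) = 0$). The restriction $0 \le j \le l_0 - 5$ is what we need for all the regularity of $Y_I$ (Lemma \ref{L:Y-def}(5)(6)) and $y_{0-}$ to be available up to the relevant order. The main obstacle I anticipate is the regularity on the boundary slice $c_I = 0$ inside $S_{k+}^R$: there $Y_I$ need not be smooth in $c$ across $U([-h,0])$, so the "differentiate under the integral sign" step needs the representation \eqref{E:Y-Cauchy-1}, which holds only for $c \notin U([-h,0])$, combined with the uniform-in-$c_I$ bounds as $c_I \to 0+$; one handles this by first proving the $j$-derivative estimate for $c_I > 0$ with constants independent of $c_I$, then passing to the limit using the continuity in Lemma \ref{L:y0}(1) and the fact that for $c_I > 0$ the contour integral is genuinely holomorphic. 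A secondary technical point is verifying that on $S_{k+}^R$ we indeed have $\dist(c, U([-h,0])) \gtrsim |k|^{-1/2}$ uniformly — this follows from the definition $(|k|/g)^{1/2}(U(0)-c_R) \in [1/2,2]$, which forces $c_R < U(0)$ and hence $c$ bounded away from the whole interval $U([-h,0])$ by $\sim|k|^{-1/2}$ — so the denominators $(c'-c)^{j+1}$ are controlled.
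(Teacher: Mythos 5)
There is a genuine gap: your argument only covers the easy half of the lemma and dismisses the hard half as ``symmetric,'' which it is not. The two rectangles are not interchangeable. By the definition of $S_k^{L,R}$, the rectangle with $+(|k|/g)^{1/2}(U(0)-c_R)\in[1/2,2]$ (the paper's $S_k^{L}$) has $c_R<U(0)$ with $U(0)-c_R\sim |k|^{-1/2}$, so for large $|k|$ the real part $c_R$ lies \emph{inside} $U([-h,0])$, and since $|c_I|\le |k|^{-7/8}\ll |k|^{-1/2}$ (and $c_I$ may be $0$), one has $\mathrm{dist}(c,U([-h,0]))\le |k|^{-7/8}$, possibly $0$. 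Your claimed uniform bound $\mathrm{dist}(c,U([-h,0]))\gtrsim |k|^{-1/2}$ holds only on the other rectangle, the one with $c_R>U(0)$ (the paper's $S_k^{R}$); your last paragraph in fact derives it from the condition that forces $c_R<U(0)$, which is self-contradictory. Consequently the central step — bounding $\p_c^j\int Y_I(k,c')/(c'-c)^{j+1}\,dc'$ by $\mathrm{dist}(c,U([-h,0]))^{-(j+1)}\int|Y_I|$ — fails on $S_{k+}^{L}$: for $c_R$ in the critical range and $c_I$ small the kernel is genuinely singular, and your fallback (prove the estimate for $c_I>0$ with constants independent of $c_I$ and pass to the limit) cannot work because with this kernel bound the constant is $c_I^{-(j+1)}$, which blows up as $c_I\to0+$.

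What is missing is precisely the paper's treatment of the critical-layer rectangle: split $Y_I=f_1+f_2$ with a cutoff at the scale $\sqrt{g/|k|}$ around $U(0)-c'\sim\sqrt{g/|k|}$. For the near part $f_1$ one cannot gain from the kernel, so one instead uses (i) the $C^{j}$ regularity of $Y_I$ in $c'$ there (Lemma \ref{L:y0}(8b)(9), which is where $l_0\ge5$ and $j\le l_0-5$ enter), (ii) the exponential smallness $|f_1^{(j)}|\le C|k|^j e^{2kU^{-1}(c')}\le C|k|^je^{-\sqrt{|k|}/C}$ because the critical point sits at depth $x_2^{c'}\approx-\sqrt{g/|k|}/U'(0)$, and (iii) the $L^2$-boundedness of the Cauchy integral uniformly in $c_I\ge0$ together with interpolation to convert this into an $L^\infty$ bound on $\p_c^jY^1$. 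Only the far part $f_2$ (and the whole integral when $c\in S_{k+}^{R}$) is handled by the direct kernel-plus-$\int|Y_I|\lesssim\mu$ estimate you propose; that part of your argument, including the substitution $c'=U(\tilde x_2^c)$ and the bound $|Y_I|\le Ce^{2kU^{-1}(c')}$, is correct and matches the paper. Note also that if your distance-based argument sufficed on both rectangles, no smoothness of $Y_I$ would be needed at all and the restriction $j\le l_0-5$ in the statement would be superfluous — a sign that the delicate case is being bypassed.
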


\begin{proof}
Due to the evenness 
we shall focus on $k>0$. We first consider $c \in S_{k+}^L$. 

One may compute, for $c \in U([-h, 0])$, 
\begin{align*}
\p_{c_R} Y_I (k, c) = & \p_{c} \left( \frac {\pi U''(x_2^c)}{U'(x_2^c)}\right) \frac{y_{-} (k, c, x_2^c)^2} {|y_{-} (k, c, 0)|^2} + \frac {2\pi U''(x_2^c) y_{-} (k, c, x_2^c) \p_{c} \big(y_{-} (k, c, x_2^c)\big)}{U'(x_2^c) |y_{-} (k, c, 0)|^2} \\
& - \frac {2\pi U''(x_2^c) y_{-} (k, c, x_2^c)^2 \big(y_-(k, c, 0) \cdot \p_c y_-(k, c, 0)\big)}{U'(x_2^c) |y_{-} (k, c, 0)|^4}, 
\end{align*}
and $\p_{c_R}^j Y_I (c)$ can be computed in a similar fashion,  where we recall $x_2^c= U^{-1}(c)$. 
Using Lemmas \ref{L:y_pm-1} and \ref{L:y0}(8b)(9), we obtain that there exist $C, k_0>0$ depending only on $U$ such that, for $k \ge k_0$ and $0\le j\le l_0-4$, 
\[
|\p_c^j Y_I(k, c)| \le C k^j e^{2 k x_2^c}, \quad  \forall c \in [U(-h) + 2(\rho_0 k)^{-1}, U   (0)- 2(\rho_0 k)^{-1}].  
\]
Here we recall that $\rho_0$ was given in \eqref{E:CIs}. 
Due to the singularity in the integral representation of $Y(k, c)$ given in Lemma \ref{L:Y-def}(7), we use  
 a cut-off function $\gamma\in C^\infty(\R, [0, 1])$ satisfying 
\[
\gamma|_{[\frac 14, 3]} =1, \quad \gamma|_{[\frac 18, 4]^c} =0. 
\]
For $c \in U([-h, 0])$, let 
\[
f_1 (c) = Y_I(k, c) \gamma \big( (k/g)^{\frac 12} (U(0) -c)\big), \quad f_2 (c) = Y_I(k, c) - f_1(c). 
\]
We rewrite Lemma \ref{L:Y-def}(7) for $c \in S_{k+}^{L}$ and $k\ge k_0$, 
\[
Y(k, c)- k\coth kh= Y^1(k, c) + Y^2 (k, c),    
\] 
where 
\[
Y^1 (k, c)  = \frac{1}{\pi} \int_\R \frac{f_1 (c')}{c'-c}dc', \quad Y^2 (k, c) = \frac{1}{\pi} \int_{U(-h)}^{U(0)} \frac{f_2 (c')}{c'-c}dc',
\]
and,  for $c \in U([-h, 0]) \cap S_{k+}^{L}$, $Y^1 (k, c) = \lim_{c_I\to 0+} Y^1 (k, c+i c_I)$ is understood. Using the above estimates on $Y_I$ and the definition of $f_1$, it is straight forward to see, for $j \le l_0-3$, 
\[
|f_1^{(j)}(c)| \le C k^j e^{2 k x_2^c}, \;\text{ if }\; (k/g)^{\frac 12} (U(0) -c) \in [1/8, 4], \; \text{ and } \; f_1(c) =0, \; \text{ otherwise}.
\]
It implies, for all $c \in \C$ with $c_I\ge 0$, 
\[
|\p_c^j Y^1(k, \cdot) |_{L^2 (\R)}^2 \le C |f_1^{(j)}|_{L^2 (\R)}^2 \le C k^{2j} \int_{U(0) - 4 \sqrt{g/k}}^{U(0) -  \frac 18\sqrt{g/k}} e^{4k U^{-1} (c')} dc'  \le C k^{2j-1} e^{-\frac {\sqrt{k}}C}, 
\]
where the substitution $c'= U(x_2)$ was also used in the last step. From interpolation, we obtain, for $j\le l_0-5$, 
\[
|\p_c^j Y^1 (k, \cdot)|_{L^\infty (\R)} \le C k^{j} e^{-\frac {\sqrt{k}}C}, \quad \forall k \ge k_0. 
\]
Concerning $Y^2(k, c)$, we have, for $k\ge 0$, $c \in S_{k+}^{L}$, and $j\ge 0$, 
\begin{align*}
|\p_c^j Y^2 (k, c)| \le & \frac{1}{\pi} \Big(\int_{U(-h)}^{U(0)- 3 \sqrt{g/k}} + \int_{U(0)- \sqrt{g/16 k}}^{U(0)} \Big) \frac{|Y_I (k, c')|}{|c'-c|^{j+1}}dc'  \\
\le & Ck^{\frac {j+1}2} \int_{U(-h)}^{U(0)} e^{2k U^{-1} (c')} dc' \le Ck^{\frac {j-1}2}, 
\end{align*}
where the substitution $c'= U(x_2)$ was used as well. Therefore we obtain the desired estimate on $Y$ in $S_{k+}^{L}$. 

The estimate of $Y(k, c)$ for $c\in S_{k+}^R$ is similar, but simpler. Actually in this case, the norm of denominator $c'-c$ in the integrand of the integral representation in Lemma \ref{L:Y-def}(7) of $Y$ is bounded below by $O(\sqrt{|k|})$. Hence $Y$ can be estimated much as the above $Y^2(k, c)$ and we omit the details. 
\end{proof} 

In the following we analyze possible zeros of $\BF(k, c)=0$ for $k\gg 1$ in the rectangles $S_{k+}^{L, R}$. 

\begin{lemma} \label{L:e-v-asymp-2}
Assume $l_0\ge 6$. There exist $k_0, C>0$ depending only on $U$, such that there exist an analytic function $c^+(k) \in (U(0), +\infty)$ and a $C^{l_0-2}$ function $c^-(k) = c_R^- (k) + ic_I^- (k)$ both defined and even for $|k|\ge k_0$ and the following hold. 
\begin{enumerate}
\item For any $|k| \ge k_0$, 
\[
\BF (k, c) =0, \;\; |k|\ge k_0, \;\; c_I\ge 0 \text{ iff } \; c \in \{c^+(k), \, c^-(k)\}, \; \text{Im}\, c \ge 0.
\]
Namely, for $|k| \ge k_0$, the roots of $\BF(k, \cdot)$ are either only $c^+(k)$ if $c_I^-(k) <0$, or  $\{c^+(k), \, c^-(k),\, \overline {c^-(k)} \}$ if $c_I^-(k) \ge 0$. 
\item  $c^\pm (k)$ satisfy the estimates 
\[
\Big|c^\pm (k) - U(0) +  \frac {U'(0)}{2|k|} \mp \sqrt{ \frac g{|k|}\Big(1 + \frac {U'(0)^2}{4g |k|}} \Big) \Big| \le C k^{-2}, 
\]
\[
\big| c_I^- (k) - \sqrt{g/(4|k|^3)} Y_I(k, c_R^- (k))\big| \le C k^{-2} |Y_I(k, c_R^- (k))|, 
\]
\[
C^{-1} \big|U''(U^{-1} (c_R^-(k) \big)\big|\le |k|^{\frac 32} e^{ \frac {2\sqrt{g|k|}}{U'(0)}} |c_I^- (k)| \le C\big|U''(U^{-1} (c_R^-(k) \big)\big|.
\]
Moreover, the following inequality holds for $c^-(k)$ if $c_I^-(k) \ge 0$ and $c^+(k)$,  
\[
|D_c F (k, c^\pm(k)) \mp 2\sqrt{g|k|} | \le C|k|^{-\frac 12}.
\] 
\end{enumerate}
\end{lemma}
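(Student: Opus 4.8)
The plan is to replace $\BF$ by $F$ (legitimate for $|k|\ge k_0$, since Lemma \ref{L:y_pm-1} gives $y_-(k,c,0)\neq0$ and $\BF$, $F$ share their zeros by Lemma \ref{L:e-v-basic-1}(2)), to use the conjugation \eqref{E:f-conj} to reduce to the closed upper half-plane, and to invoke Lemma \ref{L:e-v-asymp-1}, which confines every zero to the thin rectangles $S_{k+}^L\cup S_{k+}^R$. The natural rescaling is $c=U(0)-|k|^{-1/2}\zeta$; then $S_{k+}^R$ (where $c_R>U(0)$) corresponds to a rectangle of imaginary width $O(|k|^{-3/8})$ about the segment $[-2\sqrt g,-\tfrac12\sqrt g]$ of the real $\zeta$-axis, $S_{k+}^L$ (where $c_R\in(U(-h),U(0))$) to such a rectangle about $[\tfrac12\sqrt g,2\sqrt g]$, and the equation $F(k,c)=0$ becomes
\[
G(k,\zeta):=\wt Y(k,\zeta)\,\zeta^2-U'(0)|k|^{-1/2}\zeta-g=0,\qquad \wt Y(k,\zeta):=|k|^{-1}Y\bigl(k,U(0)-|k|^{-1/2}\zeta\bigr).
\]
By Lemma \ref{L:Y-2} (this is where $l_0\ge6$ enters, to control one $\zeta$-derivative), $\wt Y=1+O(|k|^{-3/2})$ with $\partial_\zeta\wt Y=O(|k|^{-3/2})$ uniformly on the relevant rectangles, so $G$ is, in $C^1$, an $O(|k|^{-1/2})$ perturbation of $G_0(\zeta)=\zeta^2-g$, whose only zeros are the simple points $\zeta=\pm\sqrt g$.

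For existence, uniqueness and regularity: on $S_{k+}^R$ one has $c\notin U([-h,0])$, so $F$ and hence $G$ are analytic, $F$ is real and equals $-g<0$ at $c=U(0)$ and is positive at $c=U(0)+2\sqrt{g/|k|}$, and $\partial_c F\ne0$ near the real zero; the (real-)analytic implicit function theorem then produces a unique zero $\zeta^-(k)$ of $G(k,\cdot)$ near $-\sqrt g$, real, analytic and even in $k$, and one sets $c^+(k):=U(0)-|k|^{-1/2}\zeta^-(k)\in(U(0),+\infty)$. On $S_{k+}^L$ the point $c_R$ stays in the interior of $(U(-h),U(0))$, bounded away from $U(-h)$ and $U(0)$, so by Lemmas \ref{L:e-v-basic-1}(1) and \ref{L:Y-def} the function $F$ restricted to $\{c_I\ge0\}$ is $C^{l_0-2}$ and holomorphic in $\{c_I>0\}$; extending it to a $C^{l_0-2}$ function on a full complex neighborhood and applying the real implicit function theorem near $\zeta=\sqrt g$ yields a unique, $C^{l_0-2}$, even-in-$k$ root $c^-(k)=c_R^-(k)+ic_I^-(k)$, defined for all $|k|\ge k_0$ regardless of the sign of $c_I^-(k)$. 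The Jacobian nondegeneracy is $\det D_cF=|\partial_c F|^2$ with $\partial_c F=(U(0)-c)^2\partial_c Y-2(U(0)-c)Y+U'(0)$; since $|U(0)-c|\sim\sqrt{g/|k|}$, $Y\sim|k|$ and $\partial_cY=O(1)$ on these rectangles by Lemma \ref{L:Y-2}, one gets $\partial_c F=\mp2\sqrt{g|k|}+O(|k|^{-1/2})$ at $c=c^\pm(k)$, which is nonzero and is also the last estimate of statement (2). Combining these two local uniqueness statements with the confinement in Lemma \ref{L:e-v-asymp-1} and with \eqref{E:f-conj} gives statement (1): the zeros of $\BF(k,\cdot)$ with $c_I\ge0$ are exactly $c^+(k)$, together with $c^-(k)$ iff $c_I^-(k)\ge0$, and $\overline{c^-(k)}$ is then the conjugate partner.

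For the asymptotics, set $w=U(0)-c$ and compare $F(k,U(0)-w)=Y(k,U(0)-w)w^2-U'(0)w-g$ with the genuine quadratic $\Phi(w)=|k|w^2-U'(0)w-g$, whose roots $w^0_\pm=\bigl(U'(0)\pm\sqrt{U'(0)^2+4g|k|}\bigr)/(2|k|)$ yield, through $c=U(0)-w$, precisely the displayed leading terms $U(0)-\tfrac{U'(0)}{2|k|}\pm\sqrt{g/|k|+U'(0)^2/(4k^2)}$. Since $F-\Phi=(Y-|k|)w^2$, the sharp bound $|Y-|k||\le C|k|^{-1/2}$ of Lemma \ref{L:Y-2} (sharper than what is used in Lemma \ref{L:e-v-asymp-1}) gives $|\Phi(w^*)|\le Cg|k|^{-3/2}$ at the root $w^*$, while $|\Phi'|\sim2\sqrt{g|k|}$ near $w^0_\pm$, so $|w^*-w^0_\pm|\le C|k|^{-2}$, the first estimate. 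For $c_I^-(k)$ I would follow the argument in the proof of Lemma \ref{L:bifurcation}: from $F(k,c^-(k))=0$, the mean value theorem in $c_I$, the Cauchy--Riemann relations in $\{c_I>0\}$, and the formula $F_I(k,c_R^-)=(U(0)-c_R^-)^2Y_I(k,c_R^-)$ of Lemma \ref{L:e-v-basic-1}(4), together with $(U(0)-c_R^-)^2\sim g/|k|$ and $\partial_{c_R}F_R\sim-2\sqrt{g|k|}$, one obtains $c_I^-(k)=\sqrt{g/(4|k|^3)}\,Y_I(k,c_R^-(k))+O(k^{-2})\,|Y_I(k,c_R^-(k))|$. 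The two-sided exponential bound then follows by inserting $Y_I(k,c)=\pi U''(x_2^c)y_-(k,c,x_2^c)^2/\bigl(U'(x_2^c)|y_-(k,c,0)|^2\bigr)$ from Lemma \ref{L:Y-def}(5), with $x_2^c=U^{-1}(c_R^-(k))\sim-\sqrt{g/|k|}/U'(0)$, and the hyperbolic-sine asymptotics \eqref{E:y-_1}, which give $y_-(k,c,x_2^c)^2/|y_-(k,c,0)|^2\sim e^{2kx_2^c}\sim e^{-2\sqrt{g|k|}/U'(0)}$.

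The step I expect to be the main obstacle is the $S_{k+}^L$ branch, where $F$ sits on the branch cut $U([-h,0])$, is only $C^{l_0-2}$ when restricted to $\{c_I\ge0\}$, and is not holomorphic there: Rouch\'e's theorem is unavailable, so one must run the implicit function / Brouwer degree argument as a genuinely real-variable statement on the closed half-plane, checking that the uniform lower bound on the Jacobian coming from the $C^1$ estimates of Lemma \ref{L:Y-2} is strong enough that the unique zero near $\sqrt g$ accounts for \emph{all} zeros confined to $S_{k+}^L$ by Lemma \ref{L:e-v-asymp-1}, with none lost to the extension. A secondary difficulty is keeping the error bookkeeping tight enough that the rather precise $O(k^{-2})$ and $O(k^{-2})|Y_I|$ remainders emerge, which relies on the relative $O(|k|^{-3/2})$ accuracy of $Y\approx|k|$ rather than the cruder bounds that sufficed in Lemma \ref{L:e-v-asymp-1}.
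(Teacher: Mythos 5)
Your proposal is correct and essentially reproduces the paper's argument: reduce to $F$, confine all roots to the rectangles of Lemma \ref{L:e-v-asymp-1}, extend $Y$ (hence $F$) $C^{l_0-2}$-smoothly across the branch cut, use the $C^1$-closeness of $Y$ to $|k|$ from Lemma \ref{L:Y-2} to capture a unique root in each rectangle with the stated $O(k^{-2})$ asymptotics, and then extract $c_I^-$ via the mean value theorem, the formula for $Y_I$, and the $\sinh$-asymptotics of $y_-$. The only cosmetic difference is that the paper, instead of your rescaled implicit-function setup, solves the quadratic in $U(0)-c$ and runs the contraction $c=f_\pm(k,c)$ on the full rectangles $S_k^{L,R}$, which is exactly what settles the uniqueness-in-the-whole-rectangle point you flag as the main obstacle.
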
 


\begin{proof} 
Again we only need to consider $F(k, c)=0$ for $k\gg 1$. As in the bifurcation analysis in the proof of Lemma \ref{L:bifurcation}, we consider a $C^{l_0-2}$ extension $\tilde Y(k, c)$ of $Y(k, c)$ from the domain $c\in S_{k+}^{L, R}$ to $S_{k}^{L, R}$, which is different from the original $Y(k, c)$ defined on $S_{k}^{L, R}\setminus S_{k+}^{L, R}$. The extension $\tilde Y(k, c)$ can be defined as, e.g., 
\[
\tilde Y(k, c)= \tilde Y_R(k, c) + i\tilde Y_I(k, c) = \sum_{l=1}^{l_0-1} a_l Y(k, c_R-il c_I), \; \text{ for } \;  \bar c \in S_{k+}^{L, R},
\]
where $a_1, \ldots, a_{l_0-1}$ can be chosen so that all derivatives up to the order of $l_0-2$ are matched at $c_I=0$. 
Extend $F(k, c)$ to $\tilde F(k, c)$ accordingly and consider $\tilde F(k, c)=0$, namely,  
\be \label{E:temp-16}
\tilde F(k, c) =(U(0)-c)^2 \tilde Y(k, c) - U'(0)(U(0)-c) -g=0, \quad c \in S_{k}^{L, R}. 
\ee
Treating it as a quadratic equation of $U(0)-c$, its two branches of roots must satisfy   
\[
c = f_\pm (k, c)\triangleq  U(0) -  \frac {U'(0)}{2\tilde Y(k, c)} \pm \sqrt{\frac {U'(0)^2}{4\tilde Y(k, c)^2} + \frac g{\tilde Y(k, c)}}. 
\] 
For $k \ge k_0$, the argument of the last square root is close to $g/|k|$ and the $\pm$ signs correspond to the square roots close to $\pm \sqrt{g/|k|}$. 
From Lemma \ref{L:Y-2} and the definition of the extension $\tilde Y$, there exist some $k_0, C>0$ determined by $U$ such that, for $k>k_0$ and $c \in S_{k}^{L, R}$, it holds 
\[
\Big|\Big(\frac {U'(0)^2}{4\tilde Y(k, c)^2} + \frac g{\tilde Y(k, c)}\Big) - \frac gk\Big(1 + \frac {U'(0)^2}{4g k} \Big) \Big| \le C k^{-\frac 52}. 
\]
The other term $U'(0)/(2\tilde Y(k, c))$ can be handled similarly and thus for such $(k, c)$ we obtain,  
\[
\Big|f_\pm (k, c) - U(0) +  \frac {U'(0)}{2k} \mp \sqrt{ \frac gk\Big(1 + \frac {U'(0)^2}{4g k}} \Big) \Big| \le C k^{-2} \implies f_\pm (k, c) \in S_{k}^{R, L}. 
\]
Moreover, from Lemma \ref{L:Y-2} we also have the derivative estimates of $f_\pm$
\begin{align*}
|D_c f_\pm (k, c)| =& |D_c \tilde Y(k, c)| \Big| \frac {U'(0)}{2\tilde Y(k, c)^2} \mp \Big(\frac {U'(0)^2}{4\tilde Y(k, c)^2} + \frac g{\tilde Y(k, c)}\Big)^{-\frac 12} \Big(\frac {U'(0)^2}{4\tilde Y(k, c)^3} + \frac g{2\tilde Y(k, c)^2}\Big)\Big|\\
\le & C k^{-\frac 32}.
\end{align*}
Therefore, for $k\ge k_0 \gg 1$, $f_- (k, \cdot)$ is a $C^{l_0-2}$ contraction on $S_k^L$ and $f_+ (k, \cdot)$ an analytic contraction on $S_k^R$. Let $c^\pm (k) \in S_k^{R,L}$ denote their unique fixed points, which satisfy the same above leading order asymptotics as $f_\pm$ and are $C^{l_0-2}$ in $k$.

It is straight forward to compute 
\[
D_c \tilde F(k, c) =2 (c- U(0)) \tilde Y(k, c) + (U(0)-c)^2 D_c \tilde Y(k, c) + U'(0).
\]
From Lemma \ref{L:Y-2} and the leading order estimates of $c^\pm (k)$ which yields 
\[
|c^\pm (k) - U(0) \mp \sqrt{g/k} + U'(0)/(2k)| \le Ck^{-\frac 32}, 
\]
we obtain the estimate on $ D_c \tilde F(k, c)$
\[
|D_c \tilde F (k, c^\pm(k)) \mp 2\sqrt{gk} | \le Ck^{-\frac 12}.
\] 

To complete the proof, we consider the imaginary parts of $c^\pm (k)$. Using \eqref{E:temp-16} we have  
\[
\big((U(0)-c_R)^2 - c_I^2) \tilde Y_I (k, c) - 2 (U(0)-c_R) c_I \tilde Y_R (k, c) + U'(0) c_I =0, \; \text{ at }\; c= c^\pm (k). 
\]
From the Mean Value Theorem, there exists $\tau^\pm$ between $c_R^\pm (k)$ and  $c^\pm (k)$ such that  
\[
\tilde Y_I(k, c^\pm (k)) - Y_I(k, c_R^\pm (k)) = c_I^\pm (k) \p_{c_I} \tilde Y_I (k, \tau^\pm), 
\]
which, along with Lemma \ref{L:Y-2}, yields, at $c= c^\pm (k)$, 
\[
(U(0)-c_R)^2 \big( Y_I(k, c_R) + c_I \p_{c_I} \tilde Y_I (k, \tau^\pm) \big)  - c_I^2 \tilde Y_I (k, c) - 2 (U(0)-c_R) c_I \tilde Y_R(k, c) + U'(0) c_I =0. 
\]
Therefore we obtain 
\[
c_I^\pm (k) = \frac {(U(0)-c_R)^2 Y_I(k, c_R)}{(U(0)-c_R) \big( 2\tilde Y_R(k, c) - (U(0)-c_R)\p_{c_I} \tilde Y_I (k, \tau^\pm) \big) - U'(0) + c_I \tilde Y_I (k, c)  }\Big|_{c=c^\pm (k)}, 
\]
which, along with Lemma \ref{L:Y-2} and the asymptotics of $c^\pm(k)$, implies the desired estimates on $c_I^\pm (k)$ in term of $Y_I (k, c_R^\pm (k))$. In particular, since $c_R^+ (k) > U(0)$, we have $c_I^+ (k)=0$ and $c^+ (k)> U(0)$. As $c^+(k)$ belongs to the domain of analyticity of $F(k, c)$, it is also an analytic function of $k$.
Finally, the upper and lower bounds of $c_I^-(k)$ follow from Lemmas \ref{L:Y-def}(5) and \ref{L:y_pm-1} and $U^{-1} (c_R^- (k)) = - \sqrt{g} /(U'(0) \sqrt{k}) + O(k^{-1})$. 
\end{proof}

\subsection{Eigenvalue distribution} \label{SS:e-values}

With the above preparations, we are ready to prove the main theorems on the eigenvalue distribution of the linearized gravity water waves. \\

\noindent $\bullet$ {\it Proof of Theorem \ref{T:e-values}.}
Throughout the proof, we recall that $\BF(k, c)$ and $F(k, c)$ have the same zero sets (Lemma \ref{L:e-v-basic-1}(2)) and we often mix them in the arguments.  

Most of statement (1) has been proved in Lemma \ref{L:e-v-asymp-2} except for 
the global extension {\it etc.} of $c^+(k)$ in (1c) which will be proved here. 

Since $\p_c F (0, c) >0$ for $c \in [U(0), +\infty)$ which can be verified directly using its integral formula given in Lemma \ref{L:e-v-basic-2}(1), there exists a unique $c_1 > U(0)$ such that 
\be \label{E:c1}
F(0, c_1) = 
0, \; \p_c F(0, c_1) >0, \; \text{ and } \; F(0, c) (c- c_1) >0,  \; c\in [U(0), +\infty)\setminus \{c_1\}. 
\ee
Let  
\[
\Omega_1 = \{ c\in \C \mid c_R \in ( U(0), c_1+1), \, c_I \in (-1, 1)\}. 
\]
Because $F(k, c)$ is also strictly increasing in $k\ge 0$ for any $c\ge U(0)$ (Lemma \ref{L:e-v-basic-2}(2)), we have $F(k, c) >0$ for all $c > c_1$ and $k \in \R$. Therefore, according to Lemma \ref{L:e-v-basic-2}(1) and the semicircle theorem \cite{Yih72}, $\BF(k, c)\ne 0$ for any $k \in \R$ and $c \in \p \Omega_1$ and $c_1$ is the only root of $F(0, \cdot)$ in $\Omega_1$, which is also simple. It implies Ind$\big(\BF(k, \cdot), \Omega_1\big) =1$ for all $k \in \R$. On the one hand, from Lemma \ref{L:continuation} and Remark \ref{R:continuation}, the unique root $c(k)$ of $\BF(k, \cdot)$ in $\Omega_1$ is simple and depends on $k\in \R$ analytically and evenly. On the other hand, Lemma \ref{L:e-v-asymp-2} implies that the root $c^+(k) \in \Omega_1$ for $|k| \ge k_0$. Therefore $c^+(k)$ coincides with $c(k)$ for $|k|\ge k_0$ and thus $c(k)$ serves as its extension for all $k \in \R$ as the only root of $\BF(k, \cdot)$ in $\Omega_1$, which is simple. It is also the only root of $\BF(k, \cdot)$ in $(U(0), +\infty)$ since $F(k, c) >0$ for all $c > c_1$ and $k \in \R$ obtained in the above. Finally, since $\p_c F(k, c_1) >0$, $\p_c F(k, c(k))$ does not change sign as $c(k)$ is a simple root for all $k$, and $\p_K F(k, c^+(k)) >0$ where $K=k^2$ (Lemma \ref{L:e-v-basic-2}(2)), we obtain $c'(k)<0$ for $k> 0$. This completes the proof of statement (1c). 

In statement (2), the existence and uniqueness of $k_-$ has been obtained in Lemma \ref{L:U(-h)}. 
From Lemma \ref{L:e-v-basic-2}, there exist $c_2 <U(-h)$ 
unique in $(-\infty,  U(-h)]$, 
such that 
\be \label{E:c2} \begin{split}
&F(0, c_2) =0= F(k_-, U(-h)), \; \p_c F(0, c_2)<0, \\ 
& F(0, c) (c- c_2) >0,  \ \forall c\in (-\infty, U(-h)]\setminus \{c_2\}.
\end{split} \ee
Due to the semi-circle theorem, $F(0, \cdot)$ has exactly two roots $c_2$ and $c_1=c^+(0)$ in $\Omega_2$, both of which are  simple, and $c_2$ the only one in $\Omega_3$, where 
\[
\Omega_2 = \{ c\in \C \mid |c - (U(-h)+U(0))/2| > |U(0)-U(-h)|/2\}, 
\]
\[
\Omega_3 = \{ c\in \C \mid c_R \in (c_2-1, U(-h)), \, c_I \in (-1, 1)\}.
\]
The defintions of $c_2$ as well as that of $c_1$, the monotonicity of $F(k, c)$ in $k>0$ for $c \in \R \setminus U((-h, 0])$, the local monotonicity of $F(0, c)$ in $c \in \R \setminus U((-h, 0])$, and the semi-circle theorem also imply that a.) for any $k \in \R$, the roots of $\BF(k, c)$ in $\Omega_2$ have to belong to $\Omega_1 \cup \Omega_3$ and b.) the only root of $\BF(k, c)$ for $c \in \p \Omega_1 \cup \p \Omega_3$ is $(k_-, U(-h))$. Consequently,  the total number of roots (counting the multiplicity) of $\BF(k, \cdot)$ in $\Omega_2$ can change only at $k=k_-$. 
From the same argument as in the above proof of statement (1c), 
we obtain that, for $|k|< k_-$, the only root of $\BF(k, \cdot)$ in $\Omega_3$, which is also the only one in $(-\infty, U(-h))$, is given by some $c_-(k) \in [c_2, U(-h))$. It is analytic and even in $k$ and satisfies 
\[
c_-(0)=c_2,\; \p_c F(k, c_-(k))<0, \; c_-'(k) >0, \;  F(k, c)(c_-(k)-c) >0, \; \forall |k| < k_-,  \, c \le U(-h).
\]
Here the last property is due to the monotonicity of $F(k, c)$ in $k>0$ for $c \in \R \setminus U((-h, 0])$ (Lemma \ref{L:e-v-basic-2}(2)) and that of $c_-(k)$. 
Hence $c^+(k)$ and $c_-(k)$ are the only roots of $\BF(k, \cdot)$ in $\Omega_2$ for all $|k| < k_-$. 
Moreover, for $|k| > k_0$, the only root of $\BF(k, \cdot)$ in $\Omega_2$ (and $\Omega_1 \cup \Omega_3$ as well) is $c^+(k)$ implies that this also holds for all $|k| > k_-$.  
The limit $c_-((k_-) -)$ exists due to its boundedness and monotonicity. It has to be equal to $U(-h)$, otherwise, if $c_-((k_-) -) < U(-h)$, by Lemma \ref{L:continuation} $c_-(k)$ could be continued beyond into $k > k_-$ and then $\BF(k, \cdot)$ would have at least two roots $c^+(k)$ and $c_-(k)$ in $\Omega_2$ for $1\gg k-k_- >0$, which contradicts the above analysis. Finally, statement (2c) follows from Lemma \ref{L:bifurcation}(1) and the signs $\p_k F>0$ and $\p_c F<0$ at $(k_-, U(-h))$ (Lemma \ref{L:pcF}).   

Statement (3) regarding $c$ near the inflection values of $U$ has been proved in Lemmas \ref{L:neutral-M-3} and \ref{L:bifurcation}, see also Remark \ref{R:bifurcation}. 

Statement (4) on the linear instability is a direct corollary of statements (1)--(3). 
\hfill $\square$ \\

We proceed to prove Theorem \ref{T:e-values-1} assuming either $U'' > 0$ or $U''\le 0$ on $(-h, 0)$. \\

\noindent $\bullet$ {\it Proof of Theorem \ref{T:e-values-1}.}
Suppose $U''>0$ on $(-h, 0)$, then Theorem \ref{T:e-values}(2) and Lemmas \ref{L:y0}(4) and \ref{L:e-v-basic-1}(4) imply that $F(k, c) = 0$ with $c \in U([-h, 0])$ iff $(k, c) =(k_-, U(-h))$. 
From Lemma \ref{L:e-v-asymp-2} (or Theorem \ref{T:e-values}(1)), there exists $k_0> k_-$ such that, for any $|k|> k_0$, there are exactly three roots of $F(k, \cdot)$, which are simple and given by $c^+(k) \in (U(0), c^+(0)]$ (already extended for all $k \in \R$ in Theorem \ref{T:e-values}(1)), $c^-(k) = c_R^-(k) + i c_I^-(k)$, and $\overline{c^-(k)}$, where $c_I^-(k) >0$ due to $U''>0$. 
For any $k_1 \in (k_-, k_0]$, $\BF(k, c) \ne 0$ for all $k \in [k_1, k_0]$ and $c \in U([-h, 0])$.   
The conjugacy property \eqref{E:BF-conj} and the continuity of $\BF$ (restricted to $c_I\ge 0$) imply that there exists $\delta_0>0$ such that $\BF(k, c) \ne 0$ if $k \in [k_1, k_0]$ and $dist\big(c, U([-h, 0])\big)\le \delta_0$. For any $\delta \in (0, \delta_0)$, let 
\[
\Omega_4 = \big\{ c \in \C \mid \big|c -\tfrac 12 (U(-h)+U(0))\big)\big| < \delta + \tfrac 12 (U(0)- U(-h)), \ dist\big(c, U([-h, 0])\big)> \delta  \big\}, 
\] 
and $\Omega_4^\pm$ be its (disjoint) upper and lower connected components with $\pm c_I>0$. 
Clearly $c^-(k_0) \in \Omega_4^+$, $\overline{c^-(k_0)} \in \Omega_4^-$, $c^+(k) \notin \Omega_4$ and $\BF(k, c) \ne 0$ for all $k \in [k_1, k_0]$ and $c \in \p \Omega_4= \p \Omega_4^+\cup \p \Omega_4^-$ due to the semi-circle theorem and the choice of $\delta$. Therefore 
\[
\text{Ind}(\BF (k, \cdot), \Omega_4^\pm) =\text{Ind}(\BF (k_0, \cdot), \Omega_4^\pm) =1, \quad \forall k \in [k_1, k_0]. 
\]
From Lemma \ref{L:continuation} and Remark \ref{R:continuation}, the simple root $c^-(k_0)$ of $\BF(k_0, \cdot)$ can be extended as a simple root $c^-(k) \in \Omega_4^+$ of $\BF(k, \cdot)$  for all $k \in [k_1, k_0]$, which is the only root of $\BF(k, \cdot)$ in $\Omega_4^+$. Hence, by taking all $k_1 \in (k_-, k_0]$, $c^-(k)$ can be extended for all $k \in (k_-, +\infty)$ as the only root (counting the multiplicity) of $\BF(k, \cdot)$ in $\Omega_2^c \cap \{ c_I>0\}$. From the semi-circle theorem and Theorem \ref{T:e-values}(1b), $c^\pm(k)$ and $\overline{c^-(k)}$ are the only roots of $\BF(k, \cdot)$ for $|k| > k_-$, which are also simple.  

Recall the branch $c_-(k)$ of the root of $\BF(k, \cdot)$ for $k\in [-k_- -\ep, k_-+ \ep]$ obtained in  Theorem \ref{T:e-values}(2). The assumption $U''>0$ yields $\IP\, c_-(k_- +\ep)>0$. Choose $\delta >0 $ sufficiently small so that $\BF(k, c) \ne 0$ for all $k \in [k_-+\ep, k_0]$ and $dist\big(c, U([-h, 0])\big)\le \delta$. Hence the semi-circle theorem implies $c_-(k_- +\ep) \in \Omega_4^+$ where $\Omega_4^\pm$ is defined in the same form as in the above. While Ind$(\BF (k_-+\ep, \cdot), \Omega_4^+) =1$, both $c^-(k_-+\ep)$ and $c_-(k_-+\ep)$ are roots of $\BF (k_-+\ep, \cdot)$ in $\Omega_4^+$. Therefore we obtain $c^-(k) = c_- (k)$ for $k > k_-$. 

A similar argument based on the index, continuation, and the Semi-circle Theorem starting at $k=k_-+\ep$ also yields that the roots of $\BF(k, \cdot)$ for $|k-k_-| \ll1$ are either $c^+(k)$ or close to $U(-h)$. Hence Theorem \ref{T:e-values}(2c) implies that $c^+(k), c_-(k) \in \R$ are all the roots for $k\in [k_--\ep, k_-]$. There exists $\beta_0 >0$ such that $\BF(k, c) \ne 0$ for all $|k| \le k_--\ep$ and $dist\big(c, U([-h, 0])\big) \le \beta_0$. Hence $\BF(k, c) \ne 0$ for all $\beta \in (0, \beta_0)$, $|k| \le k_- - \ep$, and $c \in \p \Omega_5$  where 
\[
\Omega_5 =\{ c\in \C \mid c_R \in (c_2-1, c_1+1), \, |c_I | \le U(0)-U(-h), \, dist\big(c, U([-h, 0])\big) > \beta \},
\]
and $c_1$ and $c_2$ have been given in \eqref{E:c1} and \eqref{E:c2}, respectively. 
Clearly, for all $|k| \le k_--\ep$, Ind$\big(\BF(k, \cdot), \Omega_5)=$Ind$\big(\BF(k_--\ep, \cdot), \Omega_5)=2$ and the two roots are $c^+ (k)$ and $c_-(k)$. For $|k| \le k_--\ep$, $\BF(k, \cdot)$ has no other roots outside $\Omega_5$ due to the semi-circle theorem and the choice of $\beta$ and $c_{1,2}$. Therefore $c^\pm(k)$ are also the only roots of $\BF(k, \cdot)$ for all $|k| \le k_-$. It completes the proof of the case of $U''>0$.  

Assume $U''\le 0$ on $(-h, 0)$. Firstly, we show the spectral stability following the most standard technique, which had actually been obtained in \cite{Yih72} (see also \cite{HL08}). Suppose $(k, c)$ is an unstable mode (i.e. $c_I>0$) with the eigenfunction $y_-(x_2)$. Multiplying the Rayleigh equation by $\overline {y_- (x_2)}$, integrating by parts over $(-h, 0)$, taking the imaginary part, and using the boundary condition of \eqref{E:Ray-3}, we have 
\begin{align*}
c_I \int_{-h}^0 \frac {U'' |y_-|^2}{|U-c|^2} dx_2 =& \IP \big( y_-'(0) \overline {y_- (0)} \big) =  \IP \Big( \frac {U'(0)(U(0)-c) +g}{(U(0)-c)^2} |y_-(0)|^2 \Big)\\
= &c_I  \Big( \frac {U'(0)}{|U(0)-c|^2} +\frac {2g (U(0) - c_R)}{|U(0)-c|^4} \Big) |y_-(0)|^2. 
\end{align*} 
The semi-circle theorem implies $c_R \in U([-h, 0])$ and thus Lemma \ref{L:e-v-basic-1}(2) yields $c_I=0$  which is a contradiction. Hence the spectral stability follows. 
Due to Lemmas \ref{L:e-v-basic-1}(2)(4) and \ref{L:e-v-basic-2}(1), the only singular or non-singular modes inside the circle \eqref{E:semi-circle} have to be at inflection values of $U$. The remaining statements in this case has been proved in Theorem \ref{T:e-values}.
\hfill $\square$\\

\begin{remark} \label{R:stability}
Suppose,  instead of $U'>0$ on $[-h, 0]$, only $U'(0)>0$ is assumed along with $U''\le 0$  on $[-h, 0]$. It is easy to see that Lemma \ref{L:e-v-basic-1}(2) still holds for any $c \in \C \setminus U([-h, 0])$. The same above argument still applies to imply the spectral stability as already proved in \cite{Yih72}. 
\end{remark}

Finally we prove Theorem \ref{T:e-values-2} assuming $U$ has exactly one non-degenerate inflection point $x_{20}$ on $(-h, 0)$. \\

\noindent $\bullet$ {\it Proof of Theorem \ref{T:e-values-2}.}
Suppose \eqref{E:K+} holds with $U''(x_{20})=0$ and $c_0= U(x_{20})$. The same type of argument  as above based on the index and the continuation will be employed extensively below on the semi-disk domain 
\[
\Omega_6 = \big\{ c \in \C \mid \big(c_R-\tfrac 12 (U(0) +U(-h))\big)^2+c_I^2< \tfrac 14 (U(0)- U(-h))^2, \ c_I>0\}. 
\]

Let us start with the case of $U'''(x_{20}) > 0$ which implies 
$\frac {U''}{U-c_0} >0$ on $[-h, 0]$. Lemma \ref{L:y0} yields $y_-(k, c_0, x_2) \in \R$ over $[-h, 0]$ for any $k \in \R$. Multiplying the Rayleigh equation by $y_-$, integrating by parts over $(-h, 0)$, and using $\frac {U''}{U-c_0} >0$, we obtain $y_- (k, c_0, 0) >0$ for all $k \in \R$. According to Lemma 
\ref{L:neutral-M-3}, there exists $k_0 >0$, unique among $k \ge 0$, such that $\BF(k_0, c_0) =0$. 
Along with the Semi-circle Theorem\footnote{From the standard proof of the Semi-circle Theorem, one could see that unstable modes can not occur on the boundary semi-circle of \eqref{E:semi-circle} when $U'>0$ on $[-h, 0]$.}, Lemmas \ref{L:e-v-basic-1}(4) and \ref{L:U(-h)}, it implies that $(\pm k_-, U(-h))$ and $(\pm k_0, c_0)$  are the only roots of $\BF$ with $c \in \p \Omega_6$. On the one hand, since $U''<0$ near $x_2= -h$, Lemma \ref{L:bifurcation} implies that there are never singular or non-singular modes with $c \in \Omega_6$  near $U(-h)$ for any $k \in \R$.
Hence the index Ind$(\BF(k, \cdot), \Omega_6)$ remains a constant for $k > k_0$. On the other hand, since $U''>0$ near $x_2=0$, Lemma \ref{L:e-v-asymp-2} implies that $(k, c^-(k))$ with $c_I^-(k) >0$ is the only unstable mode for $k\gg 1$, which implies Ind$(\BF(k, \cdot), \Omega_6)=1$ for all $k > k_0$. Again from Lemma \ref{L:bifurcation}, there exists a branch of unstable modes $(k, \CC(k))$ with $\CC(k_0)=c_0$ and $\CC_I(k) >0$ for $1\gg k- k_0 >0$, which are the only singular or non-singular modes   near $(k_0, c_0)$ with $c \in \Omega_6$. From Lemma \ref{L:continuation}, this branch $(k, \CC(k))$ can be continued for all $k\ge k_0$. Since $\BF$ has exactly one root with $c \in \Omega_6$ for $k > k_0$, we obtain that the continuations of $\CC(k)$ and $c^-(k)$ must coincide and form the only singular or non-singular modes with $c \in \overline{\Omega_6}$. The local bifurcation given by Lemma \ref{L:bifurcation} implies that Ind$(\BF(k, \cdot), \Omega_6) =0$ for $|k| < k_0$ and thus there are no unstable modes if $|k| < k_0$. 
Along with Theorem \ref{T:e-values}(2b), 
it completes the proof of this case. 

In the rest of the proof we assume $U'''(x_{20}) <0$. Lemma \ref{L:neutral-M-2} yields $k_0 > k_-$. Much as the above,  the index Ind$(\BF(k, \cdot), \Omega_6)$ remains a constant except for $k \in \{\pm k_-, \pm k_0, \pm k_1\}$ (no $k_1$ if it does not exist). Note \eqref{E:K+} and $U''' (x_{20})<0$ implies $U''<0$ on $(x_{20}, 0)$ and $U''>0$ on $(-h, 0)$. From Lemmas \ref{L:e-v-asymp-2} 
\be \label{E:ind-1}
\text{Ind}(\BF(k, \cdot), \Omega_6) = 0, \quad \text{ if } \;  |k| > k_0.
\ee 
By a similar argument to that in the proof of Theorem \ref{T:e-values-1}, 
Lemma \ref{L:bifurcation}, which gives all the unstable modes near the singular neutral modes at $c_0$ and $U(-h)$, implies that the index increases by 1 as $k$ increases through $k_-$ and decreases by 1 as $k$ increases through $k_0$, as well as through $k_1$ if $k_1$ exists. Therefore we have 
\be \label{E:ind-2}
\text{Ind}(\BF(k, \cdot), \Omega_6) =1,  \quad \text{ if } \;  |k| \in (\max\{k_-, k_1\}, k_0),  
\ee
where $\max\{k_-, k_1\}= k_-$ is understood if $k_1$ does not exist. In the case $k_1$ does not exist, we also have 
\be \label{E:ind-3}
\text{Ind}(\BF(k, \cdot), \Omega_6) = 0, \qquad |k| < k_-,
\ee 
and otherwise 
\be \label{E:ind-4}
\text{Ind}(\BF(k, \cdot), \Omega_6) = \begin{cases} 
2, \quad & |k| \in (k_-,  k_1), \\
0, \quad & |k| \in (k_1,  k_-), \\
1, & |k| < \min\{k_-, k_1\}. 
\end{cases}\ee
The linearized system \eqref{E:LEuler} is unstable when the above index is greater than zero. 
By a similar argument to that in the proof of Theorem \ref{T:e-values-1} bases on Lemma \ref{L:continuation} and the index, the branches of unstable modes bifurcating from the singular neutral modes can be continued in $k$ unless $k$ approaches $\pm k_-$, $\pm k_0$, and $\pm k_1$ or it  collides with another branch, the latter of which happens only possibly for $|k| \in (k_1, k_-)$. Let us consider the different cases (2)--(4) in the theorem separately. 

Suppose $k_1$ does not exist and thus $\pm k_0$ are the only wave numbers which make $c_0$ a singular neutral mode. From Lemma \ref{L:continuation} and \eqref{E:ind-2}, the branch of unstable modes $c_-(k)$ obtained in Theorem \ref{T:e-values}(2) (or Lemma \ref{L:bifurcation} more directly bifurcating from $(\pm k_-, U(-h))$) can be continued for all $|k| < k_0$. 
Due to \eqref{E:ind-2}, this branch has to coincide with the branch $\CC(k)$ bifurcating from $(\pm k_0, c_0)$ (again by Lemma \ref{L:bifurcation})  for $|k| \le k_0$ and with $c_-(\pm k_0)=c_0$. From the Semi-circle Theorem and \eqref{E:ind-3}, 
$(k, c_-(k))$ is the only  singular or non-singular modes with $c \in \overline{\Omega_6}$ and statement (2) is proved. 

Suppose $k_1$ exists and $k_1 \le k_-$. By continuity, the branch $c_-(k)$ satisfies $c_-(k) \ne c_0$ for $1\gg |k| - k_- >0$. The same argument as above also yields that the branch $c_-(k)$ connects to $c_0= c_-(\pm k_0)$ and is the only singular or non-singular modes with $c \in \overline{\Omega_6}$ and $|k| \ge k_-$. Due to \eqref{E:ind-4}, there are no singular or non-singular modes with $c \in \overline{\Omega_6}$ 
and $|k| \in (k_1, k_-)$. From Lemmas \ref{L:bifurcation} and \ref{L:continuation}, both branches of unstable modes bifurcating from $(\pm k_1, c_0)$ for $|k| < k_1$ can be continued for all $|k| < k_1$ and have to coincide due to  \eqref{E:ind-4}. This branch is apparently even in $k$ and the only singular or non-singular modes with $c \in \overline{\Omega_6}$ and $|k| < k_1$. It completes the proof of statement (3). 

To prove the last case, suppose $k_1$ exists and $k_1 > k_-$. From the Semi-circle Theorem and the assumptions, it is clear 
\[
\CS = \{(k, c) \mid c \in  \overline{\Omega_6}, \ |k| \le k_0, \  \BF(k, c) =0\}, \quad \overline{\CS} = \CS.
\]

$\bullet$ {\it Claim.} For any connected component  $\tilde\CS$ of $\CS$, it holds 
\[
k^* = \sup \{ k \mid \exists c \in \overline{\Omega_6}, \ (k, c) \in \tilde \CS\}   \in \{- k_-, k_1, k_0\} \;\; \& \;\; \exists c^* \in \{c_0, U(-h)\}, 
\ (k^*, c^*) \in \tilde \CS,
\]
\[
k_* = \inf \{ k \mid \exists c \in \overline{\Omega_6}, \ (k, c) \in \tilde \CS\}   \in \{ k_-, - k_1, -k_0\} \;\; \& 
\;\; \exists c_* \in \{c_0, U(-h)\}, 
\ (k_*, c_*) \in \tilde \CS.
\]
We shall prove the infimum case and the supremum case is similar.  
In fact, since the compactness of $\CS$ yields that of $\tilde \CS$, there exists $c_* \in \overline{\Omega_6}$ such that $(k_*, c_*) \in \tilde \CS$. If $c_* \in \Omega_6$ where $\BF$ is analytic, then $c_*$ is an isolated zero of $\BF(k_*, \cdot)$. Since, as $k$ varies locally, the set of zero points of analytic functions can always be extended continuously beyond $k_*$ (even though splitting may occur if $c_*$ is not simple), it contradicts with the definition of $k_*$. Therefore $c_* \in \p \Omega_6$ and thus $k_* \in \{ \pm k_-, \pm k_1, \pm k_0\}$. 
Lemma \ref{L:bifurcation} implies that $(k_0, c_0)$ is the end point of a branch of unstable modes with $k< k_0$, therefore $k_0$ can not be the infimum. Another two possibilities $(k_*, c_*) = (k_1, c_0)$ or $(k_*, c_*) = (-k_-, U(-h))$  can be excluded in the same way as $(k_0, c_0)$. The claim is proved. 

Let $\CS_1$ be the connected component of $\CS$ containing $(k_-, U(-h))$ and $\CS_{-1} = \{(-k, c) \mid (k, c) \in \CS_1\}$. From \eqref{E:ind-4}, 
\be \label{E:middle-p}
\exists | c^0 \in \Omega_6  \; \text{ s. t. } \; (0, c^0) \in \CS.
\ee
Applying the above claim to the supremum to $\CS_1$, there are two possibilities. 

$\#$ {\it Case A.} $k_1 = \sup_{(k, c) \in \CS_1} k$ and thus $(k_1, c_0) \in \CS_1$. In this case, let $\CS_0$ be the connected component of $\CS$ containing $(k_0, c_0)$. As $k_1$ is the supremum of $\CS_1$ and $k_1 <k_0$, we have $\CS_0 \cap \CS_1 = \emptyset$. This implies that $\inf_{(k, c) \in \CS_0} k \ne k_-$, otherwise $(k_-, U(-h)) \in \CS_0$ which would yield $\CS_0 = \CS_1$. Again the above claim implies either $(-k_1, c_0)\in \CS_0$ or $(-k_0, c_0) \in \CS_0$. From 
\eqref{E:middle-p} and the connectedness of $\CS_0$, 
it holds $(0, c^0) \in \CS_0$. By the even symmetry of $\BF$ in $k$, immediately we derive the same even symmetry of $\CS_0$ about $k$ and thus $(-k_0, c_0) \in \CS_0$. 
Since all points $(k, c)\in \CS$ with $c \in \{c_0, U(-h)\}$ belong to $\CS_{-1} \cup \CS_0 \cup \CS_1$, it follows from the above claim that these three sets are all the connected component of $\CS$ and thus $\CS =\CS_{-1} \cup \CS_0 \cup \CS_1$.  The desired statement (4) in case (4b) is obtained. 

$\#$  {\it Case B.} $k_0 = \sup_{(k, c) \in \CS_1} k$ and thus $(k_0, c_0) \in \CS_1$.  Accordingly we let $\CS_0$ be the connected component of $\CS$ containing $(k_1, c_0)$. Again we consider the two possibilities separately.

* {Case B.1.} $(k_1, c_0) \in \CS_1$, too. In this case, we have $\CS_0 = \CS_1$. Again since all points $(k, c)\in \CS$ with $c \in \{c_0, U(-h)\}$ belong to $\CS_{-1}  \cup \CS_1$, it follows from the above claim $(0, c^0) \in \CS =\CS_{-1} \cup \CS_1$. Due to the symmetry, we obtain $(0, c^0) \in \CS_1 \cap \CS_{-1}$ and thus $\CS=\CS_{-1} = \CS_0 = \CS_1$. Hence the desired statement (4) in case (4b) holds. 

* {Case B.2.} $(k_1, c_0) \notin \CS_1$, which yields $\CS_0 \cap \CS_1 =\emptyset$. The above claim implies $\inf_{(k, c) \in \CS_0} k < k_-$ and thus $(-k_1, c_0)\in \CS_0$ or $(-k_0, c_0) \in \CS_0$. The desired statement (4) in case (4a) follows from the same argument as in the above Case A. The proof of the theorem is complete. 
\hfill $\square$\\

According to Lemma \ref{L:neutral-M-3}, if $k_C>0$ exists, i.e. $U$ is unstable for the channel flow, then $k_1>0$ exists if $g> F^0 (c_0)$ defined in \eqref{E:F0}. From Lemma \ref{L:bifurcation-P} which is  under a weaker condition, the closed branch $\CC(k)$ of unstable modes in case (3) bifurcates from $c_0$. As this branch grows, we can not exclude the possibility that it intersects with the branch $c_-(k)$ connecting $(k_-, U(-h))$ and $(k_0, c_0)$ obtained in case (2).

\subsection{The linearization of the capillary gravity water waves at monotonic shear flow revisited} \label{SS:CGWW} 

When the surface tension is also considered, the boundary condition \eqref{E:Euler-4} is replaced by 
\be \label{E:CG-BC}
p(t, x) =  \sigma \kappa (t,x), \qquad x\in S_t,
\ee
where $\sigma > 0$, $\kappa(t,x)=-\frac{\eta_{x_1x_1}}{(1+\eta_{x_1}^2)^{\frac{3}{2}}}$ is the mean curvature of $S_t$ at $x$. Shear flow $v(x) =( U(x_2), 0)^T$ under the flat surface $S_t = \{x_2=0\}$ is also a stationary solution. The linearization of the capillary gravity water waves at such a shear flow is given by a system similar to \eqref{E:LEuler} only with the linearized boundary condition \eqref{E:LEuler-5} replaced accordingly by 
\be \label{E:LCG-BC}
p(t, x) =  g \eta - \sigma \p_{x_2}^2 \eta, \qquad \; \text{ at } x_2 =0. 
\ee
Following the same procedure, 
we obtain that $(k, c)$ corresponds to an eigenvalue $-ikc$ in the $k$-th Fourier modes of $x_1$ if and only if $\CF_\sigma (k, c)=0$, or equivalently, $\BF_\sigma (k, c)=0$, where $\CF_\sigma$ was defined \eqref{E:dispersion-CG} and $\BF_\sigma (k, c) = y_-(k, c, 0) \CF_\sigma (k, c)$.  

In \cite{LiuZ21}, we obtained results on the eigenvalue distribution and the inviscid damping of the linearized capillary gravity waves. 
In particular, the semi-circle theorem still holds for unstable modes, namely, all non-singular modes $c$ outside the circle \eqref{E:semi-circle} belong to $\R\setminus U((-h, 0))$. Moreover, there exists $k_0>0$ such that $\BF_\sigma (k, \cdot)$ has exactly two roots $c^\pm (k)$ for $|k| \ge k_0$, which are simple roots, even and analytic in $k$, and satisfy 
\[
c^+(k) > U(0), \quad c^- (k) < U(-h), \quad \lim_{|k| \to \infty}  c^\pm (k)/\sqrt{\sigma |k|} =\pm 1.
\] 
The branch $c^+(k)$ can be continued as simple roots for each $k \in \R$ with $c^+(k) > U(0)$ and even and analytic in $k$. The continuation of the other branch $c^-(k)$ may or may not reach $U(-h)$ depending on whether 
\begin{align*}
g < g_\# (\sigma) \triangleq & \max \big\{ Y\big(k, U(-h)\big) \big(U(0)-U(-h)\big)^2 - U'(0) \big(U(0)-U(-h)\big) -\sigma k^2 \mid k \in \R\big\}\\
= & \max \big\{ \CF_\sigma \big(k, U(-h)\big) +g  \mid k \in \R\big\} \ge  \CF_\sigma (0, U(-h)) +g =0.    
\end{align*}
If it does, the bifurcation at $c= U(-h)$ was analyzed under the assumption $U'' \ne 0$ on $[-h, 0]$, which was only used to ensure a.) the $C^{1, \alpha}$ regularity of $\CF_\sigma$ near $c= U(-h)$ and b.) the sign $\p_c \CF_\sigma (k_0, U(-h)) <0$ if $ \CF_\sigma (k_0, U(-h))=0$. More details can be found in Theorem 1.1 in \cite{LiuZ21}. Thanks to Corollary \ref{C:F} and Lemma \ref{L:pcF}, these two key properties of $\CF_\sigma$ also hold without assuming $U'' \ne 0$. Therefore the following proposition on the eigenvalue distribution of the linearized capillary gravity waves holds with weakened assumptions.

\begin{proposition} \label{P:CGWW}
Assume $U\in C^6$ and $U'>0$ on $[-h, 0]$, then the following hold. 
\begin{enumerate} 
\item If $g=g_\#(\sigma)$, then there exist $k_\#>0$, unique in $[0, +\infty)$, and an even $C^{1, \alpha}$ function (for any $\alpha \in [0, 1)$) $c^- (k)$ defined for all $k \in \R$ such that 
\begin{enumerate} 
\item $\CF_\sigma (k, U(-h))=0$ iff $k = \pm k_\#$;
\item $c^- (k) < U(-h)$ is analytic in $k \ne \pm k_\#$, and $c^-(\pm k_\#) = U(-h)$;
\item $c^+(k)$ and $c^-(k)$ are simple roots of $\CF_\sigma (k, \cdot)$ which include all roots of $\CF_\sigma (k, \cdot)$ outside the disk \eqref{E:semi-circle};  and  
\item there exists $\ep, \rho>0$ that $\CF_\sigma (k, c) =0$ with $|k \pm k_\#|\le \ep$ and $|c-U(-h)| \le \rho$ iff $c = c^- (k)$. 
\end{enumerate}
\item If $g< g_\# (\sigma)$, then there exist $k_\#^+ > k_\#^->0$, $\ep, \rho>0$ and even $C^{1, \alpha}$ functions (for any $\alpha \in [0, 1)$) $c^- (k)$ defined for $|k| \ge k_\#^+ -\ep$ and $c_-(k)$ for $|k| \le k_\#^- + \ep$ satisfying the following.   
\begin{enumerate}
\item $\CF_\sigma (k, U(-h))=0$ iff $k =\pm  k_\#^\pm$.
\item $c^- (k)< U(-h)$ is analytic in $k$ if $|k| > k_\#^+$ and $c_-(k) < U(-h)$ is analytic in $k$ if $|k| < k_\#^-$. Moreover, $c^+(k)$, $c^-(k)$ with $|k| > k_\#^+$, and $c_-(k)$ with $|k| < k_\#^-$ are the only roots of $\CF_\sigma $ outside the disk \eqref{E:semi-circle}, which are all simple. 
\item $c^-(\pm k_\#^+) = U(-h) =  c_-(\pm k_\#^-)$, $\mp (c_R^-)' (\pm k_\#^+)>0$, $\pm \RP\, c_-' (\pm k_\#^-)>0$.   
\item $c_I^- (k)$ with $0\le  k_\#^+ - |k| \le \ep$ and $\IP\, c_- (k)$ with $0\le |k| - k_\#^- \le \ep$ have the same sign ($+, -$, or $0$) as $U''\big(U^{-1} (c_R^- (k))\big)$ and  $U''\big(U^{-1} (\RP\, c_- (k))\big)$, respectively, and for such $k$, 
\[
C^{-1} \big|U''(U^{-1} (c_R^-(k))) \big| (|k| - k_\#^+)^{2} \le   |c_I^- (k)| \le C\big|U''(U^{-1} (c_R^-(k))) \big| ( |k| - k_\#^+)^{2},
\]
\[
C^{-1} \big|U''(U^{-1} (\RP\, c_-(k))) \big| (|k| - k_\#^-)^{2} \le   |\IP\, c_- (k)| \le C\big|U''(U^{-1} (\RP\, c_-(k)))\big| ( |k| - k_\#^-)^{2}.
\]
\item $\CF_\sigma (k, c=c_R+ic_I) =0$ with $\big||k| - k_\#^\pm\big|\le \ep$, $|c_R-U(-h)| \le \rho$, $c_I\ge 0$ iff $c=c^- (k)$ or $c=c_-(k)$. 
\end{enumerate} \end{enumerate}
\end{proposition}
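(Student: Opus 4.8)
The plan is to reproduce, for $\sigma>0$, the three-step scheme used to prove Theorems~\ref{T:e-values} and~\ref{T:e-values-1}: (i) determine the wave numbers at which $c=U(-h)$ is a neutral mode by studying $\CF_\sigma(\cdot,U(-h))$; (ii) locate the two non-singular modes $c^\pm(k)$ for $|k|\gg1$; (iii) run the index and analytic-continuation machinery of Lemma~\ref{L:continuation} and Remark~\ref{R:continuation}, together with the semicircle theorem \cite{Yih72,BR13}, to connect the large-$|k|$ picture with the endpoint. The two facts about $\CF_\sigma$ near $c=U(-h)$ that in \cite{LiuZ21} required $U''\ne0$ — its $C^{1,\alpha}$ regularity and the nondegeneracy $\p_c\CF_\sigma\ne0$ at a neutral mode — are now available under $U'>0$ alone from Corollary~\ref{C:F} and Lemma~\ref{L:pcF}, so the scheme carries over with no further change. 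Concretely, I would first analyze $\CF_\sigma(k,U(-h))$. Since $U(-h)$ sits at the endpoint of the singular set, $y_{0-}(k,U(-h),x_2)>0$ on $(-h,0]$ (Lemma~\ref{L:y0}(4)), so $\CF_\sigma(k,U(-h))\in\R$ is well defined, even in $k$, equals $-g<0$ at $k=0$ (Lemma~\ref{L:e-v-basic-2}(1)), tends to $-\infty$ as $|k|\to\infty$ (Lemma~\ref{L:e-v-large}(1)), and is strictly concave in $K=k^2$ by Lemma~\ref{L:Y-def}(8). Hence $\CF_\sigma(k,U(-h))+g$ has a unique maximum $g_\#(\sigma)\ge\CF_\sigma(0,U(-h))+g=0$, and, recalling $g>0$, the set $\{k\ge0:\CF_\sigma(k,U(-h))=0\}$ is a single point $k_\#$ when $g=g_\#(\sigma)$ — where $\p_k\CF_\sigma(k_\#,U(-h))=0$, $\p_k^2\CF_\sigma(k_\#,U(-h))<0$ — and consists of exactly two points $0<k_\#^-<k_\#^+$ when $0<g<g_\#(\sigma)$, transversal crossings with $\p_k\CF_\sigma$ of opposite signs since $k_\#^-$ lies on the increasing and $k_\#^+$ on the decreasing branch in $K$. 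This gives (1a), (2a), and the sign statements on $(c_R^-)'$ and $\RP\,c_-'$.

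Next I would invoke the large-$|k|$ description recalled before the proposition: there is $k_0$ with $\BF_\sigma(k,\cdot)$ having exactly the two simple real roots $c^\pm(k)$ for $|k|\ge k_0$, $c^-(k)<U(-h)<U(0)<c^+(k)$, even and analytic in $k$. To continue $c^+$, work on a box around the unique simple root $c_1>U(0)$ of $\CF_\sigma(0,\cdot)$ (which exists because $\p_cF(0,\cdot)>0$ on $[U(0),\infty)$ by Lemma~\ref{L:e-v-basic-2}(1)): by $\p_KF>0$ and the semicircle theorem the index of $\BF_\sigma(k,\cdot)$ on a suitable such domain is $1$ for every $k$, so $c^+(k)$ extends analytically and evenly to all $k$ with $c^+(k)>U(0)$ and is the only root there. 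In the region outside the disk \eqref{E:semi-circle} and in the closed upper half-plane, the index is $0$ for $|k|>k_\#$ (case 1), resp. $|k|>k_\#^+$ (case 2), by the same asymptotics; Lemma~\ref{L:continuation} continues $c^-(k)$ downward in $|k|$ as the unique root there until it approaches $U([-h,0])$, which the sign structure and the semicircle theorem force to be $U(-h)$, occurring exactly at $|k|=k_\#$, resp. $|k|=k_\#^+$. In case (2) one separately continues from $k=0$ the unique root $c_2<U(-h)$ of $\CF_\sigma(0,\cdot)$ (it exists since $\CF_\sigma(0,U(-h))=-g<0$ and $\CF_\sigma(0,c)\to+\infty$ as $c\to-\infty$) upward; the index count keeps it the only root $<U(-h)$ until $|k|=k_\#^-$, where it reaches $U(-h)$. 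This yields the analyticity-away-from-endpoints, evenness, and the "all roots outside the disk" clauses of (1b), (1c), (2b).

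It remains to describe the modes near $c=U(-h)$. By Corollary~\ref{C:F}, restricted to $c_I\ge0$ the function $\CF_\sigma$ is $C^{1,\alpha}$ near each relevant endpoint pair $(k_\#^{(\pm)},U(-h))$ (meaning $(k_\#,U(-h))$ in case (1), $(k_\#^\pm,U(-h))$ in case (2)), and Lemma~\ref{L:pcF} gives $\p_c\CF_\sigma(k_\#^{(\pm)},U(-h))\ne0$; in particular $\p_{c_R}F_R\ne0$ there. Extending $\CF_\sigma$ as a $C^{1,\alpha}$ function of $(c_R,c_I)$ into a full neighborhood and applying the implicit function theorem exactly as in Lemma~\ref{L:bifurcation} produces the $C^{1,\alpha}$ curve of zeros through each $(k_\#^{(\pm)},U(-h))$, which the previous step identifies near the endpoints with $c^-$, resp. $c_-$; this is (1b), (1d), (2c), (2e). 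For the refined estimates (2d), Lemma~\ref{L:bifurcation}(1) gives $\CC_I(k)\approx-(U(0)-U(-h))^2Y_I(k,\CC_R(k))/\p_{c_R}F_R$, while Lemma~\ref{L:Y-def}(5), together with $y_-(k,c,x_2^c)^2\asymp(x_2^c+h)^2\asymp(\CC_R(k)-U(-h))^2$ near the endpoint and the transversal vanishing $\CC_R(k)-U(-h)\asymp|k|-k_\#^\pm$ (here $\p_{c_R}F_I=0$ at $U(-h)$, so $\CC'(k_\#^\pm)$ is real and nonzero), yields $|\CC_I(k)|\asymp|U''(U^{-1}(\CC_R(k)))|\,(|k|-k_\#^\pm)^2$ with the sign of $U''(U^{-1}(\CC_R(k)))$; this is (2d).

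I expect the main obstacle to be twofold: the bookkeeping in the continuation/index step — choosing the boxes around $c_1,c_2$ and the truncated exterior-of-the-semicircle region so that no zero escapes to infinity or onto $U([-h,0])$ for $k$ in the relevant ranges, and verifying that the index changes are exactly accounted for by the endpoint bifurcation — and extracting the sharp two-sided quadratic bound in (2d) from only $C^{1,\alpha}$ (not $C^2$) regularity of $\CF_\sigma$ at $U(-h)$, which is precisely why Lemma~\ref{L:bifurcation}(1) and the endpoint asymptotics of $Y_I$ must replace a naive Taylor expansion. Everything else is a transcription of the $\sigma>0$ arguments of \cite{LiuZ21} with the newly removed hypothesis.
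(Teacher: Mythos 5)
Your outline reproduces the parts of the argument that are indeed routine: the structure of $\CF_\sigma(\cdot,U(-h))$ (real, even, $=-g$ at $k=0$, concave in $K=k^2$, $\to-\infty$), hence $k_\#$ or $k_\#^-<k_\#^+$; the two large-$|k|$ roots $c^\pm(k)$ from \cite{LiuZ21}; the local bifurcation at $(k_\#^{(\pm)},U(-h))$ using Corollary \ref{C:F}, Lemma \ref{L:pcF} and Lemma \ref{L:bifurcation}; and the quadratic bounds in (2d) via $Y_I$. But there is a genuine gap at the one step that is actually the crux: your claim that the continuation of $c_-(k)$ from $k=0$ "reaches $U(-h)$ exactly at $|k|=k_\#^-$" and that $c^-(k)$, continued downward in $|k|$, is "forced by the sign structure and the semicircle theorem" to hit $U(-h)$ exactly at $k_\#^+$. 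The index/count argument in the strip $\{c_R<U(-h),\ |c_I|<1\}$ only gives that the number of roots there is constant on each of $[0,k_\#^-)$, $(k_\#^-,k_\#^+)$, $(k_\#^+,\infty)$ and equals $1$ on the two outer intervals; on the middle interval it could a priori be $0$ or $2$. The competing scenario — $c_-(k)$ stays bounded away from $U(-h)$ and continues past $k_\#^-$, while a second real root emerges from $U(-h)$ at $k_\#^-$ and is re-absorbed at $k_\#^+$ — is locally consistent with every lemma you cite: Lemma \ref{L:pcF} only ties the sign of $\p_c\CF_\sigma(k_\#^\pm,U(-h))$ to the sign of $\CF_\sigma$ just below $U(-h)$ on a root-free interval, and that sign is exactly what is unknown in this scenario, so it does not by itself give $\p_c\CF_\sigma<0$ there. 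In the proof of Theorem \ref{T:e-values}(2) (which you propose to transcribe) this is excluded by the strict monotonicity $\p_K F>0$ for real $c\notin U((-h,0])$ (Lemma \ref{L:e-v-basic-2}(2)); for $\CF_\sigma=F-\sigma k^2$ with $\sigma>0$ only concavity in $K$ survives, and $\CF_\sigma(\cdot,c)$ is not monotone, so "the same argument" does not carry over at precisely this point.

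The paper closes this gap by a global argument you do not have: a homotopy in the surface tension. It introduces $\sigma'\in[0,\sigma_0]$ with $g_\#(\sigma_0)=g$, notes $k_\#^-(\sigma')\to k_-$ and $k_\#^+(\sigma')\to+\infty$ as $\sigma'\to0+$, starts from the known gravity-wave picture of Theorem \ref{T:e-values}(2) at $\sigma'=0$, and continues in $\sigma'$ (Implicit Function Theorem near $U(-h)$, using the non-degeneracy from Lemma \ref{L:pcF}) the fact that $c_-(\sigma',\cdot)$ terminates at $U(-h)$ at $k=k_\#^-(\sigma')$; only then do the signs $\p_c\CF_{\sigma'}(k_\#^-(\sigma'),U(-h))<0$, $\p_k\CF_{\sigma'}(k_\#^-(\sigma'),U(-h))>0$ and the count in the strip force zero roots on the middle interval and hence $c^-(\sigma',k_\#^+(\sigma'))=U(-h)$. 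A related omission in your plan: in case (1), $g=g_\#(\sigma)$, one has $\p_k\CF_\sigma(k_\#,U(-h))=0$, so the crossing at $k_\#$ is not transversal and your endpoint analysis (and the assembly of the single $C^{1,\alpha}$ branch $c^-(k)$ through $k_\#$) does not follow from the case-(2) picture directly; the paper obtains case (1) by taking the limit $\sigma'\to\sigma_0-$ along the homotopy. Without the homotopy (or some substitute global input replacing the lost monotonicity in $k$), your proof does not establish (1b)--(1d), (2b), (2c), or (2e).
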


\begin{remark} 
a.) The bifurcation near inflection values of $U$ can be found in Lemma \ref{L:bifurcation}. \\
b.) If $g> g_\#(\sigma)$, it had been proved in \cite{LiuZ21} that $c^-(k)$ can be extended for all $k \in \R$ as a simple root  of $\CF_\sigma (k, c\dot)$ which is the only root other than $c^+(k)$ outside the disk \eqref{E:semi-circle}). It also satisfies $c^- (k) < U(-h)$. An explicit necessary and sufficient condition for $g_\# (\sigma)=0$ was also given there. Under the assumption $U'' \ne 0$ a complete picture of the eigenvalue distribution in the fashion of Theorem \ref{T:e-values}(5) was also obtained in \cite{LiuZ21}, where the continuations of $c^-(k)$ and $c_-(k)$ coincide. \\
c.) Following from the same proof as in statement (6) of Theorem \ref{T:e-values}, one can prove that the spectral stability of the linearized capillary gravity waves. 
\end{remark}

\begin{proof}
Due to the concavity of $\CF_\sigma$ in $K=k^2$ (Lemma \ref{L:e-v-basic-2}(2)), for $g\le g_\#(\sigma)$, the existence of $k_\#>0$ or $k_\#^\pm>0$ as the only zero points of $\CF_\sigma (\cdot, U(-h))$ follows from the definition of $g_\#(\sigma)$. Much in the proof of Theorem \ref{T:e-values}(2), the branch $c_-(k)$ starts with the simple root $c_-(0) =c_2 < U(-h)$ of $\CF_\sigma(0, \cdot)$. For $|k| \gg 1$, another branch $c^-(k) <U(-h)$ had been constructed in Theorem 1.1 in \cite{LiuZ21}. Applying Lemma \ref{L:continuation} and the semi-circle theorem, $c_-(k)$ and $c^-(k)$ can be continued in $(-\infty, U(-h))$ as the only roots of $\CF_\sigma (k, \cdot)$ outside the disk \eqref{E:semi-circle} for all $|k| < k_\#^-$ and $|k| > k_\#^+$, respectively ($k_\#^\pm =k_\#$ is understood if $g = g_\#(\sigma)$), where $\CF_\sigma (k, U(-h))=0$ for the first $k$ in the continuation. It suffices to prove $c_-(k_\#^-) = c^-(k_\#^+) = U(-h)$ and the rest of the proposition would follow from the same proof as of Theorem \ref{T:e-values}. 

Our strategy is to vary the parameter $\sigma\ge 0$ starting from $0$. From the definitions 
of $\CF_\sigma$ and $g_\#(\cdot)$, if $g_\#(\sigma')>0$, then it is strictly decreasing in $\sigma'$ and there exists $\sigma_0\ge \sigma$ such that $g_\#(\sigma_0) = g$. We use the notations $k_\#^\pm( \sigma')$ for $\sigma' \in (0, \sigma_0)$ and $k_\#$ for $g = g_\# (\sigma')$ which occurs only when $\sigma'=\sigma_0$. From the concavity of $\CF_{\sigma'} (\cdot, U(-h))$ in $K=k^2$, we have  
\[
\mp \p_\sigma k_\#^\pm (\sigma') = \pm (\p_\sigma \CF_{\sigma'} / \p_k \CF_{\sigma'}) ( k_\#^\pm(\sigma'), U(-h)) >0,  \quad \sigma' \in (0, \sigma_0).
\]
They also clearly satisfy 
\[
k_\#^+(\sigma')> k_\# > k_\#^-(\sigma') >0, \;\; \lim_{\sigma' \to \sigma_0-} k_\#^\pm (\sigma') = k_\# (\sigma_0), \;\; \lim_{\sigma' \to 0+} k_\#^- (\sigma') = k_-, \;\; \lim_{\sigma' \to 0+} k_\#^+ (\sigma') = +\infty,
\] 
where $k_->0$ is given in Theorem \ref{T:e-values}(2) satisfying $F(k_-, U(-h))=0$. 

Let us use $c^-(\sigma', k)$ and $c_-(\sigma', k)$ to denote the branches of the roots in $(-\infty, U(-h))$ of $\CF_{\sigma'} (k, \cdot)$ for $\sigma' \in (0, \sigma_0]$, while we skip $\sigma'$ if there is no confusion. According to Theorem \ref{T:e-values}(2), $c_-(0, 0) < U(-h)$ can be extended as the only root $c_-(0, k)$ in $(-\infty, U(-h)]$ of $\CF_0 (k, \cdot) = F(k, \cdot)$ for all $|k| < k_-$ and $\lim_{k \to (k_-)-} c_-(0, k) = U(-h)$. 
The continuation in the direction of $\sigma'$ through the Implicit Function Theorem applied in a neighborhood of $U(-h)$ implies that, for each $\sigma' \in [0, \sigma_0)$, $c_-(\sigma', k)$ can be extended until it reaches $U(-h)$ which has to occur at $k= k_\#^- (\sigma')$. 

Let 
\[
\Omega = \{ c\in \C \mid c_R \in (-\infty, U(-h)), \, c_I \in (-1, 1)\}.
\]
Lemma \ref{L:e-v-large}(2) implies that $\CF_{\sigma'} (k, \cdot)$ has no roots in $\Omega$ for $-c_R\gg 1$. For any $\sigma' \in (0, \sigma_0)$, the only roots of $\CF_{\sigma'}$ on $\R \times \p \Omega$ are $(k_\#^\pm (\sigma), U(-h))$ due to the semi-circle theorem. Hence from an index argument applied to  sufficiently large compact subsets of $\Omega$, the total number of roots (counting the multiplicity) of $\CF_{\sigma'} (k, \cdot)$ in $\Omega$ are constants for $k$ in $[0, k_\#^-(\sigma'))$, $(k_\#^-(\sigma'), k_\#^+(\sigma'))$, and $(k_\#^+(\sigma'), +\infty)$, respectively. Since $c_-(0, 0)$ is the only root in $\Omega$ for $(\sigma', k)=(0,0)$ and $c^-(\sigma', k)$ the only one for $\sigma'\in (0, \sigma_0)$ and $k \gg 1$, the total numbers of roots of $\CF_{\sigma'} (k, \cdot)$ in $\Omega$  is $1$ for $k$ in both $[0, k_\#^-(\sigma'))$ and $(k_\#^+(\sigma'), +\infty)$. As $c_-(\sigma', k_\#^- (\sigma')) = U(-h)$, by the bifurcation analysis based on Lemma \ref{L:bifurcation} and the signs $\p_c \CF_{\sigma'} (k_\#^- (\sigma'), U(-h))<0$ (Lemma \ref{L:pcF}) and $\p_k \CF_{\sigma'} (k_\#^- (\sigma'), U(-h))>0$ (from the concavity of $\CF_{\sigma'}$ in $K=k^2$), $\CF_{\sigma'} (k, \cdot)$ has no roots in $\Omega$  for $1\gg k-k_\#^-(\sigma') >0$. Hence $\CF_{\sigma'} (k, \cdot)$ has no roots in $\Omega$ for $k \in (k_\#^-(\sigma'), k_\#^+(\sigma'))$. For the total number of roots in $\Omega$ of $\CF_{\sigma'} (k, \cdot)$ to change from $0$ to $1$ as $k$ increases through $k_\#^+(\sigma')$, it must hold $c^-(\sigma', k_\#^+(\sigma')) = U(-h)$, otherwise $c^-(\sigma', k_\#^+(\sigma'))$ would be continued to a root in $\Omega$ for $k < k_\#^+ (\sigma')$ which is a contradiction. 

Finally, $c^-(\sigma_0, k_\#) = c_-(\sigma_0, k_\#)=U(-h)$ follows from taking the limit as $ \sigma' \to \sigma_0-$.   
\end{proof}

A direct corollary of the proposition is a sufficient condition for the linear instability.  

\begin{corollary}
If $g < g_\#$ and there exists a sequence $x_{2, n} \in (-h, 0)$ converging to $-h$ as $n \to +\infty$ such that $U''(x_{2,n}) >0$ for all $n$, then the linearized capillary gravity wave system is linearly unstable. 
\end{corollary}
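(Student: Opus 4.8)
The plan is to extract an unstable mode directly from the branch $c_-(k)$ furnished by Proposition \ref{P:CGWW}(2), exploiting its behavior as $|k|\to (k_\#^-)+$. Since $g<g_\#=g_\#(\sigma)$ (so in particular $g_\#>0$ and $\sigma>0$), Proposition \ref{P:CGWW}(2) applies: there are $k_\#^->0$, $\ep,\rho>0$, and an even $C^{1,\alpha}$ function $c_-(k)$ defined for $|k|\le k_\#^-+\ep$ with $c_-(\pm k_\#^-)=U(-h)$, $\RP c_-'(k_\#^-)>0$, such that for $0\le |k|-k_\#^-\le \ep$ the sign of $\IP c_-(k)$ equals that of $U''\big(U^{-1}(\RP c_-(k))\big)$. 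The idea is to locate $k_0\in(k_\#^-,k_\#^-+\ep)$ for which $U^{-1}(\RP c_-(k_0))$ coincides with one of the points $x_{2,n}$ where $U''>0$; at such a $k_0$ we get $\IP c_-(k_0)>0$, i.e. a genuine unstable (non-singular) mode, whence the linearized capillary gravity wave system is linearly unstable. Note that the branch $c_-(k)$ for $|k|<k_\#^-$ is real by Proposition \ref{P:CGWW}(2)(b) and thus irrelevant; all the instability will come from the side $|k|>k_\#^-$.

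To carry this out, first shrink $\ep$ so that $\RP c_-(k)\in[U(-h),U(0))$ for all $k\in[k_\#^-,k_\#^-+\ep]$ (possible by continuity of $c_-$ and $c_-(k_\#^-)=U(-h)$); since $U'>0$ on $[-h,0]$, the inverse $U^{-1}$ is continuous and strictly increasing on $[U(-h),U(0)]$, so $\xi(k):=U^{-1}(\RP c_-(k))$ is a well-defined continuous function of $k\in[k_\#^-,k_\#^-+\ep]$ with $\xi(k_\#^-)=-h$. Because $\RP c_-'(k_\#^-)>0$ and $U^{-1}$ is increasing, $\xi(k)>-h$ for $k$ slightly larger than $k_\#^-$. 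Hence the compact interval $\xi\big([k_\#^-,k_\#^-+\ep]\big)$ contains $-h$ together with a point strictly to its right, and therefore contains $[-h,-h+\delta]$ for some $\delta>0$.

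Since $x_{2,n}\to -h$, all but finitely many $x_{2,n}$ lie in $(-h,-h+\delta)$; fix one such index $n_0$, so $x_{2,n_0}\in\xi\big((k_\#^-,k_\#^-+\ep]\big)$ and $U''(x_{2,n_0})>0$. By the intermediate value theorem there is $k_0\in(k_\#^-,k_\#^-+\ep]$ with $\xi(k_0)=x_{2,n_0}$, i.e. $\RP c_-(k_0)=U(x_{2,n_0})$. Since $0<|k_0|-k_\#^-\le\ep$, Proposition \ref{P:CGWW}(2)(d) gives that $\IP c_-(k_0)$ has the same sign as $U''\big(U^{-1}(\RP c_-(k_0))\big)=U''(x_{2,n_0})>0$, so $\IP c_-(k_0)>0$. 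Then $c_-(k_0)\notin U([-h,0])\subset\R$, so $(k_0,c_-(k_0))$ is a non-singular mode of $\CF_\sigma$ with $c_I>0$, i.e. an unstable mode; the corresponding solution of the form $e^{ik_0(x_1-c_-(k_0)t)}$ grows exponentially and the system is linearly unstable.

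The main obstacle — modest here — is verifying that the image of $\xi$ genuinely fills a one-sided neighborhood of $-h$, so that the sequence $\{x_{2,n}\}$ is necessarily met; this rests on the nondegeneracy $\RP c_-'(k_\#^-)\neq0$ from Proposition \ref{P:CGWW}(2)(c) together with the monotonicity of $U$. Everything else is the sign dichotomy of Proposition \ref{P:CGWW}(2)(d) plus elementary continuity; if one wishes, the quantitative lower bound $|\IP c_-(k_0)|\ge C^{-1}|U''(x_{2,n_0})|(|k_0|-k_\#^-)^2$ in the same item also quantifies the growth rate of the unstable mode.
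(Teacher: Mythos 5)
Your argument is correct and is exactly the route the paper intends: the statement is left as a ``direct corollary'' of Proposition \ref{P:CGWW}(2), and you supply precisely that reasoning --- using $c_-(\pm k_\#^-)=U(-h)$ together with $\RP\, c_-'(k_\#^-)>0$ so that the critical layer $U^{-1}(\RP\, c_-(k))$ sweeps a one-sided neighborhood of $-h$, then the sign dichotomy and item (e) to conclude $\IP\, c_-(k_0)>0$ at some $k_0$ hitting an $x_{2,n}$, hence an unstable mode.
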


\begin{remark}
For the linearized capillary gravity wave problem, a non-degenerate inflection value $c_0$ of $U$ does not necessarily leads to instability as a strong surface tension may prevent $c_0$ from becoming a neutral mode at all. 
\end{remark}

\bibliographystyle{abbrv}
\bibliography{bibliography}
\endgroup
\end{document}